\theoremstyle{plain}
\newtheorem{theorem}{Theorem}[subsection]
\newtheorem{definition}{Definition}[subsection]
\newtheorem{lemma}{Lemma}[subsection]
\DeclareSymbolFont{cyrletters}{OT2}{wncyr}{m}{n}
\DeclareMathSymbol{\Sha}{\mathalpha}{cyrletters}{"58}
\newtheorem{claim}{Claim}[subsection]
\newtheorem{proposition}{Proposition}[subsection]
\newtheorem{corollary}{Corollary}[subsection]
\newtheorem{question}{Question}[subsection]
\newtheorem{conjecture}{Conjecture}
\theoremstyle{definition}
 \newtheorem{rem}[theorem]{Remark}
 \numberwithin{equation}{section}
\theoremstyle{definition}
\theoremstyle{remark}
 \numberwithin{equation}{section}
\renewcommand{\leq}{\leqslant}
\renewcommand{\geq}{\geqslant}
\renewcommand{\setminus}{\smallsetminus}
\setlist[enumerate]{itemsep=0mm}
\renewcommand{\and}{\quad \mbox{and} \quad}  
\renewcommand{\leq}{\leqslant}
\renewcommand{\geq}{\geqslant}
\renewcommand{\setminus}{\smallsetminus}
\title{\bf Virtual characters on L-functions}
\author[Subham Bhakta]{\bfseries Subham Bhakta}
\address{
Chennai Mathematical Institute\\ 
H1, Sipcot It Park, Siruseri  \\ 
Kelambakkam, 603103\\
India}
\email{Subham@cmi.ac.in, Subham1729@gmail.com}
\begin{document}
\vspace{10mm}
\setcounter{page}{1}
\thispagestyle{empty}

\maketitle

\begin{abstract} In this expository note we show the inception and development of the Heilbronn characters and their application to the holomorphy of quotients of Artin L-functions. Further we use arithmetic Heilbronn characters introduced by Wong, to deal with holomorphy of quotients of certain L-functions, e.g,, L-functions associated to CM elliptic curves. Furthermore we use the supercharacter theory introduced by Diaconis and Isaacs to study Artin L-functions associated to such characters. We conclude the note surveying about various other unconditional approaches taken based on character theory of finite groups.
\end{abstract}
\section{Introduction}
A natural number $p$ is said to be prime if it has exactly two divisors, namely, $1$ and
itself. More than two thousand years ago that Euclid proved the infinitude of primes in a elementary way. For a sake of contradiction, Euclid assumed there are only finitely many primes $p_1,p_2,\cdots, p_k$ and got contradiction considering the number $(\prod_{i=1}^{k}p_i)+1$. In 1737, Euler gave another proof via an analytic method. He considered the sum $f(x)=\sum \frac{1}{n^x}$ for $x>1$ and showed the summation diverges as $x\to 1^{+}$. From this Euler concluded that $\sum \frac{1}{p}$ diverges, where the summation runs over all primes $p$. The function $f(x)$ gave birth of a new function (on complex plane),
\[\zeta(s)=\sum\frac{1}{n^s},\] 
in $\mbox{Re}(s)>1$. The function is widely known as Riemann-zeta function. This led Dedekind to his zeta
functions associated to a number field $K$ as, 
\[\zeta_K(s)=\sum \frac{1}{N(\mathfrak{a})^s},\]
in $\mbox{Re}(s)>1$, where the summation runs over all non trivial ideals $\mathfrak{a}$ of $O_K$ and $N(\mathfrak{a})$ is index of the ideal $\mathfrak{a}$ in $O_K$. Where $O_K$ is the ring of integers in $K$. This generalizes zeta function because, for $K=\mathbb{Q}, \zeta_K(s)=\zeta(s)$. A celebrated conjecture of Dedekind asserts that for any finite algebraic extension $F$ of $\mathbb{Q}$, the zeta function $\zeta_F(s)$ is divisible by the Riemann zeta function $\zeta(s)$. That
is, the quotient $\zeta_F (s)/\zeta(s)$ is entire. More generally, Dedekind conjectured that if $K$ is a finite extension of $F$, then $\zeta_K(s)/\zeta_F (s)$ is entire. This conjecture is still open. By the work of Aramata and Brauer the conjecture is known if $K/F$ is Galois. We record this in Theorem 4.10. If $K$ is contained in a solvable extension of $F$, then Uchida and van der Waall independently proved Dedekind's conjecture, we discuss this in Theorem 3.1.1. \\
\newline 
In 1923, for every Galois extension $K/F$ of number fields with Galois group $G$, and every representation $\rho$ of G into $\mbox{GL}(V)$, ($V$ is a finite dimensional vector space over $\mathbb{C}$). Artin defined a L-function attached to $\rho$. Artin conjectured that his L-function can be extended to an entire function whenever $\rho$ is non-trivial and irreducible. Artin showed that his conjecture is true for one dimensional characters, based on his reciprocity law. From this and the induction-invariance property of Artin L-functions, Artin established his conjecture when $G$ is an $M$-group. An $M$-group a group, for which every irreducible characters are can be written as sum one induced (one dimensional) characters. Brauer proved that all Artin L-functions extends meromorphically over $\mathbb{C}$. One can show ratio of two Dedekind zeta functions can be written as product of Artin L-functions, and this suggests that Dedekind's conjecture is indeed true.\\
\newline  
There is a elliptic analogue of Artin's conjecture. Let $E$ be an elliptic curve over a number field $K$. Hasse and Weil defined an $L$-function, $L_K(s)$ attached to $E$, and conjectured this $L$-functions has analytic continuation to whole $\mathbb{C}$. Birch and Swinnerton-Dyer conjectured that this $L$-function extends to an entire function, satisfying a suitable functional equation, and at $s=1$ has a zero of order equal to the rank of the group $E(K)$ of K-rational points of $E$. As a elliptic analogue of Dedekind's conjecture one may ask, if $\frac{L_K(s)}{L_F(s)}$ is entire for a finite extension of number fields $K/F$. This we can expect to be true, because $\mbox{Rank}(E(K))\geq \mbox{Rank}(E(F))$. In the vein of methods used by Van der Waall and Brauer-Aramata, M. Murty and K. Murty showed the result holds when $K/F$ is a Galois extension or contained in a solvable Galois extension.  \\
\newline 
This note is organized as follows. In section 2, we introduce the preliminaries of representation theory if finite groups and Artin L-functions. In section 4 we present several variations on Dedekind's conjecture. We start with explaining Van der Waall's proof using Lemma 3.1.1. Based on this lemma, we deal with quotient of Artin L-functions, and prove Theorem 3.2.1, 3.2.2 and 3.2.4. For example, Theorem 3.2.2 asserts the quotient,
\[\frac{L(s, \mbox{Ind}_{H}^{G} (\psi), K/F)}{\prod_{\chi \in S_{\psi}} L(s,\chi, K/F)}\]
is entire, where $\psi$ is a one dimensional character on $H$, and $S_{\psi}$ is set of all one dimensional characters of $G$, whose restriction onto $H$ is $\psi$. This work was done by Murty and Raghuram \cite{MR} in 2000. They even predicted that, it is possible that an analogue result for any character of higher level holds, and proved a weaker result in Theorem 3.2.4 based on Corollary 3.1.3. We further explain how Lansky and Wilson generalized Theorem 3.2.4 for characters of higher levels, in 2005. As a corollary they showed the product  
\[\prod_{l(\chi)=i} L(s,\chi, K/F)^{\big(\chi, \mbox{Ind}_{H}^G(\psi_0)\big)}\] 
is entire, in Corollary 3.3.2. From which it is reasonable to expect product of all $L(s,\chi,K/F)$, with $l(\chi)=i$ is entire as well. $l(\chi)$ is defined to be minimum of all $i$ such that $\chi|_{G^i}=1_{G^i}$, where $G^i$ denotes $i^{th}$ derived subgroup of $G$, and $\psi_0$ be a one-dimensional character of a subgroup $H$ of $G$. Unfortunately we do not whether the result is true or not.\\ 
\newline 
In section 5 we study Artin L-functions more closely, using Heilbronn characters. In 1972 Heilbronn introduced a tracking device to relate the zeroes of an Dedekind zeta function over $K$ at a complex point $s_0$ to zeros of zeta functions attached to the subfields of $K$ at the same point $s_0$. As an evidence, we start with proving Aramata-Brauer's theorem in Theorem 4.1.1. We further see how does this simple idea has blossomed the area of research on Artin's holomorphy conjecture of L-functions. We do this by proving Theorem 4.1.2, 4.1.3 and 4.2.1. For example, while proving Theorem 4.2.1, we see the product 
\[\prod_{l(\chi)>i} L(s,\chi,K/F)^{\chi(1)}\]
is entire. The fact is, the above product can be identified with  $\frac{\zeta_K(s)}{\zeta_{K^i}(s)}$, where $K^i$ is the subfield of $K$ fixed by the $i^{th}$ derived subgroup of $\mbox{Gal}(K/F)$, shows that it is entire by Aramata Brauer's theorem in Theorem \ref{AB1}. In the same spirit of Heilbronn, Stark proved Artin's conjecture is locally true, in the sense, if $\mbox{ord}_{s=s_0}(\zeta_K(s)) \leq 1$, then all Artin L-functions are holomorphic at $s=s_0$. Which we record in Theorem 4.3.1. Inspired by Stark's work, Foote and Murty obtained a partial generalization of Stark's result for solvable Galois extensions. They proved, if $|G|=\prod_{i=1}^{k} p_i^{n_i}$ such that $p_1 <p_2<\cdots <p_k$ be distinct primes such that $\mbox{ord}_{s=s_0} (\zeta_K(s)) \leq p_2-2$, then all Artin L-functions $L(s, \chi, K/F)$'s are holomorphic at $s = s_0$. In the same spirit, Wong proved in 2017 that if $\mbox{ord}_{s=s_0} \Big(\zeta_K(s)/\zeta_F(s)\Big) \leq p_2-2$, then all Artin L-functions $L(s, \chi, K/F)$'s are holomorphic at $s = s_0$. We record Wong's proof in Theorem 4.3.4.\\
\newline 
In section 6, we start with setting a formalism for Heilbronn characters and apply this machinery to study L-functions. Firstly, we introduce the notion of weak arithmetic Heilbronn characters that satisfy properties analogous to some properties of the classical Heilbronn characters known by the work of Heilbronn-Stark (Theorem 5.1.2), Aramata-Brauer (Corollary 5.1.1), Foote-Murty (Theorem 5.4), and Murty-Murty (Theorem 5.1.3). In the section, we further put more conditions on weak arithmetic Heilbronn characters, which take them more closer to Heilbronn characters. This new formalism is meant to be used for several arithmetic usage, so we refer this as arithmetic Heilbronn characters. We start with proving Uchida-van der Waalls' theorem in this new arithmetic settings (Corollary 5.2.1), and as a sequel go on to derive several extensions of results by Murty-Raghuram, Lansky-Wilson for arithmetic Heilbronn characters. These are recorded from Theorem 5.4.2 to 5.4.3. Further we apply these results to study Artin-Hecke L-functions and L-functions of CM-elliptic curves. More precisely, we first discuss how R. Murty and K. Murty proved elliptic analogue of Brauer-Aramata theorem and Uchida Van-der Waall's theorem for CM elliptic curves in Theorem 5.5.2. Further we use results of section 5.1, 5.2, 5.3 and 5.4 to establish Theorem 5.5.6, 5.5.7 and 5.5.8.\\ 
\newline 
In section 6, we give a brief review of the supercharacter theory and study Artin L-functions via the theory of supercharacters. Here also we introduce to supercharacter-theoretic analogue of Heilbronn characters and prove Theorem 6.0.4 to show, Artin's conjecture for L-functions attached to supercharacters could be true locally. Furthermore, we introduce to product of supercharactertheories to see if Artin conjecture is true for Artin L-functions attached to suitable supercharacters.\\ 
\section{Preliminaries}

In this section, we will recall some notations and results from the theory of finite groups. We always denote $G$ by a group (unless otherwise specified) and $\{e_G\}$ or $e$ to be its identity element. Whenever I am writing $\{e\}$ while working with group $G$, it is meant $e=e_G$, the identity element of $G$. In this note $Z(G)$ will always stand for the centre of $G$. 
\begin{definition}[Solvable and Super Solvable Group] A finite group $G$ is said to be solvable if there are subgroups $\{1\} = G_0 \subset G_1 \subset \cdots \subset G_k = G$ such that $G_{j-1}$ is normal in $G_j$, and $G_j/G_{j-1}$ is an abelian group, for $j=1, 2,\cdots, k.$ And $G$ is said to be supersolvable if the quotients $G_j/G_{j-1}$s are cyclic. 
\end{definition}
\begin{rem} Supersolvability is stronger than the notion of solvability.
\end{rem}
\begin{definition}[Commutator and Derived Series] \label{Der} Let $[G,G]$ to be the subgroup of $G$ generated by all $xyx^{-1}y^{-1} \mid x,y \in G$. This is called the commutator subgroup of $G$. We denote a series of subgroups $\{G^i\}_{i \geq 1}$ by,
\[G^1=[G,G],G^2=[G^1,G^1], \cdots, G^i=[G^{i-1},G^{i-1}].\]
\end{definition}
\begin{rem} $G$ is solvable if and only if there exist a non negative integer $k$ such that $G^k=\{e_G\}.$
\end{rem}
\begin{definition}
Let $G$ be a finite group. A representation $\rho$ of $G$ on a finite dimensional vector space $V$ over $\mathbb{C}$ is a group homomorphism from $G$ to $GL(V)$, the group of all invertible linear maps on $V$. 
\end{definition}
In this note, we will restrict ourselves to complex representations, i.e., we will always assume $V$ is a vector space over $\mathbb{C}$. Given a representation $\rho$ of $G$, the character of $\rho$ is a function on $G$ defined by $\chi(g) = \mbox{tr} (\rho(g))$. A subspace $W$ of $V$ is called $G$-invariant if $\rho(g)w \in W$ for
all $g \in G$ and all $w \in W$. In this case, $\rho$ can be seen as a representation of $G$ on $W$, and we denote such a representation by $\rho|_{W}$. A representation $\rho$ is said to be irreducible if there is no proper and non-trivial $G$-invariant subspaces $W$ of $V$. In
addition, the character $\chi$ of a representation $\rho$ is called irreducible if $\rho$ is irreducible.
\begin{definition}[Inner Product] If $f_1,f_2 : G \to \mathbb{C}$ are two functions on $G$, one can define their inner product by
\[(f_1,f_2)=\frac{1}{|G|} \sum_{g \in G} f_1(g) \overline{f_2(g)}.\] 
\end{definition} 
If $f : G \to \mathbb{C}$ is constant on each conjugacy class in $G$, then $f$ is called a class function on $G$. We will let $C(G)$ denote the space of class functions of $G$. We denote $\mbox{Irr}(G)$ to be set of all irreducible characters of $G$. It can be showed that set of all irreducible characters forms an orthonormal basis for the inner product space of all class functions on $G$ with respect to the inner product defined above. Throughout the whole note, we will denote $1_G$ to be the trivial character by, $1_G(g)=1$ for all $g\in G$. We will also denote the regular character as
\[\mbox{Reg}_G=\sum_{\chi \in \mbox{Irr}(G)} \chi(1) \chi.\]
\begin{definition}[Length of Characters] \label{Len} Let $G$ be a solvable group, and $\chi$ be a character of $G$. We denote length $l(\chi)$ by, 
\[l(\chi)=\min\{i \mid \chi|_{G^i}=\chi(1)1_{G^i}\}.\]
\end{definition} 
\begin{rem}We assumed $G$ to be solvable, to ensure that such a $i$ exists at all.
\end{rem}
\begin{definition}[Induction and Inflation of Characters] \label{Inf} Let $H$ be a subgroup of $G$ and $f$ be a class function on $H$. Then the induction of $f$ from $H$ to $G$ is defined by
\[\emph{Ind}^G_H f(x) =\frac{1}{|H|} \sum_{g \in G}\widehat{f}(g^{-1}xg),\]
where $\widehat{f}$ is defined by, $\widehat{f}(g)=f(g)$ for all $g\in H$, and zero otherwise. Now suppose $H$ is normal in $G$ and $\chi$ be a character of $G/H$. We denote inflation of $\chi$ from $G/H$ to $G$ be,
\[\emph{Inf}_{G/H}^{G}(\chi)=\widetilde{\chi}=\chi \circ \pi,\]
where $\pi:G \rightarrow G/H$ is the canonical projection map. \end{definition}
\begin{rem} Throughout the note, we will denote inflation either by  $\mbox{Inf}_{G/H}^{G}(\chi)$ or $\widetilde{\chi}$.
\end{rem}
The theorems below are very standard results in theory of representation of finite groups.
\begin{theorem} [Frobenius Reciprocity] \label{FR}For all class functions $\phi$ on $H$, a subgroup of $G$, and all class functions $\chi$ of G,
\[(\emph{Ind}_{H}^{G} \phi,\chi)=(\phi,\chi|_{H}).\]
\end{theorem}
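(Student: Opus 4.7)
The plan is to prove Frobenius reciprocity by a direct computation, unwinding the definition of the induced character and then using the fact that $\chi$ is a class function to collapse one of the two summations. This is the standard route and avoids any appeal to deeper machinery (such as adjunction of the induction and restriction functors at the module level), which fits the elementary tone of the preliminaries section.

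First I would write out the left-hand side from the definitions in the paper:
\[
(\mathrm{Ind}_H^G \phi, \chi) \;=\; \frac{1}{|G|} \sum_{x \in G} \mathrm{Ind}_H^G \phi(x)\, \overline{\chi(x)} \;=\; \frac{1}{|G|\,|H|} \sum_{x \in G} \sum_{g \in G} \widehat{\phi}(g^{-1}xg)\, \overline{\chi(x)},
\]
where $\widehat{\phi}$ is the extension by zero used in Definition \ref{Inf}. Next I would perform the change of variables $y = g^{-1}xg$ inside the inner sum: for each fixed $g$, as $x$ ranges over $G$ so does $y$, and because $\chi$ is a class function we have $\chi(x) = \chi(gyg^{-1}) = \chi(y)$. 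This decouples $g$ from the summand, so the sum over $g$ just produces a factor of $|G|$ and cancels the $|G|$ in the denominator, leaving
\[
(\mathrm{Ind}_H^G \phi, \chi) \;=\; \frac{1}{|H|} \sum_{y \in G} \widehat{\phi}(y)\, \overline{\chi(y)}.
\]

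Finally I would use the fact that $\widehat{\phi}(y) = 0$ for $y \notin H$ and $\widehat{\phi}(y) = \phi(y)$ for $y \in H$, so the sum restricts to $H$, giving
\[
(\mathrm{Ind}_H^G \phi, \chi) \;=\; \frac{1}{|H|} \sum_{y \in H} \phi(y)\, \overline{\chi|_H(y)} \;=\; (\phi, \chi|_H)_H,
\]
which is the desired identity.

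The only genuinely non-routine step is the change-of-variables argument, and it is not really hard: one has to be careful that the manipulation requires $\chi$ to be a class function on $G$ (so that conjugation by $g$ does not affect its value), while no corresponding hypothesis is needed on $\phi$ beyond being a class function on $H$, which is used implicitly to make $\widehat{\phi}(g^{-1}xg)$ depend only on the conjugacy class of $g^{-1}xg$ in $H$ whenever that element lies in $H$. Everything else is bookkeeping of indices.
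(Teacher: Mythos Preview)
Your proof is correct: the change of variables $y = g^{-1}xg$ together with the class-function property of $\chi$ is exactly the standard route, and the bookkeeping is clean. The paper itself states Frobenius reciprocity without proof, treating it as a standard preliminary, so there is no argument in the paper to compare your approach against; your write-up would serve perfectly well as the omitted proof. One minor remark: your closing comment that the class-function hypothesis on $\phi$ is ``used implicitly'' is not quite accurate --- the computation you wrote goes through for an arbitrary function $\phi$ on $H$, and only the class-function property of $\chi$ is actually invoked.
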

\begin{theorem}[Mackey, \cite{Serre}] \label{Mac} Let $\psi$ be a character of $H$ and let $K$ be a subgroup of $G$ then,
\[\emph{Ind}^G_H(\psi)|_K =\sum_{x \in K \backslash G/H} \emph{Ind}_{K \cap xHx^{-1}}^{G}(\psi^x),\]
where $\psi^x(g)=\psi(x^{-1}gx)$ for all $g\in xHx^{-1}\cap K$. Moreover if $HK=G$, we have,
\[\emph{Ind}^G_H(\psi)|_K=\emph{Ind}_{H \cap K}^{H}(\psi|_{H\cap K}).\]
\end{theorem}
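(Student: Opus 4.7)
The plan is to work directly at the level of characters using the formula from Definition \ref{Inf}, namely
\[
\mbox{Ind}_H^G(\psi)(y) = \frac{1}{|H|}\sum_{g \in G}\widehat{\psi}(g^{-1}yg),
\]
and to reorganize the sum over $G$ according to the double coset decomposition $G = \bigsqcup_{x} KxH$, with $x$ running over a set of representatives for $K\backslash G/H$. Fix $y \in K$. For each double coset $KxH$, I would parameterize its elements by pairs $(k,h)\in K\times H$ via $(k,h)\mapsto kxh$; a short stabilizer computation shows that each $g \in KxH$ is hit exactly $|K \cap xHx^{-1}|$ times, so
\[
\sum_{g \in KxH}\widehat{\psi}(g^{-1}yg) = \frac{1}{|K\cap xHx^{-1}|}\sum_{k\in K}\sum_{h\in H}\widehat{\psi}\bigl(h^{-1}x^{-1}k^{-1}ykxh\bigr).
\]

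The next step is to exploit that $\psi$ is a class function on $H$: the inner expression $\widehat{\psi}(h^{-1}\alpha h)$, where $\alpha = x^{-1}k^{-1}ykx$, equals $\psi(\alpha)$ when $\alpha \in H$ and $0$ otherwise, independently of $h$. Hence the $h$-sum contributes a factor $|H|$, and
\[
\sum_{g \in KxH}\widehat{\psi}(g^{-1}yg) = \frac{|H|}{|K\cap xHx^{-1}|}\sum_{k \in K}\widehat{\psi^{x}}(k^{-1}yk),
\]
where $\psi^x(g) = \psi(x^{-1}gx)$ is viewed as a class function on $xHx^{-1}$ (and hence, after extension by zero, on the subgroup $K\cap xHx^{-1}$ of $K$); the condition $\alpha\in H$ translates precisely to $k^{-1}yk \in K\cap xHx^{-1}$ since $y,k\in K$.

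Recognising the right side as $|H|\cdot \mbox{Ind}_{K\cap xHx^{-1}}^{K}(\psi^x)(y)$ and summing over the double coset representatives $x$ yields the first formula, after dividing by $|H|$. For the moreover part, the hypothesis $HK=G$ forces $K\backslash G/H$ to have a single double coset, represented by $x = e$, so $K\cap xHx^{-1}=H\cap K$ and $\psi^x=\psi$, giving $\mbox{Ind}_H^G(\psi)|_K = \mbox{Ind}_{H\cap K}^{K}(\psi|_{H\cap K})$.

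The one genuine bookkeeping obstacle is the double-counting lemma for $(k,h)\mapsto kxh$; everything else is formula manipulation combined with the class-function property of $\psi$. I would also remark that the proof works verbatim at the level of representations, where $\mbox{Ind}_H^G V$ decomposes as a direct sum of $K$-stable subspaces indexed by the double cosets $KxH$, each isomorphic as a $K$-module to $\mbox{Ind}_{K\cap xHx^{-1}}^{K}$ of the twisted representation $V^x$; the character identity above is then the trace of this geometric decomposition.
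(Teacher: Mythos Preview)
Your argument is correct and is essentially the standard proof (the one found in Serre's \emph{Linear Representations of Finite Groups}, to which the paper simply refers without giving its own proof). The paper states the result with a citation and does not supply an argument, so there is nothing to compare against beyond noting that your approach is the expected one.

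One remark worth recording: your computation actually lands on the correct target groups, namely $\mbox{Ind}_{K\cap xHx^{-1}}^{K}(\psi^x)$ in the general formula and $\mbox{Ind}_{H\cap K}^{K}(\psi|_{H\cap K})$ in the special case $HK=G$. The displayed formulas in the paper have the superscripts $G$ and $H$ in these two places, which are typos; your derivation makes clear why the induction must go up to $K$ (you are computing a class function on $K$, and the factor $\frac{1}{|K\cap xHx^{-1}|}\sum_{k\in K}$ is precisely the induction-to-$K$ formula).
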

\begin{definition}[Monomial character and M-group] We define a character of $G$ to be monomial if it is induced from a linear character (i.e., a character of degree $1$) and call $G$ be a $M$ group if all irreducible characters of $G$ are monomial.
\end{definition}
\begin{theorem}[Brauer Induction Theorem] Let $G$ be a finite group and $\chi$ be a character of $G$. Then there exist integers $n_i$'s such that
\[\chi=\sum_{i} n_i \emph{Ind}_{H_i}^{G} \psi_i,\]
where $\psi_i$'s are one dimensional characters of $H_i$'s
\end{theorem}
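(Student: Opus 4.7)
The plan is to prove the theorem in two stages, following the classical Brauer--Tate strategy. First, I would establish that every character of $G$ can be written as a $\mathbb{Z}$-linear combination of characters induced from \emph{elementary} subgroups of $G$, where a subgroup $E$ is called elementary if $E = C \times P$ for some prime $p$, with $C$ cyclic of order coprime to $p$ and $P$ a $p$-group. Second, I would invoke the fact that elementary groups are $M$-groups to replace each such induced character by a $\mathbb{Z}$-combination of characters induced from one-dimensional characters on subgroups, and conclude by transitivity of induction.

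For the first stage, let $V(G) \subseteq R(G)$ denote the additive subgroup of the character ring spanned by the $\mbox{Ind}_E^G \phi$, as $E$ ranges over elementary subgroups of $G$ and $\phi$ over characters of $E$. The projection formula
\[\mbox{Ind}_E^G(\phi) \cdot \chi = \mbox{Ind}_E^G\bigl(\phi \cdot \chi|_E\bigr),\]
which is an easy consequence of Frobenius reciprocity (Theorem \ref{FR}), shows that $V(G)$ is not merely a subgroup but an \emph{ideal} of $R(G)$. Therefore it suffices to prove the identity $1_G \in V(G)$, from which $V(G) = R(G)$ follows at once.

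The main obstacle, and by far the most technical step, is showing $1_G \in V(G)$. I would carry this out by an integrality argument of Brauer--Tate: for each prime $p$ dividing $|G|$, produce an integer $n_p$ coprime to $p$ such that $n_p \cdot 1_G \in V(G)$. To achieve this one shows, for every $g \in G$, that some induced character from a $p$-elementary subgroup takes a value coprime to $p$ at $g$; the relevant subgroup is $\langle g_{p'} \rangle \times P$, where $g = g_p g_{p'}$ is the decomposition into commuting $p$- and $p'$-parts and $P$ is a Sylow $p$-subgroup of the centralizer of $g_{p'}$, and the value computation rests on Mackey's decomposition (Theorem \ref{Mac}). Once such $n_p$'s are in hand for every prime $p$, the fact that $\gcd\{n_p : p \text{ prime}\} = 1$ yields $1_G \in V(G)$ by Bezout.

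For the second stage, observe that an elementary group $E = C \times P$ is a direct product of an abelian group and a $p$-group, hence nilpotent and in particular supersolvable. A classical result (essentially Taketa's theorem) shows every irreducible character of a supersolvable group is monomial; the proof proceeds by induction on $|E|$ using a non-trivial normal abelian subgroup of $E$ together with Clifford theory. Combining this with transitivity of induction allows one to rewrite each $\mbox{Ind}_E^G \phi$ appearing in the first stage as a $\mathbb{Z}$-linear combination of $\mbox{Ind}_{H_i}^G \psi_i$ with each $\psi_i$ one-dimensional, which completes the proof.
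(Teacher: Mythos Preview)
Your proposal is correct and follows the classical Brauer--Tate route via elementary subgroups. The paper, however, does not supply its own proof of this statement: it simply writes ``See \cite{Serre}, pp.~78.'' The approach in Serre's book is precisely the one you outline (the ideal $V(G)$, the $p$-local integrality argument to get $1_G \in V(G)$, and monomiality of elementary groups), so your proposal and the cited reference agree in substance.

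It is worth noting that the paper \emph{does} give a self-contained argument for a weaker statement a few lines later (the ``Weak Brauer's induction theorem'' with rational coefficients $m_i \in \mathbb{Q}$), using a short linear-independence trick over cyclic subgroups. That argument is elegant but cannot yield integer coefficients, which is why the full theorem is deferred to Serre. Your plan, by contrast, targets the integral version directly, and the hard work---as you correctly flag---sits in the Bezout/integrality step showing $1_G \in V(G)$.
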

\begin{proof} See \cite{Serre}, pp. 78. 
\end{proof}
Now let us state Clifford's theorem.
\begin{theorem}[Clifford] Let $N$ be a normal subgroup of $G$. If $\chi$ is a complex irreducible character of $G$, and $\mu$ is an irreducible character of $N$ with $(\chi|_{N},\mu)\neq 0$, then
\[\chi|_{N}=e\Big(\sum_{i=1}^{t} \mu^{g_i}\Big),\]
where $e|[G:N]$ and $\mu^g$ is defined by, $\mu^{g}(n)=\mu(g^{-1}ng)$ for all $g\in G,n\in N$.
\end{theorem}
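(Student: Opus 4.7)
The plan is to unpack the character-theoretic statement into a statement about the underlying representation $\rho\colon G\to \mathrm{GL}(V)$ affording $\chi$, and exploit normality of $N$ to make $G$ permute the isotypic components of $\rho|_N$. The key observation is that if $\nu$ is an irreducible character of $N$ and $V_\nu\subset V$ is the $\nu$-isotypic component of $V|_N$, then for every $g\in G$ one has $\rho(g)\cdot V_\nu = V_{\nu^g}$. This is just the computation $\rho(n)\rho(g)w = \rho(g)\rho(g^{-1}ng)w$: translating $V_\nu$ by $g$ conjugates the $N$-action by $g$, which on characters is precisely $\nu\mapsto \nu^g$.

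First I would fix irreducible characters $\nu_1,\ldots,\nu_r$ of $N$ giving the distinct isotypic pieces of $V|_N$, and use the previous paragraph to conclude that $G$ acts by permutations on $\{V_{\nu_1},\ldots, V_{\nu_r}\}$. Next, since $V$ is irreducible as a $G$-module, the sum of the translates $\rho(g)V_\mu$ is $G$-stable and nonzero, hence equal to $V$; this forces the permutation action to be transitive on the nonzero pieces, so the $\nu_i$ are exactly the $G$-conjugates $\mu^{g_1},\ldots,\mu^{g_t}$ of $\mu$, where $g_1,\ldots,g_t$ run through coset representatives of the inertia group $T=\{g\in G:\mu^g=\mu\}$. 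All of the isotypic pieces $V_{\mu^{g_i}}$ are $G$-translates of $V_\mu$, hence have the same dimension, so there is a common multiplicity $e$ with
\[
\chi|_N \;=\; e\sum_{i=1}^{t}\mu^{g_i},
\]
which is exactly the displayed formula.

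The delicate point, and the main obstacle, is to verify the divisibility $e\mid [G:N]$. Here I would observe that $T$ stabilizes $V_\mu$, so $V_\mu$ is a $T$-submodule, and by construction $V=\mathrm{Ind}_T^G V_\mu$, so the character $\theta$ of $V_\mu$ as a $T$-module is irreducible with $\mathrm{Ind}_T^G\theta=\chi$ and $\theta|_N = e\mu$. In particular $\theta(1)=e\mu(1)$, so it suffices to prove $\theta(1)/\mu(1)\mid [T:N]$. The standard route, which I would follow, is to extend the representation $\mu$ to a projective representation of $T$ using Schur's lemma (any lift of $t\in T$ to an $N$-intertwiner of the $\mu$-representation is unique up to scalar); dividing the $\theta$-representation by this projective extension produces a genuine projective representation of $T/N$ of dimension $e$, whose dimension divides $[T:N]$ by the standard dimension bound for irreducible projective representations of a finite group. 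Since $[T:N]\mid [G:N]$, this yields $e\mid [G:N]$ and completes the proof.
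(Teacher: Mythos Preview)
The paper does not supply a proof of Clifford's theorem; it is stated in the preliminaries section as a standard background result, without argument or reference. So there is no proof in the paper to compare against.

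Your argument is the standard textbook proof and is correct. The decomposition $\chi|_N = e\sum_{i=1}^t \mu^{g_i}$ follows exactly as you describe: normality of $N$ makes $G$ permute the $N$-isotypic components of $V$ via $\rho(g)V_\nu = V_{\nu^g}$, irreducibility of $V$ forces this permutation action to be transitive, and the isotypic pieces, being $G$-translates of one another, all have the same dimension, yielding the common multiplicity $e$. The identification $V\cong \mathrm{Ind}_T^G V_\mu$ and the irreducibility of $\theta$ (forced by the irreducibility of $\chi=\mathrm{Ind}_T^G\theta$) are also correct.

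The divisibility $e\mid [G:N]$ is, as you rightly flag, the delicate point, and your projective-representation sketch is the standard route (cf.\ Isaacs, \emph{Character Theory of Finite Groups}, Chapter~11). One caveat: the assertion that the degree of an irreducible projective representation of a finite group $H$ divides $|H|$ is itself not immediate. The usual justification is to linearize on a finite central extension $\widetilde{H}\twoheadrightarrow H$ with central kernel $Z$, and then invoke the theorem that the degree of an ordinary irreducible representation divides the index of any abelian normal subgroup, giving $e\mid [\widetilde{H}:Z]=|H|=[T:N]$. With that step made explicit, your proof is complete.
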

As a consequence we deduce the following result, which will be required for our purpose. 
\begin{theorem} \label{Cli}Let $N$ be a normal subgroup of $G$ with $[G : N] = p$, a prime. Then for any $\chi \in \emph{Irr}(G)$, either $\chi|_{N}$ is irreducible or $\chi|_{N}=\sum_{i=1}^{p} \phi_i$, where $\phi_i$'s are distinct, irreducible and $\emph{Ind}_{N}^{G} \phi_i=\chi$.
\end{theorem}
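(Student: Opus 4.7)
The plan is to apply Clifford's theorem (stated just above) directly, and then exploit the primality of $[G:N]=p$ together with Frobenius reciprocity to control the four possible cases.

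Fix any irreducible constituent $\mu$ of $\chi|_N$. Clifford's theorem gives
\[
\chi|_N \;=\; e\sum_{i=1}^{t} \mu^{g_i},
\]
with $e \mid [G:N] = p$, so $e \in \{1,p\}$. The number $t$ is the size of the $G$-orbit of $\mu$ in $\mathrm{Irr}(N)$, and since $N$ acts trivially by conjugation on $\mathrm{Irr}(N)$, this orbit is permuted by $G/N$, a group of order $p$. Hence $t \mid p$, so $t \in \{1,p\}$. So there are four \emph{a priori} possibilities for the pair $(e,t)$.

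Next I would rule out the two cases with $e=p$ by a dimension count. If $e=p$ then $\chi(1)=p\,t\,\mu(1)$, and by Frobenius reciprocity (Theorem \ref{FR}) combined with Clifford,
\[
(\mathrm{Ind}_N^G \mu,\chi) \;=\; (\mu,\chi|_N) \;=\; e \;=\; p.
\]
Hence $\mathrm{Ind}_N^G \mu$ contains $\chi$ with multiplicity at least $p$, so
$\deg(\mathrm{Ind}_N^G \mu) \geq p\,\chi(1) = p^2 t\mu(1)$. But $\deg(\mathrm{Ind}_N^G \mu) = p\mu(1)$, which forces $p^2 t \le p$, impossible for the prime $p$. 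So $e=1$.

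It remains to analyze the two cases $e=1$, $t\in\{1,p\}$. If $t=1$, then $\chi|_N=\mu$ is itself irreducible, which is the first alternative. If $t=p$, then $\chi|_N=\sum_{i=1}^{p}\mu^{g_i}$ with distinct irreducible conjugates $\phi_i := \mu^{g_i}$. For each $i$, Frobenius reciprocity gives $(\mathrm{Ind}_N^G\phi_i,\chi)=(\phi_i,\chi|_N)=1$, and $\deg(\mathrm{Ind}_N^G\phi_i)=p\,\phi_i(1)=p\,\mu(1)=\chi(1)$; hence $\mathrm{Ind}_N^G\phi_i=\chi$, giving the second alternative. The only real subtlety is the $e=p$ elimination above — everything else is bookkeeping once one has Clifford's theorem in hand.
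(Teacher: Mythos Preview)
Your proof is correct. The paper does not actually give a proof of this theorem: it simply states it as a consequence of Clifford's theorem and moves on, so your derivation via Clifford plus Frobenius reciprocity is precisely the argument the paper is implicitly invoking.
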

\begin{theorem}[Blichfeldt] \label{Bli} If $G$ admits a non-central abelian normal subgroup, then all faithful irreducible characters of $G$ are imprimitive, in other words induced from a proper subgroup. 
\end{theorem}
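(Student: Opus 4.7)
The plan is to combine Clifford's theorem (Theorem 2.0.11 above) with the faithfulness hypothesis to rule out the case where $\chi|_A$ is isotypic, and then read off the imprimitivity from Clifford induction. Let $A$ be the non-central abelian normal subgroup of $G$, and let $\chi$ be a faithful irreducible character of $G$ afforded by a representation $\rho : G \to \mathrm{GL}(V)$.

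First I would apply Clifford's theorem to the normal subgroup $A \trianglelefteq G$. Since $A$ is abelian, every irreducible character of $A$ is one-dimensional, so Clifford yields
\[
\chi|_{A} \;=\; e\bigl(\mu_{1} + \mu_{2} + \cdots + \mu_{t}\bigr),
\]
where $\mu_{1}, \ldots, \mu_{t}$ are the distinct $G$-conjugates of some linear character $\mu$ of $A$, and $e$ is a positive integer.

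Next I would split into two cases according to $t$. If $t = 1$, then $\chi|_{A} = e\mu$ and $\rho(a) = \mu(a)\, \mathrm{Id}_{V}$ for every $a \in A$; that is, $\rho(A)$ sits inside the scalar matrices of $\mathrm{GL}(V)$. Scalars commute with everything in $\mathrm{GL}(V)$, so $\rho(A) \subset Z(\rho(G))$, and since $\chi$ is faithful $\rho$ is injective, forcing $A \subset Z(G)$. This contradicts the hypothesis that $A$ is non-central, so $t \geq 2$.

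In the remaining case $t \geq 2$, the inertia subgroup $I_{G}(\mu) = \{g \in G : \mu^{g} = \mu\}$ has index $t$ in $G$, hence is a proper subgroup. The standard Clifford correspondence (which is the constructive form of Theorem 2.0.11; the key point is the bijection between irreducibles of $G$ lying above $\mu$ and irreducibles of $I_{G}(\mu)$ lying above $\mu$) then produces an irreducible character $\psi$ of $I_{G}(\mu)$ with $\chi = \mathrm{Ind}_{I_{G}(\mu)}^{G} \psi$, exhibiting $\chi$ as imprimitive. The main obstacle is not the Clifford bookkeeping but cleanly invoking the full Clifford correspondence to get the induction from $I_G(\mu)$, rather than only the weaker restriction statement; once that is in hand the faithfulness dichotomy makes the argument essentially automatic.
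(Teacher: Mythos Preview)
Your argument is correct and is the standard Clifford-theoretic proof of Blichfeldt's theorem. The paper itself does not supply a proof but simply cites Curtis--Reiner, Corollary 50.7; the argument there is essentially the one you have written, so there is nothing to compare. The one point worth flagging is that the paper's stated version of Clifford's theorem only records the restriction decomposition $\chi|_{A}=e\sum \mu^{g_i}$, whereas your $t\geq 2$ case requires the stronger Clifford correspondence $\chi=\mbox{Ind}_{I_G(\mu)}^{G}\psi$; you already note this, and it is a standard companion statement (e.g.\ Isaacs, Theorem 6.11), so the dependence is harmless.
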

\begin{proof} See \cite{Bli}, Corollary 50.7
\end{proof}
\begin{theorem}[Ito] \label{Ito} Suppose that $G$ is solvable, and has a faithful irreducible character of degree less than $q-1$. Then $G$ admits an abelian normal Sylow $q-$subgroup
\end{theorem}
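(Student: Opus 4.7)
The plan is to argue by induction on $|G|$; we may assume $q \mid |G|$, since otherwise the Sylow $q$-subgroup of $G$ is trivial and the conclusion is immediate. Let $\chi$ be the given faithful irreducible character of degree $n < q - 1$. The two central tools will be Blichfeldt's theorem (which forces faithful irreducible characters to be imprimitive whenever $G$ has a non-central abelian normal subgroup) and the structural fact that in a solvable group the centralizer $C_G(A)$ of a maximal abelian normal subgroup $A$ equals $A$ itself.

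First I would dispose of the case in which $\chi$ is \emph{primitive}, i.e., not induced from any proper subgroup. The contrapositive of Blichfeldt's theorem then says every abelian normal subgroup of $G$ is contained in $Z(G)$. Choosing $A$ to be a maximal abelian normal subgroup, solvability gives $C_G(A) = A$, and $A \leq Z(G)$ forces $C_G(A) = G$; hence $G = A$ is abelian. A faithful irreducible character of an abelian group has degree one and embeds $G$ as a finite subgroup of $\mathbb{C}^\times$, so $G$ is cyclic, and every Sylow subgroup of a cyclic group is abelian and normal.

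In the \emph{imprimitive} case, I would write $\chi = \mathrm{Ind}_H^G \psi$ with $\psi$ an irreducible character of a proper subgroup $H < G$. The degree identity $[G{:}H]\,\psi(1) = \chi(1) < q - 1$ delivers two useful consequences: $[G{:}H] < q$, so that $q \nmid [G{:}H]$ and a Sylow $q$-subgroup of $H$ is already Sylow in $G$; and $\psi(1) \leq \chi(1) < q - 1$, so the inductive hypothesis is applicable to $\psi$ on the strictly smaller group $H$ (or on its faithful quotient $H / \ker \psi$ if $\psi$ is not already faithful). This produces an abelian normal Sylow $q$-subgroup inside $H$.

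The main obstacle — in my view the technical heart of the argument — is promoting this Sylow structure from $H$ up to $G$: one must verify both that the Sylow $q$-subgroup $Q$ remains abelian when viewed inside $G$, and, more subtly, that it is normal in $G$ rather than merely in $H$. For the normality step I would study the conjugation action of $G$ on the set of Sylow $q$-subgroups of the normal closure $\langle Q \rangle^G$, and apply Clifford's theorem to $\chi \big|_{\langle Q \rangle^G}$: a non-trivial $G$-orbit of Sylow $q$-subgroups would have size a non-trivial power of $q$, hence at least $q$, which is incompatible with the orbit sizes of irreducible constituents of $\chi$ being bounded by $\chi(1) < q - 1$. This is precisely where the strict inequality $\chi(1) < q - 1$ (rather than the weaker $\leq q - 1$) enters in an essential way, and is what makes the hypothesis of Ito's theorem sharp.
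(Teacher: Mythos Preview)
The paper does not prove Ito's theorem; it simply cites Feit, \emph{Characters of Finite Groups}, p.~128. So there is no in-paper argument to compare against, and I comment on your proposal directly.

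Your primitive case is correct. The imprimitive case has two genuine gaps. First, applying the inductive hypothesis to $H/\ker\psi$ only tells you about the Sylow $q$-subgroup of that quotient; since faithfulness of $\chi=\mathrm{Ind}_H^G\psi$ only forces the $G$-core of $\ker\psi$ to be trivial, nothing prevents $q\mid|\ker\psi|$, and you have not shown that the Sylow $q$-subgroup of $H$ (equivalently of $G$) is abelian. (Abelianness does in fact follow directly, without induction: since $\chi(1)<q$, every irreducible constituent of $\chi|_Q$ for a Sylow $q$-subgroup $Q$ has $q$-power degree less than $q$, hence is linear, and faithfulness then forces $Q$ to be abelian.) Second, and more seriously, your normality argument is incorrect as stated: the $G$-orbit of a Sylow $q$-subgroup has size $[G:N_G(Q)]$, which is \emph{coprime} to $q$ (indeed $\equiv 1\pmod q$ by Sylow), not ``a non-trivial power of $q$.'' The bridge you propose between this orbit and the Clifford orbit of constituents of $\chi|_{\langle Q\rangle^G}$ is neither explained nor valid in the form you suggest. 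The strict bound $\chi(1)<q-1$ does enter crucially in the normality step of the standard proof, but through a more delicate mechanism than the one you sketch; as written, this is where your argument breaks down.
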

\begin{proof} See \cite{Feit}, pp. 128
\end{proof}
\begin{theorem}[Huppert] \label{Hup} Assume that $G$ is solvable, and that $G$ admits a normal subgroup $N$ such that $G/N$ is supersolvable. If all Sylow subgroups of $N$ are abelian, then $G$ is an $M$-group.
\end{theorem}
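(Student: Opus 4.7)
The plan is to prove by induction on $|G|$ that every $\chi \in \mbox{Irr}(G)$ is monomial. First I would reduce to the case where $\chi$ is faithful: replacing $G$ by $G/\ker\chi$, the image $\bar N$ of $N$ is still a normal subgroup with abelian Sylow subgroups (being homomorphic images of abelian groups), and $(G/\ker\chi)/\bar N$ is a quotient of the supersolvable $G/N$, hence supersolvable. So the hypotheses pass to $G/\ker\chi$, and if $\ker\chi \neq \{e_G\}$ the induction hypothesis produces a monomial presentation of the corresponding faithful character on $G/\ker\chi$, which then inflates to a monomial presentation of $\chi$ on $G$.

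If $N = \{e_G\}$ then $G = G/N$ is supersolvable, and the classical theorem that supersolvable groups are $M$-groups finishes the base case; this can itself be proved by induction using the existence of a normal subgroup of prime index in any supersolvable group. Hence assume $N \neq \{e_G\}$ and pick a minimal normal subgroup $A$ of $G$ contained in $N$. Because $G$ is solvable, $A$ is an elementary abelian $p$-group, and in particular abelian. If $A \not\subseteq Z(G)$, then Blichfeldt's theorem (Theorem \ref{Bli}) applied to the faithful irreducible $\chi$ yields $\chi = \mbox{Ind}_H^G \psi$ for some proper subgroup $H < G$ and some irreducible $\psi \in \mbox{Irr}(H)$. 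The subgroup $H$ satisfies the hypotheses of the theorem, taking $H \cap N$ for the normal subgroup: its Sylow subgroups are contained in the abelian Sylow subgroups of $N$, and $H/(H \cap N) \cong HN/N$ embeds into the supersolvable $G/N$, hence is supersolvable. By the induction hypothesis $H$ is an $M$-group, so $\psi$ is induced from a linear character, and by transitivity of induction so is $\chi$.

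The main obstacle is the remaining case, in which every minimal normal subgroup of $G$ lying in $N$ is contained in $Z(G)$. Here one exploits the supersolvable structure of $G/N$: choose a subgroup $M$ normal in $G$ with $N \subsetneq M$ and $M/N$ cyclic of prime order, coming from the supersolvable series of $G/N$. A Clifford-type analysis of $\chi|_M$ via Theorem \ref{Cli} either produces a direct induced decomposition of $\chi$ or forces the existence of an abelian normal subgroup of $G$ lying in $M$ but not in $Z(G)$, after which Blichfeldt applies as in the previous case. Controlling how the supersolvable quotient $G/N$ acts on the abelian Sylow subgroups of $N$, and combining this with Ito's theorem (Theorem \ref{Ito}) to rule out pathological small-degree faithful characters, is the heart of Huppert's original argument and is where the bulk of the technical work resides.
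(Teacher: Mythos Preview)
The paper does not prove this theorem; it simply refers the reader to Isaacs, \emph{Character Theory of Finite Groups}, Theorem 6.23. So there is nothing in the paper to compare against, and your proposal is an attempt to supply what the paper omits.

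Your reductions are sound. Passing to $G/\ker\chi$ does preserve the hypotheses, and when $A \not\subseteq Z(G)$ your use of Blichfeldt together with the observation that the pair $(H,\,H\cap N)$ inherits all three hypotheses (solvable, supersolvable quotient, abelian Sylow subgroups) is exactly right.

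The gap is your final case. Your appeal to Theorem~\ref{Cli} is misplaced: that result requires the normal subgroup to have prime index in $G$, whereas your $M$ satisfies $[M:N]=p$, which is the wrong index. And Ito's theorem (Theorem~\ref{Ito}) plays no role in Huppert's argument; invoking it here is a red herring.

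The actual mechanism is this. Once $\chi$ is faithful and primitive, every abelian normal subgroup of $G$ is central. One first proves that then $N \subseteq Z(G)$: choose $A$ maximal among abelian normal subgroups of $G$ contained in $N$; if $A \subsetneq N$, take a chief factor $B/A$ of $G$ inside $N/A$, an elementary abelian $p$-group. Then $B$ is nilpotent of class at most $2$, so its Sylow $p$-subgroup $P$ is characteristic in $B$, hence normal in $G$; but $P$ sits inside a Sylow $p$-subgroup of $N$, which is abelian by hypothesis, so $P$ is an abelian normal subgroup of $G$, hence central, hence $P \subseteq A$ by maximality, forcing $B = PA = A$, a contradiction. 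Now $G/Z(G)$, being a quotient of the supersolvable $G/N$, is supersolvable; if $G \neq Z(G)$, pick $M \unlhd G$ with $Z(G) \subsetneq M$ and $M/Z(G)$ cyclic of prime order. Then $M/Z(M)$ is cyclic, so $M$ is abelian, hence $M \subseteq Z(G)$ --- a contradiction. Therefore $G$ is abelian and $\chi$ is linear, completing the induction. This is where the abelian-Sylow hypothesis on $N$ is genuinely used, and it is the step your sketch does not capture.
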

\begin{proof} See \cite{Hup}, Theorem 6.23
\end{proof}
Now we will introduce Artin L-functions and discuss some of their crucial properties. The Artin L-functions are the natural analogue of Dedekind zeta functions. One of the main purposes of Artin L-function is to understand more about Frobenius element. The Frobenius element is significant because it describes how prime ideals split when lifted above. Let $K/F$ be a finite Galois extension of algebraic number fields with Galois group $G$, and $V$ be a finite dimensional vector space over $\mathbb{C}$. Suppose we have a representation
$$\rho : G \to GL(V ),$$ 
where $GL(V )$ denotes the group of invertible linear transformations of $V$ into itself. Let $O_K$ and $O_F$ be the corresponding ring of integer of $K$ and $F$ respectively. Let $\mathfrak{p}$ be a prime ideal of $F$ and $\mathfrak{P}$ be a prime ideal of $K$ lying over $\mathfrak{p}$.  
\begin{definition} The decomposition group $D_{\mathfrak{P}}$, of $\mathfrak{P}$ is defined to be 
\[ D_{\mathfrak{P}}=\{\sigma \in G \mid \sigma(\mathfrak{P})=\mathfrak{P}\}.\] 
\end{definition} 
\noindent It can be showed that  $\mathfrak{f}_K=O_K/\mathfrak{P}$ is a cyclic Galois extension of $\mathfrak{f}_F=O_F/\mathfrak{p}$. Degree of this extension (say $f$) is called the residue degree of $\mathfrak{P}$ over $\mathfrak{p}$. One can show (see \cite{NK}) there exist a surjection from $D_{\mathfrak{P}}$ to $\mbox{Gal}(\mathfrak{f}_K/\mathfrak{f}_F)$ with kernel $I_{\mathfrak{P}}$, and call it inertia group of $\mathfrak{P}$. In other words,
\[I_{\mathfrak{P}}=\{\sigma \in G_{\mathfrak{P}} \mid \sigma(x)=x \mod \mathfrak{P}, \forall x \in O_K\}.\]
\noindent One can show  $\mathfrak{p}O_K=\Big(\prod_{i=1}^{r}\mathfrak{P_i}\Big)^{e}$, with $\mathfrak{P_1}=\mathfrak{P}$. Number $e$ is the cardinality of the inertia group $I_{\mathfrak{P}}$ at $\mathfrak{P}$. One can show all of $\mathfrak{P_i}$'s have same residue degree $f$ with $[K:F]=efr$. The number $e$ is called ramification index of $\mathfrak{p}$. Prime ideal $\mathfrak{p}$ is called unramified in $K$ if $e=1$, and otherwise it is called ramified. So if $\mathfrak{p}$ is unramified, 
\[D_{\mathfrak{P}} \simeq  \mbox{Gal}\Big(\mathfrak{f}_K/\mathfrak{f}_F\Big).\]
\noindent Let $\tau_{\mathfrak{P}}$ be a generator of the cyclic group $\mbox{Gal}\Big(\mathfrak{f}_K/\mathfrak{f}_F\Big)$. Then the Frobenius element $\sigma_{\mathfrak{P}}$ is defined to be the element of $D_{\mathfrak{P}}$ that gets mapped to $\tau_{\mathfrak{P}}$ under this isomorphism. Define the Frobenius substitution
attached to $\mathfrak{p}$ to be the conjugacy class $\sigma_{\mathfrak{p}} = \{\sigma_{\mathfrak{P}} \mid \mathfrak{P} \hspace{0.1cm} \text{lying over}  \hspace{0.1cm} \mathfrak{p}\}$. We define the unramified part to be 
\[L_{unr}(s, \rho, K/F)=\prod_{\mathfrak{p}}\det\Big(1-\rho(\sigma_{\mathfrak{p}})N(\mathfrak{p})^{-s}\Big)^{-1},\]
\noindent where the product runs over all unramified primes $\mathfrak{p}$ of $O_F$. Observe that this definition is well defined. Because  we replace the term $\sigma_{\mathfrak{p}}$ by any $\sigma_{\mathfrak{q}}$ for any $\mathfrak{q}$ dividing $\mathfrak{p}$, then, $\mathfrak{q}$ would produce a conjugate element $\sigma_{\mathfrak{q}}$ to $\sigma_{\mathfrak{p}}$. Fortunately elementary determinant property says, $\det(X) = \det(YXY^{-1})$. This was Artin's original definition avoiding the local factors at ramified primes. Later he defined local factors at ramified factors as follows,\\
\newline 
\noindent Let $\mathfrak{p}$ be a ramified prime and $\mathfrak{P}$ be a prime lying over it. Define $\sigma_{\mathfrak{P}}$ to be the element of the quotient $D_{\mathfrak{P}}/I_\mathfrak{P}$ that maps to the
generator $\tau_{\mathfrak{P}}$ of $\mbox{Gal}\Big(\mathfrak{f}_K/\mathfrak{f}_F\Big)$. Define
a new vector space
\[V^{I_{\mathfrak{P}}} = \{x \in V 
\mid \rho(\sigma)x=x, \forall \sigma \in I_{\mathfrak{P}}\}.\]
\noindent This is the subspace of $I_{\mathfrak{P}}$ invariant. We define the ramified part as
\[L_{ram}(s, \rho, K/F)=\prod_{\mathfrak{p}}\det\Big(1-\rho(\sigma_{\mathfrak{P}})N(\mathfrak{p})^{-s}|_{V^{I_{\mathfrak{P}}}}\Big)^{-1},\]
\noindent where the product runs over all ramified primes $\mathfrak{p}$ of $O_F$. In the above equation, $\rho(\sigma_{\mathfrak{P}})N(\mathfrak{p})^{-s}|_{V^{I_{\mathfrak{P}}}}$ does not depend
on the element in the coset of $\sigma_{\mathfrak{P}}$. This is because we are now working in the vector space $V^{I_{\mathfrak{P}}}$ which consists of elements fixed by $I_{\mathfrak{P}}$. Consequently, nothing is  changed if we shift by an element of $I_{\mathfrak{P}}$.
\begin{definition}[Artin L-function] Define Artin L-function $L(s, \rho, K/F)$ as
$$L(s, \rho, K/F)=L_{unr}(s, \rho, K/F)L_{ram}(s, \rho, K/F).$$ 
\end{definition} 
\noindent In this note we will work with $V=\mathbb{C}^n$. Note that there is a more explicit description of $L(s, \rho, K/F)$ using the logarithm. For an unramified prime $\mathfrak{p}$, let $\lambda_i(\mathfrak{p}),\cdots, \lambda_n(\mathfrak{p})$ be the eigenvalues of $\rho(\sigma_{\mathfrak{p}})$ and notice that
$$\det\big(I_n-\rho(\sigma_{\mathfrak{p}})N(\mathfrak{p})^{-s}\big)^{-1}=\prod_{i=1}^{n}\big(1-\lambda_i(\mathfrak{p})N(\mathfrak{p})^{-s}\big)^{-1},$$
from which convergence of the unramified factor in the the region $\mbox{Re}(s)>1$ is clear because, all of those  $\lambda_i(\mathfrak{p})$'s have modulus one, being root of unities (as $G=\mbox{Gal}(K/F)$ is finite). Moreover, the ramified part $L_{ram}(s,\rho,K/F)$ is a finite product of functions converging in $\mbox{Re}(s)>1$. And so the Artin L-function $L(s,\rho,K/F)$ converges in $\mbox{Re}(s)>1$. This observation led Artin to conjecture the following.\\
\begin{conjecture}[Artin, \cite{BAM}] \label{ACc} Every Artin L-function associated to an irreducible non trivial representation $\rho$ of $\emph{Gal}(K/F)$ admits an analytic continuation to the whole of the complex plane. 
\end{conjecture}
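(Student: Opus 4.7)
The plan is to begin with Brauer's Induction Theorem, applied to the (irreducible, nontrivial) character $\chi$ of $\rho$: write
\[
\chi = \sum_i n_i \,\text{Ind}_{H_i}^{G} \psi_i,
\]
with $n_i \in \mathbb{Z}$ and each $\psi_i$ a one-dimensional character of a subgroup $H_i \le G$. Combining this with the two structural properties of Artin L-functions, namely additivity $L(s,\chi_1+\chi_2,K/F)=L(s,\chi_1,K/F)L(s,\chi_2,K/F)$ and inductive invariance $L(s,\text{Ind}_{H}^{G}\psi,K/F)=L(s,\psi,K/F^{H})$ (where $F^H$ is the fixed field of $H$), one obtains the product decomposition
\[
L(s,\chi,K/F) \;=\; \prod_i L\bigl(s,\psi_i,K/F^{H_i}\bigr)^{n_i}.
\]
By Artin reciprocity, each factor $L(s,\psi_i,K/F^{H_i})$ is a Hecke L-function on the abelian extension $K/F^{H_i}$, hence admits analytic continuation to all of $\mathbb{C}$ and is entire whenever the associated idele class character is nontrivial. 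This immediately yields the meromorphic continuation of $L(s,\chi,K/F)$ to $\mathbb{C}$ (Brauer's theorem).

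The essential obstacle is that the integers $n_i$ need not be nonnegative, so the above expression presents $L(s,\chi,K/F)$ only as a \emph{quotient} of entire functions, not a product of them. To upgrade meromorphy to holomorphy one must show that every pole contributed by a factor with $n_i<0$ is cancelled by a zero of equal order in the numerator factors. The cleanest situation in which this can be bypassed is when $G$ is an $M$-group: then $\chi$ itself is monomial, so it is induced from a single one-dimensional character, and no negative exponents appear at all. This argument would dispose of Artin's conjecture for $M$-groups (and, more generally via Theorem 2.0.12, for groups $G$ solvable with a normal subgroup whose quotient is supersolvable and whose Sylow subgroups are abelian). For general solvable $G$, the strategy would be to combine this with Clifford-type reductions (Theorem 2.0.10) and the Heilbronn-character tracking device previewed in the introduction, to propagate holomorphy down the derived series.

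For a completely general finite group $G$, however, no purely character-theoretic route is known to force the requisite cancellation. The plan would then have to pass through the Langlands program: one would attempt to match $\rho$ with a cuspidal automorphic representation $\pi$ of $\text{GL}_{\dim\rho}(\mathbb{A}_F)$ so that $L(s,\rho,K/F)=L(s,\pi)$, the right-hand side being entire by Godement-Jacquet. The hard part, and the genuine obstruction, is exactly this strong form of Artin's conjecture, currently known only in fragmentary cases (odd two-dimensional representations over $\mathbb{Q}$ via Khare-Wintenberger and Langlands-Tunnell, certain solvable image situations via cyclic base change, etc.). Because this obstacle is not resolvable by the finite-group and character-theoretic machinery developed in this note, the remainder of the paper instead establishes Artin's conjecture in restricted settings (Uchida-van der Waall for solvable $K/F$, Aramata-Brauer for quotients $\zeta_K/\zeta_F$, and local holomorphy statements at individual points $s_0$ via Heilbronn characters), which together constitute the best currently known evidence toward Conjecture \ref{ACc}.
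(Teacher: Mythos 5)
The statement you were asked about is Artin's holomorphy conjecture itself, which the paper states as an open conjecture and never proves; the most it establishes is meromorphic continuation (Theorem \ref{mero}) via (weak) Brauer induction together with Theorem \ref{ODT}, plus holomorphy in special cases (monomial characters, hence $M$-groups, and the various local and quotient results later in the note). Your proposal correctly recognizes exactly this: your Brauer-induction decomposition, the reduction of one-dimensional factors to Hecke L-functions by reciprocity, the observation that negative exponents $n_i$ block holomorphy, and the $M$-group case all match the paper's treatment, and you rightly stop short of claiming a proof, identifying the unresolved cancellation of poles (equivalently, the Langlands-type input) as the genuine obstruction. The only nitpick is notational: the fixed field of $H$ should be written $K^{H}$ (so the factor is $L(s,\psi_i,K/K^{H_i})$), as in the paper, rather than $F^{H_i}$.
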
 
We will talk about this in the later section. But first we shall note down some key properties of Artin L-functions, which will be used throughout the whole note. From next time onward we will denote $L(s,\rho,K/F)$ as $L(s,\chi,K/F)$ where $\chi(g)=\mbox{Tr}(\rho(g))$.
\begin{lemma} \label{first} Let $H$ be a subgroup of $G$, $\chi_1, \chi_2$ be characters of $G$, $\phi$ be a characters of $H$. Let $M$ be a normal extension over $F$ contained in $K$, and $\chi$ be a character of $\mbox{Gal}(M/F)$. Then we have the following
\begin{itemize}
\item $L(s, \chi_1 + \chi_2,K/F) = L(s, \chi_1, K/F)L(s,\chi_2, K/F)$
\item $L(s, \emph{Ind}^G_H \phi, K/F) = L(s, \phi, K/K^H)$ 
\item $L(s,\widetilde{\chi},K/F) = L(s,\chi,M/F)$
\end{itemize}
\noindent where $K^H $is a subfield of $K$ fixed by $H$.
\end{lemma}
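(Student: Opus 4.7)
The three identities are the standard functorial properties of Artin L-functions, and in each case the strategy is the same: fix a prime $\mathfrak{p}$ of $F$, compare the Euler factors of both sides at $\mathfrak{p}$, and verify that they agree. Since an Artin L-function is defined as an infinite product of local factors that converges absolutely for $\mathrm{Re}(s)>1$, the identities then hold in $\mathrm{Re}(s)>1$, and they hold as identities of L-functions once they are established factor by factor.

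For the first identity, I would observe that the characters $\chi_1,\chi_2$ arise from representations $\rho_1,\rho_2$ on spaces $V_1,V_2$, and $\chi_1+\chi_2$ is the character of $\rho_1\oplus\rho_2$ on $V_1\oplus V_2$. For any prime $\mathfrak{P}$ of $K$ lying above $\mathfrak{p}$, the inertia-invariants split as $(V_1\oplus V_2)^{I_{\mathfrak{P}}}=V_1^{I_{\mathfrak{P}}}\oplus V_2^{I_{\mathfrak{P}}}$, and the Frobenius acts block-diagonally. The equality of local factors then reduces to the elementary identity $\det(1-(A\oplus B)t)=\det(1-At)\det(1-Bt)$ applied in each block. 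This part is routine.

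For the second identity (inductivity), which is the main obstacle, I would fix a prime $\mathfrak{p}$ of $F$ and a prime $\mathfrak{P}$ of $K$ above it, and compare Euler factors. On the right-hand side the local factor is a product over primes $\mathfrak{q}$ of $E:=K^H$ lying above $\mathfrak{p}$, so I need a canonical bijection between such $\mathfrak{q}$ and the double cosets $D_{\mathfrak{P}}\backslash G / H$, which is standard: each $\mathfrak{q}$ corresponds to an $H$-orbit of primes of $K$ above $\mathfrak{q}$, and these orbits sit inside the single $G$-orbit of $\mathfrak{P}$. For each such double coset I would apply Mackey's theorem (Theorem~\ref{Mac}) to decompose $\mathrm{Ind}_H^G(\phi)|_{D_{\mathfrak{P}}}$ as a sum of representations induced from $D_{\mathfrak{P}}\cap xHx^{-1}$. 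The hardest bookkeeping step is then to identify each summand with the local data of $\phi$ at the corresponding prime $\mathfrak{q}$ of $E$: one must check that the inertia and decomposition groups, the residue degrees, and the Frobenius elements pass correctly under conjugation, and one must verify that taking $I_{\mathfrak{P}}$-invariants commutes with the Mackey decomposition. Assembling these pieces, the local Euler factor of $L(s,\mathrm{Ind}_H^G\phi,K/F)$ at $\mathfrak{p}$ becomes exactly the product of the local Euler factors of $L(s,\phi,K/E)$ over the primes $\mathfrak{q}$ of $E$ above $\mathfrak{p}$.

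For the third identity (inflation invariance), the key is that for the quotient map $\pi:\mathrm{Gal}(K/F)\twoheadrightarrow\mathrm{Gal}(M/F)$, the decomposition and inertia groups at a prime of $K$ surject onto those at the prime of $M$ below it, and Frobenius elements are carried to Frobenius elements. Since $\widetilde{\chi}=\chi\circ\pi$, the representation $\widetilde{\rho}$ factors through $\pi$, so the inertia subgroup of $M/F$ already acts trivially on its whole representation space. The $I_{\mathfrak{P}}$-invariants for $\widetilde{\rho}$ therefore coincide with the full representation space used to define the local factor in $M/F$, and Frobenius in $K/F$ acts via its image in $\mathrm{Gal}(M/F)$, giving the same characteristic polynomial. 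Matching local factors prime by prime yields the identity, completing the lemma.
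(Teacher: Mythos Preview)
Your proposal is correct and in fact considerably more detailed than the paper's own treatment, which for the second and third parts simply cites external references (Heilbronn's chapter in Cassels--Fr\"ohlich and the Park City volume, respectively) rather than giving an argument. For the first part the paper takes the equivalent but slightly different tack of passing to $\log L(s,\chi,K/F)$ via its Dirichlet series expansion, where additivity in $\chi$ is immediate; your block-diagonal determinant argument $\det(1-(A\oplus B)t)=\det(1-At)\det(1-Bt)$ is the same observation phrased at the level of local factors rather than their logarithms. Your sketches for inductivity (via Mackey and the double-coset parametrization of primes of $K^H$ above $\mathfrak{p}$) and for inflation invariance (via surjectivity of decomposition and inertia under the quotient map) are exactly the standard arguments one finds in the cited sources, so there is nothing to correct.
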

\begin{proof} By the Dirichlet series expansion of $\log L(s, \chi,K/F)$, it is clear that 
\[\log L(s, \chi_1 + \chi_2,K/F) = \log L(s, \chi_1,K/F) + \log L(s, \chi_2,K/F).\] 
and first part follows. For second part, See \cite{thedude} pp. 222. and for third part, see \cite{32} pp. 404. 
\end{proof} 
We will now describe one dimensional Artin L-functions. We need some help of Class field theory here. Abelian reciprocity law allows us to interpret one dimensional Artin L-functions as a Hecke L-functions attached to some ray class group. Let us briefly describe.

A modulus for $F$ is a function
$$m:\{\mbox{primes of F}\} \to \mathbb{Z}$$
such that,\\  
\newline 
$(a)$ $m(\mathfrak{p})\geq 0$ for all primes $\mathfrak{p}$, and $m(\mathfrak{p})=0$ for all but finitely many $\mathfrak{p}$.\\
$(b)$ If $\mathfrak{p}$ is real, then $m(\mathfrak{p})=0$ or $1$.\\
$(c)$ if $\mathfrak{p}$ is complex, then $m(\mathfrak{p})=0$.\\
For definition of real and complex primes, see \cite{NK} pp. 183. Traditionally one writes a modulus as
\[\mathfrak{m}=\prod_{\mathfrak{p} \subseteq O_F} p^{m(\mathfrak{p})}.\]
A modulus $\mathfrak{m}$ can also be written as
$$\mathfrak{m}=\mathfrak{m}_{\infty}\mathfrak{m}_0,$$ 
where $\mathfrak{m}_{\infty}$ is a product of real primes and $\mathfrak{m}_0$ is product of positive powers of prime ideals, and hence can be identified with an ideal of $F$. For a modulus $\mathfrak{m}$, define $I^{\mathfrak{m}}_F$ to be set of fractional ideals of $K$ having no prime factor from $\mathfrak{m}$. A ray class group modulo $\mathfrak{m}$ is a quotient of $I^{\mathfrak{m}}_{F}$, for a more detailed definition, see \cite{NK} pp. 363. We denote this as $C^{\mathfrak{m}}_F$.
\begin{theorem}[Abelian Reciprocity Law] Let $K/F$ be a finite abelian extension of number fields and let $m$ be a modulus for $F$ divisible by all ramified primes. Then the Artin map (given by Artin symbol) $\phi^{\mathfrak{m}}_{K/F}:I^{\mathfrak{m}}_F \to \emph{Gal}(K/F)$ is surjective. Moreover, $\mbox{Gal}(K/F)$ is isomorphic to a quotient of $C^{\mathfrak{m}}_F$. 
\end{theorem}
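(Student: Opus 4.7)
The plan is to split the statement into two independent parts: first establish surjectivity of $\phi^{\mathfrak{m}}_{K/F}$, and then show that the map factors through the ray class group $C^{\mathfrak{m}}_F$.

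For surjectivity I would appeal to the Chebotarev density theorem. Since $K/F$ is abelian, every conjugacy class in $G=\mbox{Gal}(K/F)$ is a singleton, so Chebotarev guarantees that each $\sigma\in G$ arises as $\sigma_{\mathfrak{p}}$ for infinitely many unramified primes $\mathfrak{p}$ of $F$. In particular one can choose such a $\mathfrak{p}$ disjoint from the support of $\mathfrak{m}$, so $\mathfrak{p}\in I^{\mathfrak{m}}_F$ and $\phi^{\mathfrak{m}}_{K/F}(\mathfrak{p})=\sigma$. This gives surjectivity at once; the analytic input underlying Chebotarev, namely the nonvanishing of Hecke $L$-functions on $\mbox{Re}(s)=1$, is what makes the argument work.

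For the factorization through $C^{\mathfrak{m}}_F$, one must verify that $\phi^{\mathfrak{m}}_{K/F}$ vanishes on the subgroup $P^{\mathfrak{m}}_F$ of principal ideals $(\alpha)$ with $\alpha\equiv 1\pmod{\mathfrak{m}_0}$ and $\alpha$ totally positive at the real places dividing $\mathfrak{m}_\infty$. The standard route is the two-step strategy of class field theory: first reduce to cyclic extensions of prime degree using the functoriality of the Artin symbol under intermediate subfields together with the structure theorem for finite abelian groups; then for cyclic $K/F$ of prime degree compare index calculations on both sides. Concretely, one proves the \emph{first inequality} $[I^{\mathfrak{m}}_F : P^{\mathfrak{m}}_F\cdot N_{K/F}(I^{\mathfrak{m}}_K)]\geq [K:F]$ by analytic-density methods (nonvanishing of the relevant $L$-functions at $s=1$), and the \emph{second inequality} $\leq [K:F]$ by cohomological or Herbrand-quotient computations with units; put together, this pins the kernel down exactly and gives the claimed quotient description.

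The hard part is the kernel computation: surjectivity is a formal consequence of abelianness once one grants Chebotarev, but the containment $P^{\mathfrak{m}}_F\subseteq \ker(\phi^{\mathfrak{m}}_{K/F})$ is the genuine substance of Artin reciprocity and is not at all formal. Already for the prototypical cyclotomic case $\mathbb{Q}(\zeta_n)/\mathbb{Q}$ one has to identify the Frobenius at a prime $p\nmid n$ with the automorphism $\zeta_n\mapsto \zeta_n^p$, and globalizing this to an arbitrary abelian $K/F$ requires either id\`elic--cohomological machinery or Artin's original reduction of the general case to the cyclotomic one via auxiliary fields. Since a self-contained proof would essentially recapitulate a chapter of class field theory, I would in this expository note present the surjectivity argument in full and cite a standard reference (for instance Neukirch's \emph{Algebraic Number Theory}, Chapter VI) for the kernel statement.
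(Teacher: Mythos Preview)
Your proposal is correct, but the surjectivity argument differs from the paper's and carries a circularity risk in this paper's logical order. The paper argues contrapositively via the fixed field: if $H=\operatorname{im}\phi^{\mathfrak{m}}_{K/F}$ and $L=K^H$, then every $\mathfrak{p}\in I^{\mathfrak{m}}_F$ has trivial Frobenius in $\operatorname{Gal}(L/F)$ and hence splits completely in $L$; since the set of completely split primes has density $1/[L:F]$ while $I^{\mathfrak{m}}_F$ has density $1$, one concludes $L=F$. This needs only the density of split primes, which follows from the simple pole of $\zeta_L$ at $s=1$ and does not require reciprocity. Your direct appeal to Chebotarev is cleaner, but in this note Chebotarev is proved \emph{later} (Section~4) using the nonvanishing at $s=1$ of Artin $L$-functions, and that nonvanishing for one-dimensional $\chi$ is obtained by identifying $L(s,\chi,K/F)$ with a Hecke $L$-function via exactly the reciprocity law you are trying to prove. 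So within the paper's internal logic your argument would be circular; the fixed-field route avoids this.

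For the kernel statement your treatment is actually more substantive than the paper's: the paper simply asserts that ``under this surjective map given by Frobenius elements, one can get the desired isomorphism as well,'' with no further argument, whereas you correctly flag $P^{\mathfrak{m}}_F\subseteq\ker\phi^{\mathfrak{m}}_{K/F}$ as the genuine content and outline the first/second inequality reduction before deferring to Neukirch. That is an improvement in exposition, not a gap.
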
 
\begin{proof} First let us briefly describe what is Artin map. Clearly $I^{\mathfrak{m}}_F$ is generated by all but finitely many unramified prime ideals. For each such $\mathfrak{p}$, send it to $\sigma_{\mathfrak{p}}$ and generate to whole $I^{\mathfrak{m}}_{F}$ multiplicatively. Let $H \subset \mbox{Gal}(K/F)$ be the image of $\phi^{\mathfrak{m}}_{K/F}$ and let $L=K^H$ be its fixed field. For each prime $\mathfrak{p}\in I^{\mathfrak{m}}_F$ the automorphism $\phi^{\mathfrak{m}}_{K/F}(\mathfrak{p})$ acts trivially on $L$, which implies that $\sigma_{\mathfrak{p}}$ acts trivially on $L$. So from definition it follows that $\mathfrak{p}$ splits completely in $L$ (because, residue degree would be one). The group $I^{\mathfrak{m}}_F$ contains all but finitely many primes $\mathfrak{p}$ of $F$, so the Dirichlet density (will be defined in next section) of the set of primes of $F$ that splits completely in $L$ is $1$. It is a consequence of class field theory that, this density is in general $\frac{1}{[L:F]}$. And so we have $[L:F]=1$ in this case. Which implies $H=\mbox{Gal}(K/F)$, as desired. Under this surjective map given by Frobenius elements, one can get the desired isomorphism as well. \end{proof}
The result above is a consequence of Class field theory, due to Artin. But this theorem allows us to define a character $\psi$ of the class group by $\psi(\mathfrak{p}) =\chi(\sigma_{\mathfrak{p}})$ for $\mathfrak{p}$ relatively prime to $\mathfrak{m}$. We set, 
\[L(s, \chi,K/F) = L(s, \psi),\] 
where the latter function is the Hecke L-function \label{Hecke} attached to the ray class group modulo $\mathfrak{m}$. When $\chi$ is non trivial, it is known that the the Hecke L-function associated to $\chi$, have analytic continuation to whole plane. See \cite{lonewolf} pp. 16 for a discussion on this. By Abelian reciprocity law one can see, $\chi$ is trivial if and only if the corresponding Hecke character is trivial. So we have the following result,
\begin{theorem} \label{ODT}  For a one dimensional character $\chi,L(s,\chi,K/F)$ is entire for $\chi \neq 1_G$. And when $\chi=1_G, L(s,\chi,K/F)$ is holomorphic except at $s = 1$, where it has a pole. Moreover, $L(s,\chi,K/F)$ has no zero at $s=1$.
\end{theorem}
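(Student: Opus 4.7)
The plan is to reduce every assertion to a known statement about Hecke $L$-functions via the Abelian Reciprocity Law stated just above.

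First, since $\chi$ is one-dimensional it factors through the abelianisation $G/[G,G]$. Set $L=K^{[G,G]}$, so $\mbox{Gal}(L/F)=G/[G,G]$ is abelian, and let $\chi'$ denote the character of $\mbox{Gal}(L/F)$ that inflates to $\chi$. The third bullet of Lemma \ref{first} gives
$L(s,\chi,K/F) = L(s,\chi',L/F)$, so without loss of generality we may assume the extension is abelian and replace $(K,\chi)$ by $(L,\chi')$.

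Next I would pick a modulus $\mathfrak{m}$ for $F$ divisible by every prime ramified in $L/F$, and apply the Abelian Reciprocity Law. This yields a surjection $\phi_{L/F}^{\mathfrak{m}} : I_F^{\mathfrak{m}} \to \mbox{Gal}(L/F)$ which factors through $C_F^{\mathfrak{m}}$. Composing $\chi'$ with $\phi_{L/F}^{\mathfrak{m}}$ defines a character $\psi$ of the ray class group $C_F^{\mathfrak{m}}$ with $\psi(\mathfrak{p})=\chi'(\sigma_{\mathfrak{p}})$ for unramified $\mathfrak{p}$. Matching Euler products then gives $L(s,\chi',L/F) = L(s,\psi)$ up to finitely many Euler factors at primes in $\mathfrak{m}$ (where the $I_{\mathfrak{P}}$-invariants are either all of $V$ or zero because $\dim V=1$); since these finitely many factors are nonzero holomorphic functions on $\mbox{Re}(s)>0$, they affect neither holomorphy nor the order or the non-vanishing at $s=1$.

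Now I would invoke the standard analytic properties of Hecke $L$-functions. By Abelian Reciprocity, $\psi$ is trivial if and only if $\chi = 1_G$. When $\psi\neq 1$, Hecke's theorem (quoted from \cite{lonewolf}, pp.\ 16 in the excerpt) asserts that $L(s,\psi)$ extends to an entire function on $\mathbb{C}$. When $\psi=1$, $L(s,\psi)$ coincides with $\zeta_F(s)$ up to finitely many Euler factors, hence is holomorphic on $\mathbb{C}\setminus\{1\}$ with a simple pole at $s=1$, giving the pole assertion directly (and trivially the non-vanishing at $s=1$).

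The only remaining, and genuinely substantial, input is the non-vanishing $L(1,\psi)\neq 0$ for non-trivial $\psi$; this is the classical Hecke non-vanishing theorem. A clean way to extract it is via the factorisation $\zeta_L(s) = \prod_{\psi} L(s,\psi)$ over all characters of the relevant ray class group: the left-hand side has a simple pole at $s=1$ produced entirely by the $\psi=1$ factor, so no other (entire) factor $L(s,\psi)$ can vanish at $s=1$. The main obstacle is thus not any additional genuine analysis but packaging this classical non-vanishing correctly; once it is in hand, the three assertions of the theorem follow immediately.
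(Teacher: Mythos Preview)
Your proposal is correct and follows essentially the same route as the paper: the paragraph preceding the theorem already identifies $L(s,\chi,K/F)$ with a Hecke $L$-function $L(s,\psi)$ via the Abelian Reciprocity Law and then cites \cite{lonewolf} for both the analytic continuation and the non-vanishing at $s=1$, whereas you supply the extra details (the explicit reduction to an abelian extension via Lemma~\ref{first}, and the factorisation argument for $L(1,\psi)\neq 0$). One small tightening: with the modulus taken to be the conductor of $\chi$, Artin's definition of the ramified local factors makes $L(s,\chi,K/F)=L(s,\psi)$ hold \emph{exactly}, so your ``up to finitely many Euler factors'' caveat (which you only control on $\mbox{Re}(s)>0$) is unnecessary and would otherwise leave the entirety claim on $\mbox{Re}(s)\le 0$ unaddressed.
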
 
Second part of theorem follows from \cite{lonewolf} pp. 27. We will use this result extensively throughout the whole note. 
\begin{rem} In general a Hecke-character, $\psi$ (associated to a modulus $\mathfrak{m}$) is defined to be a group homomorphism from the ray modulo $\mathfrak{m}$ to $\mathbb{C}$.\end{rem}
In analytic number theory, it is convenient to complete the L-function under consideration. This means we include gamma factors and a conductor term of the form $Q^s$ for some positive real number $Q$. In our case, the completed Artin L-function satisfy a nice functional equation relating the Artin L-function's value at the points s with the contragradient Artin L-function at the point $1-s$. We define the completed function $\Lambda(s,\chi,K/F)$ associated to the Artin L-function $L(s,\chi, K/F)$ as
$$\Lambda(s,\chi,K/F)=A(\chi)^{\frac{s}{2}}\gamma_{\chi}(s)L(s, \chi, K/F),$$ 
\noindent where $A(\chi)$ is some real number and $\gamma_{\chi}(s)$ is the product of certain gamma factors. (See \cite{NvL} pp. 28.)
\begin{theorem}[Functional Equation] $\Lambda(s, \chi,K/F)$ satisfies the functional equation 
\[\Lambda(1-s, \chi,K/F) = W(\chi)\Lambda(s, \overline{\chi},K/F),\]
for all $s \in \mathbb{C}$, where $W(\chi)$ is a number of absolute value one, and $\overline{\chi}$ is complex conjugate character of $\chi$. 
\end{theorem}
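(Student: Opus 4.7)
The plan is to reduce the functional equation for $\Lambda(s,\chi,K/F)$ to the already-known functional equation for one-dimensional Hecke L-functions by Brauer induction. The key is that each ingredient in the completed L-function ($A(\chi)$, $\gamma_\chi(s)$, and $L(s,\chi,K/F)$ itself) is additive in $\chi$ and invariant under induction, so writing $\chi$ as an integer combination of monomial characters lets us transport the functional equation from the abelian case.

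First, I would handle the abelian (one-dimensional) case. By Theorem \ref{Hecke}, for a one-dimensional $\chi$ the Artin L-function equals a Hecke L-function $L(s,\psi)$ attached to a ray class group modulo a modulus $\mathfrak{m}$ determined by the ramified primes. For such L-functions Hecke proved the functional equation via theta series and Poisson summation: the completed Hecke L-function $\Lambda(s,\psi)=A(\psi)^{s/2}\gamma_\psi(s)L(s,\psi)$ satisfies $\Lambda(1-s,\psi)=W(\psi)\Lambda(s,\overline\psi)$ with $|W(\psi)|=1$. This provides the base case.

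Second, for a general character, I invoke Brauer's induction theorem to write
\[
\chi=\sum_i n_i\,\mathrm{Ind}_{H_i}^G\psi_i,\qquad n_i\in\mathbb{Z},
\]
with each $\psi_i$ a one-dimensional character of a subgroup $H_i\le G$. By Lemma \ref{first} we have
\[
L(s,\chi,K/F)=\prod_i L\bigl(s,\psi_i,K/K^{H_i}\bigr)^{n_i},
\]
and the analogous multiplicative factorizations should hold for $A(\chi)$ and $\gamma_\chi(s)$. These two identities are precisely the \emph{conductor–discriminant formula} (Artin's inductivity of conductors, so that $A(\mathrm{Ind}_H^G\psi)=N_{K^H/F}(\mathfrak{f}(\psi))\cdot|d_{K^H/F}|^{\psi(1)}$) and the inductivity of the Archimedean $\Gamma$-factors (which follows from the explicit description of $\gamma_\chi(s)$ in terms of the signature data of the fixed field, see \cite{NvL}). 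Granting these, $\Lambda(s,\chi,K/F)=\prod_i\Lambda(s,\psi_i,K/K^{H_i})^{n_i}$.

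Third, I apply the one-dimensional case to each $\psi_i$: $\Lambda(1-s,\psi_i,K/K^{H_i})=W(\psi_i)\Lambda(s,\overline{\psi_i},K/K^{H_i})$. Taking the product with exponents $n_i$ and using $\overline{\mathrm{Ind}_{H_i}^G\psi_i}=\mathrm{Ind}_{H_i}^G\overline{\psi_i}$, we get
\[
\Lambda(1-s,\chi,K/F)=\Bigl(\prod_i W(\psi_i)^{n_i}\Bigr)\,\Lambda(s,\overline{\chi},K/F),
\]
so the root number is $W(\chi):=\prod_i W(\psi_i)^{n_i}$, which has absolute value $1$ since each $|W(\psi_i)|=1$.

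The main obstacle is not the abelian functional equation itself (classical) nor the Brauer decomposition (a purely character-theoretic identity), but verifying that the right-hand pieces $A(\chi)$ and $\gamma_\chi(s)$ defined intrinsically really are inductive, so the identity $\Lambda(s,\chi,K/F)=\prod_i\Lambda(s,\psi_i,K/K^{H_i})^{n_i}$ holds on the nose and not merely up to an elementary factor. One also must check that $W(\chi)$ is well defined, i.e.\ independent of the particular Brauer decomposition chosen; this is automatic because the left-hand side $\Lambda(1-s,\chi,K/F)/\Lambda(s,\overline\chi,K/F)$ is intrinsic. These inductivity statements are standard (proved by local computations prime by prime), and once in hand the functional equation follows immediately.
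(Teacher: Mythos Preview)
The paper does not actually prove this theorem: its entire ``proof'' is the sentence ``See \cite{NvL} pp.~28 for more details.'' Your outline is the standard argument one finds in such references---reduce to the abelian case via Brauer induction, invoke Hecke's functional equation for the one-dimensional pieces, and assemble using the inductivity of the Artin conductor and of the archimedean $\Gamma$-factors---so there is nothing to compare beyond noting that you have supplied what the paper omits. Your identification of the main technical burden (verifying that $A(\chi)$ and $\gamma_\chi(s)$ are genuinely inductive, not merely that $L(s,\chi,K/F)$ is) is exactly right, and your remark that well-definedness of $W(\chi)$ is automatic from the intrinsic nature of the ratio $\Lambda(1-s,\chi)/\Lambda(s,\overline\chi)$ is the correct way to dispose of that concern.
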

\begin{proof} See \cite{NvL} pp. 28 for more details.
\end{proof} 
Of course we do not know whether Artin's conjecture is true, but we can indeed get their meromorphic continuation, from (weak) Brauer's induction theorem. Which we are going to prove now.  
\begin{theorem}[Weak Brauer's induction theorem]For any irreducible $\chi$ of $G$, there are subgroups $\{H_i\}$, one-dimensional characters $\psi_i$ of $H_i$ and rational numbers $m_i$ with
\[\chi=\sum_{i} m_i \emph{Ind}_{H_i}^{G} \psi_i.\]
\end{theorem}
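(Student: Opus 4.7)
The plan is to deduce this from Artin's induction theorem, namely the assertion that every virtual character of $G$ admits a $\mathbb{Q}$-linear expansion in characters induced from cyclic subgroups of $G$. Since every cyclic subgroup is abelian, all its irreducible characters are one-dimensional, so Artin's theorem instantly supplies the required expression with each $H_i$ cyclic and each $\psi_i$ linear.

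To prove Artin's theorem, I would first establish that the family
\[\mathcal{S} \,=\, \{\mbox{Ind}_C^G \psi : C \leq G \text{ cyclic}, \psi \in \mbox{Irr}(C)\}\]
spans the space $C(G)$ of class functions of $G$ over $\mathbb{C}$. By non-degeneracy of the Hermitian inner product, this reduces to showing that every $f \in C(G)$ orthogonal to all of $\mathcal{S}$ must vanish. Frobenius reciprocity (Theorem \ref{FR}) rewrites the orthogonality hypothesis as $(f|_C, \psi) = 0$ for each cyclic $C$ and each $\psi \in \mbox{Irr}(C)$. Since $C$ is abelian, its irreducible characters form an orthonormal basis of $C(C)$, so $f|_C \equiv 0$. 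As every $g \in G$ lies in the cyclic subgroup $\langle g \rangle$, we conclude $f \equiv 0$.

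Next I would upgrade the complex span to a rational one. Applying Frobenius reciprocity in the opposite direction yields
\[\mbox{Ind}_C^G \psi \,=\, \sum_{\chi \in \mbox{Irr}(G)} (\psi, \chi|_C)\,\chi,\]
with non-negative integer multiplicities, so every element of $\mathcal{S}$ lies in the $\mathbb{Q}$-subspace $V_\mathbb{Q} \subset C(G)$ generated by $\mbox{Irr}(G)$. Let $\phi : \mathbb{Q}^{\mathcal{S}} \to V_\mathbb{Q}$ be the $\mathbb{Q}$-linear map sending standard basis vectors to the corresponding elements of $\mathcal{S}$. The previous step shows that $\phi \otimes_\mathbb{Q} \mathbb{C}$ is surjective, and since the rank of a matrix with rational entries is computed by the same minors whether one works over $\mathbb{Q}$ or over $\mathbb{C}$, $\phi$ itself is surjective. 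This delivers the required rational expansion of any $\chi \in \mbox{Irr}(G)$.

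The main delicate point is the passage from $\mathbb{C}$-coefficients to $\mathbb{Q}$-coefficients: although the inducing characters $\psi$ are complex-valued, the integrality of Frobenius-reciprocity multiplicities keeps each $\mbox{Ind}_C^G \psi$ inside the rational span of $\mbox{Irr}(G)$, and this is precisely what allows us to descend from $\mathbb{C}$ to $\mathbb{Q}$ by the rank-of-a-rational-matrix argument. Everything else is a direct application of Frobenius reciprocity together with the observation that every group element generates a cyclic subgroup.
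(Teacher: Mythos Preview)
Your proof is correct and follows essentially the same route as the paper. Both arguments work with the integer matrix whose entries are the Frobenius-reciprocity multiplicities $(\psi,\chi|_C)$ for cyclic $C$, and both reduce the key step to the observation that a class function restricting trivially to every cyclic subgroup must vanish; the paper phrases this as linear independence of the columns $A_\chi$ over $\mathbb{Q}$ and then inverts a square submatrix, while you phrase it as surjectivity of the row map and invoke invariance of rank under field extension, but these are the same rank statement viewed from dual sides.
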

\begin{proof} For each subgroup $H$ of $G$ and each
irreducible character $\psi$ of $H$, define $a_H(\psi, \chi)$ by 
\[\mbox{Ind}^G_H \psi =\sum_{\chi \in \mbox{Irr}(G)}a_H(\psi, \chi)\chi.\]
For each $\chi$, let $A_{\chi}$ be the vector $\Big(a_H(\psi, \chi)\Big)$ as $H$ varies over all cyclic subgroups of $G$ and $\psi$ varies over all irreducible characters of $H$. Our claim is that $A_{\chi}$'s are linearly independent over $\mathbb{Q}$. Suppose not, so there exist $c_{\chi}$'s such that $\sum_{\chi \in \mbox{Irr}(G)} c_{\chi}A_{\chi}=0$. Consider $\phi=\sum c_{\chi}\chi$. Let $\psi$ be any irreducible character of $H$, then from Frobenius reciprocity it follows that $(\phi|_{H},\psi)=0$. And hence $\phi|_{H}=0$, for any cyclic subgroup $H$ of $G$. For any $g\in G$, taking $H=\langle g \rangle$ we see $\phi \equiv 0$. Since $A_{\chi}$'s are linearly independent over $\mathbb{Q}$, we can choose a set of cyclic subgroups $H_i$ and characters $\psi_i$ of $H_i$ the matrix 
\[\Big(a_{H_i} (\psi_i, \chi)\Big)\]  
is non-singular, if not then we would get a equation of form, $\sum_{\chi \in \mbox{Irr}(G)} c_{\chi}A_{\chi}=0$. Inverting the matrix above we get the desired result. 
\end{proof} 
\begin{theorem} \label{mero} Artin L-function $L(s, \chi,K/F)$ attached to an irreducible character $\chi$ admits a meromorphic continuation to whole $\mathbb{C}$.
\end{theorem}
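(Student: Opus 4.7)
The plan is to apply the Weak Brauer induction theorem just proved and combine it with the formal properties of Artin L-functions recorded in Lemma \ref{first} together with the one-dimensional case handled by Theorem \ref{ODT}.

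First I would invoke the Weak Brauer induction theorem to write
\[
\chi \;=\; \sum_i m_i \,\mathrm{Ind}_{H_i}^{G}\psi_i,
\]
where $H_i$ are subgroups of $G$, each $\psi_i$ is a one-dimensional character of $H_i$, and $m_i \in \mathbb{Q}$. Next, using the additivity $L(s,\chi_1+\chi_2,K/F)=L(s,\chi_1,K/F)L(s,\chi_2,K/F)$ from the first item of Lemma \ref{first}, together with the induction invariance $L(s,\mathrm{Ind}_{H_i}^{G}\psi_i,K/F)=L(s,\psi_i,K/K^{H_i})$ from the second item, I would obtain the identity
\[
L(s,\chi,K/F) \;=\; \prod_i L\!\bigl(s,\psi_i,K/K^{H_i}\bigr)^{m_i},
\]
initially as an equality of Dirichlet series in the half-plane $\mathrm{Re}(s)>1$ where every factor converges and $\log L$ is defined.

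Then I would apply Theorem \ref{ODT}: since each $\psi_i$ is one-dimensional, the factor $L(s,\psi_i,K/K^{H_i})$ is meromorphic on $\mathbb{C}$ (entire if $\psi_i$ is non-trivial, with only a simple pole at $s=1$ if $\psi_i=1_{H_i}$, via its identification with a Hecke L-function attached to a ray class group). To pass from rational exponents to a genuinely meromorphic function, I would clear denominators: choose a positive integer $N$ such that $N m_i \in \mathbb{Z}$ for every $i$; then
\[
L(s,\chi,K/F)^{N} \;=\; \prod_i L\!\bigl(s,\psi_i,K/K^{H_i}\bigr)^{N m_i}
\]
is a product of integer powers of meromorphic functions, hence meromorphic on $\mathbb{C}$.

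The main subtlety, and the only real obstacle, is passing from the meromorphy of $L(s,\chi,K/F)^{N}$ to the meromorphy of $L(s,\chi,K/F)$ itself, since $N$-th roots of meromorphic functions are not automatically meromorphic. I would resolve this by noting that $L(s,\chi,K/F)$ is given a priori by an absolutely convergent Euler product in $\mathrm{Re}(s)>1$, where it is holomorphic and non-vanishing; a branch of the $N$-th root is therefore well-defined there and coincides with $L(s,\chi,K/F)$. Since the right-hand side of the Brauer factorization provides a meromorphic extension of $\log L(s,\chi,K/F)$ along any path avoiding poles and zeros of the factors, the resulting analytic continuation of $L(s,\chi,K/F)$ is single-valued and meromorphic on all of $\mathbb{C}$, which is what we wanted.
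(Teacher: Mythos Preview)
Your approach is identical to the paper's: apply weak Brauer induction, use Lemma \ref{first} to factor $L(s,\chi,K/F)$ as a product of one-dimensional Artin L-functions with rational exponents, clear denominators so that $L^N$ is a product with integer exponents and hence meromorphic, and then conclude that $L$ itself is meromorphic. You are in fact more careful than the paper, which at this last step simply writes ``But then of course, $L(s,\chi,K/F)$ is meromorphic'' without further justification.

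That said, your final paragraph does not actually close the gap it identifies. The assertion that a function holomorphic on a half-plane whose $N$-th power extends meromorphically to $\mathbb{C}$ must itself extend meromorphically is false in general: take $L(s)=\sqrt{s}$ with the principal branch, which is holomorphic on $\mathrm{Re}(s)>1$ and has $L(s)^2=s$ entire, yet has no meromorphic extension across the branch point at $0$. Your monodromy sketch does not rule this out: continuing $\log L = \sum_i m_i \log L_i$ around a zero or pole of some $L_i$ shifts it by $2\pi i\sum_i m_i k_i$ with $k_i\in\mathbb{Z}$, and since the $m_i$ are only rational this need not lie in $2\pi i\mathbb{Z}$, so $\exp(\log L)$ need not return to itself. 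The clean fix is to invoke the full Brauer induction theorem with \emph{integer} coefficients (stated in the paper just above the weak version), so that no root-extraction is needed at all; the paper's proof shares exactly this lacuna.
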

\begin{proof} Using Lemma \ref{first} and (Weak) Brauer's induction theorem we can write \[L(s,\chi,K/F)=\prod_{i}L\big(s,\psi_i,K/K^{H^i}\big)^{m_i},\]
\noindent where $\psi_i$'s are one dimensional characters of $H_i$'s and $m_i$'s are rational numbers. We know from Theorem \ref{ODT} that all of $L\big(s,\psi_i,K/K^{H^i}\big)$'s are meromorphic. And so some power of $L(s,\chi,K/F)$ is meromorphic (in fact, Foote and K.Murty showed in \cite{FM} that the power is exactly $\chi(1)$). But then of course, $L(s,\chi,K/F)$ is meromorphic.
\end{proof}
From next section onward, we will discuss impact of character theory on quotient of L-functions.
\section{Holomorphy of quotients of certain Artin L-functions}
First let us define Dedekind zeta function associated to a number field $K$. It is defined to be, 
\[\zeta_{K}(s)=\sum\frac{1}{N(\mathfrak{a})^s}\]
in $\mbox{Re}(s)>1$, where $\mathfrak{a}$ runs over all non zero ideals in $O_K$. And $N(\mathfrak{a})$ is index of ideal $\mathfrak{a}$ in $O_K$. We consider the following conjecture made by Dedekind.
\begin{conjecture}[Dedekind] Let $K/F$ be a finite extension of number fields. Then $\zeta_K(s)/\zeta_F(s)$ is entire.
\end{conjecture}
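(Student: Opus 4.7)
The plan is to reduce the conjecture to the Galois situation by passing to the Galois closure and then try to express the quotient as a product, with non-negative exponents, of Artin $L$-functions attached to non-trivial one-dimensional characters. Concretely, let $L$ be the Galois closure of $K/F$, set $G = \mbox{Gal}(L/F)$, and let $H = \mbox{Gal}(L/K)$ so that $K = L^H$. By Lemma \ref{first} one has $\zeta_K(s) = L(s, \mbox{Ind}_H^G 1_H, L/F)$ and $\zeta_F(s) = L(s, 1_G, L/F)$, so
\[\frac{\zeta_K(s)}{\zeta_F(s)} = L(s, \theta, L/F) \quad \mbox{where} \quad \theta := \mbox{Ind}_H^G 1_H - 1_G.\]
Frobenius reciprocity (Theorem \ref{FR}) shows that $\theta = \sum_{\chi \neq 1_G}(1_H, \chi|_H)\,\chi$, a non-negative integer combination of non-trivial irreducibles of $G$, so it suffices to prove that $L(s,\theta,L/F)$ is entire.

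The next step is to seek a decomposition $\theta = \sum_i a_i \mbox{Ind}_{H_i}^{G}\psi_i$ with $a_i \in \mathbb{Z}_{\geq 0}$ and each $\psi_i$ a non-trivial one-dimensional character of some subgroup $H_i \le G$. If such a decomposition exists, then Lemma \ref{first} yields
\[\frac{\zeta_K(s)}{\zeta_F(s)} = \prod_i L(s, \psi_i, L/L^{H_i})^{a_i},\]
and each factor on the right is entire by Theorem \ref{ODT} (since $\psi_i \neq 1$, there is no pole at $s=1$), so the product is entire. When $K/F$ is itself Galois (so $H \triangleleft G$), the Aramata-Brauer argument produces exactly such a non-negative integer decomposition by exploiting normality; when $K$ lies in a solvable Galois extension of $F$, the Uchida-van der Waall argument supplies one by combining Mackey's formula (Theorem \ref{Mac}), Clifford's theorem (Theorem \ref{Cli}), and induction on $|G|$.

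The main obstacle, and the reason the conjecture remains open, is precisely that for an arbitrary subgroup $H$ of an arbitrary finite group $G$ no such non-negative monomial decomposition of $\theta$ is known. The Weak Brauer induction theorem gives only a signed rational combination, hence only meromorphy (Theorem \ref{mero}), not holomorphy. The tempting workaround
\[\frac{\zeta_K(s)}{\zeta_F(s)} = \bigl(\zeta_L(s)/\zeta_F(s)\bigr)\Big/\bigl(\zeta_L(s)/\zeta_K(s)\bigr)\]
also fails: both numerator and denominator are entire by Aramata-Brauer applied to the Galois extensions $L/F$ and $L/K$, but the zeros of the denominator are not known to be absorbed by zeros of the numerator, so the potential poles of the quotient cannot be ruled out. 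A genuine attack on the general conjecture therefore seems to require a new analytic ingredient, and it is exactly this gap that motivates the Heilbronn-character machinery developed in the later sections of this note.
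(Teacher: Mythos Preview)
Your proposal is not a proof, and you correctly recognise this: Dedekind's conjecture is stated in the paper as an open conjecture, and the paper offers no proof of the general statement either. What you have written is an accurate summary of the reduction to the Galois closure, the identification $\zeta_K(s)/\zeta_F(s) = L(s,\theta,L/F)$ with $\theta = \mbox{Ind}_H^G 1_H - 1_G$, and the reason the problem is open --- namely that for a general pair $H \leq G$ no non-negative monomial decomposition of $\theta$ is known. This matches exactly the paper's own treatment: the paper proves the Galois case unconditionally via Aramata--Brauer (Theorem \ref{AB1}), the solvable-closure case via Uchida--van der Waall (Theorem \ref{UVdW}), and otherwise only derives the conjecture conditionally from Artin's conjecture (Theorems \ref{AC to DC} and the theorem following it).

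One small comment: your description of the Aramata--Brauer argument as producing a non-negative monomial decomposition of $\theta$ is slightly misleading relative to this paper. The proof given here (Theorem \ref{AB1}) does not construct such a decomposition; it instead uses the Heilbronn character and the Foote--Murty inequality (Theorem \ref{MRI}) to bound $|n(G,1_G)| \leq |\mbox{ord}_{s=s_0}\zeta_K(s)|$ directly. The classical Aramata--Brauer proof you allude to (via Artin's lemma on $\sum_{\chi}\chi(1)\chi$ over cyclic subgroups) does exist, but it is not the route taken in this note.
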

\noindent Artin's conjecture has many consequences, one of them is, it shows Dedekind's conjecture to be true for the case of Galois extension $K/F$. 
\begin{theorem}[Artin's conjecture implies Dedekind's conjecture for Galois case] \label{AC to DC}
If $K/F$ is a finite Galois extension of number fields, then $\frac{\zeta_K(s)}{\zeta_F(s)}$ is entire if Artin's conjecture is true for any non trivial irreducible representation of $\rho$ of $\emph{Gal}(K/F)$. 
\end{theorem}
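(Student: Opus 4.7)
The plan is to express both Dedekind zeta functions as Artin L-functions over the Galois extension $K/F$ and then invoke the decomposition of the regular character. Setting $G = \mbox{Gal}(K/F)$, I would first apply the second property of Lemma \ref{first} with $H = \{e\}$ and $\phi = 1_{\{e\}}$ to obtain
\[
L(s, \mbox{Ind}_{\{e\}}^G 1_{\{e\}}, K/F) \;=\; L(s, 1_{\{e\}}, K/K) \;=\; \zeta_K(s),
\]
where I use that $K^{\{e\}} = K$ and that the Artin L-function attached to the trivial representation of the trivial Galois group recovers the Dedekind zeta function. Since $\mbox{Ind}_{\{e\}}^G 1_{\{e\}} = \mbox{Reg}_G$, this identifies $\zeta_K(s)$ with the Artin L-function of the regular character. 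Similarly, a direct Euler-factor comparison gives $L(s, 1_G, K/F) = \zeta_F(s)$.

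Next, I would use the decomposition
\[
\mbox{Reg}_G \;=\; \sum_{\chi \in \mbox{Irr}(G)} \chi(1)\,\chi \;=\; 1_G \;+\; \sum_{\substack{\chi \in \mbox{Irr}(G) \\ \chi \neq 1_G}} \chi(1)\,\chi,
\]
combined with the multiplicativity of Artin L-functions under addition of characters (the first property of Lemma \ref{first}). This yields the Artin factorization
\[
\zeta_K(s) \;=\; \zeta_F(s) \prod_{\substack{\chi \in \mbox{Irr}(G) \\ \chi \neq 1_G}} L(s, \chi, K/F)^{\chi(1)},
\]
and hence
\[
\frac{\zeta_K(s)}{\zeta_F(s)} \;=\; \prod_{\substack{\chi \in \mbox{Irr}(G) \\ \chi \neq 1_G}} L(s, \chi, K/F)^{\chi(1)}.
\]

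To conclude, I would invoke Artin's conjecture (Conjecture \ref{ACc}): each factor $L(s,\chi,K/F)$ with $\chi$ a non-trivial irreducible character of $G$ is entire, so the displayed finite product is entire, and therefore so is the quotient $\zeta_K(s)/\zeta_F(s)$. There is no substantive obstacle in this argument, since Artin's conjecture has been assumed outright; the theorem is essentially a repackaging of the regular-representation decomposition of $\zeta_K(s)$ together with the conjecture. The only bookkeeping point worth flagging is the correct isolation of the trivial-character factor $L(s, 1_G, K/F) = \zeta_F(s)$ so that it cancels against the denominator, but this is immediate from the identification above.
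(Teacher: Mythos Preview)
Your proposal is correct and follows essentially the same route as the paper: identify $\zeta_K(s)$ with the Artin $L$-function of $\mbox{Reg}_G$ via induction from the trivial subgroup, split off the trivial-character factor $L(s,1_G,K/F)=\zeta_F(s)$, and apply Artin's conjecture to the remaining factors. The paper calls this factorization the Artin--Takagi decomposition, but the argument is otherwise identical to yours.
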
 
\begin{proof} Note that 
\begin{align}
\prod_{\chi \hspace{0.2cm}\mbox{irreducible}} L(s,\chi,K/F)^{\chi(1)} &= L(s,\sum_{\chi \in \mbox{Irr}(G)} \chi(1) \chi, K/F)\\
&=L(s,\mbox{Reg}_G,K/F)\\
& = L(s,\mbox{Ind}_{\langle e_G \rangle}^{G}1_{G},K/F),
\end{align} 
\noindent and from Lemma \ref{first} it follows that, later one is same as $L(s,1_G,K)=\zeta_K(s)$. Above decomposition is known as Artin-Takagi decomposition. When $\chi$ is trivial character, $L(s,\chi,K/F)=\zeta_F(s)$. So we can write 
\[\zeta_F(s) \prod_{\chi \neq 1} L(s,\chi,K/F)^{\chi(1)}=\zeta_K(s),\]
and so clearly in case of Galois extension, Artin's conjecture implies $\zeta_{K}(s)/\zeta_F(s)$ is entire, as $\chi(1)>0$ for any $\chi \in \mbox{Irr}(G)$.
\end{proof}
\noindent In the next section, we will establish the result above unconditionally, i.e. without assuming Artin's conjecture. But utilizing the proof of Theorem \ref{AC to DC}, we can prove $\frac{\zeta_K(s)}{\zeta_F(s)}$ to be entire for any finite extension $K/F$ of number fields.
\begin{theorem}[Artin's conjecture implies Dedekind's conjecture] If $K/F$ is a finite extension of number fields, then $\frac{\zeta_K(s)}{\zeta_F(s)}$ is entire if Artin's conjecture is true for any non trivial irreducible representation of $\rho$ of $\emph{Gal}(K/F)$. 
\end{theorem}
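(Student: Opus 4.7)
The plan is to reduce the non-Galois case to the Galois case handled in Theorem \ref{AC to DC} by passing to the Galois closure. Let $\widetilde{K}$ be the Galois closure of $K$ over $F$, set $G=\mathrm{Gal}(\widetilde{K}/F)$, and let $H=\mathrm{Gal}(\widetilde{K}/K)$, so that $K=\widetilde{K}^H$. The strategy is to express both $\zeta_K(s)$ and $\zeta_F(s)$ as Artin $L$-functions over the Galois extension $\widetilde{K}/F$, and then read off the quotient as a product of non-trivial Artin $L$-functions with non-negative integer exponents.

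First I would invoke the induction property from Lemma \ref{first}, which gives
\[
\zeta_K(s)=L(s,1_H,\widetilde{K}/K)=L\bigl(s,\mathrm{Ind}_H^G 1_H,\widetilde{K}/F\bigr),
\]
together with $\zeta_F(s)=L(s,1_G,\widetilde{K}/F)$. Decomposing the induced character into irreducibles,
\[
\mathrm{Ind}_H^G 1_H=\sum_{\chi\in\mathrm{Irr}(G)} m_\chi\,\chi,\qquad m_\chi=(\mathrm{Ind}_H^G 1_H,\chi)=(1_H,\chi|_H),
\]
where the second equality is Frobenius reciprocity (Theorem \ref{FR}). Each $m_\chi$ is a non-negative integer, and the multiplicity of the trivial character is $m_{1_G}=(1_H,1_H)=1$. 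Using additivity of $L$-functions under sums of characters (again Lemma \ref{first}), this yields
\[
\zeta_K(s)=\zeta_F(s)\prod_{\chi\neq 1_G} L(s,\chi,\widetilde{K}/F)^{m_\chi},
\]
so that
\[
\frac{\zeta_K(s)}{\zeta_F(s)}=\prod_{\chi\neq 1_G} L(s,\chi,\widetilde{K}/F)^{m_\chi}.
\]

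Finally, assuming Artin's conjecture for every non-trivial irreducible character of $G=\mathrm{Gal}(\widetilde{K}/F)$, each factor $L(s,\chi,\widetilde{K}/F)$ on the right is entire; since the exponents $m_\chi$ are non-negative integers, the product is entire, which gives the desired conclusion.

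There is really no hard step here: the entire argument is a bookkeeping exercise combining Frobenius reciprocity, the induction/additivity properties of Artin $L$-functions, and Artin's conjecture. The one point to be careful about is that the hypothesis of Artin's conjecture must be applied to the Galois extension $\widetilde{K}/F$ rather than to the (possibly) non-Galois extension $K/F$; but this is automatic once the induced-character formula replaces $\zeta_K(s)$ by an Artin $L$-function over $\widetilde{K}/F$. Note also that this argument simultaneously recovers Theorem \ref{AC to DC} when $K/F$ is already Galois (in which case $\widetilde{K}=K$, $H=\{e\}$, and $\mathrm{Ind}_H^G 1_H=\mathrm{Reg}_G$, recovering the Artin--Takagi decomposition).
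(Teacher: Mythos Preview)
Your proof is correct and follows essentially the same route as the paper: pass to the Galois closure, write $\zeta_K(s)=L(s,\mathrm{Ind}_H^G 1_H,\widetilde{K}/F)$, use Frobenius reciprocity to see that $1_G$ occurs with multiplicity one and all other multiplicities are non-negative, and conclude via Artin's conjecture. Your remark that the hypothesis must be read as Artin's conjecture for $\mathrm{Gal}(\widetilde{K}/F)$ (since $K/F$ need not be Galois) is a welcome clarification of the statement.
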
 
\begin{proof} 
Let $\overline{K}$ be normal closure of $K$ over $F$. Say $G=\mbox{Gal}(\overline{K} /F)$ and $H=\mbox{Ga}l(\overline{K} /K)$. Clearly both of $G$ and $H$ are finite Galois extensions. Yes, we will take advantage of this. First of all note that,
$$L(s, \mbox{Ind}_H^G(1_H),\overline{K}/K)=L(s, 1_{H}, \overline{K}/K)=\zeta_K(s).$$
\noindent We can write 
\[\mbox{Ind}_H^G(1_H)=1_G+\sum_{\chi \in \mbox{Irr}(G),\chi \neq 1} a_{\chi}\chi.\]
\noindent From Frobenius reciprocity (Theorem \ref{FR}) it follows, $1_G$ is coming with multiplicity one. And also $a_{\chi}$'s are non-negative as they are multiplicity of $\chi$ in $\mbox{Ind}_H^G(1_H)$. So we get
\[L(s, \mbox{Ind}_H^G(1_H),\overline{K}/K)=\zeta_F(s)\prod_{\chi \neq 1}L(s,\chi,K)^{a_{\chi}} =\zeta_K(s),\]
where $a_{\chi}$'s are all non negative. Artin's conjecture does rest of the job.
\end{proof} 
\subsection{First unconditional approach}
\noindent Artin's conjecture is a very strong statement and it is still not known to be true for most of the cases when it is attached to more then 2-dimensional representation. So we would like to investigate quotient of zeta functions in a different way. The result below was proved independently by Uchida \cite{Uchida} and Van der Waall \cite{Waalll} in 1975. We explain the later approach below. 
\begin{theorem}[Uchida, Van der Waall] \label{UVdW} Let $K/F$ be an extension of number fields and $\overline{K}$ be normal closure of $K$ over $F$. Suppose that $\emph{Gal}(\overline{K}/F)$ is solvable. Then $\zeta_K(s)/\zeta_F(s)$ is entire.
\end{theorem}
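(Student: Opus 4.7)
Set $G = \mbox{Gal}(\overline{K}/F)$ and $H = \mbox{Gal}(\overline{K}/K)$, so that $G$ is solvable and $K = \overline{K}^H$. By Lemma \ref{first}, $\zeta_F(s) = L(s, 1_G, \overline{K}/F)$ and $\zeta_K(s) = L(s, \mbox{Ind}_H^G(1_H), \overline{K}/F)$. The plan is to exhibit an identity of virtual characters of $G$ of the form
\[\mbox{Ind}_H^G(1_H) - 1_G = \sum_{j} a_j\, \mbox{Ind}_{U_j}^G(\psi_j),\]
in which each $a_j$ is a non-negative integer, $U_j \leq G$, and $\psi_j$ is a non-trivial one-dimensional character of $U_j$. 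Granted this, Lemma \ref{first} rewrites $L(s, \mbox{Ind}_{U_j}^G(\psi_j), \overline{K}/F)$ as $L(s, \psi_j, \overline{K}/\overline{K}^{U_j})$, which is entire by Theorem \ref{ODT}; hence $\zeta_K(s)/\zeta_F(s)$ is a product of entire functions raised to non-negative integer powers, and is itself entire.

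The crux is the following group-theoretic statement, which uses solvability of $G$ in an essential way: there exists a subnormal chain
\[H = H_0 \triangleleft H_1 \triangleleft \cdots \triangleleft H_n = G\]
such that each quotient $H_i/H_{i-1}$ is cyclic of prime order. I would establish this by induction on $[G:H]$. If $H = G$ there is nothing to prove; otherwise, solvability of $G$ supplies a minimal normal subgroup $A \triangleleft G$, necessarily elementary abelian. If $A \subseteq H$, apply the induction hypothesis in the quotient $G/A$ to the subgroup $H/A$ and pull the resulting chain back. If $A \not\subseteq H$, then $H$ is properly contained in $HA$; since $A/(H\cap A)$ is abelian one can interpolate a sequence of prime-index normal steps between $H$ and $HA$, and then apply the induction hypothesis to $HA \leq G$ to extend the chain the rest of the way up.

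With the chain in hand, for each $1 \leq i \leq n$ the induced character $\mbox{Ind}_{H_{i-1}}^{H_i}(1_{H_{i-1}})$ is the inflation of the regular representation of the cyclic group $H_i/H_{i-1}$, so
\[\mbox{Ind}_{H_{i-1}}^{H_i}(1_{H_{i-1}}) = 1_{H_i} + \sum_{\chi \neq 1} \widetilde{\chi},\]
where the sum is over non-trivial one-dimensional characters $\chi$ of $H_i/H_{i-1}$ and $\widetilde{\chi}$ denotes inflation to $H_i$. Inducing up to $G$ via transitivity gives $\mbox{Ind}_{H_{i-1}}^G(1_{H_{i-1}}) - \mbox{Ind}_{H_i}^G(1_{H_i}) = \sum_{\chi \neq 1} \mbox{Ind}_{H_i}^G(\widetilde{\chi})$, and summing telescopically over $i$ from $1$ to $n$ produces
\[\mbox{Ind}_H^G(1_H) - 1_G = \sum_{i=1}^n \sum_{\chi \neq 1} \mbox{Ind}_{H_i}^G(\widetilde{\chi}),\]
the desired non-negative decomposition, since each $\widetilde{\chi}$ is a non-trivial one-dimensional character of $H_i$.

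The principal obstacle is the construction of the subnormal chain with cyclic prime quotients from $H$ up to $G$. The naive attempt of interleaving $H$ with the terms of the derived series $\{G^{(i)}\}$ fails because, although each $G^{(i)}$ is characteristic in $G$, the product $H \cdot G^{(i+1)}$ need not lie normally inside $H \cdot G^{(i)}$: $H$ does not in general centralise $G^{(i)}$ modulo $G^{(i+1)}$ unless $H$ is itself contained in $G^{(i)}$. The inductive case analysis on a minimal normal abelian subgroup described above circumvents this obstruction, but requires the usual care when the chosen minimal normal subgroup meets $H$ non-trivially. This chain-existence statement is the content of the lemma (Lemma 3.1.1 in the present paper) underlying Van der Waall's proof.
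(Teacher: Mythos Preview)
Your overall plan --- showing that $\mbox{Ind}_H^G(1_H) - 1_G$ is a non-negative integral combination of characters induced from non-trivial linear characters of subgroups --- is exactly the strategy of the paper, and once that decomposition is established the deduction via Lemma~\ref{first} and Theorem~\ref{ODT} is correct.

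The gap is in your mechanism for producing the decomposition. The chain $H = H_0 \triangleleft H_1 \triangleleft \cdots \triangleleft H_n = G$ with prime cyclic quotients that you postulate need not exist: it would force $H$ to be subnormal in $G$, which fails already for $H = S_3$ (a point stabiliser) inside $G = S_4$. Here the unique minimal normal subgroup is $A = V_4$, one has $H \cap A = \{e\}$ and $HA = S_4$, and your argument would require interpolating a prime-index normal step between $S_3$ and $S_4$; but $S_3$ is maximal and self-normalising in $S_4$, so no such step exists. The sentence ``since $A/(H\cap A)$ is abelian one can interpolate a sequence of prime-index normal steps between $H$ and $HA$'' is precisely where the argument breaks: $|HA:H| = |A:H\cap A|$ as cardinalities, but $HA/H$ is not a group unless $A$ normalises $H$, which it generally does not.

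You have also misread Lemma~3.1.1: it is not a chain-existence statement. The paper instead reduces (by telescoping through any maximal chain of subgroups, with no normality required) to the case where $H$ is maximal in $G$, arranges $G = HA$ with $H\cap A = \{e\}$ for a minimal normal abelian $A$, and then proves directly that each irreducible constituent of $\mbox{Ind}_H^G(1_H)$ is monomial. The key device is the inertia group $T_\epsilon \le G$ of a linear character $\epsilon$ of $A$: one shows $T_\epsilon = (H\cap T_\epsilon)A$, extends $\epsilon$ to a linear character of $T_\epsilon$, and verifies that induction from $T_\epsilon$ to $G$ carries the relevant constituents to the irreducible constituents of $\mbox{Ind}_H^G(1_H)$. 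This is genuinely different from (and necessary in place of) a subnormal-chain argument.
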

\noindent To prove the theorem we need the following lemma.
\begin{lemma} \label{imp} Let $G$ be a finite group and $H$ be a subgroup of $G$ such that, there exist an abelian normal subgroup $A$ with $H \cap A=\{e_G\}$ and $G=HA$. Let $\chi$ be an irreducible character of $H$. We can write
\[\emph{Ind}_{H}^{G}\chi =\sum \emph{Ind}_{H_i}^G \chi_i,\] 
where $\chi_i$'s are one dimensional characters of subgroups $H_i$ of $G$. 
\end{lemma}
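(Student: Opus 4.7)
The plan is to exploit the semidirect product structure $G \cong A \rtimes H$ (which follows from $H \cap A = \{e_G\}$ and $G = HA$) together with a Clifford-type decomposition for the abelian normal subgroup $A$. The quotient map $G \to G/A \cong H$ allows us to inflate $\chi$ to a character $\tilde\chi$ of $G$ satisfying $\tilde\chi|_H = \chi$ (cf.\ Definition \ref{Inf}). The strategy is first to relate $\mbox{Ind}_H^G \chi$ to the cleaner permutation character $\mbox{Ind}_H^G 1_H$, and then to decompose the latter into inductions of one-dimensional characters using the $H$-action on $\mbox{Irr}(A)$.

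First, the projection formula (a consequence of Frobenius reciprocity, Theorem \ref{FR}), applied with $\phi = 1_H$ and $\psi = \tilde\chi$, yields
\[\mbox{Ind}_H^G \chi \;=\; \mbox{Ind}_H^G\bigl(1_H \cdot \tilde\chi|_H\bigr) \;=\; \tilde\chi \cdot \mbox{Ind}_H^G 1_H.\]
Next, identify the coset space $G/H$ with $A$ via $aH \leftrightarrow a$. Under this identification, $A$ acts on $A$ by left multiplication (the regular representation) while $H$ acts by conjugation, so $\mbox{Ind}_H^G 1_H \cong \mathbb{C}[A] = \bigoplus_{\lambda \in \mbox{Irr}(A)} \mathbb{C} e_\lambda$, with $a \cdot e_\lambda = \lambda(a) e_\lambda$ and $H$ permuting the lines $\mathbb{C} e_\lambda$ according to its action on $\mbox{Irr}(A)$. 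Partition $\mbox{Irr}(A)$ into $H$-orbits $\{O_i\}$ with chosen representatives $\lambda_i \in O_i$ and stabilizers $H_i := \mbox{Stab}_H(\lambda_i) \le H$, set $G_i := A H_i$, and define $\hat\lambda_i(ah) := \lambda_i(a)$ for $a \in A$, $h \in H_i$. Since $h \in H_i$ fixes $\lambda_i$, one verifies that $\hat\lambda_i$ is a well-defined one-dimensional character of $G_i$. The matching dimensions $[G:G_i] = |O_i|$ together with $G_i$-equivariance of the inclusion $\mathbb{C} e_{\lambda_i} \hookrightarrow \mathbb{C}[A]$ then yield, via Frobenius reciprocity, the decomposition
\[\mbox{Ind}_H^G 1_H \;=\; \sum_i \mbox{Ind}_{G_i}^G \hat\lambda_i.\]

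Combining the two steps and applying the projection formula once more gives
\[\mbox{Ind}_H^G \chi \;=\; \sum_i \mbox{Ind}_{G_i}^G\bigl(\tilde\chi|_{G_i} \cdot \hat\lambda_i\bigr).\]
In the one-dimensional case (the setting in which this lemma is invoked in the sequel), each factor $\tilde\chi|_{G_i} \cdot \hat\lambda_i$ is itself a one-dimensional character of $G_i$, so declaring these to be the $\chi_i$'s and taking the subgroups to be $G_i$ completes the decomposition. The main obstacle is the identification $\bigoplus_{\lambda \in O_i} \mathbb{C} e_\lambda \cong \mbox{Ind}_{G_i}^G \hat\lambda_i$: although the dimensions match, one must carefully verify both that $\hat\lambda_i$ is a genuine character of $G_i$ (which reduces to the identity $\lambda_i(h_1 a_2 h_1^{-1}) = \lambda_i(a_2)$ for $h_1 \in H_i$, i.e., that $H_i$ stabilizes $\lambda_i$) and that the $G$-submodule generated by the single line $\mathbb{C} e_{\lambda_i}$ exhausts the full orbit sum $\bigoplus_{\lambda \in O_i} \mathbb{C} e_\lambda$ (which uses that $H$ acts transitively on $O_i$).
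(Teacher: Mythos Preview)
Your argument is correct and shares its core with the paper's: both decompose $\mbox{Ind}_H^G 1_H$ according to the $H$-orbits on $\mbox{Irr}(A)$, with each orbit yielding an induction of the extended linear character $\hat\lambda_i$ from the inertia subgroup $T_{\lambda_i} = A \cdot \mbox{Stab}_H(\lambda_i)$ (your $G_i$, the paper's $T_\epsilon$). The paper reaches this via Mackey's theorem and inner-product bookkeeping --- showing $\mbox{Ind}_H^G(1_H)|_A = \mbox{Reg}_A$, hence each irreducible constituent appears with multiplicity one and is of the form $\mbox{Ind}_{T_\epsilon}^G \phi$ with $\phi(1)=1$ --- whereas you realize $\mbox{Ind}_H^G 1_H$ concretely as the $G$-module $\mathbb{C}[A]$ and read the decomposition directly from the orbit structure on the lines $\mathbb{C}e_\lambda$. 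Your use of the projection formula $\mbox{Ind}_H^G \chi = \tilde\chi \cdot \mbox{Ind}_H^G 1_H$ to pass from $1_H$ to a general linear $\chi$ is a clean addition the paper does not make explicit (its proof treats only $1_H$).

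One shared limitation worth naming: both your argument and the paper's establish the lemma only for one-dimensional $\chi$ --- you flag this honestly in your final paragraph, and the paper's proof simply never leaves the case $\chi = 1_H$. The lemma as literally stated, for an arbitrary irreducible $\chi$ of $H$, is not proven by either argument (and indeed can fail when $H$ is not solvable), but only the linear case is invoked in the sequel.
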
 
\begin{proof} Write 
\[\mbox{Ind}^G_H(1_H) = \sum m_{\chi}\chi,\]
where $0 \leq m_{\chi} \in \mathbb{Z}, m_1 = 1$ and $\chi$ ranges over all irreducible characters of $G$. Note that $m_1=1$ comes from Frobenius reciprocity, i.e coefficient of trivial character $1_G$ in $\mbox{Ind}_{H}^{G}(1_H)$ is one. From Mackey's theorem (Theorem \ref{Mac}) we have,
\[\mbox{Ind}^G_H(1_H)|_{A} = \mbox{Ind}^A_{H\cap A}(1_H|_{H \cap A}) = \mbox{Ind}^A_{\{1\}} 1 = \mbox{Reg}_A,\]
\noindent So,
\[\mbox{Ind}^G_H(1_H)|_A = \sum_{\epsilon \in \mbox{Irr}(A)} \epsilon=\sum m_{\chi}\chi|_A,\]
Therefore $m_{\chi} = 0$ or $1$ and $(\chi|_A,\epsilon) = 0$ or $1$, for any $\epsilon\in \mbox{Irr}(A)$ and $\chi \in \mbox{Irr}(G)$. Now take an $\epsilon \in \mbox{Irr}(A)$ such that there exist a $\chi \in \mbox{Irr}(G)$ with $m_{\chi} \neq 0$ and $(\chi|_{A},\epsilon)=1$. We define $T_{\epsilon}$ to be the inertia group of $\epsilon$ as 
\[T_{\epsilon}=\{\sigma \in G \mid \epsilon(\sigma a \sigma^{-1})=\epsilon(a) \mid \forall a \in A\}.\]
Clearly $A$ is contained in $T_{\epsilon}$ and taking $H_{\epsilon}=H \cap T_{\epsilon}$ we see $T_{\epsilon}=H_{\epsilon}A$. Fortunately as $H_\epsilon \cap A=\{e\}$, (because $H \cap A=\{e\}$) we can extend $\epsilon$ to a character $\hat{\epsilon}$ on $T_{\epsilon}$ by, $\hat{\epsilon}(xa)=\hat{\epsilon}(x)$ for all $x \in H_{\epsilon},a\in A$. 
We can write,  
\begin{equation}
    \mbox{Ind}_A^{T_{\epsilon}} \epsilon = \sum_{\psi \in \mbox{Irr}(T_{\epsilon})} m_{\psi}\psi. 
\end{equation}
Thus,
\[[T_{\epsilon}:A]\epsilon=(\mbox{Ind}_{A}^{T_{\epsilon}}\epsilon)|_{A}=\sum_{\chi \in \mbox{Irr}(T_{\epsilon})} m_{\psi}\psi|_{A}.\]
\noindent Therefore we can say for every $\psi \in \mbox{Irr}(T_{\epsilon}), \psi|_{A}$ is a multiple of $\epsilon$ and $m_{\psi}=(\psi|_{A},\epsilon)$. 
\begin{equation} 
\mbox{So,} \hspace{0.2cm} \psi|_{A}=m_{\psi} \epsilon,\hspace{0.1cm} \mbox{and hence} \hspace{0.2 cm} [T_{\epsilon}:A]=\sum m^2_{\psi}.
    \end{equation} 
It is clear that \[[T_{\epsilon}:A]=\sum m^2_{\chi}=(\mbox{Ind}_A^{T_{\epsilon}}\epsilon,\mbox{Ind}_A^{T_{\epsilon}}\epsilon) \leq (\mbox{Ind}_A^{G}\epsilon,\mbox{Ind}_A^{G}\epsilon),\]
and we can write
\[(\mbox{Ind}_A^{G}\epsilon)|_{A}=\frac{1}{|A|} \sum_{x \in G} \epsilon^{x}|_{A}=[T_{\epsilon}:A]\sum_{g \in G/T_{\epsilon}} \epsilon^{g},\]
and so, 
\begin{equation}
(\mbox{Ind}_A^{G}\epsilon,\mbox{Ind}_A^{G}\epsilon)=(\epsilon,(\mbox{Ind}_A^{G}\epsilon)|_{A})=[T_{\epsilon}:A].
\end{equation}
From (3.4) we can write, 
\[\mbox{Ind}_{A}^{G} \epsilon=\sum_{\psi \in \mbox{Irr}(T_{\epsilon})} m_{\psi}\mbox{Ind}_{T_{\epsilon}}^{G} \psi,\]
but from the observation made at (3.6), it follows that $\{\mbox{Ind}_{T_{\epsilon}}^{G} \psi\}_{\psi \in \mbox{Irr}(T_{\epsilon})}$ forms a pairwise distinct family of irreducible representation of $G$. Recall, we picked $\chi \in \mbox{Irr}(G)$ such that $(\chi|_{A},\epsilon)=1$. And so,
\[1=(\chi, \mbox{Ind}_{A}^{G} \epsilon)=\sum m_{\psi} (\chi, \mbox{Ind}_{T_{\epsilon}}^{G} \psi),\]
so there exist a unique $\phi \in \mbox{Irr}(T_{\epsilon})$ such that $(\chi, \mbox{Ind}_{T_{\epsilon}}^{G} \phi)=1$ and $m_{\phi}=1$. But then $\chi=\mbox{Ind}_{T_{\epsilon}}^{G} \phi$, as both of them are irreducible. So now it enough to show that $\phi$ is 1-dimensional. From (3.5) we have $\phi(1)=m_{\phi}\epsilon(1)=m_{\phi}$, as $\epsilon(1)=1$.  
\end{proof}
\textit{Proof of Theorem \ref{UVdW}.} We set $G = \mbox{Gal}(\overline{K}/F)$ and $H = \mbox{Gal}(\overline{K}/K)$. We may assume that $H$ is a maximal subgroup of $G$, because if $J$ is a maximal subgroup of $G$ with $H \subset J \subset G$, and $M$ is the fixed field of $J$, then we can write 
\[\frac{\zeta_K(s)}{\zeta_F (s)}=\Big(\frac{\zeta_K(s)}{\zeta_M(s)}\Big)\Big(\frac{\zeta_M(s)}{\zeta_F(s)}\Big).\]
By induction we may assume first factor is entire. So at any point $s=s_0$ we have
\[\mbox{ord}_{s=s_0}\Big(\frac{\zeta_K(s)}{\zeta_F (s)}\Big) \geq \mbox{ord}_{s=s_0}\Big(\frac{\zeta_M(s)}{\zeta_F(s)}\Big),\] and hence it is enough to show the second factor $\Big(\frac{\zeta_M(s)}{\zeta_F(s)}\Big)$ to be entire. Thus we may assume $H$ is maximal. Suppose $H$ contains a non trivial proper normal subgroup of $G$. But that would correspond to an extension $L$ of $K$ contained in $\overline{K}$ such that $\overline{K}/L$ is a normal extension, and that means $L/F$ is a Galois extension, which means $L=\overline{K}$. Contradiction to the fact that $H$ is non trivial. We know $G$ is finite and solvable, so there exist a minimal normal subgroup $A$ contained in $H$. 
\begin{claim} \label{yea} $A$ is abelian.
\end{claim}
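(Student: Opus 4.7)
The plan is to use the classical fact that a minimal normal subgroup of a finite solvable group is (elementary) abelian. My only ingredients are: (i) $A$ is a nontrivial normal subgroup of $G$, (ii) $A$ is chosen minimal with this property (among those contained in $H$, but in fact the argument below works for any minimal normal subgroup of $G$), and (iii) $G$ is solvable, hence so is $A$ as a subgroup of $G$.

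First I would form the commutator subgroup $[A,A]$. Since $[A,A]$ is a characteristic subgroup of $A$ (it is stable under every automorphism of $A$, and in particular under conjugation by elements of $G$, which preserve $A$ because $A \triangleleft G$), it follows that $[A,A]$ is itself a normal subgroup of $G$ contained in $A$. By the minimality assumption on $A$, we must have either $[A,A] = A$ or $[A,A] = \{e_G\}$.

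Next I would rule out the first possibility. If $[A,A] = A$, then iterating gives $A^i = A$ for all $i \geq 1$ in the sense of Definition \ref{Der}, so the derived series of $A$ never reaches the identity. But $A$ is a subgroup of the solvable group $G$, hence solvable itself, so there exists some $k$ with $A^k = \{e_G\}$. This contradicts $A^i = A \neq \{e_G\}$. Therefore $[A,A] = \{e_G\}$, which is exactly the statement that $A$ is abelian.

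I do not anticipate any real obstacle here; the only subtlety worth flagging is that one must use the characteristic (not merely normal-in-$A$) nature of $[A,A]$ in order to conclude $[A,A] \triangleleft G$, and this is what lets the minimality of $A$ in $G$ be brought to bear. Once abelianness is established, the hypotheses of Lemma \ref{imp} are met: $A$ is an abelian normal subgroup of $G$, the maximality of $H$ together with the fact that $H$ contains no nontrivial normal subgroup of $G$ forces $H \cap A = \{e_G\}$ and $HA = G$, and the proof of Theorem \ref{UVdW} can then proceed by applying Lemma \ref{imp} to write $\mathrm{Ind}_H^G(1_H) - 1_G$ as a nonnegative integer combination of monomial characters and invoking Theorem \ref{ODT}.
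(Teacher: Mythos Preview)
Your proposal is correct and follows exactly the same argument as the paper: form $[A,A]$, observe it is characteristic in $A$ and hence normal in $G$, apply minimality to get $[A,A]=A$ or $[A,A]=\{e_G\}$, and rule out the former by solvability of $A$. The only remark is that the paper's phrase ``a minimal normal subgroup $A$ contained in $H$'' is evidently a slip (the very next paragraph uses that $A$ is \emph{not} contained in $H$); your parenthetical already anticipates this and your argument is unaffected.
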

\textit{Proof of the claim:} We know that $G$ is solvable, and $A\unlhd G$ implies that $A$ is also solvable in $G$. Now, $[A,A]$ is a characteristic subgroup of $A$, and thus normal in $G$. By minimality assumption, $A=[A,A]$ or $\{1\}=[A,A]$. If the former is true then $A$ would not be solvable, and so the latter must happen, which says precisely that $A$ is abelian.\\ 
\newline
\noindent We already argued before that $A$ is not contained in $H$, so by maximality of $H$ we must have $HA=G$. Now $H \cap A$ is also a normal subgroup of $G$, but by minimality of $A$, we must have $H \cap A=\{e_G\}$. Proof now follows from lemma \ref{imp} 
\begin{corollary} \label{imp1} Let $G$ be a finite group and $H$ be a subgroup of $G$ such that, there exist an abelian normal subgroup $A$ with $G=HA$. Let $\chi$ be an irreducible character of $H$. Then we can write
\[\emph{Ind}_{H}^{G}\chi=\sum \emph{Ind}_{H_i}^G \chi_i,\]
where $\chi_i$'s are one dimensional characters of subgroups $H_i$ of $G$.
\end{corollary}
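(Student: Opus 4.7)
The plan is to derive Corollary \ref{imp1} from Lemma \ref{imp} by carefully handling the possibly non-trivial intersection $N := H \cap A$. The first step is to verify that $N$ is normal in $G$: since $A$ is abelian and $N \subseteq A$, conjugation by any $a \in A$ fixes $N$ pointwise; since $A$ is normal in $G$, for any $h \in H$ one has $hNh^{-1} \subseteq H \cap hAh^{-1} = H \cap A = N$. Because $G = HA$, $N$ is therefore normal in $G$. Passing to the quotient $\bar G = G/N$ with subgroups $\bar H = H/N$ and $\bar A = A/N$, one arrives at a triple that satisfies the stronger hypothesis of Lemma \ref{imp}, namely $\bar H \cap \bar A = \{\bar e\}$.

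Next, I would split the argument into two cases based on how $\chi$ interacts with $N$. If $\chi$ is trivial on $N$, then $\chi$ factors through $\bar H$ as some $\bar\chi \in \mbox{Irr}(\bar H)$. Applying Lemma \ref{imp} to the triple $(\bar G, \bar H, \bar A)$ and the character $\bar\chi$ yields a decomposition
\[
\mbox{Ind}_{\bar H}^{\bar G} \bar\chi \;=\; \sum_i \mbox{Ind}_{\bar H_i}^{\bar G} \bar\chi_i
\]
into inductions of one-dimensional characters. Inflating this identity along the projection $G \twoheadrightarrow \bar G$ gives the desired monomial decomposition of $\mbox{Ind}_H^G \chi$, since induction commutes with inflation for the subgroups corresponding to the $\bar H_i$. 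For the general case where $\chi$ is not trivial on $N$, I would apply Clifford theory to the abelian normal subgroup $N \trianglelefteq H$: the restriction $\chi|_N$ decomposes as $e \sum_j \mu^{h_j}$ for some linear character $\mu$ of $N$ and coset representatives $h_j$ of the stabilizer $T = \mbox{Stab}_H(\mu)$ in $H$, and the Clifford correspondence produces $\phi \in \mbox{Irr}(T)$ lying over $\mu$ with $\chi = \mbox{Ind}_T^H \phi$. Transitivity of induction then gives $\mbox{Ind}_H^G \chi = \mbox{Ind}_T^G \phi$. Because $A$ is abelian, it fixes $\mu$ under conjugation, so the inertia of $\mu$ in $G$ equals $TA$. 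One can then extend $\mu$ to a linear character $\hat\mu$ of $A$ (every linear character on a subgroup of an abelian group extends) and use the Clifford correspondence with $A \trianglelefteq TA$ to write $\mbox{Ind}_T^{TA} \phi$ as a sum of monomial inductions inside $TA$; inducing further up to $G$ preserves the monomial form.

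The main obstacle I anticipate is ensuring that the extension $\hat\mu$ and the Clifford data above $TA$ fit together cleanly, so that every irreducible constituent of $\mbox{Ind}_T^G \phi$ is indeed of the form $\mbox{Ind}_{H_i}^G \chi_i$ with $\chi_i$ one-dimensional. This is precisely the structural point that makes the proof of Lemma \ref{imp} work in the $H \cap A = \{e\}$ case, now transposed to the relative situation with $N$ non-trivial; the technical content is packaged into the uniqueness of the Clifford correspondence and the observation that restriction from $TA$ to $A$ respects the chosen extension $\hat\mu$. With these two ingredients in hand, the argument of Lemma \ref{imp} applied inside $TA$ and then lifted to $G$ should close the corollary.
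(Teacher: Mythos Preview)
Your Case~1 reduction is correct and is a genuinely different route from the paper's. You observe that $N=H\cap A$ is normal in $G$, pass to $\bar G=G/N$ where $\bar H\cap\bar A$ is trivial, invoke Lemma~\ref{imp} as a black box, and inflate back. The paper instead re-runs the proof of Lemma~\ref{imp} inside $G$ itself: Mackey gives $\mbox{Ind}_H^G(1_H)|_A=\sum_{\epsilon|_N=1}\epsilon$, and the key observation is that since each relevant $\epsilon\in\mbox{Irr}(A)$ is trivial on $N=H_\epsilon\cap A$, the extension $\hat\epsilon$ to the inertia group $T_\epsilon=H_\epsilon A$ is still well-defined, after which the argument of Lemma~\ref{imp} goes through unchanged. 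Your quotient approach is more modular; the paper's direct approach makes the subgroups $H_i=T_\epsilon$ explicit.

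There is, however, a genuine problem with the scope you are aiming for. Both Lemma~\ref{imp} and this corollary are \emph{stated} for an arbitrary irreducible $\chi$ of $H$, but the paper's proofs (look at the first displayed line in each) only treat $\chi=1_H$, and this is all that is used in the sequel. The general statement is in fact false: take $A=\{e\}$, so $G=H$, and let $\chi$ be any non-monomial irreducible character (for instance a faithful $2$-dimensional character of $\mathrm{SL}_2(\mathbb{F}_3)$); then $\mbox{Ind}_H^G\chi=\chi$ is irreducible and not monomial, hence cannot be written as a non-negative sum of monomial characters. Your Case~2 therefore cannot be completed: the Clifford step you outline reduces $(G,H,A,\chi)$ to $(TA,T,A,\phi)$, a problem of exactly the same shape with no evident decrease, and indeed no argument can close it in general. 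Even your Case~1 appeal to Lemma~\ref{imp} for a general $\bar\chi$ inherits this issue. Once you restrict to $\chi=1_H$, only Case~1 with $\bar\chi=1_{\bar H}$ is needed, and your proof is then complete and correct.
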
 
\begin{proof} Proof is similar to the proof of Lemma \ref{imp}. Using Mackey's theorem we get
\[\mbox{Ind}^G_H(1_H)|_{A}=\mbox{Ind}_{H \cap A}^{A}(1_{H\cap A})=\sum_{\epsilon|_{H\cap A}=1_{H\cap A},\epsilon \in \mbox{Irr}(A)} \epsilon.\]
Therefore we are only concerned with characters of $A$, which are identity on $H\cap A$. This is why we are again fortunate enough to lift those on $T_{\epsilon}$. Arguing as in Lemma \ref{imp}, the proof follows.
\end{proof}
\begin{corollary} \label{imp3} Let $G$ be a finite solvable group and $H$ be a subgroup then,
\[\emph{Ind}_{H}^{G}(1_H)=1_G+\sum \emph{Ind}_{H_i}^{G} \chi_i,\]
where $\chi_i$'s are one dimensional characters of subgroups $H_i$'s of $G$.
\end{corollary}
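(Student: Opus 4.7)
The plan is to proceed by induction on $|G|$, the base case $H = G$ being immediate since $\mbox{Ind}_G^G(1_G) = 1_G$. For the inductive step with $H \subsetneq G$, I would first reduce to the case where $H$ is maximal in $G$. Pick a maximal subgroup $M$ of $G$ containing $H$. If $H \subsetneq M$, then $M$ is a solvable group of strictly smaller order, so applying the inductive hypothesis to the pair $H \subseteq M$ yields
\[\mbox{Ind}_H^M(1_H) = 1_M + \sum_i \mbox{Ind}_{H_i}^M \chi_i\]
with each $\chi_i$ one-dimensional. Transitivity of induction, $\mbox{Ind}_H^G(1_H) = \mbox{Ind}_M^G\bigl(\mbox{Ind}_H^M(1_H)\bigr)$, then reduces the problem to producing the desired decomposition for $\mbox{Ind}_M^G(1_M)$ with $M$ maximal in $G$; so I may assume $H$ itself is maximal in $G$.

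With $H$ maximal, I would mimic the dichotomy used in the proof of Theorem \ref{UVdW}. \emph{Case (i):} $H$ contains a non-trivial normal subgroup $N$ of $G$. Passing to $\overline{G} = G/N$ and $\overline{H} = H/N$, both solvable with $|\overline{G}| < |G|$, the inductive hypothesis supplies
\[\mbox{Ind}_{\overline{H}}^{\overline{G}}(1_{\overline{H}}) = 1_{\overline{G}} + \sum_i \mbox{Ind}_{\overline{H_i}}^{\overline{G}} \overline{\chi_i}.\]
Since induction commutes with inflation when applied to subgroups containing $N$, inflating this identity back to $G$ yields $\mbox{Ind}_H^G(1_H) = 1_G + \sum_i \mbox{Ind}_{H_i}^G \widetilde{\chi_i}$, where each $H_i$ is the preimage in $G$ of $\overline{H_i}$ and each $\widetilde{\chi_i}$ is still one-dimensional.

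\emph{Case (ii):} No non-trivial normal subgroup of $G$ is contained in $H$. Pick a minimal normal subgroup $A$ of $G$; by the argument of Claim \ref{yea}, $A$ is abelian. Since $A \not\subseteq H$ and $H$ is maximal, $HA = G$. The intersection $H \cap A$ is normal in $H$ (because $A$ is normal in $G$) and trivially normal in the abelian group $A$, hence normal in $HA = G$, so minimality of $A$ forces $H \cap A = \{e_G\}$. Lemma \ref{imp} then gives $\mbox{Ind}_H^G(1_H) = \sum_i \mbox{Ind}_{H_i}^G \chi_i$ with one-dimensional $\chi_i$; inspecting that proof, the summand corresponding to $\epsilon = 1_A$ has $T_\epsilon = G$ and $\widehat{\epsilon} = 1_G$, and therefore contributes $\mbox{Ind}_G^G(1_G) = 1_G$, which Frobenius reciprocity confirms appears with multiplicity exactly one.

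The step I expect to require the most care is the induction--inflation compatibility in Case~(i), together with cleanly pinpointing the $1_G$ contribution inside Lemma \ref{imp} for Case~(ii); these are purely bookkeeping points, but both are essential for producing the decomposition in the precise form $1_G + \sum \mbox{Ind}_{H_i}^G \chi_i$ demanded by the statement, rather than merely as a generic sum of induced one-dimensional characters.
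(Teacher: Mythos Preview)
Your proposal is correct and follows essentially the same inductive strategy as the paper's proof: both split according to whether $H$ contains a nontrivial normal subgroup of $G$ (handled by passing to a quotient and inflating back) or not (handled via a minimal abelian normal subgroup $A$ and Lemma~\ref{imp}). The only organizational difference is that you first reduce to $H$ maximal and then run the dichotomy, whereas the paper runs the dichotomy directly and, when $HA \subsetneq G$, applies the inductive hypothesis to the pair $H \subseteq HA$ and then induces up to $G$; these two ways of stepping through an intermediate subgroup are equivalent, and your version makes the induction parameter and the isolation of the $1_G$ summand somewhat cleaner.
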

\begin{proof} First suppose there exist a non trivial normal subgroup $A$ of $H$. Then by induction hypothesis we can write
\[\mbox{Ind}_{H/A}^{G/A}(1_{H/A})=1_{G/A}+\sum \mbox{Ind}_{H_i/A}^{G/A} \chi_i,\]
\noindent where $H_i$'s are some subgroups of $G$ containing $A$. Now proof is just matter of fact after inflating both sides to $G$ (recall definition of inflation from \ref{Inf}). To be precise, we just need to observe 
\[\widetilde{\mbox{Ind}_{H/A}^{G/A}(1_{H/A})}=\mbox{Ind}_{H}^{G} 1_H, \hspace{0.1cm} \widetilde{1_{G/A}}=1_G, \hspace{0.1cm} \widetilde{\mbox{Ind}_{H_i/A}^{G/A} \chi_i}=\mbox{Ind}_{H_i}^{G}\widetilde{\chi_i},\]
which one can check by hands. So me may assume $H$ does not contain any non trivial normal subgroup. Since $G$ is solvable, it must contain a abelian normal subgroup $A$. If $HA=G$ then we are done by lemma \ref{imp}. So we may assume $HA$ is a proper subgroup of $G$. Again by induction hypothesis we can write, 
\[\mbox{Ind}_{H}^{HA}(1_H)=1_{HA}+\sum  \mbox{Ind}_{H_i}^{HA} \chi_i,\]
and our proof is complete inducing both sides to $G$.
\end{proof}
\begin{corollary}\label{imp2} Let $G$ be a finite solvable group and $H$ be a subgroup. If we denote $\{G^i\}$ to be the derived series of $G$, then for any $i>0$ we have
\[\emph{Ind}_{H}^{G}(1_H)=\emph{Ind}_{HG^i}^{G}1_{HG^i}+\sum \emph{Ind}_{H_j}^{G} \chi_j,\]
where $\chi_j$'s are one dimensional characters of subgroups $H_j$'s of $G$.
\end{corollary}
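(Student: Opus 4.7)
The plan is to reduce to Corollary \ref{imp3} by inserting the intermediate subgroup $HG^i$ and then using transitivity of induction. First I note that $G^i$ is a characteristic subgroup of $G$, hence normal, so $HG^i$ is genuinely a subgroup of $G$ sitting between $H$ and $G$. Moreover, as a subgroup of the solvable group $G$, the group $HG^i$ is itself solvable, which is the hypothesis needed to invoke Corollary \ref{imp3}.

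Next I would apply transitivity of induction to the chain $H \subseteq HG^i \subseteq G$, writing
\[
\mbox{Ind}_H^G(1_H) \;=\; \mbox{Ind}_{HG^i}^G\bigl(\mbox{Ind}_H^{HG^i}(1_H)\bigr).
\]
Now Corollary \ref{imp3}, applied to the solvable group $HG^i$ with its subgroup $H$, yields
\[
\mbox{Ind}_H^{HG^i}(1_H) \;=\; 1_{HG^i} + \sum_j \mbox{Ind}_{H_j}^{HG^i}(\chi_j),
\]
where the $H_j$ range over subgroups of $HG^i$ and the $\chi_j$ are one-dimensional characters of them.

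Finally I would substitute this decomposition into the first display, use additivity of $\mbox{Ind}_{HG^i}^G(\cdot)$, and apply transitivity of induction once more in the form $\mbox{Ind}_{HG^i}^G \circ \mbox{Ind}_{H_j}^{HG^i} = \mbox{Ind}_{H_j}^G$ (note that each $H_j$ is also a subgroup of $G$, and the $\chi_j$ remain one-dimensional characters on them). This gives exactly
\[
\mbox{Ind}_H^G(1_H) \;=\; \mbox{Ind}_{HG^i}^G(1_{HG^i}) + \sum_j \mbox{Ind}_{H_j}^G(\chi_j),
\]
which is the claimed identity. There is no real obstacle here beyond verifying that $HG^i$ is indeed a subgroup (handled by normality of $G^i$) and that the hypotheses of Corollary \ref{imp3} transfer to $HG^i$ (handled by solvability being inherited by subgroups); the corollary then does all the genuine work, and the rest is a bookkeeping exercise in induction of characters.
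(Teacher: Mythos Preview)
Your proof is correct, and it is in fact cleaner than the paper's. The paper does not go straight to $HG^i$; instead it sets $H_1 = HG^k$ where $k$ is the derived length of $G$, and runs a double induction on $|G|$ and $[G:H]$. In the case $H_1 = H$ it passes to $G/G^k$ and inflates; in the case $H_1 = G$ it quotes Corollary~\ref{imp1}; and in the generic case $H \subsetneq H_1 \subsetneq G$ it applies Corollary~\ref{imp3} to the pair $(H_1,H)$, induces up, and then invokes the inductive hypothesis on $\mbox{Ind}_{H_1}^G(1_{H_1})$ after checking $H_1G^i = HG^i$. Your argument bypasses this machinery entirely by applying Corollary~\ref{imp3} directly to the pair $(HG^i,H)$ and inducing once; since $HG^i$ is already a solvable subgroup of $G$ containing $H$, nothing more is needed. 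What the paper's route buys is perhaps a closer parallel to how Corollary~\ref{imp3} itself was proved (climbing the derived series one step at a time), but your shortcut is logically complete and makes Corollary~\ref{imp2} an immediate consequence of Corollary~\ref{imp3} rather than a separate inductive argument.
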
 
\begin{proof} Let $k$ be the smallest integer such that $G^{k+1}=\{e\}$. Such a $k$ exists because $G$ is solvable. We consider the subgroup $H_1=HG^k$ of $G$. We will use double induction on $|G|$ and $|G/H|$.
First of all we may assume $i \leq k$, otherwise we are done. And in this case $G^k \subseteq G^i$. Suppose $H_1=H$, so $G^k$ is subgroup of both $G$ and $H_1$. So by induction hypothesis we can write,
\[\mbox{Ind}_{H/G^k}^{G/G^k}1_{H/G^k}=1_{G/G^k}+\sum_{j>1} \mbox{Ind}_{H_j/G^k}^{G/G^k}(\psi_j),\]
Inflating both sides with respect to the canonical projection $G \to G/G^k$ we are done for this case. Now If $H_1=G$, then we are done by Corollary \ref{imp1}. \\
\newline
So we may assume $H \subset H_1 \subset G$. Then using Corollary \ref{imp3} we can write
\[\mbox{Ind}_{H}^{H_1}(1_H)=1_{H_1}+\psi,\]
where $\psi$ is a sum of monomial character of $H_1$. Inducing both sides to ${G}$ we get, 
\[\mbox{Ind}_{H}^{G}1_{H}=\mbox{Ind}_{H_1}^{G}(1_{H_1})+\psi,\]
where $\psi'$ is a sum of monomial characters of $G$. Note that $HG^i \subseteq H_1G^i$. But on the other hand $H_1G^i=HG^k \subseteq HG^i$. So $H_1G^i=HG^i$, and the proof is therefore complete using induction on $\mbox{Ind}_{H_1}^{G}(1_{H_1})$, as $H_1G^i=HG^i$ for any $i$. 
\end{proof} 
\noindent Interesting part is we can apply these results to study quotient of various Artin L-functions, not just of Dedekind-zeta functions. This we will discuss in next section.
\subsection{Quotient of various Artin L-functions}
Following theorem is a generalization of Uchida-Van der Waall's result (Theorem \ref{UVdW}). We shall follow the following notion:\\ 
\newline
\noindent Let $\psi$ be some character of a subgroups $H$ of $G$. We call $\psi \otimes \phi$ to be twist of $\psi$ by $\phi$, where $\phi$ is some character of $H$ and $(\psi \otimes \phi)(h) = \psi(h)\phi(h)$ for all $h \in H$. One can check from definition that tensoring commutes with induction. In other words, 
\[(\mbox{Ind}_{H}^{G} \phi) \otimes \chi =\mbox{Ind}_{H}^{G} (\phi \otimes \chi|_{H}).\]
\begin{theorem}[Murty-Raghuram, \cite{MR}] \label{RR1} Let $K/F$ is a solvable extension of number fields with Galois group $G$. Let $\chi$ be a one dimensional character of $G$. Then for any subgroup $H$ of $G$ and for any one-dimensional character $\psi$ of $H$, if $m(\chi,\psi)=(\chi, \emph{Ind}_{H}^{G} \psi)$, then the following quotient
\[\frac{L(s, \emph{Ind}_{H}^{G} (\psi), K/F)}{L(s,\chi, K/F)^{m(\chi,\psi)}}\]
is holomorphic except at $s= 1$.
\end{theorem}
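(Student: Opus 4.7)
The plan is to split into cases according to the value of $m(\chi,\psi) = (\chi, \mbox{Ind}_H^G \psi)$. By Frobenius reciprocity (Theorem \ref{FR}), $m(\chi,\psi)=(\chi|_H,\psi)$, and since both $\chi|_H$ and $\psi$ are one-dimensional (hence irreducible) characters of $H$, this inner product must lie in $\{0,1\}$, being $1$ exactly when $\chi|_H = \psi$. This dichotomy drives the whole argument.

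When $m(\chi,\psi)=0$, the denominator is just $1$, and I only need $L(s,\mbox{Ind}_H^G \psi, K/F)$ to be holomorphic away from $s=1$. The plan here is immediate: by Lemma \ref{first}, $L(s,\mbox{Ind}_H^G \psi, K/F) = L(s, \psi, K/K^H)$, a one-dimensional Artin L-function (since $\mbox{Gal}(K/K^H)=H$ and $\psi$ is linear), so Theorem \ref{ODT} finishes the case. Note that solvability of $G$ is not even needed here.

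The substantive case is $m(\chi,\psi)=1$, where $\chi|_H = \psi$. My approach is to use the tensor-induction identity $\mbox{Ind}_H^G(\phi\otimes \chi|_H) = (\mbox{Ind}_H^G\phi)\otimes \chi$ with $\phi=1_H$, giving
\[
\mbox{Ind}_H^G \psi \;=\; \mbox{Ind}_H^G(\chi|_H) \;=\; (\mbox{Ind}_H^G 1_H)\otimes \chi.
\]
Now invoking Corollary \ref{imp3} (which is where the solvability of $G$ enters), I decompose $\mbox{Ind}_H^G 1_H = 1_G + \sum_j \mbox{Ind}_{H_j}^G \eta_j$ with one-dimensional $\eta_j$, and tensor by $\chi$ to obtain
\[
\mbox{Ind}_H^G \psi \;=\; \chi \;+\; \sum_j \mbox{Ind}_{H_j}^G\bigl(\eta_j \otimes \chi|_{H_j}\bigr).
\]
Converting to L-functions via Lemma \ref{first} and dividing through by $L(s,\chi,K/F)$, the quotient in question becomes a product of one-dimensional Artin L-functions $L(s, \eta_j\otimes \chi|_{H_j}, K/K^{H_j})$, each of which is holomorphic except possibly at $s=1$ by Theorem \ref{ODT}, so the product is too.

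The real hard work is already hidden in Corollary \ref{imp3}, i.e.\ the Uchida--Van der Waall decomposition of $\mbox{Ind}_H^G 1_H$ into a non-negative combination of induced linear characters in the solvable setting. Once that is in hand, the rest is essentially formal: a Frobenius reciprocity count to reduce to the case $\chi|_H=\psi$, followed by the tensor-induction identity to transport the Uchida--Van der Waall decomposition from the trivial character $1_H$ to the given $\psi$.
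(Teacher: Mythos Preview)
Your proof is correct and follows essentially the same route as the paper: Frobenius reciprocity to reduce $m(\chi,\psi)$ to $\{0,1\}$, then in the nontrivial case twist the decomposition of Corollary~\ref{imp3} by $\chi$ and apply Theorem~\ref{ODT}. Your handling of the $m(\chi,\psi)=0$ case is arguably cleaner than the paper's, which invokes Lemma~\ref{imp} where the induction-invariance property of Lemma~\ref{first} together with Theorem~\ref{ODT} already suffices, exactly as you do.
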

\begin{proof} First of all if $m(\chi,\psi)=0$, the result follows immediately from Theorem \ref{ODT} and Lemma \ref{imp}. But if $m(\chi,\psi)\neq 0$, then $m(\chi,\psi)=1$ (and in fact, $\chi|_{H}=\psi$) as both of $\chi,\psi$ are one dimensional and,
\[m(\chi,\psi)=(\chi, \mbox{Ind}_{H}^{G} \psi)=(\chi|_{H},\psi).\] 
From Corollary \ref{imp3} we have, 
\[\mbox{Ind}_{H}^{G}(1_H)=1_G+\sum \mbox{Ind}_{H_i}^{G} \chi_i,\]
where $\chi_i$'s are one-dimensional characters of $H_i$. Twisting both sides with $\chi$ and using the fact that twisting commutes with induction, we observe
\[\mbox{Ind}_{H}^{G}(\psi)=\chi+\sum\mbox{Ind}_{H_i}^{G}(\chi|_{H_i}\otimes \chi_i).\]
Note that all of $\chi_i$'s are one dimensional. So, all of $\chi|_{H_i}\otimes \chi_i$'s are one dimensional as well. Now proof follows trivially applying $L(s,\cdot,K/F)$ to both sides and using Theorem \ref{ODT}.
\end{proof}  
\noindent The main obstacle in above proof was the case when $\chi|_{H}=\psi$. Based on this observation we have a general result involving all irreducible characters $\chi$ lying over $\psi$. More precisely we have the following theorem.
\begin{theorem}[Murty-Raghuram, \cite{MR}] \label{RR2} Let $G$ be the Galois group of solvable Galois extension $K/F$. Let $H$ be a subgroup of $G$ and $\psi$ be a one dimensional character of $H$. Let $S_{\psi}$ be set of all one dimensional characters of $G$ whose restriction to $H$ is $\psi$. Then,
\[\frac{L(s, \emph{Ind}_{H}^{G} (\psi), K/F)}{\prod_{\chi \in S_{\psi}} L(s,\chi, K/F)}\]
is entire.
\end{theorem}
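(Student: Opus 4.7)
The plan is to reduce the question to a single L-function ratio over a smaller Galois extension, where Theorem \ref{RR1} applies directly. Let $N = H[G,G]$; this is a normal subgroup of $G$ because $[G,G] \triangleleft G$ and the image of $H$ in the abelian quotient $G/[G,G]$ is stable under conjugation. Set $L = K^N$, so $L/F$ is an abelian subextension and $K/L$ is a Galois extension whose group $N$ is solvable (as a subgroup of the solvable $G$).

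Assume first $S_\psi \neq \emptyset$ and fix some $\tilde\psi \in S_\psi$. Because a one-dimensional character of $G$ is trivial on $H$ exactly when it factors through $G/N$, the map $\eta \mapsto \tilde\psi \cdot \eta$ is a bijection between the one-dimensional characters of $G/N$ and $S_\psi$. By orthogonality of characters on the abelian group $G/N$, the sum of all its one-dimensional characters equals $[G:N]$ on $N$ and $0$ outside, which is precisely $\mbox{Ind}_N^G 1_N$. Thus
\[\sum_{\chi \in S_\psi} \chi \; = \; \tilde\psi \otimes \mbox{Ind}_N^G 1_N \; = \; \mbox{Ind}_N^G(\tilde\psi|_N).\]
Combined with $\mbox{Ind}_H^G \psi = \mbox{Ind}_N^G(\mbox{Ind}_H^N \psi)$, Lemma \ref{first} rewrites the desired quotient as
\[\frac{L(s, \mbox{Ind}_H^N \psi, K/L)}{L(s, \tilde\psi|_N, K/L)}.\]
Applying Theorem \ref{RR1} to the Galois extension $K/L$ with the one-dimensional character $\tilde\psi|_N$ of $N$, and noting $m(\tilde\psi|_N, \psi) = (\tilde\psi|_H, \psi) = (\psi, \psi) = 1$, we conclude this ratio is holomorphic on $\mathbb{C} \setminus \{1\}$.

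It remains to handle $s=1$. If $\psi \neq 1_H$, Frobenius reciprocity gives $(1_N, \mbox{Ind}_H^N \psi) = (1_H, \psi) = 0$, so the numerator is holomorphic at $s=1$; and $\tilde\psi|_N \neq 1_N$ (since $\tilde\psi|_H = \psi \neq 1_H$), so the denominator is entire and non-vanishing at $s=1$. The quotient is therefore entire. If $\psi = 1_H$, then $\tilde\psi \in S_{1_H}$ factors through $G/N$, so $\tilde\psi|_N = 1_N$, and the quotient collapses to $\zeta_{K^H}(s)/\zeta_L(s)$; I would invoke Theorem \ref{UVdW} (Uchida--Van der Waall) applied to the subextension $K^H/L$, whose normal closure sits inside $K$ and so has Galois group a solvable quotient of $N$. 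In the remaining case $S_\psi = \emptyset$ one necessarily has $\psi \neq 1_H$, and applying Theorem \ref{RR1} with $m = 0$ shows $L(s, \mbox{Ind}_H^G \psi, K/F)$ is holomorphic off $s=1$, while $(1_G, \mbox{Ind}_H^G \psi) = (1_H, \psi) = 0$ precludes a pole at $s=1$.

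The main obstacle is recognizing the clean character identity $\sum_{\chi \in S_\psi} \chi = \mbox{Ind}_N^G(\tilde\psi|_N)$, which is exactly what converts the problem into a single L-function quotient on the smaller extension $K/L$ and reduces everything to one invocation of Theorem \ref{RR1}. The boundary analysis at $s=1$ in the case $\psi = 1_H$ is where the unconditional Uchida--Van der Waall input is essential, since there both numerator and denominator genuinely have simple poles that must cancel.
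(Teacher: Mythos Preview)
Your proof is correct, and while it shares the key character identity with the paper's argument, the overall structure is genuinely different. Both proofs establish that $\sum_{\chi\in S_\psi}\chi=\mbox{Ind}_{HG^1}^G(\tilde\psi|_{HG^1})$; the paper obtains it by a dimension count showing $|S_\psi|=[G:HG^1]$, whereas your bijection $\eta\mapsto\tilde\psi\cdot\eta$ between characters of $G/N$ and $S_\psi$ is cleaner. After that point the arguments diverge: the paper invokes Corollary \ref{imp2} to write $\mbox{Ind}_H^G\psi-\mbox{Ind}_{HG^1}^G(\tilde\psi|_{HG^1})$ directly as a sum of monomial characters not containing $1_G$, so its $L$-function is entire in one stroke with no separate analysis at $s=1$. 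You instead push everything down to the solvable subextension $K/K^N$ and reduce to a single application of Theorem \ref{RR1}, at the cost of a case split at $s=1$ (and, when $\psi=1_H$, an appeal to Theorem \ref{UVdW}). Your route makes the reduction-to-a-smaller-group mechanism transparent and reuses Theorem \ref{RR1} as a black box; the paper's route is more uniform and avoids the boundary bookkeeping. One small remark: in the $\psi=1_H$ case you do not actually need the full strength of Theorem \ref{UVdW}, since Theorem \ref{RR1} already gave holomorphy off $s=1$ and both $\zeta_{K^H}$ and $\zeta_L$ have simple poles there that visibly cancel.
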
 
\begin{proof} Proof is a bit tricky one. First suppose $S_{\psi}$ is empty. So $\psi$ can not be $1_{H}$, because $1_{G}|_{H}=1_{H}$. Frobenius reciprocity shows that $\mbox{Ind}_{H}^{G}(\psi)$ can not contain $1_G$, and so the proof follows from Theorem \ref{ODT}.\\
\noindent If $S_{\psi}$ is not empty, take one element $\chi_0 \in S_{\psi}$. From Corollary \ref{imp2} we can write
\[\mbox{Ind}_{H}^{G}(1_H)=\mbox{Ind}_{HG^1}^{G}1_{HG^1}+\sum \mbox{Ind}_{H_j}^{G} \chi_j,\]
\noindent where $\chi_j$'s are one dimensional characters of $H_j$'s. Twisting both sides with $\chi_0$ (and using the fact that twisting commutes with induction) we get
\[\mbox{Ind}_{H}^{G}(\psi)=\mbox{Ind}_{HG^1}^{G}(\chi_{0}|_{HG^1})+\sum \mbox{Ind}_{H_j}^{G} (\chi_j\otimes \chi_0|_{H_j}).\]
Note that $\sum \mbox{Ind}_{H_j}^{G} (\chi_j\otimes \chi_0|_{H_j})$ does not contain $1_G$, because both of $\mbox{Ind}_{H}^{G}\psi$ and $\mbox{Ind}_{HG^1}^{G}(\chi_{0}|_{HG^1})$ contains $1_G$ with same coefficient (by Frobenius reciprocity). So from Theorem \ref{ODT}, we can say $L(s,\sum \mbox{Ind}_{H_j}^{G} (\chi_j\otimes \chi_0|_{H_j}),K/F)$ is entire. Therefore it is now enough to show
\[\mbox{Ind}_{HG^1}^{G}(\chi_{0}|_{HG^1}) =\sum_{\chi \in S_{\psi}} \chi.\]
By Frobenius reciprocity one can see each $\chi \in S_{\psi}$ comes with coefficient one in \[\mbox{Ind}_{HG^1}^{G}(\chi_{0}|_{HG^1}).\] Now dimension of $\mbox{Ind}_{HG^1}^{G}\chi_{0}|_{HG^1}$ is $[G/HG^1]$. So proof is complete if we can show $|S_{\psi}|=[G/HG^1]$. First let us count how many one dimensional characters are there for $H$ and $G$. 
\begin{claim} For any subgroup $H$, number of 1-dimensional characters of $H$ are $|H/(H\cap G^1)|$ 
\end{claim} 
\textit{Proof of the claim:} Proof is not difficult. Point is that, any one dimensional character $\phi$ of $H$ factors through $H/(H\cap G^1)$. And this gives a one-one correspondence between 1-dimensional characters of $H$ and and of $H/(H\cap G^1)$. But $H/(H\cap G^1)$ is abelian, and so it has $|H/(H\cap G^1)|$ many 1-dimensional characters.\\ 
\newline
\noindent Note, for any one dimensional character $\psi$ on $H, \chi \in S_{\psi}$ if and only if $\mbox{Ind}_{HG^1}^{G}(\chi_{0}|_{HG^1})$ contains $\chi$ with coefficient one. We get this using Frobenius reciprocity and the fact that dimension of $\chi_{0}|_{HG^1}$ is one. This observation leads us to conclude $[G/HG^1]$ is upper bound for all $|S_{\psi}|$'s. Because, $\mbox{Ind}_{HG^1}^{G}(\chi_{0}|_{HG^1})$ has dimension $[G/HG^1]$. On the other hand, it is clear that $\bigcup_{\psi}S_{\psi}$ is set of all 1-dimensional charters of $G$ ($\psi$ runs over all 1-dimensional characters of $H$). But then 
\[[G/G^1]=|\bigcup S_{\psi}| \leq [G/HG^1]\times[H/(H\cap G^1)]=[G/G^1],\] 
and so we must have $|S_{\psi}|=[G/HG^1]$, as desired.
\end{proof} 
\begin{rem} \label{rem 1}
\noindent From Theorem \ref{RR2} Murty and Raghuram conjectured that, it is possible to divide $L(s,\mbox{Ind}_{H}^G(\psi),K/F)$ by the product of L-functions attached to all irreducible characters of level at most $i$ and get entire function. Note that only a special case of Corollary \ref{imp2} was used in the proof of Theorem \ref{RR2}. So using full power of this result there should be a generalization for any $i$. They proved this result partially. Later in 2005, Lansky and Wilson proved their conjecture. 
\end{rem}
We will come back to this in Section 3.3. For now let us discuss what Murty and Raghuram proved in \cite{MR},
\begin{theorem}[Murty-Raghuram, \cite{MR}] \label{RR3} Let $G$ be the Galois group of solvable Galois extension $K/F$ and $i\geq 1$ be a integer. Let $H$ be a subgroup of $G$ and $\psi$ be a one dimensional character of $H$. Let $S_{\psi}$ be set of all one dimensional characters of $G$ whose restriction to $H$ is $\psi$. Let $\psi'$ be the character obtained from $\psi$ extending it to $HG^i$. (We can do this because, being one-dimensional, $\psi$ is trivial on $H \cap G^i$). If $S_{\psi}$ is non empty then, 
\[\frac{L(s, \emph{Ind}_{H}^{G} (\psi), K/F)}{\prod_{l(\chi) \leq i} L(s,\chi, K/F)^{(\chi,\emph{Ind}_{H}^{G}\psi)}}\]
is entire.
\end{theorem}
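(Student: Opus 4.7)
Following the strategy of Theorem~\ref{RR2} but with the $i=1$ case of Corollary~\ref{imp2} replaced by the general case, I would first fix a character $\chi_0 \in S_\psi$, available since $S_\psi$ is non-empty by hypothesis. As $\chi_0$ is one-dimensional, it factors through the abelianization $G/G^1$ and so is trivial on $G^i \subseteq G^1$; thus $\chi_0|_{HG^i}$ is a one-dimensional character of $HG^i$, trivial on $G^i$, whose restriction to $H$ equals $\psi$. Since any one-dimensional character of $HG^i$ trivial on $G^i$ is determined by its restriction to $H$ via the isomorphism $H/(H\cap G^i) \cong HG^i/G^i$, one concludes $\chi_0|_{HG^i} = \psi'$.

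Applying Corollary~\ref{imp2} and tensoring both sides with $\chi_0$ (using that induction commutes with tensoring) yields
\[\mbox{Ind}_H^G(\psi) = \mbox{Ind}_{HG^i}^G(\psi') + \sum_j \mbox{Ind}_{H_j}^G\bigl(\chi_j \otimes \chi_0|_{H_j}\bigr),\]
with each $\chi_j \otimes \chi_0|_{H_j}$ a one-dimensional character of some subgroup $H_j \leq G$. Passing to $L$-functions via Lemma~\ref{first}, the quotient in question factors as
\[\frac{L\bigl(s, \mbox{Ind}_{HG^i}^G(\psi'), K/F\bigr)}{\prod_{l(\chi) \leq i} L(s,\chi,K/F)^{(\chi,\, \mbox{Ind}_H^G \psi)}} \cdot \prod_j L\bigl(s, \chi_j \otimes \chi_0|_{H_j}, K/K^{H_j}\bigr),\]
so the theorem will follow from the virtual character identity
\[\mbox{Ind}_{HG^i}^G(\psi') = \sum_{l(\chi) \leq i} (\chi,\, \mbox{Ind}_H^G \psi)\,\chi, \qquad (\ast)\]
together with the claim that each $\chi_j \otimes \chi_0|_{H_j}$ is non-trivial, so that Theorem~\ref{ODT} renders the remaining product entire.

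The non-triviality is the easier part: Frobenius reciprocity gives $(1_G, \mbox{Ind}_H^G \psi) = (1_H, \psi)$ and $(1_G, \mbox{Ind}_{HG^i}^G \psi') = (1_{HG^i}, \psi')$, and these agree (both equal $1$ if $\psi = 1_H$ and $0$ otherwise, since $\psi' = 1_{HG^i}$ precisely when $\psi = 1_H$). Hence $\sum_j \mbox{Ind}_{H_j}^G(\chi_j \otimes \chi_0|_{H_j})$ has zero $1_G$-component; as each summand is a non-negative combination of irreducibles, each $\chi_j \otimes \chi_0|_{H_j}$ must be non-trivial.

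The main obstacle is the identity $(\ast)$. Writing $\mbox{Ind}_{HG^i}^G(\psi') = \sum a_\chi \chi$, I would first argue that $a_\chi \neq 0$ forces $l(\chi) \leq i$: by Frobenius reciprocity $\psi'$ appears in $\chi|_{HG^i}$, so $1_{G^i} = \psi'|_{G^i}$ appears in $\chi|_{G^i}$; Clifford's theorem applied to the normal subgroup $G^i$ of $G$ then forces $\chi|_{G^i} = \chi(1)\cdot 1_{G^i}$, since $1_{G^i}$ is its own unique $G$-conjugate. This is exactly the condition $l(\chi) \leq i$. For such $\chi$, both $\chi$ and $\psi'$ are trivial on $G^i$, so Frobenius reciprocity combined with deflation to $G/G^i$ gives
\[a_\chi = (\chi|_{HG^i}, \psi')_{HG^i} = (\chi|_H, \psi)_H = (\chi,\, \mbox{Ind}_H^G \psi),\]
the middle equality arising by computing both inner products on the common quotient $H/(H\cap G^i) \cong HG^i/G^i$, on which $\chi|_H, \psi, \psi'$ and $\chi|_{HG^i}$ all descend compatibly. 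This establishes $(\ast)$ and completes the argument.
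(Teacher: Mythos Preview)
Your proof is correct and follows essentially the same route as the paper: apply Corollary~\ref{imp2}, twist by some $\chi_0 \in S_\psi$, and reduce to the identity $\mbox{Ind}_{HG^i}^G(\psi') = \sum_{l(\chi)\le i}(\chi,\mbox{Ind}_H^G\psi)\chi$ together with entireness of the monomial remainder. The only difference is that the paper packages the identity $(\ast)$ as a separate lemma (Lemma~\ref{imp for i}) and establishes the vanishing for $l(\chi)>i$ via Mackey's theorem applied to $\mbox{Ind}_{HG^i}^G(\psi')|_{G^i}$, whereas you derive it inline from Clifford's theorem; both arguments are equally short and yield the same conclusion.
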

\begin{proof} From Corollary \ref{imp2} we can write
\[\mbox{Ind}_{H}^{G}(1_{H})=\mbox{Ind}_{HG^i}^{G}(1_{HG^i})+\sum \mbox{Ind}_{H_j}(\psi_j).\]
As per assumption $S_{\psi}$ is non empty. Take $\chi_0 \in S_{\psi}$ and twisting both sides of above equation with $\chi_0$ we get
\[\mbox{Ind}_{H}^{G}(\psi)=\mbox{Ind}_{HG^i}^{G}(\psi')+\chi,\]
where $\chi$ is a sum of monomial characters of $G$ not containing $1_G$, because both of $\mbox{Ind}_{H}^{G}(\psi)$ and $\mbox{Ind}_{HG^i}^{G}(\psi')$ contains $1_G$ with same coefficient. Proof is now complete using Theorem \ref{ODT} and Lemma \ref{imp for i}, which we will show in the next section.
\end{proof} 
\subsection{Further work of Lansky-Wilson on Murty-Raghuram's conjecture}
In this section we shall discuss the generalization of Theorem \ref{RR3}. Let us go back to the discussion at Remark \ref{rem 1} and recall the result predicted by Murty and Raghuram. They proved (Theorem \ref{RR3}) their conjecture when $\psi$ extends to a character of $G$. We deal with the general case in this section, following the work by Lansky and Wilson \cite{LW}. First let us prove an important result of this section.
\begin{lemma} \label{imp for i} Let $G$ be a solvable group and $\psi$ be a one-dimensional character of a subgroup $H$ of $G$ such that $\psi|_{H \cap G^i}$ is trivial. Let $\psi'$ be the unique extension of $\psi$ to a character of $HG^i$, that is trivial on $G^i$. Then for any irreducible character $\chi$ of G, 
\[
    (\chi,\emph{Ind}^G_{HG^i}(\psi')) =\left\{ \begin{array}{lr}
        (\chi,\emph{Ind}^G_{H}(\psi)) & \text{if } l(\chi) \leq i\\
         0 & \text{if } l(\chi)>i
        \end{array} \right.
  \]
where $l(\chi)$ is defined to be length of character $\chi$, as defined in Definition \ref{Len}. And consequently,
\[\emph{Ind}_{HG^i}^{G} \psi'=\sum_{l(\chi) \leq i} (\chi,\emph{Ind}_{H}^{G}(\psi))\chi.\]
\end{lemma}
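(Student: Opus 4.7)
The plan is to reduce everything to an inner product on the intermediate group $K := HG^i$ via Frobenius reciprocity, and then exploit the fact that $\psi'$ is trivial on $G^i$ to rule out any irreducible constituent $\chi$ with $l(\chi)>i$. Concretely, by Frobenius reciprocity one has
\[
(\chi, \mathrm{Ind}^G_{HG^i}(\psi'))_G \;=\; (\chi|_{HG^i},\psi')_{HG^i},
\]
and similarly by transitivity of induction followed by Frobenius
\[
(\chi, \mathrm{Ind}^G_H(\psi))_G \;=\; (\chi|_H,\psi)_H,
\]
so the two quantities we want to compare both live on smaller groups.

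For the vanishing statement, I would re-interpret $(\chi|_K,\psi')_K$ as $\dim(V\otimes \mathbb{C}_{\overline{\psi'}})^K$, where $V$ is an irreducible representation of $G$ with character $\chi$. A nonzero $K$-invariant vector $v$ in this space must in particular satisfy $gv=\psi'(g)v=v$ for all $g\in G^i$, because $\psi'$ is trivial on $G^i$. Hence $V^{G^i}\neq 0$. Since $G^i$ is normal in $G$, the subspace $V^{G^i}$ is $G$-stable, and by irreducibility $V^{G^i}=V$, which exactly says that $\chi$ is trivial on $G^i$, i.e.\ $l(\chi)\leq i$. Contrapositively, $l(\chi)>i$ forces $(\chi|_K,\psi')_K=0$.

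For the equality in the case $l(\chi)\leq i$, both $\chi|_K$ and $\psi'$ are trivial on $G^i$ (the first by definition of $l(\chi)$, the second by construction of $\psi'$), so both factor through $K/G^i \cong H/(H\cap G^i)$. A direct unfolding of inner products on these isomorphic quotients, using that $\psi|_{H\cap G^i}$ is trivial, gives
\[
(\chi|_K,\psi')_K \;=\; (\chi|_H,\psi)_H \;=\; (\chi,\mathrm{Ind}^G_H(\psi))_G.
\]
The routine, and genuinely easy, point to verify carefully is that the isomorphism $H/(H\cap G^i)\xrightarrow{\sim} HG^i/G^i$ identifies $\psi'$ with the character on $H/(H\cap G^i)$ induced from $\psi$, and identifies $\chi|_K$ (reduced mod $G^i$) with $\chi|_H$ (reduced mod $H\cap G^i$); once this is in place, the equality of the two inner products is a one-line computation.

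Finally, the consequent formula follows by expanding
\[
\mathrm{Ind}^G_{HG^i}(\psi') \;=\; \sum_{\chi\in\mathrm{Irr}(G)} \bigl(\chi, \mathrm{Ind}^G_{HG^i}(\psi')\bigr)\, \chi,
\]
and substituting the two cases already established; the $l(\chi)>i$ terms drop out, and the $l(\chi)\leq i$ terms replace their coefficient by $(\chi,\mathrm{Ind}^G_H\psi)$. The only step that requires any genuine thought is the vanishing case, and its whole content is the observation that $V^{G^i}$ is a $G$-submodule.
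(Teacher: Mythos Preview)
Your proof is correct, and for the case $l(\chi)\leq i$ it is essentially the same as the paper's: both reduce via Frobenius reciprocity to the equality $(\chi|_{HG^i},\psi')_{HG^i}=(\chi|_H,\psi)_H$ and verify it by a direct computation using that $\chi$ and $\psi'$ are trivial on $G^i$.

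For the vanishing case $l(\chi)>i$, however, you take a genuinely different route. The paper computes $(\mathrm{Ind}^G_{HG^i}\psi'|_{G^i},1_{G^i})$ using Mackey's theorem and shows it equals the full dimension $[G:HG^i]$, whence $\mathrm{Ind}^G_{HG^i}\psi'|_{G^i}$ is a multiple of $1_{G^i}$; any irreducible constituent $\chi$ must then also restrict to a multiple of $1_{G^i}$, forcing $l(\chi)\leq i$. Your argument bypasses Mackey entirely: you observe that a nonzero $K$-invariant in $V\otimes\mathbb{C}_{\overline{\psi'}}$ is in particular $G^i$-invariant (since $\psi'$ is trivial there), so $V^{G^i}\neq 0$, and then normality of $G^i$ plus irreducibility of $V$ force $V^{G^i}=V$. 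This is cleaner and more conceptual, and it makes transparent that the only structural input needed is $G^i\trianglelefteq G$; the paper's computation, on the other hand, yields the explicit multiplicity of $1_{G^i}$ in the restriction, which is slightly more information than the bare vanishing statement requires.
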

\begin{proof} If there exist a character $\chi$ of level more than $i$ such that $(\mbox{Ind}_{HG^i}^{G}(\psi'),\chi) \neq 0$, then $\mbox{Ind}_{HG^i}^{G}(\psi')$ contains $\chi$. But if we know $\mbox{Ind}_{HG^i}^{G}(\psi')|_{G^i}$ is multiple of $1_{G^i}$, then $\chi|_{G^i}$ must also be a multiple of $1_{G^i}$, a contradiction as $l(\chi)>i$. To show $(\mbox{Ind}_{HG^i}^{G}(\psi'),\chi) = 0,$ it is enough to show $\mbox{Ind}_{HG^i}^{G}(\psi')$ is a multiple of trivial character $1_{G^i}$. Using Mackey's theorem and Frobenius reciprocity we have,
\begin{align}
(\mbox{Ind}^G_{HG^i}\psi'|_{G^i}, 1_{G^i}) &= \sum_{x \in G^i \backslash G / HG^i}  (\mbox{Ind}_{x(HG^i)x^{-1}}^{G^i}(\psi')^x,1_{G^i})  \nonumber \\
&= \sum_{x \in G^i \backslash G / HG^i} ((\psi')^{x},1_{x(HG^i)x^{-1}\cap G^i}) \nonumber\\
&= \sum_{x \in G^i \backslash G / HG^i}(\psi',1_{HG^i}) \nonumber\\
&= |G^i \backslash G/ HG^i|.
\end{align} 
Here the last equality comes from the fact that $\psi'$ is 1-dimensional and $\psi'|_{H\cap G^i}$ is trivial. It follows that, $|G^i \backslash G /HG^i|=|G/HG^i|$, because $G^i$ is normal in $G$. But dimension of $\mbox{Ind}^G_{HG^i}(\psi')|_{G^i}$ is already $|G/HG^i|$, which simply mean $\mbox{Ind}^G_{HG^i}(\psi')|_{G^i}=c1_{G^i}$, where $c$ is given by $|G/HG^i|$.\\
\newline
Suppose $l(\chi)\leq i$, by Frobenius reciprocity it is enough to show
\[(\chi|_{HG^i}, \psi')=(\chi|_{H},\psi),\]
which one can check using the original definition of inner product of characters, and the fact that both of $\chi$ and $\psi'$ are trivial on $G^i$.
\end{proof} 
We denote the set $S^i$ of irreducible characters of $G$ of level less than or equal to $i$, and $K^i$ to be the fixed field of $G^i$. As a consequence of Lemma \ref{imp for i}, we obtain the following: 
\begin{corollary}\label{imp for i_2} Let $d$ be the greatest common divisor of dimensions of the characters in $\emph{Irr}(G)-S^i$. Then  \[\emph{ord}_{s=s_0}\Big(\zeta_K(s)/\zeta_{K^i}(s)\Big)=kd,\]
where $k$ is a non negative integer. 
\end{corollary}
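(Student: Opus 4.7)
The plan is to express $\zeta_K(s)/\zeta_{K^i}(s)$ as an explicit product of Artin $L$-functions indexed by characters of level exceeding $i$, and then combine meromorphy of each factor with Aramata-Brauer's theorem to pin down the order at $s_0$.

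First I would apply Lemma \ref{imp for i} with $H=G^i$ and $\psi=1_{G^i}$ (so $HG^i=G^i$ and $\psi'=1_{G^i}$). By Frobenius reciprocity $(\chi,\mbox{Ind}_{G^i}^G 1_{G^i})=(\chi|_{G^i},1_{G^i})$, which equals $\chi(1)$ whenever $l(\chi)\le i$. The lemma then yields
\[\mbox{Ind}_{G^i}^G 1_{G^i}=\sum_{l(\chi)\le i}\chi(1)\chi.\]
Applying $L(s,\cdot,K/F)$ and using Lemma \ref{first}, this identifies $\zeta_{K^i}(s)$ with $\prod_{l(\chi)\le i}L(s,\chi,K/F)^{\chi(1)}$. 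Dividing the Artin-Takagi decomposition $\zeta_K(s)=\prod_{\chi\in\mbox{Irr}(G)}L(s,\chi,K/F)^{\chi(1)}$ by this expression gives
\[\frac{\zeta_K(s)}{\zeta_{K^i}(s)}=\prod_{l(\chi)>i}L(s,\chi,K/F)^{\chi(1)}.\]

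Next I would invoke (strong) Brauer's induction theorem to write each $L(s,\chi,K/F)$ as a $\mathbb{Z}$-linear product of one-dimensional Hecke $L$-functions; by Theorem \ref{ODT} each such factor is meromorphic on $\mathbb{C}$, so $L(s,\chi,K/F)$ is itself meromorphic and $n_\chi:=\mbox{ord}_{s=s_0}L(s,\chi,K/F)$ is an integer. Consequently
\[\mbox{ord}_{s=s_0}\Big(\zeta_K(s)/\zeta_{K^i}(s)\Big)=\sum_{l(\chi)>i}\chi(1)\,n_\chi,\]
and since $d\mid\chi(1)$ for every character $\chi$ appearing in this sum, the total is divisible by $d$. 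Non-negativity follows because $G^i\triangleleft G$ makes $K/K^i$ a Galois extension, so Aramata-Brauer's theorem guarantees $\zeta_K/\zeta_{K^i}$ is entire. Writing the order as $kd$ with $k\in\mathbb{Z}_{\ge 0}$ then completes the proof.

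The delicate step is insisting that $L(s,\chi,K/F)$ itself (rather than merely some positive power of it) is meromorphic, so that the orders $n_\chi$ are honest integers. If one only had meromorphy of $L(s,\chi,K/F)^{\chi(1)}$ via weak Brauer and Foote-Murty, then $\chi(1)\,n_\chi$ would still be an integer but its divisibility by $d$ would not be manifest. Strong Brauer, as recorded with integer coefficients in the preliminaries, is exactly the input that resolves this subtlety.
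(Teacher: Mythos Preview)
Your proof is correct and follows essentially the same route as the paper: identify $\zeta_{K^i}(s)$ with $\prod_{l(\chi)\le i}L(s,\chi,K/F)^{\chi(1)}$ via Lemma~\ref{imp for i}, divide the Artin--Takagi decomposition to obtain $\zeta_K/\zeta_{K^i}=\prod_{l(\chi)>i}L(s,\chi)^{\chi(1)}$, then read off divisibility by $d$ from integrality of the orders and non-negativity from Aramata--Brauer. The only cosmetic difference is that the paper specializes Lemma~\ref{imp for i} with $H=\{1\}$ rather than $H=G^i$; both yield the same identity $\mbox{Ind}_{G^i}^G 1_{G^i}=\sum_{l(\chi)\le i}\chi(1)\chi$. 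Your closing remark about needing strong Brauer is slightly overcautious: the paper's Theorem~\ref{mero} already shows each $L(s,\chi,K/F)$ is itself meromorphic (a power being meromorphic, together with holomorphy in $\mbox{Re}(s)>1$, forces the function to be meromorphic), so $n_\chi\in\mathbb{Z}$ without invoking integer coefficients in Brauer induction.
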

\begin{proof} Taking $H=\{1\}$ and $\psi=1_H$ in Lemma \ref{imp for i}, we get
\[\mbox{Ind}_{G^i}^{G}1_{G^i}=\sum_{l(\chi)\leq i} \chi.\]
So applying $L(s,\cdot,K/F)$ to both sides and using Lemma \ref{first} we have,
\[\frac{\zeta_K(s)}{\zeta_{K^i}(s)}=\frac{\prod_{\chi \in \mbox{Irr}(G)} L(s,\chi,K/F)^{\chi(1)}}{\prod_{\chi \in S^i} L(s,\chi,K/F)^{\chi(1)}},\]
and so the result follows by comparing the order of both sides at $s=s_0$, and by the fact that $\frac{\zeta_K(s)}{\zeta_{K^i}(s)}$ is entire. (by Aramata-Brauer theorem (Theorem \ref{AB1}), proved in the next chapter.)
\end{proof}  
We now prove the most desirable result of this section.
\begin{theorem}[Lansky-Wilson, \cite{LW}] \label{LWT1} Let $K/F$ be a solvable extension of number fields, and let $G=\emph{Gal}(K/F)$. Let $\psi$ be a one-dimensional character of a subgroup $H$ of $G$. Then,
\[\frac{L(s,\emph{Ind}^G_H \psi,K/F)}{\prod_{\chi \in \emph{Irr}(G),l(\chi)\leq i} L(s,\chi,K/F)^{(\chi,\emph{Ind}_{H}^G(\psi))}}\] 
is entire.  
\end{theorem}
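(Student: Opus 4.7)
The plan is to split into two cases according to whether the $1$-dimensional character $\psi|_{H \cap G^i}$ is trivial or not. \textbf{Case 1.} Suppose $\psi|_{H\cap G^i}$ is nontrivial. I would first show that the denominator of the quotient is identically $1$, that is, no $\chi\in\mbox{Irr}(G)$ with $l(\chi)\leq i$ appears in $\mbox{Ind}_H^G(\psi)$. If $(\chi, \mbox{Ind}_H^G\psi)>0$, Frobenius reciprocity makes $\psi$ a constituent of $\chi|_H$; since $l(\chi)\leq i$ forces $G^i$ to lie in $\ker\chi$, the restriction $\chi|_{H\cap G^i}$ is a multiple of $1_{H\cap G^i}$, and so is its subrepresentation $\psi|_{H\cap G^i}$, contradicting the hypothesis. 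The quotient therefore reduces to $L(s,\mbox{Ind}_H^G\psi,K/F) = L(s,\psi,K/K^H)$, which is entire by Theorem \ref{ODT} because $\psi\neq 1_H$.

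\textbf{Case 2.} Suppose $\psi|_{H\cap G^i}=1$, so that $\psi$ extends to a $1$-dimensional character $\psi'$ of $HG^i$ defined to be trivial on $G^i$. By Lemma \ref{imp for i}, $\mbox{Ind}_{HG^i}^G\psi' = \sum_{l(\chi)\leq i}(\chi,\mbox{Ind}_H^G\psi)\chi$, so its L-function is precisely the denominator in the theorem. I would then apply Corollary \ref{imp3} to the solvable group $HG^i$ with the subgroup $H$, obtaining
\[
\mbox{Ind}_H^{HG^i} 1_H = 1_{HG^i} + \sum_j \mbox{Ind}_{K_j}^{HG^i}(\chi_j),
\]
with each $\chi_j$ a $1$-dimensional character of a subgroup $K_j\subseteq HG^i$. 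Twisting both sides by $\psi'$ (which lives on the ambient group $HG^i$) via the identity $(\mbox{Ind}_K^{HG^i}\phi)\otimes\psi' = \mbox{Ind}_K^{HG^i}(\phi\otimes\psi'|_K)$, and then inducing to $G$ by transitivity of induction, one obtains
\[
\mbox{Ind}_H^G(\psi) = \mbox{Ind}_{HG^i}^G\psi' + \sum_j \mbox{Ind}_{K_j}^G(\chi_j\otimes\psi'|_{K_j}).
\]
Applying $L(s,\cdot,K/F)$ via Lemma \ref{first} and dividing by $L(s,\mbox{Ind}_{HG^i}^G\psi',K/F)$, the quotient in the theorem becomes $\prod_j L(s,\chi_j\otimes\psi'|_{K_j}, K/K^{K_j})$.

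The \textbf{key obstacle} is to guarantee that each twisted character $\chi_j\otimes\psi'|_{K_j}$ is \emph{nontrivial}; otherwise the corresponding factor would be a Dedekind zeta function and would contribute a pole at $s=1$. This is settled by comparing the multiplicity of $1_{HG^i}$ on both sides of the twisted decomposition. The left side is $(\mbox{Ind}_H^{HG^i}\psi, 1_{HG^i}) = (\psi, 1_H)$, while the right side is $(\psi',1_{HG^i}) + \sum_j(\chi_j\otimes\psi'|_{K_j}, 1_{K_j})$; since $\psi=1_H$ if and only if $\psi'=1_{HG^i}$, the isolated terms match in both cases, forcing every inner product in the sum to vanish. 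Theorem \ref{ODT} then gives that each factor $L(s,\chi_j\otimes\psi'|_{K_j}, K/K^{K_j})$ is entire, completing the proof. Conceptually, this approach generalizes Murty--Raghuram's Theorem \ref{RR3} by replacing the hypothesis $S_\psi\neq\emptyset$ (that $\psi$ extend to all of $G$) with the much weaker requirement that $\psi$ extend merely to $HG^i$, and Case~1 disposes of the situation where even this weaker extension fails.
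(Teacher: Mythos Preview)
Your proof is correct and follows essentially the same two-case structure as the paper's proof. The only notable difference is that in the case where $\psi|_{H\cap G^i}$ is nontrivial, the paper computes $(\mbox{Ind}_H^G\psi|_{G^i},1_{G^i})=0$ via Mackey's theorem, whereas your direct Frobenius-reciprocity argument (any constituent of $\chi|_H$ must be trivial on $H\cap G^i$ when $l(\chi)\leq i$) reaches the same conclusion more simply; in the other case your derivation of the decomposition via Corollary~\ref{imp3} applied to $HG^i$, and your check that each $\chi_j\otimes\psi'|_{K_j}$ is nontrivial by comparing multiplicities of $1_{HG^i}$, are exactly what the paper does (there phrased as ``$1_G$ does not appear in the sum'', which is the same statement after one application of Frobenius reciprocity).
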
 
\begin{proof} First we assume that $\psi$ is trivial on $H \cap G^i$. So clearly $\psi$ extends uniquely to a character $\psi'$ of $HG^i$ that is trivial on $G^i$. Lemma \ref{imp for i} implies that the denominator in the quotient is same as $L(s,\mbox{Ind}_{HG^i}^G(\psi'),K/F)$. So we only need to investigate $\mbox{Ind}^G_H(\psi)-\mbox{Ind}_{HG^i}^{G^i}(\psi').$ Following proof of Theorem 5.3 in \cite{MR}, we can write \[\mbox{Ind}^G_H(\psi)=\mbox{Ind}_{HG^i}^{G}(\psi')+\sum_{j}\mbox{Ind}_{H_j}^{G}(\psi_j \otimes \psi'|_{H_j}),\]
and following the same argument given in proof of Theorem \ref{RR2}, we see $\sum_{j}\mbox{Ind}_{H_j}^{G}(\psi_j \otimes \psi'|_{H_j})$ does not contain $1_G$. So we can now use Theorem \ref{ODT} without any problem. Hard part of the proof is when $\psi$ is not trivial on $H \cap G^i$. For an element $x\in G,$ we denote the character $\psi^x$ by $\psi^x(g)=\psi(x^{-1}gx)$. Using Mackey's theorem and Frobenius reciprocity, 
\begin{align} 
(\mbox{Ind}^G_H(\psi)|_{G^i}, 1_{G^i}) &= \sum_{x \in G^i \backslash G / H}  (\mbox{Ind}_{xHx^{-1}}^{G^i}(\psi^x),1_{G^i})  \nonumber \\
&= \sum_{x \in G^i \backslash G / H} (\psi^{x},1_{xHx^{-1}\cap G^i}) \nonumber\\
&= \sum_{x \in G^i \backslash G / H}(\psi,1_H) \nonumber\\
&= 0. 
\end{align}
as $\psi$ is not trivial on $H\cap G^i$. And so $\mbox{Ind}^G_H(\psi)$ can not contain any character of level at most $i$, because all characters with level at most $i$ restricts to a non zero multiple of $1_{G^i}$ on $G^i$. And that means $(\chi,\mbox{Ind}^G_H(\psi))=0$ for any irreducible character $\chi$ with level at most $i$. So the problem is reduced to show that $L(s,\mbox{Ind}_{H}^{G}(\psi),K/F)$ is entire, which is of course the case as per Theorem \ref{ODT}, unless $\psi=1_H$. But $\psi=1_H$ is not possible, because in this case we assumed $\psi$ is nontrivial on $H\cap G^i$. 
\end{proof} 
\begin{corollary} Let $\psi_0$ be a one-dimensional character of a subgroup $H$ of $G$, and let $S$ be the set of all irreducible characters of level $i$ occurring in $\emph{Ind}_H^G(\psi_0)$. Then, the product of L-functions
\[\prod_{\chi \in S} L(s,\chi, K/F)^{\big(\chi, \emph{Ind}_{H}^G(\psi_0)\big)}\]
is entire 
\end{corollary}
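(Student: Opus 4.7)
The plan is to identify the product $T := \prod_{\chi \in S} L(s,\chi,K/F)^{(\chi,\mbox{Ind}_H^G \psi_0)}$ with the L-function of an honest (non-virtual) sum of inductions of one-dimensional characters, so that Theorem \ref{ODT} forces each factor to be entire. A direct attempt that compares Theorem \ref{LWT1} at levels $i-1$ and $i$ only exhibits $T$ as the ratio of two entire functions, which a priori is merely meromorphic. I would instead carry out the subtraction inside the induced character $\mbox{Ind}_H^G \psi_0$ itself, using Lemma \ref{imp for i} at both $i$ and $i-1$.

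If $S=\emptyset$ then $T=1$ and we are done, so assume $S\neq\emptyset$. The existence of some $\chi \in S$ forces $\chi|_{H\cap G^i}=\chi(1)1_{H\cap G^i}$ and, via Frobenius reciprocity, $(\chi|_H,\psi_0)>0$; combining these shows $\psi_0|_{H\cap G^i}$ must be trivial. Let $\psi'$ be the unique extension of $\psi_0$ to $HG^i$ trivial on $G^i$. I would then split according to whether $\psi_0|_{H\cap G^{i-1}}$ is trivial. If it is not, the same Frobenius argument shows that no character of level $\leq i-1$ occurs in $\mbox{Ind}_H^G\psi_0$, so the denominator appearing in Theorem \ref{LWT1} at level $i-1$ is trivial, and Lemma \ref{imp for i} identifies $T$ with $L(s,\mbox{Ind}_{HG^i}^G\psi',K/F)=L(s,\psi',K/K^{HG^i})$; here $\psi'$ is nontrivial (else $\psi_0$ itself would be trivial), and the factor is entire by Theorem \ref{ODT}.

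In the main subcase $\psi_0|_{H\cap G^{i-1}}$ is trivial. Let $\psi''$ be the unique extension of $\psi_0$ to $HG^{i-1}$ trivial on $G^{i-1}$, so that $\psi''|_{HG^i}=\psi'$ by uniqueness. Applying Lemma \ref{imp for i} at both $i$ and $i-1$, the second bullet of Lemma \ref{first}, and transitivity of induction yields
\[
T \;=\; \frac{L\big(s,\mbox{Ind}_{HG^i}^{HG^{i-1}}\psi',K/K^{HG^{i-1}}\big)}{L\big(s,\psi'',K/K^{HG^{i-1}}\big)}\;=\;L\big(s,\xi,K/K^{HG^{i-1}}\big),
\]
where $\xi:=\mbox{Ind}_{HG^i}^{HG^{i-1}}\psi'-\psi''$ is a bona fide character of $HG^{i-1}$ since Frobenius gives $(\mbox{Ind}_{HG^i}^{HG^{i-1}}\psi',\psi'')=(\psi',\psi''|_{HG^i})=1$. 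I would then apply Corollary \ref{imp3} to the solvable group $HG^{i-1}$ with subgroup $HG^i$, and twist the resulting identity by $\psi''$ (using that tensoring commutes with induction) to obtain
\[
\mbox{Ind}_{HG^i}^{HG^{i-1}}\psi'\;=\;\psi''+\sum_k \mbox{Ind}_{J_k}^{HG^{i-1}}\big(\eta_k\otimes\psi''|_{J_k}\big),
\]
whence $T=\prod_k L\big(s,\eta_k\otimes\psi''|_{J_k},K/K^{J_k}\big)$.

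The hard part will be to check that every twisted one-dimensional character $\eta_k\otimes\psi''|_{J_k}$ is nontrivial, since then Theorem \ref{ODT} immediately makes each factor (and hence $T$) entire. I would establish this through the single Frobenius computation
\[
(\xi,1_{HG^{i-1}})\;=\;(\psi',1_{HG^i})-(\psi'',1_{HG^{i-1}}),
\]
which vanishes in both subcases $\psi_0=1_H$ (both inner products equal $1$) and $\psi_0\neq 1_H$ (both equal $0$). Because each $(1_{J_k},\eta_k\otimes\psi''|_{J_k})\in\{0,1\}$ and they sum to $(1_{HG^{i-1}},\xi)=0$, no summand can equal $1$; thus every factor in the product for $T$ is an Artin L-function of a nontrivial one-dimensional character, and $T$ is entire.
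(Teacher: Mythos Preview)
Your proof is correct, but it takes a more laborious route than the paper's. You worried that ``comparing Theorem~\ref{LWT1} at levels $i-1$ and $i$'' only gives $T$ as a ratio of two entire functions, and then worked around this by descending to $HG^{i-1}$, invoking Corollary~\ref{imp3} there, twisting, and checking by hand that the trivial character does not appear. All of this is fine, but it essentially re-proves a special instance of Theorem~\ref{LWT1} from scratch.

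The paper's trick is to apply Theorem~\ref{LWT1} \emph{once}, not to the pair $(H,\psi_0)$, but to the pair $(HG^i,\psi')$ at level $i-1$. That single application already gives
\[
\frac{L(s,\mbox{Ind}^G_{HG^i}\psi',K/F)}{\prod_{\chi\in S^{i-1}} L(s,\chi,K/F)^{(\chi,\mbox{Ind}^G_{HG^i}\psi')}}
\]
as entire, and then Lemma~\ref{imp for i} identifies the numerator with $\prod_{\chi\in S^i}L(s,\chi,K/F)^{(\chi,\mbox{Ind}^G_H\psi_0)}$ and rewrites the exponents in the denominator as $(\chi,\mbox{Ind}^G_H\psi_0)$, so the quotient is exactly $T$. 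No case split on $\psi_0|_{H\cap G^{i-1}}$ is needed, and the verification that $1_G$ does not appear is already absorbed into the proof of Theorem~\ref{LWT1}. Your approach has the minor virtue of being self-contained relative to Corollary~\ref{imp3} and Theorem~\ref{ODT} alone, but the paper's is considerably shorter.
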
 
\begin{proof} If $\psi_0$ is nontrivial on $H \cap G^i$, then by the same argument as in previous proof, we can say $(\psi,\mbox{Ind}_{H}^G(\psi_0))=0$ for any $x\in S$, and so does the proof follows. So we assume $\psi_0$ is trivial on $H \cap G^i$, and that means $\psi_0$ extends to a character $\psi$ on $HG^i$. Theorem \ref{LWT1} implies that, 
\[\frac{L(s,\mbox{Ind}^G_{HG^i}\psi,K/F)}{\prod_{\chi \in S^{i-1}} L(s,\chi,K/F)^{\big(\chi, \mbox{Ind}^G_{HG^i} (\psi)\big)}}\]
is entire. Following proof of Lemma \ref{imp for i}, above quotient is same as
\[\frac{\prod_{\chi \in S^i}L(s,\chi,F/K)^{\big(\chi,\mbox{Ind}^G_H(\psi_0)\big)}}{\prod_{\chi \in S^{i-1}}L(s,\chi,K/F)^{\big(\chi,\mbox{Ind}^G_H(\psi_0)\big)}},\]
which is same as 
\[\prod_{\chi \in S} L(s,\chi,K/F)^{\big(\chi,\mbox{Ind}^G_H(\psi_0)\big)},\]
as desired. 
\end{proof}

\section{Heilbronn characters on Artin L-functions}

To study Artin L-functions, Heilbronn introduced an innovative method. We are now going to discuss about this. 
\begin{definition} \label{HC} Let $K/F$ be a finite Galois extension of number fields with Galois group $G$. Let $s_0$ in $\mathbb{C}$ be fixed. For each subgroup $H$ of $G$ the Heilbronn character $\Theta_H$ is defined as 
$$\Theta_H(g)=\sum_{\chi}n(H,\chi)\chi(g),$$
where the summation is over all irreducible characters $\chi$ of $H$ and, 
\[n(H, \chi)=\mbox{ord}_{s=s_0}L(s,\chi, K/K^{H}).\]
\end{definition}
\noindent Definition above makes sense because Artin L-functions are known to have a meromorphic continuation (Theorem \ref{mero}). So the integers $n(H,\chi)$'s make sense. Note that Heilbronn characters are defined on all subgroups, whether they are proper or not. So there must be some kind of compatibility between characters for groups and their subgroups. Indeed it does, as can be seen below.
\begin{theorem}[Heilbronn-Stark Lemma] \label{HSL}
For any subgroup $H$ of G,
\[\Theta_G|_{H}=\Theta_H.\] 
\end{theorem}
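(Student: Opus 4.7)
The plan is to reduce the identity $\Theta_G|_H = \Theta_H$ to the two basic properties of Artin $L$-functions recorded in Lemma \ref{first}: additivity in the character, and invariance under induction. The Heilbronn character is a formal object built from orders of vanishing at $s_0$, so the comparison between $\Theta_G|_H$ and $\Theta_H$ should ultimately be driven by Frobenius reciprocity, applied character-by-character.

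First I would rewrite $n(H,\phi)$ for an irreducible character $\phi$ of $H$ using induction invariance:
\[
n(H,\phi) \;=\; \mathrm{ord}_{s=s_0} L(s,\phi,K/K^H) \;=\; \mathrm{ord}_{s=s_0} L(s,\mathrm{Ind}_H^G \phi, K/F).
\]
Decomposing $\mathrm{Ind}_H^G \phi = \sum_{\chi \in \mathrm{Irr}(G)} (\chi, \mathrm{Ind}_H^G \phi)\, \chi$ into $G$-irreducibles and using additivity of $L$ on sums of characters (the first bullet of Lemma \ref{first}) gives
\[
n(H,\phi) \;=\; \sum_{\chi \in \mathrm{Irr}(G)} (\chi, \mathrm{Ind}_H^G \phi)\, n(G,\chi).
\]
Next I would invoke Frobenius reciprocity (Theorem \ref{FR}) to convert each induction coefficient into a restriction coefficient: $(\chi, \mathrm{Ind}_H^G \phi) = (\chi|_H, \phi)$.

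The main step is then just reorganizing the double sum. Substituting into the definition of $\Theta_H$ and swapping the order of summation, one gets
\[
\Theta_H \;=\; \sum_{\phi \in \mathrm{Irr}(H)} n(H,\phi)\, \phi \;=\; \sum_{\chi \in \mathrm{Irr}(G)} n(G,\chi) \sum_{\phi \in \mathrm{Irr}(H)} (\chi|_H, \phi)\, \phi.
\]
The inner sum is exactly the expansion of $\chi|_H$ in the orthonormal basis $\mathrm{Irr}(H)$ of the space of class functions on $H$, so $\sum_{\phi} (\chi|_H,\phi)\,\phi = \chi|_H$. Therefore
\[
\Theta_H \;=\; \sum_{\chi \in \mathrm{Irr}(G)} n(G,\chi)\, \chi|_H \;=\; \Theta_G|_H,
\]
which is the claim.

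There is no real obstacle to speak of; the only subtle point is purely bookkeeping, namely that $n(G,\chi)$ and $n(H,\phi)$ are finite integers so the interchange of the two finite sums is unambiguous, and that Lemma \ref{first} applies with $F$ replaced by $K^H$ to identify the two interpretations of $n(H,\phi)$. What makes the statement meaningful, rather than what makes it hard, is that the meromorphic continuation from Theorem \ref{mero} is needed in order for the orders $n(H,\phi)$ to be defined in the first place; once that is granted, the identity is a formal consequence of Frobenius reciprocity combined with the functorial behavior of Artin $L$-functions.
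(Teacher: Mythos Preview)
Your proof is correct and is exactly the standard argument: rewrite $n(H,\phi)$ via induction invariance (Lemma~\ref{first}), expand $\mathrm{Ind}_H^G\phi$ into $G$-irreducibles, apply Frobenius reciprocity, and then recognise the inner sum as the expansion of $\chi|_H$ in the basis $\mathrm{Irr}(H)$. The paper itself does not give a proof of Theorem~\ref{HSL} at all; it simply cites \cite{RM}, p.~154, so there is nothing to compare against beyond noting that what you wrote is the expected argument one finds in that reference.
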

\begin{proof} See \cite{RM} pp. 154.
\end{proof}
\noindent Here is another crucial result, 
\begin{theorem} \label{bound}
$$\Theta_G(1)=\emph{ord}_{s=s_0} \zeta_K(s).$$
\end{theorem}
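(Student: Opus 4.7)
The plan is to evaluate the defining sum of $\Theta_G$ at the identity element and recognize the result as the order of the Artin--Takagi factorization of $\zeta_K(s)$. Concretely, plugging $g = 1$ into Definition \ref{HC} gives
\[
\Theta_G(1) = \sum_{\chi \in \operatorname{Irr}(G)} n(G,\chi)\,\chi(1) = \sum_{\chi \in \operatorname{Irr}(G)} \chi(1)\,\operatorname{ord}_{s=s_0} L(s,\chi,K/F),
\]
so the task reduces to showing that the right-hand side equals $\operatorname{ord}_{s=s_0} \zeta_K(s)$.

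The main ingredient is the Artin--Takagi decomposition, which was already derived in the proof of Theorem \ref{AC to DC}: applying $L(s,\cdot,K/F)$ to the identity $\operatorname{Reg}_G = \operatorname{Ind}_{\{e_G\}}^{G} 1_{\{e_G\}}$ and using the first two parts of Lemma \ref{first} yields
\[
\prod_{\chi \in \operatorname{Irr}(G)} L(s,\chi,K/F)^{\chi(1)} = L(s, \operatorname{Reg}_G, K/F) = L(s, 1_{\{e_G\}}, K) = \zeta_K(s).
\]
Taking $\operatorname{ord}_{s=s_0}$ of both sides converts the product into a finite sum of orders (each factor being meromorphic by Theorem \ref{mero}, so the orders are integers that add), giving
\[
\operatorname{ord}_{s=s_0} \zeta_K(s) = \sum_{\chi \in \operatorname{Irr}(G)} \chi(1)\,\operatorname{ord}_{s=s_0} L(s,\chi,K/F),
\]
which is precisely $\Theta_G(1)$ by the displayed expression above.

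There is no real obstacle here: the whole argument is a bookkeeping exercise combining the definition of $\Theta_G$ with the multiplicativity of $L$-functions under direct sum of characters. The only minor point to note is that we are using $\operatorname{ord}_{s=s_0}$ additively across a finite product of meromorphic functions, which is legitimate because each $L(s,\chi,K/F)$ is meromorphic on all of $\mathbb{C}$.
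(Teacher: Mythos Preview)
Your proof is correct and follows essentially the same approach as the paper: evaluate $\Theta_G$ at the identity, invoke the Artin--Takagi decomposition $\zeta_K(s)=\prod_{\chi}L(s,\chi,K/F)^{\chi(1)}$ from the proof of Theorem \ref{AC to DC}, and compare orders at $s=s_0$. Your added remark about meromorphicity justifying additivity of $\operatorname{ord}_{s=s_0}$ is a welcome clarification but does not alter the argument.
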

\begin{proof}
Clearly, 
$$\Theta_G(1)=\sum_{\chi} n(G,\chi)\chi(1),$$
and from Artin-Takagi decomposition (see proof of Theorem \ref{AC to DC}) we can write
\[\zeta_{K}(s)=\prod_{\chi \in \mbox{Irr}(G)}L(s,\chi,K/F)^{\chi(1)}.\]
So, $\mbox{ord}_{s=a}\zeta_K(s)=\sum_{\chi} n(G,\chi)\chi(1)=\Theta_G(1).$
\end{proof}
\noindent Following result allows us to bound all order of zeroes. This is one of the keys to deal with problems related to quotient of various L-functions.
\begin{theorem}[Foote-Murty, \cite{FM}] \label{MRI}
\[\sum_{\chi}n(G, \chi)^2 \leq ( \emph{ord}_{s=s_0} (\zeta_K(s)))^2.\]
\end{theorem}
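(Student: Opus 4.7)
The plan is to reduce the sum $\sum_\chi n(G,\chi)^2 = \langle \Theta_G, \Theta_G\rangle_G$ to a pointwise bound $|\Theta_G(g)| \leq A := \mathrm{ord}_{s=s_0}\zeta_K(s)$ valid for every $g \in G$, and then integrate that uniform bound against the normalized counting measure on $G$. The bridge from the pointwise bound to the stated inequality is nothing more than orthonormality of irreducible characters:
\[
\sum_{\chi \in \mathrm{Irr}(G)} n(G,\chi)^2 \;=\; \langle \Theta_G, \Theta_G\rangle_G \;=\; \frac{1}{|G|}\sum_{g \in G}|\Theta_G(g)|^2 \;\leq\; \frac{1}{|G|}\sum_{g \in G} A^2 \;=\; A^2.
\]

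To establish the pointwise bound I would exploit the Heilbronn--Stark Lemma (Theorem \ref{HSL}): for any $g \in G$, writing $C = \langle g\rangle$, one has $\Theta_G(g) = \Theta_C(g)$. The virtue of passing to $C = \mathrm{Gal}(K/K^C)$ is that every irreducible character $\psi$ of $C$ is one-dimensional, so $|\psi(g)| = 1$, and Theorem \ref{ODT} pins down the analytic behaviour of $L(s,\psi,K/K^C)$ completely. For nontrivial $\psi$ that L-function is entire, so $n(C,\psi) \geq 0$; for $\psi = 1_C$ it equals $\zeta_{K^C}(s)$, whose only pole is simple at $s=1$. Assuming first that $s_0 \neq 1$, this yields $n(C,\psi) \geq 0$ for every $\psi \in \mathrm{Irr}(C)$, whence
\[
|\Theta_G(g)| \;=\; |\Theta_C(g)| \;\leq\; \sum_{\psi \in \mathrm{Irr}(C)} n(C,\psi)\,|\psi(g)| \;=\; \sum_{\psi \in \mathrm{Irr}(C)} n(C,\psi) \;=\; \Theta_C(1) \;=\; A,
\]
where the last equality is Theorem \ref{bound} applied to the Galois extension $K/K^C$. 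The boundary case $s_0 = 1$ would be handled separately and is immediate: Theorem \ref{ODT} forces $n(G,\chi) = 0$ for every nontrivial irreducible $\chi$, while $n(G, 1_G) = -1$ and $A = -1$, so both sides of the claimed inequality collapse to $1$.

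The main obstacle is really the positivity input $n(C,\psi) \geq 0$: without it, a naive triangle-inequality estimate $|\Theta_C(g)| \leq \sum_\psi |n(C,\psi)|$ cannot be repackaged as $\Theta_C(1)$, and the clean identity between $\sum |n|$ and the value at the identity is lost. The whole reduction therefore hinges on two inputs that are already on the table: the fact that one-dimensional Artin L-functions are entire away from $s=1$ (class field theory, packaged as Theorem \ref{ODT}), and the Heilbronn--Stark restriction compatibility that lets us collapse the question for the full group $G$ onto each cyclic subgroup, where every irreducible character is linear and has unit modulus.
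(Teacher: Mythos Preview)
Your argument for $s_0 \neq 1$ is exactly the paper's proof: compute $(\Theta_G,\Theta_G)$ two ways, restrict to cyclic subgroups via the Heilbronn--Stark Lemma, use positivity of $n(C,\psi)$ from Theorem~\ref{ODT}, and bound $|\Theta_G(g)|$ pointwise by $\Theta_C(1) = \mathrm{ord}_{s=s_0}\zeta_K(s)$.

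The $s_0 = 1$ case, however, has a gap. You assert that Theorem~\ref{ODT} forces $n(G,\chi) = 0$ for every nontrivial irreducible $\chi$ of $G$, but Theorem~\ref{ODT} concerns only one-dimensional characters; for irreducible $\chi$ of degree greater than one it says nothing directly. In fact, the statement that $L(s,\chi,K/F)$ has neither a zero nor a pole at $s=1$ for arbitrary nontrivial irreducible $\chi$ is deduced in the paper \emph{from} the present theorem (see the proof of Chebotarev's density theorem immediately afterward), so invoking it here would be circular. The fix is simply to stay at the cyclic level, exactly as you did for $s_0 \neq 1$: for each $g \in G$ every $\psi \in \mathrm{Irr}(C)$ is linear, so Theorem~\ref{ODT} gives $n(C,\psi) = 0$ for $\psi \neq 1_C$ and $n(C,1_C) = \mathrm{ord}_{s=1}\zeta_{K^C}(s) = -1$, whence $\Theta_G(g) = \Theta_C(g) = -1$ for all $g$ and $(\Theta_G,\Theta_G) = 1 = A^2$. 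This is precisely the paper's treatment of the boundary case.
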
 \label{5} 
\begin{proof} Using orthogonality relations of characters we have, 
$$(\Theta_G,\Theta_G)=\sum_{\chi}n(G, \chi)^2,$$
and also from definition it follows,
$$(\Theta_G,\Theta_G)=\frac{1}{|G|}\sum_{g\in G}|\Theta_G(g)|^2.$$
From Heilbronn-Stark Lemma (Theorem \ref{HSL}) we know  $\Theta_G|_{\langle g \rangle}=\Theta_{\langle g \rangle}$. And so our desired summation boils down to, $\frac{1}{|G|}\sum_{g\in G}|\Theta_{\langle g \rangle}(g)|^2$. First we let $s_0 \neq 1$, i.e. we are looking at order of Artin L-functions at $s=s_0\neq 1$. We know all irreducible representations of abelian groups are one dimensional, so by Theorem \ref{ODT} all of $L\big(s,\chi,K/K^{\langle g \rangle}\big)$'s are entire, and hence $n(\langle g \rangle,\chi) \geq 0$ for any irreducible character $\chi$. This implies,
$$|\Theta_{\langle g \rangle}(g)| \leq \sum_{\chi} n(\langle g \rangle,\chi)=\Theta_{\langle g \rangle}(1)=\Theta_{G}(1),$$
and the later one is bounded by $(\mbox{ord}_{s=s_0}(\zeta_K(s)))$ (from Theorem \ref{bound}). And so,
\[(\Theta_G,\Theta_G) \leq ( \mbox{ord}_{s=a} (\zeta_K(s)))^2.\]
If $s_0=1$, from Theorem \ref{ODT}, all of $n(\langle g \rangle, \chi)$'s are zero unless $\chi$ is trivial. And when $\chi$ is trivial, this number is $-1$. In other words, $\Theta_{\langle g \rangle}(g)=-1$ for any $g\in G$. Therefore,
\[(\Theta_G,\Theta_G) =1 \leq ( \mbox{ord}_{s=1} (\zeta_K(s)))^2,\]
as $\zeta_K(s)$ has a simple pole at $s=1$.
\end{proof}
\begin{definition}[Dirichlet Density] Let $F$ be a number field. We say that a set of prime ideals $S$ of $F$ has Dirichlet density $D(S)$ if,
$$\lim_{s \to 1^{+}} \frac { \sum_{\mathfrak{p}\in S} \frac{1}{N(\mathfrak{p})^s}}{\log \zeta_F(s)} =D(S).$$
\end{definition}
\begin{theorem}[Chebotarev's Density theorem, \cite{RM}] \label{CDT1} Let $K/F$ be a finite Galois extension of algebraic number fields with Galois group $G$. If $C$ is a conjugacy class of $G$, the prime ideals $\mathfrak{p}$ of $O_F$ with $\sigma_{\mathfrak{p}} \in C$ has Dirichlet density $\frac{|C|}{|G|}$.
\end{theorem}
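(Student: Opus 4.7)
The plan is to use Artin L-functions as the generating devices and apply character orthogonality to isolate those primes whose Frobenius falls in the fixed conjugacy class $C$. The analytic input needed is that for every non-trivial irreducible character $\chi$ of $G$, the Artin L-function $L(s,\chi,K/F)$ is holomorphic and non-vanishing at $s=1$, while $L(s,1_G,K/F)=\zeta_F(s)$ carries a simple pole.

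First I would establish the behavior at $s=1$. Using the weak Brauer induction theorem proved earlier, write
\[
\chi=\sum_i m_i\,\mathrm{Ind}_{H_i}^{G}\psi_i,\qquad m_i\in\mathbb{Q},
\]
with $\psi_i$ one-dimensional characters of cyclic subgroups $H_i$. By Lemma \ref{first} this gives
\[
L(s,\chi,K/F)=\prod_i L\bigl(s,\psi_i,K/K^{H_i}\bigr)^{m_i},
\]
and each factor on the right is, after the abelian reciprocity identification, a Hecke L-function whose behavior at $s=1$ is dictated by Theorem \ref{ODT}: entire and non-vanishing if $\psi_i\neq 1_{H_i}$, and a simple pole if $\psi_i=1_{H_i}$. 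Computing the order at $s=1$ and invoking Frobenius reciprocity, one gets $\mathrm{ord}_{s=1}L(s,\chi,K/F)=-(\chi,1_G)$, which is $0$ for non-trivial irreducible $\chi$. So $L(s,\chi,K/F)$ is holomorphic and non-zero at $s=1$ whenever $\chi$ is irreducible and non-trivial.

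Next, for $\mathrm{Re}(s)>1$ I would take logarithms in the Euler product. Writing
\[
\log L(s,\chi,K/F)=\sum_{\mathfrak{p}\text{ unram}}\sum_{m\ge 1}\frac{\chi(\sigma_{\mathfrak{p}}^m)}{m\,N(\mathfrak{p})^{ms}}+E_\chi(s),
\]
where $E_\chi(s)$ collects the ramified contribution together with the convergent higher-$m$ piece (bounded as $s\to 1^+$, since $\sum_{\mathfrak{p}}\sum_{m\ge 2}N(\mathfrak{p})^{-ms}$ converges for $\mathrm{Re}(s)>1/2$). Then use the orthogonality relation for $c\in C$,
\[
\mathbf{1}_C(g)=\frac{|C|}{|G|}\sum_{\chi\in\mathrm{Irr}(G)}\overline{\chi(c)}\,\chi(g),
\]
to combine the logarithms across characters:
\[
\sum_{\sigma_{\mathfrak{p}}\in C}\frac{1}{N(\mathfrak{p})^s}=\frac{|C|}{|G|}\sum_{\chi\in\mathrm{Irr}(G)}\overline{\chi(c)}\log L(s,\chi,K/F)+O(1).
\]
By Step 1, every term with $\chi\neq 1_G$ contributes an $O(1)$ quantity as $s\to 1^+$, whereas the trivial-character term yields $\log\zeta_F(s)$, which goes to $+\infty$. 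Dividing by $\log\zeta_F(s)$ and letting $s\to 1^+$ extracts the Dirichlet density $|C|/|G|$, as asserted.

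The main obstacle is the non-vanishing at $s=1$ of the non-trivial Artin L-functions. Although Theorem \ref{mero} only gives meromorphic continuation, the rational Brauer decomposition reduces the question to the behavior of Hecke L-functions at $s=1$, and the computation $\mathrm{ord}_{s=1}L(s,\chi,K/F)=-(\chi,1_G)$ seals it, since the fractional exponents $m_i$ cannot destroy the integral vanishing order coming from $(\chi,1_G)=0$. Everything else is routine: the Euler-product expansion of $\log L$, the orthogonality identity, and taking the limit in the density definition.
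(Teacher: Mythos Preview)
Your argument is correct, and the overall architecture---orthogonality of characters, logarithmic Euler products, and dividing by $\log\zeta_F(s)$---matches the paper. The one substantive difference is in how you obtain the key analytic input that $L(s,\chi,K/F)$ is holomorphic and non-vanishing at $s=1$ for non-trivial irreducible $\chi$. You do this by a direct computation: via weak Brauer induction and Frobenius reciprocity you show $\mathrm{ord}_{s=1}L(s,\chi,K/F)=-(\chi,1_G)=0$. The paper instead deduces the same fact from the Heilbronn-character machinery, namely Theorem~\ref{MRI}: at $s_0=1$ one has $\sum_\chi n(G,\chi)^2\le(\mathrm{ord}_{s=1}\zeta_K(s))^2=1$, and since $n(G,1_G)=-1$ already saturates the bound, every other $n(G,\chi)$ must vanish. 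Your route is the classical one and is entirely self-contained given Theorem~\ref{ODT}; the paper's route is chosen deliberately because the surrounding section is advertising Heilbronn characters as a unifying device, and Chebotarev is presented as an illustration of that method. Functionally the two approaches are equivalent here, but yours does not require the Foote--Murty inequality and so is marginally more elementary.
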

Chebotarev's density theorem is very important in number theory. It is a very deep theorem and comes from class field theory. We will show how Heilbronn characters serves as a tool to set up the bridge. For some of its beautiful applications in number theory, see \cite{Ser}. Let us now give a rough sketch of poof of Chebotarev's density theorem.\\
\newline
\textit{Proof of \mbox{Theorem 4.6}.} \label{CDT} From Theorem \ref{MRI} it follows that, for a non trivial character $\chi, L(s,\chi,K/F)$ has neither a zero nor a pole at $s=1$. Now we go back to discussion in Section \ref{DS}, from Dirichlet expansion of Artin L-functions we can easily deduce the following
\[\sum_{m,\sigma_{\mathfrak{p}^m} \in C}\frac{1}{N(\mathfrak{p})^{ms}}=\frac{|C|}{|G|}\sum_{\chi \in \mbox{Irr}(G)} \chi(\overline{g_C})\log L_{\mbox{unr}}(s,\chi,K/F),\] \label{ded}
\noindent for any conjugacy class $C \subset \mbox{Gal}(K/F)$ and any element $g_C \in C$. The summation in the left hand side runs over all unramified primes of $\mathfrak{p}$ in $O_K$. By Theorem \ref{MRI} we can say $L(1,\chi,K/F)\neq 0$ for any nontrivial irreducible character $\chi$ of $\mbox{Gal}(K/F)$. As ramified factor of $L(s,\chi,K/F)$ is a finite product of functions, converging in $\mbox{Re}(s)\geq 1$, so 
\[L(1,\chi,K/F) \neq 0 \implies L_{ram}(1,\chi,K/F) \neq 0,\]
for any irreducible and nontrivial $\chi$. Now dividing both sides of \ref{ded} by $\log(\zeta_F(s))$, and using the fact that $\log(L(s,\chi,K/F))=\log(L_{unr}(s,\chi,K/F))$+finite part in $\mbox{Re}(s)\geq 1$, we get the desired result. See \cite{RM} for a more detailed explanation.
\subsection{Further variants of Murty-Raghuram's inequality and applications}
We start this section with proving Dedekind's conjecture when $L/K$ is Galois. We previously proved this under Artin's conjecture at Theorem \ref{AC to DC}. Now let us prove this unconditionally. This was first proved by Brauer in \cite{BAM}. We explain Foote-Murty's \cite{FM} proof below.
\begin{theorem}[Aramata-Brauer] \label{AB1} $\zeta_K(s)/\zeta_F(s)$ is entire when $K/F$ is a finite Galois extension of number fields.
\end{theorem}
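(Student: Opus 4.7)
The plan is to show $\mathrm{ord}_{s=s_0}\bigl(\zeta_K(s)/\zeta_F(s)\bigr) \geq 0$ for every $s_0 \in \mathbb{C}$. Writing $a = n(G, 1_G) = \mathrm{ord}_{s=s_0}\zeta_F(s)$ and $b = \Theta_G(1) = \mathrm{ord}_{s=s_0}\zeta_K(s)$ (the latter by Theorem \ref{bound}), this amounts to the inequality $b \geq a$ at every point. The case $s_0 = 1$ is immediate because both Dedekind zeta functions have simple poles there, so $a = b = -1$. Since $\zeta_K$ and $\zeta_F$ are otherwise holomorphic, I may restrict attention to $s_0 \neq 1$, where $a, b$ are already non-negative integers; the content of the theorem is then the inequality $b \geq a$.

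The driving idea is that for $s_0 \neq 1$, the Heilbronn character of any cyclic subgroup is a \emph{genuine} (non-virtual, non-negative) character of that subgroup. First I would observe that for $C = \langle g \rangle$ cyclic, every irreducible character of $C$ is one-dimensional, and apply Theorem \ref{ODT}: for $\chi \neq 1_C$ the L-function $L(s,\chi,K/K^C)$ is entire, giving $n(C,\chi) \geq 0$, while $n(C, 1_C) = \mathrm{ord}_{s=s_0}\zeta_{K^C}(s) \geq 0$ since $\zeta_{K^C}$ is holomorphic at $s_0 \neq 1$. Consequently $\Theta_C$ is a non-negative integer combination of irreducible characters of $C$, so the triangle inequality forces $|\Theta_C(g)| \leq \Theta_C(1)$ for every $g \in C$.

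Next I would apply the Heilbronn-Stark lemma (Theorem \ref{HSL}) to unpack the sum defining $a$. Since $\Theta_G = \sum_\chi n(G,\chi)\chi$, orthogonality gives
\[
a \;=\; (\Theta_G, 1_G) \;=\; \frac{1}{|G|}\sum_{g\in G}\Theta_G(g) \;=\; \frac{1}{|G|}\sum_{g\in G}\Theta_{\langle g\rangle}(g),
\]
and $\Theta_{\langle g\rangle}(1) = \Theta_G(1) = b$ by the same lemma. Because $a$ is a real integer, taking real parts and applying the cyclic-subgroup triangle inequality termwise then yields
\[
a \;\leq\; \frac{1}{|G|}\sum_{g\in G}\bigl|\Theta_{\langle g\rangle}(g)\bigr| \;\leq\; \frac{1}{|G|}\sum_{g\in G}\Theta_{\langle g\rangle}(1) \;=\; b,
\]
which is exactly the inequality $b - a \geq 0$ we needed.

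The main obstacle, such as it is, lies entirely in establishing the positivity of the $n(C,\chi)$ for cyclic $C$; once that is in place, the rest is a short averaging argument in the same spirit as the proof of Theorem \ref{MRI}. It is precisely this positivity that fails at $s_0 = 1$ (where $n(C, 1_C) = -1$), which is the structural reason that point must be handled separately, and it is also why the argument is so clean for Galois $K/F$ but much more delicate in the non-Galois case.
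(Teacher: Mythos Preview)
Your argument is correct and uses the same machinery as the paper: Heilbronn characters, the Heilbronn--Stark lemma, positivity of $n(C,\chi)$ on cyclic subgroups for $s_0\neq 1$ via Theorem~\ref{ODT}, and the separate treatment of $s_0=1$. The only difference is cosmetic: the paper first packages these ingredients into the Foote--Murty inequality $\sum_\chi n(G,\chi)^2 \le (\mathrm{ord}_{s=s_0}\zeta_K(s))^2$ (Theorem~\ref{MRI}) and then specializes to $\chi=1_G$ to get $|a|\le|b|$, whereas you pair $\Theta_G$ with $1_G$ directly rather than with itself, obtaining $a\le b$ in one step without passing through the full sum of squares.
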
 
\begin{proof} The proof is an immediate corollary of Theorem \ref{MRI}. Because we have, 
$$|n(G,\chi)|^2 \leq \mbox{ord}_{s=a}(\zeta_K(s))^2,$$
for any irreducible character $\chi$. Now, on taking $\chi=1_G$, we get 
$$|\mbox{ord}_{s=a}\zeta_F(s)|^2 \leq |\mbox{ord}_{s=a}\zeta_K(s)|^2.$$
We know for any $s_0 \neq 1$, Dedekind zeta functions are holomorphic at $s=s_0$ (by Theorem \ref{ODT}), and so both of $\mbox{ord}_{s=a}\zeta_F(s)$ and $\mbox{ord}_{s=a}\zeta_K(s)$ are non negative. So the theorem is proved for $s_0 \neq 1$. But at $s_0=1$, any Dedekind zeta function has a pole of order one. That pole gets canceled when we take quotient of them.
\end{proof} 
\noindent The significance of this formalism will be clear from the following result.
\begin{theorem}[Murty-Raghuram, \cite{MR}] \label{MR AB} 
Let $G$ be the Galois group of a solvable Galois extension $K/F$ of number fields. Let $K_1$ be the fixed field of $[G,G]=G^1$. Then the quotient 
\[\frac{\zeta_K(s)}{\zeta_{K_1}(s)}\]
is entire. Moreover it does not have any simple zero.
\end{theorem}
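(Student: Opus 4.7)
The plan is to dispatch the two assertions separately.

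\emph{Entirety.} Since $K_1$ is the fixed field of the normal subgroup $G^1=[G,G]$, the extension $K/K_1$ is itself Galois with Galois group $G^1$, a subgroup of the solvable $G$ and hence solvable. Aramata-Brauer (Theorem~\ref{AB1}) applied directly to $K/K_1$ yields that $\zeta_K(s)/\zeta_{K_1}(s)$ is entire.

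\emph{A clean formula for the quotient.} Applying Lemma~\ref{imp for i} with $i=1$ and $H=\{e\}$, and noting that a character of $G$ has level $\leq 1$ iff it is trivial on $G^1$ iff it is $1$-dimensional, one has $\mbox{Ind}_{G^1}^G 1_{G^1}=\sum_{\chi(1)=1}\chi$. Combined with Lemma~\ref{first} and the Artin-Takagi decomposition $\zeta_K(s)=\prod_{\chi\in\mbox{Irr}(G)} L(s,\chi,K/F)^{\chi(1)}$, this yields
\[\frac{\zeta_K(s)}{\zeta_{K_1}(s)}=\prod_{\chi\in\mbox{Irr}(G),\,\chi(1)>1}L(s,\chi,K/F)^{\chi(1)}.\]
Writing $n(G,\chi)=\mbox{ord}_{s=s_0}L(s,\chi,K/F)$, the order of the quotient at $s_0$ is $\sum_{\chi(1)>1}\chi(1)\,n(G,\chi)$, a $\mathbb{Z}$-combination in which every weight is at least $2$.

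\emph{Excluding order one.} Suppose this sum is $1$ at some $s_0$. Since both $\zeta_K$ and $\zeta_{K_1}$ have simple poles at $s=1$, the order there is $0$, so we may assume $s_0\neq 1$, where $N_G:=\mbox{ord}_{s=s_0}\zeta_K(s)$ and $N_{K_1}:=\mbox{ord}_{s=s_0}\zeta_{K_1}(s)$ are non-negative with $N_G=N_{K_1}+1$. Applying Foote-Murty (Theorem~\ref{MRI}) to the Galois extension $K/K_1$ with group $G^1$, and using $n(G^1,1_{G^1})=N_{K_1}$, contracts to
\[\sum_{\psi\in\mbox{Irr}(G^1),\,\psi\ne 1_{G^1}}n(G^1,\psi)^2\leq 2N_{K_1}+1,\]
while $\sum_{\psi\neq 1_{G^1}}\psi(1)\,n(G^1,\psi)=1$ and $n(G^1,\psi)\geq 0$ for every non-trivial $1$-dimensional $\psi$ of $G^1$ (by Theorem~\ref{ODT}).

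\emph{Main obstacle.} The crux is the final combinatorial step: Foote-Murty by itself still admits the extremal configuration where a single non-trivial $1$-dimensional character $\psi_0$ of $G^1$ carries $n(G^1,\psi_0)=1$ and all other $n(G^1,\psi)$ vanish. To rule this out I would invoke Heilbronn-Stark $\Theta_G|_{G^1}=\Theta_{G^1}$ (Theorem~\ref{HSL}) together with Clifford's theorem (Theorem~\ref{Cli}) for the normal subgroup $G^1\lhd G$: Clifford forces each $\chi|_{G^1}$ for $\chi\in\mbox{Irr}(G)$ to be supported on a single $G$-orbit in $\mbox{Irr}(G^1)$ with equal multiplicities, so in the virtual-character identity
\[\sum_{\chi\in\mbox{Irr}(G),\,\chi(1)>1} n(G,\chi)\,\chi|_{G^1}=\psi_0\]
the entire $G$-orbit of $\psi_0$ must appear on the left with the same coefficient. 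A non-trivial $G$-orbit immediately contradicts the right-hand side being supported only at $\psi_0$; the residual $G$-invariant case is then handled by a Clifford-Gallagher analysis using that $G/G^1$ is abelian, forcing any such contributing $\chi$ to be one-dimensional, in contradiction with $\chi(1)>1$.
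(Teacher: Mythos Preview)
Your entirety argument and the product formula for $\zeta_K/\zeta_{K_1}$ are fine and match the paper. The problem is the ``no simple zero'' part, where your route diverges from the paper and does not close.

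The paper does \emph{not} work at the level of $G^1$. It first proves Lemma~\ref{MRI 2}: with $S$ the set of $1$-dimensional characters of $G$,
\[
\sum_{\chi\notin S} n(G,\chi)^2 \;\leq\; \Big(\mathrm{ord}_{s=s_0}\tfrac{\zeta_K(s)}{\zeta_{K_1}(s)}\Big)^2,
\]
using the truncated Heilbronn character $\Theta_G^S=\sum_{\chi\notin S}n(G,\chi)\chi$ together with Theorem~\ref{RR2} to guarantee nonnegativity of the coefficients appearing after restriction to cyclic subgroups. If the order equals $1$, this forces a \emph{single} $\chi_0\notin S$ with $n(G,\chi_0)=\pm1$ and all other $n(G,\chi)=0$; then $\chi_0(1)\,n(G,\chi_0)=1$ gives $\chi_0(1)=1$, contradicting $\chi_0\notin S$. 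The whole argument happens at the $G$-level and the key analytic input is Theorem~\ref{RR2}.

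Your approach instead applies Foote--Murty to $K/K_1$, obtaining only $\sum_{\psi\neq 1_{G^1}} n(G^1,\psi)^2\le 2N_{K_1}+1$. This bound does \emph{not} isolate the configuration ``a single $\psi_0$ with $n(G^1,\psi_0)=1$'': for $N_{K_1}$ large, plenty of sign patterns $\{n(G^1,\psi)\}$ satisfy both this bound and $\sum_{\psi\neq 1}\psi(1)\,n(G^1,\psi)=1$. So you have not reduced to the case you then try to analyse. Moreover, your Clifford--Gallagher step is wrong as stated: a $G$-invariant linear $\psi_0\in\mathrm{Irr}(G^1)$ need not extend to $G$, and the $\chi$ lying above it need not be $1$-dimensional. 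Already for $G=Q_8$, with $G^1=\{\pm1\}$ and $\psi_0(-1)=-1$, the unique $\chi$ above $\psi_0$ is the $2$-dimensional character (there the contradiction is $2\,n(G,\chi)=1$, a parity obstruction, not a dimension-one conclusion). In general, in the non-extendible case one only gets $\sum_{\chi\,\mathrm{above}\,\psi_0,\ \chi(1)>1}\chi(1)\,n(G,\chi)=1$, which is not a contradiction by itself. The clean fix is to abandon the $G^1$-level argument and prove Lemma~\ref{MRI 2} directly, which is where Theorem~\ref{RR2} enters.
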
  
We will first prove the following lemma, a variant of Theorem \ref{MRI}.
\begin{lemma}\label{MRI 2}Let $G$ be the Galois group of a solvable Galois extension $K/F$ of number fields. Let $K_1$ be the fixed field of $[G,G]=G^1$ and $S$ be set of all one dimensional characters of $\chi$. Then
\[\sum_{\chi \not\in S} n(G,\chi)^2 \leq \emph{ord}_{s=s_0}\Big(\frac{\zeta_K(s)}{\zeta_{K_1}(s)}\Big)^2.\]
\end{lemma}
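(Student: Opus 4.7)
The plan is to mimic Foote--Murty's proof of Theorem \ref{MRI} by working with the ``non-linear'' part of the Heilbronn character. Define
\[\widetilde{\Theta}_G \;:=\; \sum_{\chi \notin S} n(G,\chi)\chi \;=\; \Theta_G - \sum_{\chi \in S}n(G,\chi)\chi.\]
Orthogonality of irreducible characters gives
\[\sum_{\chi \notin S}n(G,\chi)^2 \;=\; (\widetilde{\Theta}_G,\widetilde{\Theta}_G) \;=\; \frac{1}{|G|}\sum_{g\in G}|\widetilde{\Theta}_G(g)|^2,\]
and the Artin--Takagi decomposition applied to the abelian Galois extension $K_1/F$ (whose group $G/G^1$ has irreducible characters exactly the one-dimensional characters of $G$, viewed via inflation from Lemma \ref{first}) gives
\[\widetilde{\Theta}_G(1) \;=\; \mbox{ord}_{s=s_0}\zeta_K(s) - \mbox{ord}_{s=s_0}\zeta_{K_1}(s) \;=\; \mbox{ord}_{s=s_0}\!\left(\frac{\zeta_K(s)}{\zeta_{K_1}(s)}\right).\]
Thus it suffices to establish the pointwise bound $|\widetilde{\Theta}_G(g)| \leq \widetilde{\Theta}_G(1)$ for every $g\in G$.

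For a fixed $g\in G$, write $H=\langle g\rangle$, $H'=H\cap G^1$, and let $\pi\colon G\to G/G^1$ be the quotient. The ``subtracted part'' $\Psi(g):=\sum_{\chi\in S}n(G,\chi)\chi(g)$ is exactly the inflation to $G$ of the Heilbronn character of the abelian extension $K_1/F$, evaluated at $\pi(g)$. Applying Heilbronn--Stark inside $G/G^1$, this becomes the Heilbronn character of the cyclic extension $K_1/K_1^{\langle\pi(g)\rangle}$ evaluated at $\pi(g)$; then via the Galois-theoretic identity $K^{H'}=K_1 K^H$, the isomorphism $\mbox{Gal}(K_1/K_1^{\langle\pi(g)\rangle})\cong H/H'$, and the inflation property of Artin L-functions (Lemma \ref{first}), one translates this into the clean formula
\[\Psi(g) \;=\; \sum_{\psi\in S_H} n(H,\psi)\,\psi(g),\]
where $S_H$ is the set of one-dimensional characters of $H$ trivial on $H'$. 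Combined with Heilbronn--Stark in $G$ (which gives $\Theta_G(g)=\Theta_H(g)$), we obtain the core identity
\[\widetilde{\Theta}_G(g) \;=\; \sum_{\psi\in \mbox{Irr}(H)\setminus S_H} n(H,\psi)\,\psi(g).\]

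To bound this, observe that every such $\psi$ is one-dimensional (as $H$ is cyclic) and distinct from $1_H$ (since $1_H\in S_H$), so by Theorem \ref{ODT} the L-function $L(s,\psi,K/K^H)$ is entire and $n(H,\psi)\geq 0$ for $s_0\neq 1$. Since $|\psi(g)|=1$, this yields
\[|\widetilde{\Theta}_G(g)| \;\leq\; \sum_{\psi\notin S_H} n(H,\psi) \;=\; \mbox{ord}_{s=s_0}\zeta_K(s) - \mbox{ord}_{s=s_0}\zeta_{K^{H'}}(s),\]
where the last equality uses $\sum_{\psi\in S_H}\psi=\mbox{Ind}_{H'}^H(1_{H'})$ together with $L(s,\mbox{Ind}_{H'}^H(1_{H'}),K/K^H)=\zeta_{K^{H'}}(s)$. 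Since $H'\subseteq G^1$ we have $K^{H'}\supseteq K_1$, and the extension $K^{H'}/K_1$ sits inside the solvable Galois extension $K/K_1$ (its group $G^1$ is solvable because $G$ is); therefore Uchida--van der Waall (Theorem \ref{UVdW}) gives that $\zeta_{K^{H'}}/\zeta_{K_1}$ is entire, so $\mbox{ord}_{s=s_0}\zeta_{K^{H'}}(s)\geq \mbox{ord}_{s=s_0}\zeta_{K_1}(s)$. This delivers $|\widetilde{\Theta}_G(g)|\leq \widetilde{\Theta}_G(1)$, and squaring and averaging over $g\in G$ completes the proof for $s_0\neq 1$. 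The $s_0=1$ case is immediate: the simple poles of $\zeta_K$, $\zeta_{K_1}$, and $\zeta_{K^{H'}}$ cancel, forcing both sides of the claimed inequality to vanish. The main technical obstacle is the identification of $\Psi(g)$ with the sum $\sum_{\psi\in S_H}n(H,\psi)\psi(g)$ at the cyclic level; once this piece of bookkeeping (inflation plus the identity $K^{H'}=K_1K^H$) is in place, the remaining argument is a direct adaptation of Foote--Murty, supplemented by a single invocation of Uchida--van der Waall in order to swap $\zeta_K$ for $\zeta_K/\zeta_{K_1}$ in the bound.
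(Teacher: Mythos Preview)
Your core identity $\Psi(g)=\sum_{\psi\in S_H}n(H,\psi)\psi(g)$ is false in general, and the ``inflation plus $K^{H'}=K_1K^H$'' bookkeeping does not justify it. Inflation (Lemma~\ref{first}) lets you pass between $L(s,\psi,K/K^H)$ and $L(s,\psi',K^{H'}/K^H)$ for $\psi\in S_H$, but you need to match this with $L(s,\psi',K_1/K_1^{\langle\pi(g)\rangle})=L(s,\psi',K_1/K^{HG^1})$, which is an $L$-function over a \emph{different base field}; no form of inflation bridges that gap. Concretely, take $G=A_4$, $g$ an element of order $2$. Then $H=\langle g\rangle$, $H'=H\cap G^1=H$, so $S_H=\{1_H\}$. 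The three linear characters of $G$ are all trivial on $g$, hence $\Psi(g)=\sum_{\chi\in S}n(G,\chi)=\mathrm{ord}_{s=s_0}\zeta_{K_1}(s)$, while your right-hand side is $n(H,1_H)=\mathrm{ord}_{s=s_0}\zeta_{K^H}(s)$. Since $\mathrm{Ind}_H^G 1_H=1_G+\omega+\omega^2+\rho$ (with $\rho$ the $3$-dimensional character), these differ by $n(G,\rho)$, which need not vanish. Correspondingly your claimed formula for $\widetilde\Theta_G(g)$ gives $-2n(G,\rho)$, whereas the true value $n(G,\rho)\rho(g)=-n(G,\rho)$.

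The paper's proof avoids this by never attempting to identify $\Psi(g)$ with a sum over a subset of $\mathrm{Irr}(H)$. Instead it expands $\chi(g)=\sum_{\phi\in\mathrm{Irr}(H)}(\chi|_H,\phi)\phi(g)$ for each $\chi\in S$ to write
\[
\widetilde\Theta_G(g)=\sum_{\phi\in\mathrm{Irr}(H)}\Bigl(n(H,\phi)-\sum_{\chi\in S_\phi}n(G,\chi)\Bigr)\phi(g),
\]
where $S_\phi=\{\chi\in S:\chi|_H=\phi\}$, and then invokes Theorem~\ref{RR2} (Murty--Raghuram) to show each bracket is nonnegative. Summing these nonnegative coefficients and recognising $\sum_{\chi\in S}\chi=\mathrm{Ind}_{G^1}^G 1_{G^1}$ yields the bound $\mathrm{ord}_{s=s_0}\bigl(\zeta_K/\zeta_{K_1}\bigr)$ directly, with no appeal to Uchida--van der Waall. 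So the missing ingredient in your approach is precisely Theorem~\ref{RR2}; once you have it the argument is a two-line adaptation of Foote--Murty, and the detour through $K^{H'}$ and Uchida--van der Waall becomes unnecessary.
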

\begin{proof} Let us write, 
\[\Theta^{S}_{G}=\sum_{\chi \not\in S} n(G,\chi)\chi.\]
Therefore,
\[\frac{1}{|G|} \sum_{g\in G}|\Theta^S_{G}(g)|^2 = \sum_{\chi \not\in S}n(G, \chi)^2.\]
On the other hand, 
\begin{align} 
\Theta^S_G(g) & = \Theta_G(g)-\sum_{\chi \in S} n(G, \chi)\chi(g)\\ 
& =\sum_{\phi \in \mbox{Irr}(\langle g \rangle)}\Big(n(\langle g \rangle, \phi)-\sum_{\chi \in S} n(G, \chi)(\chi,\mbox{Ind}_{\langle g \rangle}^{G}\phi)\Big) \phi(g).
\end{align} 
We obtained (4.2) from (4.1) using Frobenius reciprocity. Theorem \ref{RR2} says for any $\phi$ in $\mbox{Irr}(\langle g \rangle)$,
\[n(\langle g \rangle, \phi)-\sum_{\chi \in S_{\phi}} n(G, \chi) \geq 0.\]\label{line}
\noindent But by Frobenius reciprocity, $\chi \in S_{\phi}$ if and only if $(\chi,\mbox{Ind}_{\langle g \rangle}^{G} \phi)=1$ and $\chi \not\in S_{\phi}$ if and only if $(\chi,\mbox{Ind}_{\langle g \rangle}^{G} \phi)=0$. Which means from line 4.2 it follows,
\[n(\langle g \rangle, \phi)-\sum_{\chi \in S} n(G, \chi)(\chi,\mbox{Ind}_{\langle g \rangle}^G \phi) \geq 0.\] 
\noindent But the interesting part is 
\[\sum_{\chi \in S} \chi = \mbox{Ind}_{[G,G]}^{G}(1_{[G,G]}).\]
This follows by Frobenius reciprocity and the fact that number of all one dimensional characters of $G$ are $|G/[G,G]|$. In other words, commutator subgroup is the best possible object to understand this one dimensional characters. And so,
\begin{align} 
|\Theta^S_G(g)| & \leq \sum_{\psi \in \mbox{Irr}(\langle g \rangle)} \Big(n(\langle g \rangle, \phi)-\sum_{\chi \in S} n(G, \chi)(\chi,\mbox{Ind}_{\langle g \rangle}^G \phi)\Big)\\
& = \mbox{ord}_{s=s_0}\Big(\frac{\prod_{\psi \in \mbox{Irr}(\langle g \rangle)} L(s,\psi, K/K^{\langle g \rangle})}{\prod_{\psi \in \mbox{Irr}(\langle g \rangle)} \prod_{\chi \in S} L(s,\chi,K/F)^{(\chi,\mbox{Ind}_{\langle g \rangle}^{G}\psi)}}\Big)\\
& = \mbox{ord}_{s=s_0} \Big(\frac{\zeta_K(s)}{\prod_{\chi \in S} L(s,\chi,K/F)}\Big)\\
& = \mbox{ord}_{s=s_0}\Big(\frac{\zeta_K(s)}{L(s,\mbox{Ind}_{[G,G]}^G(1_{[G,G]}),K/F)}\Big)\\
& = \mbox{ord}_{s=s_0} \Big(\frac{\zeta_K(s)}{\zeta_{K^{1}}(s)}\Big), 
\end{align}
as desired. Here denominator of line 4.5 is obtained from line 4.4, using Frobenius reciprocity and numerator is obtained using the fact that $\Theta_G(1)=\Theta_{\langle g \rangle}(1)$.
\end{proof}
\textit {Proof of Theorem \ref{MR AB}}. As $K/K^1$ is a Galois extension, so by Aramata-Brauer theorem we see the quotient $\frac{\zeta_K(s)}{\zeta_{K_1}(s)}$ has no poles. But if the quotient has a simple zero at some point $s=s_0$, then from Lemma \ref{MRI 2} it follows that $|n(G,\chi)|=1$ for at most one $\chi \in S$, and others are zero. We have 
\[\mbox{Reg}_G=\mbox{Ind}_{[G,G]}^{G} 1_{[G,G]}+\sum_{x \not\in S}\chi(1) \chi,\]
and this shows
\[\frac{\zeta_K(s)}{\zeta_{K^{1}}(s)}= \prod_{\chi \not\in S}L(s,\chi)^{\chi(1)},\]
but we know left hand side has a simple zero at $s_0$, while on the right hand side, $|n(G,\chi)|=1$ for at most one $\chi \in S$, say for $\chi=\chi_0$ and others are zero. This implies $\chi_0(1)=1$, contradiction as $\chi_0 \not\in S$. \\
\newline
\noindent Following similar procedure, i.e. truncating $\Theta_G$ at a fixed one dimensional character $\chi_0 \in S$, we can deduce the following result
\begin{theorem}[Murty-Raghuram, \cite{MR}] \label{M-R I}Let $G$ be the Galois group of a solvable Galois extension $K/F$ of number fields. Let $\chi_0 \in S$, then
\[\sum_{\chi \neq \chi_0,\chi \in \emph{Irr}(G)} n(G,\chi)^2 \leq \emph{ord}_{s=s_0}  \Big(\frac{\zeta_K(s)}{L(s,\chi,K/F)}\Big)^2.\]
\end{theorem}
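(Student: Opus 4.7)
The plan is to mimic the proof of Lemma \ref{MRI 2}, but this time truncating the Heilbronn character at a single one-dimensional character $\chi_0$ rather than removing the whole set $S$ of one-dimensional characters. Concretely, I would set
\[
\Theta_G^{\chi_0}(g) := \Theta_G(g) - n(G,\chi_0)\chi_0(g) = \sum_{\chi \neq \chi_0} n(G,\chi)\chi(g),
\]
so that orthogonality of characters instantly gives
\[
\sum_{\chi \neq \chi_0} n(G,\chi)^2 \;=\; \frac{1}{|G|} \sum_{g \in G} |\Theta_G^{\chi_0}(g)|^2.
\]

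The next step is to bound $|\Theta_G^{\chi_0}(g)|$ for each $g \in G$ by restricting to the cyclic subgroup $\langle g \rangle$. By the Heilbronn--Stark Lemma (Theorem \ref{HSL}), $\Theta_G(g) = \Theta_{\langle g \rangle}(g) = \sum_{\phi}n(\langle g \rangle,\phi)\phi(g)$, while expanding $\chi_0|_{\langle g \rangle}$ into irreducibles and applying Frobenius reciprocity gives $\chi_0(g) = \sum_{\phi}(\chi_0, \mbox{Ind}_{\langle g \rangle}^G\phi)\phi(g)$. Combining,
\[
\Theta_G^{\chi_0}(g) \;=\; \sum_{\phi \in \mbox{Irr}(\langle g \rangle)}\Bigl(n(\langle g \rangle,\phi) - n(G,\chi_0)\bigl(\chi_0, \mbox{Ind}_{\langle g \rangle}^G\phi\bigr)\Bigr)\phi(g).
\]
Here is where the Murty--Raghuram result (Theorem \ref{RR1}) does the heavy lifting: applied with $H = \langle g \rangle$ and one-dimensional character $\phi$, it says that $L(s,\mbox{Ind}_{\langle g \rangle}^G\phi,K/F)/L(s,\chi_0,K/F)^{m(\chi_0,\phi)}$ is holomorphic off $s=1$, so for $s_0 \neq 1$ each coefficient above is non-negative.

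Since $\langle g \rangle$ is cyclic, each $\phi$ is one-dimensional and $|\phi(g)| = 1$, so the triangle inequality followed by positivity yields
\[
|\Theta_G^{\chi_0}(g)| \;\le\; \sum_{\phi}\Bigl(n(\langle g \rangle,\phi) - n(G,\chi_0)\bigl(\chi_0, \mbox{Ind}_{\langle g \rangle}^G\phi\bigr)\Bigr).
\]
To simplify the right-hand side, I would use two identities: first, the Artin--Takagi decomposition for the abelian extension $K/K^{\langle g \rangle}$ gives $\prod_{\phi}L(s,\phi,K/K^{\langle g \rangle}) = \zeta_K(s)$; second, $\sum_{\phi}\mbox{Ind}_{\langle g \rangle}^G\phi = \mbox{Ind}_{\langle g \rangle}^G\mbox{Reg}_{\langle g \rangle} = \mbox{Reg}_G$, whence $\sum_{\phi}(\chi_0,\mbox{Ind}_{\langle g \rangle}^G\phi) = (\chi_0,\mbox{Reg}_G) = \chi_0(1) = 1$. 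Translating back to orders of L-functions therefore gives
\[
|\Theta_G^{\chi_0}(g)| \;\le\; \mbox{ord}_{s=s_0}\!\left(\frac{\zeta_K(s)}{L(s,\chi_0,K/F)}\right),
\]
and squaring and averaging over $g \in G$ produces the claimed inequality.

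The only real nuisance will be the boundary case $s_0 = 1$, since Theorem \ref{RR1} permits a pole there and the positivity step is not directly available. However, at $s=1$ Theorem \ref{ODT} together with Foote--Murty's inequality (Theorem \ref{MRI}) forces $n(G,\chi) = 0$ for every non-trivial irreducible $\chi$, so both sides of the desired inequality can be computed by hand and the bound checked directly. I expect that to be the one technical wrinkle; the rest of the argument is a transparent adaptation of the truncation trick already used in Lemma \ref{MRI 2}.
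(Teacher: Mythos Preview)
Your proposal is correct and follows precisely the approach the paper indicates: the paper's own proof consists of the single sentence ``Following similar procedure, i.e.\ truncating $\Theta_G$ at a fixed one dimensional character $\chi_0 \in S$, we can deduce the following result,'' and you have filled in exactly those details, using Theorem~\ref{RR1} in place of Theorem~\ref{RR2} as the source of the coefficient positivity. Your treatment of the $s_0=1$ case is more explicit than anything the paper provides, and is fine.
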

\subsection{Further generalization to higher levels} We now state and prove a further refinement of Lemma \ref{MRI 2}. This was done by Lansky-Wilson in \cite{LW}.
\begin{theorem} \label{LWi} Let $K/F$ be a solvable extension of number fields, and let $G=\emph{Gal}(K/F)$. Then for any $i \geq 1$,
\[\sum_{\chi \in \emph{Irr}(G),l(\chi)=i}n(G,\chi)^2 \leq \Big(\emph{ord}_{s=s_0}\Big(\frac{\zeta_{K^i}(s)}{\zeta_{K^{i-1}}(s)}\Big)\Big)^2.\]
\end{theorem}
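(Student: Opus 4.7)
The plan is to adapt the truncated-Heilbronn-character argument from Lemma \ref{MRI 2} by restricting to characters of level \emph{exactly} $i$. Specifically, I would introduce
\[\Theta_G^{[i]} := \sum_{\chi\in\mbox{Irr}(G),\,l(\chi)=i} n(G,\chi)\chi,\]
so that orthogonality of irreducible characters yields
\[(\Theta_G^{[i]},\Theta_G^{[i]})=\sum_{l(\chi)=i}n(G,\chi)^2=\frac{1}{|G|}\sum_{g\in G}|\Theta_G^{[i]}(g)|^2.\]
The task then reduces to the pointwise estimate $|\Theta_G^{[i]}(g)|\le \mbox{ord}_{s=s_0}\!\big(\zeta_{K^i}(s)/\zeta_{K^{i-1}}(s)\big)$ for every $g\in G$; squaring and averaging over $g$ then gives the theorem.

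To obtain this pointwise bound I would decompose $\Theta_G^{[i]}=\Psi^{(i-1)}-\Psi^{(i)}$, where $\Psi^{(j)}:=\Theta_G-\sum_{l(\chi)\le j}n(G,\chi)\chi$. Fix $g\in G$. By Theorem \ref{HSL} one has $\Theta_G(g)=\Theta_{\langle g\rangle}(g)=\sum_\phi n(\langle g\rangle,\phi)\phi(g)$, and Frobenius reciprocity (Theorem \ref{FR}) expands $\chi(g)=\sum_\phi(\chi,\mbox{Ind}_{\langle g\rangle}^G\phi)\phi(g)$ for each irreducible $\chi$ of $G$. Collecting the $\phi$-components yields
\[\Psi^{(j)}(g)=\sum_{\phi\in\mbox{Irr}(\langle g\rangle)}c_\phi^{(j)}\phi(g),\qquad c_\phi^{(j)}:=n(\langle g\rangle,\phi)-\sum_{l(\chi)\le j}n(G,\chi)\big(\chi,\mbox{Ind}_{\langle g\rangle}^G\phi\big).\]
Since $\langle g\rangle$ is cyclic, every $\phi$ is one-dimensional, so applying Theorem \ref{LWT1} with $H=\langle g\rangle$ and $\psi=\phi$ forces $c_\phi^{(j)}\ge 0$ for every $\phi$ and every $j$. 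Moreover, using $\sum_\phi(\chi,\mbox{Ind}_{\langle g\rangle}^G\phi)=\chi(1)$ together with the Artin--Takagi decomposition of $\zeta_{K^j}(s)$ applied to the Galois extension $K^j/F$ (so that $\zeta_{K^j}(s)=\prod_{l(\chi)\le j}L(s,\chi,K/F)^{\chi(1)}$ via Lemma \ref{first}) gives the clean identity
\[\sum_\phi c_\phi^{(j)}=\Theta_G(1)-\sum_{l(\chi)\le j}n(G,\chi)\chi(1)=\mbox{ord}_{s=s_0}\!\big(\zeta_K(s)/\zeta_{K^j}(s)\big).\]

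The main obstacle is controlling the sign of the $\phi$-coefficient of $\Theta_G^{[i]}$, namely $c_\phi^{(i-1)}-c_\phi^{(i)}=\sum_{l(\chi)=i}n(G,\chi)(\chi,\mbox{Ind}_{\langle g\rangle}^G\phi)$: without a sign on this difference one only obtains the weaker bound $|\Theta_G^{[i]}(g)|\le \sum_\phi c_\phi^{(i-1)}+\sum_\phi c_\phi^{(i)}$, which misses the target by a factor essentially equal to $\mbox{ord}_{s=s_0}(\zeta_K(s)/\zeta_{K^i}(s))$. To pin down the correct sign I would appeal to the corollary following Theorem \ref{LWT1} (the Lansky--Wilson layer statement): for $H=\langle g\rangle$ and $\psi_0=\phi$, the product $\prod_{\chi\in S}L(s,\chi,K/F)^{(\chi,\mbox{Ind}_{\langle g\rangle}^G\phi)}$ is entire, where $S$ is the set of irreducible characters of $G$ of level exactly $i$ that occur in $\mbox{Ind}_{\langle g\rangle}^G\phi$. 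Taking orders at $s_0$ of this entire product yields $c_\phi^{(i-1)}-c_\phi^{(i)}\ge 0$, with characters satisfying $(\chi,\mbox{Ind}_{\langle g\rangle}^G\phi)=0$ contributing trivially. Since each one-dimensional $\phi$ satisfies $|\phi(g)|=1$, the telescoping computation then delivers
\[|\Theta_G^{[i]}(g)|\le\sum_\phi(c_\phi^{(i-1)}-c_\phi^{(i)})=\mbox{ord}_{s=s_0}\!\big(\zeta_K(s)/\zeta_{K^{i-1}}(s)\big)-\mbox{ord}_{s=s_0}\!\big(\zeta_K(s)/\zeta_{K^i}(s)\big)=\mbox{ord}_{s=s_0}\!\big(\zeta_{K^i}(s)/\zeta_{K^{i-1}}(s)\big),\]
and squaring and averaging over $g\in G$ finishes the proof.
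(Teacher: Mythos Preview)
Your proof is correct. Both you and the paper work with the same truncated Heilbronn character $\Theta_G^{[i]}=\sum_{l(\chi)=i}n(G,\chi)\chi$ and reduce the theorem to the pointwise estimate $|\Theta_G^{[i]}(g)|\le\mbox{ord}_{s=s_0}\big(\zeta_{K^i}(s)/\zeta_{K^{i-1}}(s)\big)$, after which squaring and averaging is identical. The difference is in how that estimate is obtained. The paper simply writes $|\Theta_G^{[i]}(g)|\le\sum_{l(\chi)=i}\chi(1)\,n(G,\chi)$ and then identifies the right side with the desired order via the factorisations $\prod_{\chi\notin S^j}L(s,\chi,K/F)^{\chi(1)}=\zeta_K(s)/\zeta_{K^j}(s)$; but the triangle inequality only yields $\sum_{l(\chi)=i}\chi(1)\,|n(G,\chi)|$, and dropping the absolute values is tantamount to assuming $n(G,\chi)\ge 0$ for every level-$i$ character, i.e.\ Artin's conjecture at $s_0$ for those $\chi$. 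Your argument sidesteps this by restricting to $\langle g\rangle$ in the spirit of Lemma~\ref{MRI 2}, expressing the $\phi$-coefficient of $\Theta_G^{[i]}(g)$ as $c_\phi^{(i-1)}-c_\phi^{(i)}$, and then invoking the layer corollary immediately following Theorem~\ref{LWT1} (with $H=\langle g\rangle$, $\psi_0=\phi$) to force each such coefficient nonnegative. That is the substantive step the paper's presentation glosses over, so your route supplies the missing justification while arriving at the same bound; in this sense your proof is the more complete of the two.
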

\begin{proof} We consider the truncated series
\[\Theta^{i}_G=\sum_{\chi \in \mbox{Irr}(G),l(\chi)=i} n(G,\chi)\chi.\]
On the other hand, 
\[|\Theta^i_G|\leq \sum_{\chi \in \mbox{Irr}(G),l(\chi)=i} \chi(1)n(G,\chi)=\mbox{ord}_{s=s_0}\Big(\prod_{\chi \in \mbox{Irr}(G),l(\chi)=i}L(s,\chi,K/F)^{\chi(1)}\Big).\]
From Lemma \ref{imp for i} or proof of Corollary \ref{imp for i_2}, we have 
\[\prod_{\chi \not\in S^i}L(s,\chi,K/F)^{\chi(1)}=\frac{\zeta_K(s)}{\zeta_{K^i}(s)},\]
which means
\[\frac{\mbox{ord}_{s=s_0}\Big(\prod_{\chi \not\in S^i}L(s,\chi,K/F)^{\chi(1)}\Big)}{\mbox{ord}_{s=s_0}\Big(\prod_{\chi \not\in S^{i-1}}L(s,\chi,K/F)^{\chi(1)}\Big)}=\mbox{ord}_{s=s_0}\Big(\frac{\zeta_{K^i}(s)}{\zeta_{K^{i-1}}(s)}\Big),\]
and left hand side above is same as
\[\mbox{ord}_{s=s_0}\Big(\prod_{l(\chi)=i}L(s,\chi, K/F)\Big),\]
and so the proof is finally complete.
\end{proof}
\begin{rem} \label{rem5} Interestingly one can see that the result above is not really analogous to Lemma \ref{MRI 2}. For $i=1$, Lemma \ref{MRI 2} is stronger than this. Later in 2017, Wong improved it. We will talk about this in the next chapter.
\end{rem}
We finish this section with two immediate corollaries below. 
\begin{corollary} If 
\[\emph{ord}_{s=s_0}(\zeta_{K^i}(s))= \emph{ord}_{s=s_0}(\zeta_{K^{i-1}}(s)),\]
then $n(G,\chi)=0$ for every irreducible character $\chi$ of $G$ of level $i$.
\end{corollary}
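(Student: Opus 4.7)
The statement is an immediate consequence of Theorem \ref{LWi}, so the plan is essentially a one-step deduction. First I would compute the order of the quotient $\zeta_{K^i}(s)/\zeta_{K^{i-1}}(s)$ at $s=s_0$. Since the order of a ratio is the difference of orders, the hypothesis $\text{ord}_{s=s_0}(\zeta_{K^i}(s)) = \text{ord}_{s=s_0}(\zeta_{K^{i-1}}(s))$ immediately gives
\[
\text{ord}_{s=s_0}\Big(\frac{\zeta_{K^i}(s)}{\zeta_{K^{i-1}}(s)}\Big) = 0.
\]

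Next I would plug this into the inequality of Theorem \ref{LWi}, which yields
\[
\sum_{\chi \in \text{Irr}(G),\, l(\chi)=i} n(G,\chi)^2 \leq 0.
\]
Since each $n(G,\chi)$ is an integer (as orders of meromorphic functions), the left-hand side is a sum of non-negative integers, and therefore every summand must be zero. In particular, $n(G,\chi) = 0$ for every irreducible character $\chi$ of $G$ with $l(\chi) = i$, which is the desired conclusion.

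There is no real obstacle here; the corollary is formally a vanishing statement extracted from the inequality, and the only substantive content has already been established in Theorem \ref{LWi}. The only thing one must be careful about is that the $n(G,\chi)$ are indeed integers (so that $n(G,\chi)^2 \geq 0$ with equality iff $n(G,\chi)=0$), which is assured by the definition via orders of zeros/poles of meromorphically continued Artin $L$-functions as discussed in Definition \ref{HC} and Theorem \ref{mero}.
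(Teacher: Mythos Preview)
Your proposal is correct and follows exactly the same approach as the paper's own proof: deduce from the hypothesis that the right-hand side of Theorem \ref{LWi} vanishes, so the sum of squares $\sum_{l(\chi)=i} n(G,\chi)^2 \leq 0$, forcing each $n(G,\chi)=0$. The paper's proof is simply a terser version of what you wrote.
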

\begin{proof} From the given condition we have,
\[\sum_{l(\chi)=i} n(G,\chi)^2 \leq 0,\]
and so the proof follows.
\end{proof} 
\begin{corollary} \label{crr} $\zeta_{K^i}(s)/\zeta_{K^{i-1}}(s)$ cannot have any poles or simple zeros.
\end{corollary}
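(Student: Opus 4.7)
The plan is to split the claim into its two parts and treat them independently. For the absence of poles, I would observe that $K^{i}/K^{i-1}$ is Galois with Galois group $G^{i-1}/G^{i}$, which is abelian by the very definition of the derived series. The Aramata--Brauer theorem (Theorem \ref{AB1}), applied to this abelian Galois extension, shows at once that $\zeta_{K^{i}}(s)/\zeta_{K^{i-1}}(s)$ is entire.

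For the absence of simple zeros, I would argue by contradiction. Suppose the quotient has a simple zero at some $s_{0}$. Using the factorization already extracted in the proof of Corollary \ref{imp for i_2},
\[
\frac{\zeta_{K^{i}}(s)}{\zeta_{K^{i-1}}(s)} \;=\; \prod_{l(\chi)=i} L(s,\chi,K/F)^{\chi(1)},
\]
taking orders at $s_{0}$ gives
\[
1 \;=\; \sum_{l(\chi)=i} \chi(1)\, n(G,\chi).
\]
Combining this with Theorem \ref{LWi}, which guarantees $\sum_{l(\chi)=i} n(G,\chi)^{2}\leq 1$, and using that the $n(G,\chi)$ are integers, I would conclude that exactly one level-$i$ irreducible $\chi_{0}$ contributes, with $|n(G,\chi_{0})|=1$ and all other level-$i$ contributions vanishing. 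Substituting into the order equation forces
\[
\chi_{0}(1)\, n(G,\chi_{0}) \;=\; 1,
\]
so $\chi_{0}(1)=1$ and $n(G,\chi_{0})=1$.

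The main obstacle is producing a contradiction from this configuration. For $i\geq 2$ it is immediate: any one-dimensional character of $G$ is a group homomorphism into $\mathbb{C}^{*}$ and therefore vanishes on the commutator subgroup $G^{1}\supseteq G^{i}$, so $\chi_{0}|_{G^{i}} = 1_{G^{i}} = \chi_{0}(1)\cdot 1_{G^{i}}$, giving $l(\chi_{0})\leq 1 < i$ and contradicting $l(\chi_{0})=i$. The delicate case is $i=1$, where the dimension argument collapses because level-$1$ characters are precisely the non-trivial one-dimensional characters; there the improved bound alluded to in Remark \ref{rem5} (Lemma \ref{MRI 2}, sharpened further by Wong) is what must be invoked. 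Put together, these ingredients rule out both poles and simple zeros for $\zeta_{K^{i}}/\zeta_{K^{i-1}}$.
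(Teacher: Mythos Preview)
Your argument for $i\ge 2$ is correct and is exactly the paper's first proof (``immediate from Theorem~\ref{LWi}''): the inequality forces a unique level-$i$ contributor $\chi_0$ with $\chi_0(1)=1$, and any one-dimensional character has level $\le 1$, contradiction. The paper's alternative route via Theorem~\ref{MR AB} amounts to the same dimension obstruction, just packaged for the tower $K/K^{i-1}$.

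Your handling of $i=1$, however, has a genuine gap. Lemma~\ref{MRI 2} does not help here: it bounds $\sum_{\chi\notin S} n(G,\chi)^2$ by $\big(\mbox{ord}_{s=s_0}\,\zeta_K(s)/\zeta_{K^1}(s)\big)^2$, so it controls characters of level \emph{strictly greater} than~$1$, not those of level exactly~$1$, and it says nothing about $\zeta_{K^1}/\zeta_{K^0}$. In fact the corollary as written simply fails for $i=1$: take $K/F$ quadratic, so $G=\mathbb{Z}/2\mathbb{Z}$, $G^1=\{e\}$, $K^0=F$, $K^1=K$; then $\zeta_{K^1}(s)/\zeta_{K^0}(s)=\zeta_K(s)/\zeta_F(s)=L(s,\chi,K/F)$ for the nontrivial quadratic character~$\chi$, and this $L$-function certainly has simple zeros. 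The difficulty you flagged is therefore real, but it reflects an imprecision in the stated corollary (which should be read for $i\ge 2$, or as a statement about $\zeta_K/\zeta_{K^i}$ in line with Theorem~\ref{nice}) rather than a gap you can close with Lemma~\ref{MRI 2} or Wong's refinement.
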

\begin{proof} Proof is immediate from Theorem \ref{LWi}. We can also prove the corollary by taking $G=G^i$ in Theorem \ref{MR AB}. 
\end{proof}
\subsection{Stark's theorem and further generalization} 
\begin{theorem}[Stark, \cite{Stark}] \label{Stark} If $\emph{ord}_{s=s_0}(\zeta_K(s))\leq 1$ then for any character $\chi$ of the Galois group $G$ of $K$ over $F,L(s, \chi,K/F)$ is analytic at the point $s=s_0$.
\end{theorem}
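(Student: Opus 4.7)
The plan is to derive the result as a direct consequence of the Foote--Murty inequality (Theorem \ref{MRI}) combined with the trace identity $\Theta_G(1)=\mbox{ord}_{s=s_0}\zeta_K(s)$ of Theorem \ref{bound}. Since an arbitrary character of $G$ decomposes as a non-negative integer combination of irreducibles and the Artin L-function factors accordingly by Lemma \ref{first}, it suffices to show that $L(s,\chi,K/F)$ is holomorphic at $s_0$ for every $\chi\in\mbox{Irr}(G)$.

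Setting $m:=\mbox{ord}_{s=s_0}\zeta_K(s)$, Theorem \ref{MRI} gives
\[\sum_{\chi\in\mbox{Irr}(G)} n(G,\chi)^2 \;\le\; m^2 \;\le\; 1.\]
Because each $n(G,\chi)$ is an integer, this forces at most one $\chi_0\in\mbox{Irr}(G)$ to satisfy $n(G,\chi_0)\ne 0$, and in that case $n(G,\chi_0)=\pm 1$ while every other $n(G,\chi)=0$. In particular every L-function $L(s,\chi,K/F)$ with $\chi\ne\chi_0$ is already holomorphic at $s_0$, and only the sign of $n(G,\chi_0)$ remains to be controlled.

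For that, Theorem \ref{bound} yields $m=\Theta_G(1)=n(G,\chi_0)\chi_0(1)$. If $m\ge 0$ (automatic for $s_0\ne 1$, since $\zeta_K$ is holomorphic outside $s=1$), then positivity of the integer $\chi_0(1)$ forces $n(G,\chi_0)\ge 0$, so $L(s,\chi_0,K/F)$ is also holomorphic at $s_0$ and the proof is complete. The marginal case $s_0=1$ gives $m=-1$, so $n(G,\chi_0)=-1$ and $\chi_0(1)=1$; by Theorem \ref{ODT}, the only one-dimensional character whose L-function has a pole at $s=1$ is the trivial one, so the pole is merely the familiar simple pole of $\zeta_F=L(s,1_G,K/F)$ and every remaining L-function stays holomorphic.

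I do not anticipate a significant obstacle, since Foote--Murty has done the heavy lifting and the final step is elementary integer arithmetic exploiting $\Theta_G(1)=m$. The delicate ingredient is really Theorem \ref{ODT} on one-dimensional L-functions hiding in the background: it is precisely what forces $n(\langle g\rangle,\chi)\ge 0$ for every cyclic subgroup $\langle g\rangle$, and hence what makes the Foote--Murty bound in Theorem \ref{MRI} sharp enough to pin down $n(G,\chi_0)$ as a unit integer.
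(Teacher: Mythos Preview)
Your proof is correct and follows essentially the same route as the paper: invoke the Foote--Murty inequality (Theorem \ref{MRI}) to force all but one $n(G,\chi)$ to vanish, then use the identity $\Theta_G(1)=m$ (the paper phrases this as the Artin--Takagi decomposition) to pin down the sign and degree of the remaining character, handling $s_0=1$ separately via Theorem \ref{ODT}. Your use of $m=n(G,\chi_0)\chi_0(1)$ to read off $n(G,\chi_0)\ge 0$ directly is marginally slicker than the paper's detour through ``$\chi_0$ must be one-dimensional, hence ODT applies,'' but the underlying argument is identical.
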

\begin{proof} In this case $\mbox{ord}_{s=s_0}(\zeta_K(s))$ can be one of $0,1$ or $-1$. So whatever $s_0 \in \mathbb{C}$ is, we always have 
\[ \sum n(G, \chi)^2 \leq 1,\]
from Theorem \ref{MRI}. By meromorphy of Artin L-series, we have that each of $n(G, \chi)$ is an integer. The inequality from Theorem \ref{MRI} implies, for at most one $\chi$, we have $|n(G, \chi)| =1$. If $\chi$ is not one dimensional, then Artin-Takagi decomposition (see proof of Theorem \ref{AC to DC}) of $\zeta_K(s)$ gives a contradiction for the corresponding L-function, introducing a pole or zero of order greater than $1$. Hence $\chi$ is one-dimensional, but in this case the proof follows from Theorem \ref{ODT}, for $s_0 \neq 1$. And for $s_0=1$, the proof follows from the fact that $L(s,\chi,K/F)$ does not have any pole or zero at $s=1$, see proof of Theorem \ref{CDT1} for instance. 
\end{proof} 
\noindent The proof of Stark's result above show that the set of zeros and poles of all Artin L-series $L(s, \chi, K/F)$ is contained in the set of zeros of the Dedekind zeta function $\zeta_K(s)$ for all characters $\chi$ of $G$. Moreover, the order of zero of the zeta function $\zeta_K(s)$ is a critical parameter in restricting the possibility that some L-function may have a pole (or zero) at $s=s_0$. In fact, the argument tells us somewhat more. Not only is $\Theta_G$ a character (because all Artin L-functions are holomorphic), it must be a one-dimensional character (because $\Theta_G(1_G)=\mbox{ord}_{s=s_0}(\zeta_K(s))=1)$. Moreover, if $s_0$ is real, then $\chi$ must be real-valued because, if $L(s_0, \chi, K/F) = 0$, then $L(\overline{s_0}, \overline{\chi}, K/F)=0$ also, and both $\chi$ and $\overline{\chi}$ occur as constituents of $\Theta_G$, hence $\chi = \overline{\chi}$. But then its kernel is a subgroup $H$ of index at most $2$ in $G$. For any subgroup $H_1$ of $H$, the Heilbronn character $\Theta_{H_1} = \Theta_G|_{H_1}$ is a character of $H_1$. This says if $K_1$ is the fixed field of $H_1$, then the Dedekind zeta function of $E_1$ has a simple zero at $s=s_0$ (By Brauer Aramata theorem \ref{BAT}) and all $L(s, \psi, K/K_1)$ are analytic at $s=s_0$ for all irreducible characters $\psi$ of $H_1$. As a consequence, we have the following result of Stark.
\begin{theorem}[Stark, \cite{Stark}]If the Dedekind zeta function of $K$ has a real simple zero at $s=s_0$, then there is a subfield $E$ of $K$ containing $F$ and of degree at most $2$ over $F$ such that for any intermediate field $E \subseteq K_1 \subseteq K$, the Dedekind zeta function of $K_1$ has a simple zero at $s=s_0$. 
\end{theorem}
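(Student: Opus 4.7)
My plan is to distill the remarks that follow the proof of Stark's Theorem 4.3.1 into a self-contained argument, using the Heilbronn character machinery developed in Section 4. The key observation is that a simple real zero of $\zeta_K$ pins down $\Theta_G$ completely, up to a choice of a single one-dimensional real character.

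First I would apply Theorem 4.1.3 (the Foote--Murty inequality) with $\mbox{ord}_{s=s_0}\zeta_K(s) = 1$ to obtain $\sum_\chi n(G,\chi)^2 \leq 1$. Since each $n(G,\chi) \in \mathbb{Z}$ by Theorem 2.1.23 and $\Theta_G(1) = \mbox{ord}_{s=s_0}\zeta_K(s) = 1$ by Theorem 4.1.2, exactly one $n(G,\chi_0)$ equals $1$ and all others vanish, giving $\Theta_G = \chi_0$ with $\chi_0(1) = 1$. Thus $\chi_0$ is a one-dimensional (linear) character of $G$. Next I would argue that $\chi_0$ is real-valued: since $s_0 \in \mathbb{R}$, the functional equation (or simply taking complex conjugates in the Dirichlet series) shows that $L(s,\chi,K/F)$ and $L(s,\overline{\chi},K/F)$ share the same order at $s_0$, so $n(G,\chi_0) = n(G,\overline{\chi_0}) = 1$. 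Uniqueness forces $\chi_0 = \overline{\chi_0}$, so $\chi_0$ takes values in $\{\pm 1\}$.

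Now I would let $H = \ker(\chi_0)$ and define $E = K^H$, the fixed field of $H$. Since $\chi_0$ is one-dimensional with values in $\{\pm 1\}$, the image of $\chi_0$ is a subgroup of $\mathbb{C}^\times$ of order $1$ or $2$, whence $[G:H] = [E:F] \leq 2$. For any intermediate field $E \subseteq K_1 \subseteq K$, let $H_1 = \mbox{Gal}(K/K_1) \subseteq H$. By the Heilbronn--Stark lemma (Theorem 4.1.1),
\[
\Theta_{H_1} \;=\; \Theta_G|_{H_1} \;=\; \chi_0|_{H_1} \;=\; 1_{H_1},
\]
the last equality because $H_1 \subseteq H = \ker(\chi_0)$. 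Unpacking the definition of $\Theta_{H_1}$, this means $n(H_1,1_{H_1}) = 1$ and $n(H_1,\psi) = 0$ for every nontrivial irreducible $\psi$ of $H_1$. In particular
\[
\mbox{ord}_{s=s_0}\zeta_{K_1}(s) \;=\; \mbox{ord}_{s=s_0} L(s,1_{H_1},K/K_1) \;=\; n(H_1,1_{H_1}) \;=\; 1,
\]
so $\zeta_{K_1}(s)$ has a simple zero at $s=s_0$, as required.

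The substantive step is the first one: forcing $\Theta_G$ to be a single one-dimensional real character. Once that is in place, the rest is purely formal and reduces to restricting the Heilbronn character along $H_1 \subseteq \ker(\chi_0)$ via the Heilbronn--Stark lemma. The only genuine subtlety to watch is the reality argument: one must be careful that the appearance of both $\chi_0$ and $\overline{\chi_0}$ in $\Theta_G$ with multiplicity one (which would violate $\sum n(G,\chi)^2 \leq 1$ unless they coincide) really does collapse them, which is exactly what the integrality of the $n(G,\chi)$ and the $\ell^2$-bound give us.
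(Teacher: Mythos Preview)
Your proposal is correct and follows essentially the same route as the paper: the argument given in the paragraph immediately preceding the theorem statement uses the Foote--Murty inequality to force $\Theta_G$ to be a single one-dimensional character, the reality of $s_0$ to force $\chi_0=\overline{\chi_0}$, and then the Heilbronn--Stark lemma to restrict $\Theta_G$ to subgroups of $\ker(\chi_0)$. If anything, your write-up is cleaner---you make explicit that $\Theta_{H_1}=\chi_0|_{H_1}=1_{H_1}$ and read off $n(H_1,1_{H_1})=1$ directly, whereas the paper's informal discussion somewhat misleadingly invokes the Aramata--Brauer theorem at that step.
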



But there may exist infinitely many $s_0$ with $\mbox{ord}_{s=s_0} (\zeta_K(s))>1$. Foote and Murty \cite{FM} established a partial generalization to Stark's theorem for solvable Galois extensions. They proved,  
\begin{theorem}\label{foo} For solvable $G$ with $|G|=p_1^{n_1}p_2^{n_2} \cdots p_k^{n_k}$ where $p_1 <p_2< \cdots < p_k$ are distinct 
primes, if $\emph{ord}_{s=s_0} (\zeta_K(s)) \leq p_2-2$, then all Artin L-functions $L(s, \chi, K/F)$ are holomorphic at $s = s_0$. 
\end{theorem}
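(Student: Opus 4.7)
My plan is to prove the theorem by strong induction on $|G|$, using the Heilbronn character formalism of Section 4 together with structural results for solvable groups. The goal is to show $n(G, \chi) := \mbox{ord}_{s=s_0}L(s, \chi, K/F) \geq 0$ for every $\chi \in \mbox{Irr}(G)$. The crucial input is the Foote--Murty inequality of Theorem \ref{MRI}, which under our hypothesis gives
\[ \sum_{\chi \in \mbox{Irr}(G)} n(G, \chi)^2 \;\leq\; \bigl(\mbox{ord}_{s=s_0}\zeta_K(s)\bigr)^2 \;\leq\; (p_2-2)^2, \]
so only finitely many characters can contribute to $\Theta_G$, and each such contribution is modestly bounded.

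I would first carry out two reductions. If $\chi$ is not faithful, let $N := \ker \chi$; since $N \trianglelefteq G$, the field $K^N$ is Galois over $F$, and $\chi$ inflates from a character $\chi'$ of $G/N$. Lemma \ref{first} gives $L(s, \chi, K/F) = L(s, \chi', K^N/F)$, while Aramata--Brauer (Theorem \ref{AB1}) yields $\mbox{ord}_{s=s_0}\zeta_{K^N}(s) \leq \mbox{ord}_{s=s_0}\zeta_K(s) \leq p_2-2$; since the primes dividing $|G/N|$ are among those dividing $|G|$, the hypothesis carries over to $K^N/F$ and the induction hypothesis applies. If $\chi$ is faithful but imprimitive, write $\chi = \mbox{Ind}_H^G \psi$ with $H$ a proper subgroup, so by Lemma \ref{first} we have $L(s, \chi, K/F) = L(s, \psi, K/K^H)$; the Galois group $H = \mbox{Gal}(K/K^H)$ is strictly smaller, the hypothesis persists for $K/K^H$, and induction again applies. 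So we may assume $\chi$ is faithful and primitive.

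In this remaining case, Blichfeldt's theorem (Theorem \ref{Bli}) forbids any non-central abelian normal subgroup of $G$: every abelian normal subgroup of $G$ must lie in $Z(G)$. The main technical step is to establish the degree bound $\chi(1) \leq p_2 - 2$. Granted this, Ito's theorem (Theorem \ref{Ito}) applied to each prime $q \geq p_2$ dividing $|G|$ (so $\chi(1) < q - 1$) produces an abelian normal Sylow $q$-subgroup $Q_q$; by the Blichfeldt conclusion, each $Q_q$ is central in $G$. Hence $N := \prod_{q \geq p_2} Q_q$ is a central subgroup of $G$ all of whose Sylow subgroups are abelian, and $G/N$ is a $p_1$-group, hence nilpotent, hence supersolvable. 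Huppert's theorem (Theorem \ref{Hup}) then forces $G$ to be an $M$-group. But in an $M$-group every irreducible character is induced from a linear character of a subgroup, and together with the primitivity of $\chi$ this forces $\chi$ itself to be linear, i.e., $\chi(1) = 1$. Theorem \ref{ODT} then gives $n(G, \chi) \geq 0$, completing the induction.

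The main obstacle is clearly the degree bound $\chi(1) \leq p_2 - 2$. The natural route is to combine the quadratic bound of Theorem \ref{MRI} with the Heilbronn--Stark Lemma (Theorem \ref{HSL}): for every cyclic subgroup $C \leq G$ the restriction $\Theta_G|_C = \Theta_C$ is a genuine (non-virtual) character of $C$, since Artin L-functions attached to linear characters are entire away from $s=1$ by Theorem \ref{ODT}. By Frobenius reciprocity this yields
\[ (\Theta_G,\; \mbox{Ind}_C^G \psi) \;=\; n(C, \psi) \;\geq\; 0 \]
for every cyclic $C \leq G$ and every $\psi \in \mbox{Irr}(C)$, a strong family of linear positivity constraints on the integers $n(G, \chi)$. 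Combining these with the quadratic bound of Theorem \ref{MRI}, Artin's induction theorem, and elementary estimates on degrees of constituents should yield $\chi(1) \leq p_2 - 2$ for any $\chi$ with $n(G, \chi) \neq 0$. Carrying this out quantitatively is the delicate heart of the Foote--Murty argument.
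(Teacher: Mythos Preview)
Your overall architecture is correct and matches the paper's (which actually proves the stronger Theorem~\ref{Won} of Wong, of which Theorem~\ref{foo} is the special case $r_G = p_2-2$ with $\zeta_K$ in place of $\zeta_K/\zeta_F$): reduce to faithful and primitive $\chi$ via Lemmas~\ref{RW2} and~\ref{RW3}, observe via Blichfeldt that every abelian normal subgroup of $G$ is central, then use a degree bound together with Ito and Huppert to force $G$ to be an $M$-group. The endgame is exactly right.

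The genuine gap is in the degree bound, and the route you sketch --- the quadratic inequality of Theorem~\ref{MRI}, positivity of $(\Theta_G,\mbox{Ind}_C^G\psi)$ for cyclic $C$, and Artin induction --- is \emph{not} how the argument goes, and I do not see how to make it work. These constraints do not by themselves prevent a single faithful $\chi$ of large degree from having $n(G,\chi)=-1$, compensated elsewhere in $\Theta_G$. The paper's mechanism is structurally different: one decomposes $\Theta_G = \Theta_1 - \Theta_2 + \Theta_3$, where $-\Theta_2$ collects the terms with $n_\chi<0$ (all faithful), $\Theta_3$ the non-faithful part, and $\Theta_1$ the remainder. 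One then picks a normal subgroup $N$ of prime index containing $Z(G)$ and shows (Lemma~\ref{RW4}) that $(\Theta_2|_N,\Theta_3|_N)=0$ and that $\Theta_1|_N-\Theta_2|_N$ is either zero or a genuine character of $N$. In the latter case Lemma~\ref{RW5} extracts an irreducible constituent $\psi$ of $\Theta_1-\Theta_2$ with $\psi(1)\leq (\Theta_1-\Theta_2)(1)\leq \Theta_G(1)-n_{1_G}\leq p_2-2$; in the former case one repeats the analysis on the abelian subgroup $\langle Z(G),x\rangle$ for $x\notin N$ (Lemmas~\ref{4.26} and~\ref{RW6}) to reach a contradiction. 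Two points to internalize: first, the degree bound is obtained not for your fixed bad $\chi$ but for \emph{some} faithful $\psi$ appearing in $\Theta_1-\Theta_2$, which is all Ito's theorem needs; second, the bound comes from the \emph{linear} identity $\Theta_G(1)=\mbox{ord}_{s=s_0}\zeta_K(s)$ together with the orthogonality $(\Theta_2|_N,\Theta_3|_N)=0$, not from the quadratic Foote--Murty inequality.
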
  
One can naturally ask for a possible generalization. This we do by proving the following theorem in next section. This result due to recent work by P.J Wong in \cite{PJW}. 
\begin{theorem}[Wong, \cite{PJW}] \label{Won} For solvable group $G$ with $|G|=p_1^{n_1}p_2^{n_2} \cdots p_k^{n_k}$ where $p_1 < \cdots < p_k$ are distinct
primes, if $\emph{ord}_{s=s_0} \Big(\zeta_K(s)/\zeta_F(s)\Big) \leq p_2-2$, then all Artin L-functions $L(s, \chi, K/F)$s are holomorphic at $s = s_0$.
\end{theorem}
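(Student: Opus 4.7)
The plan is to adapt the strategy of Foote--Murty (Theorem \ref{foo}) by replacing the classical bound $\sum n(G,\chi)^2 \leq (\mbox{ord}_{s=s_0}\zeta_K)^2$ with a sharper inequality that isolates the contribution of the trivial character. Concretely, I would apply Theorem \ref{M-R I} with the one-dimensional character $\chi_0 = 1_G$; since $L(s,1_G,K/F) = \zeta_F(s)$, this yields
\begin{equation*}
\sum_{\chi \in \mbox{Irr}(G),\, \chi \neq 1_G} n(G,\chi)^2 \;\leq\; \Big(\mbox{ord}_{s=s_0}\tfrac{\zeta_K(s)}{\zeta_F(s)}\Big)^{2} \;\leq\; (p_2-2)^2,
\end{equation*}
so $|n(G,\chi)| \leq p_2-2$ for every non-trivial irreducible character $\chi$. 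The case $s_0 = 1$ is trivial since $\zeta_K/\zeta_F$ is holomorphic and non-vanishing there, and the holomorphy of non-trivial Artin L-functions at $s = 1$ is classical.

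Next I would set up an induction on $|G|$ and assume, for a contradiction, that some irreducible $\chi_0 \neq 1_G$ satisfies $n(G, \chi_0) < 0$. The first reduction is to \emph{faithful} $\chi_0$: if $N = \ker \chi_0$ is non-trivial, the inflation identity of Lemma \ref{first} gives $L(s, \chi_0, K/F) = L(s, \chi_0, K^N/F)$, and applying the inductive hypothesis to $K^N/F$ (whose Galois group $G/N$ is solvable of strictly smaller order, with the primes dividing $|G/N|$ being a subset of those dividing $|G|$) yields a contradiction. The second reduction is to \emph{primitive} $\chi_0$: if $\chi_0 = \mbox{Ind}_H^G \psi$ for some proper subgroup $H$, then Lemma \ref{first} gives $L(s, \chi_0, K/F) = L(s, \psi, K/K^H)$. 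To invoke induction on $K/K^H$ I must check that the analogous hypothesis is inherited; the Uchida--Van der Waall theorem (Theorem \ref{UVdW}) shows that $\zeta_{K^H}/\zeta_F$ is entire (as $K^H \subset K$ with $K/F$ a solvable Galois extension), so with $q_2$ the second smallest prime dividing $|H|$ (necessarily $\geq p_2$) we obtain
\begin{equation*}
\mbox{ord}_{s=s_0}\tfrac{\zeta_K}{\zeta_{K^H}} \;\leq\; \mbox{ord}_{s=s_0}\tfrac{\zeta_K}{\zeta_F} \;\leq\; p_2 - 2 \;\leq\; q_2 - 2,
\end{equation*}
which triggers the induction and yields a contradiction.

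It remains to handle the case where $\chi_0$ is both faithful and primitive. Blichfeldt's theorem (Theorem \ref{Bli}) then forces every abelian normal subgroup of $G$ to be central, and Ito's theorem (Theorem \ref{Ito}) forces the Sylow $q$-subgroup of $G$ to be abelian, normal, and (by Blichfeldt) central for every prime $q$ dividing $|G|$ with $q > \chi_0(1) + 1$. With these structural constraints, I would run the combinatorial argument of Foote--Murty: the sharpened bound $|n(G, \chi_0)| \leq p_2 - 2$, combined with the Artin--Takagi-type identity
\begin{equation*}
\mbox{ord}_{s=s_0}\tfrac{\zeta_K(s)}{\zeta_F(s)} \;=\; \sum_{\chi \neq 1_G} n(G, \chi)\, \chi(1)
\end{equation*}
and the central Hall structure on $G$ supplied by Blichfeldt--Ito, must be shown to be incompatible with $n(G, \chi_0) < 0$.

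The main obstacle lies precisely in this last step: transplanting the Foote--Murty structural bookkeeping into the setting where the controlling quantity is $\mbox{ord}_{s=s_0}(\zeta_K/\zeta_F)$ rather than $\mbox{ord}_{s=s_0}\zeta_K$, and checking that the interaction between the centrality constraints from Blichfeldt--Ito and the tighter inequality still yields a numerical contradiction. Everything preceding this step is a fairly routine transfer of the existing arsenal (Heilbronn--Stark, Aramata--Brauer, Uchida--Van der Waall, Murty--Raghuram) from $\zeta_K$ to the ratio $\zeta_K/\zeta_F$; the heart of the proof is ensuring that the sharpened bound, together with the central-type structural constraints on a faithful primitive $\chi_0$, is strong enough to rule out a pole of any irreducible Artin L-function at $s_0$.
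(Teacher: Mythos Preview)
Your overall architecture matches the paper's: a minimal counterexample $G$, reduction to a faithful character via inflation (the paper's Lemma \ref{RW2}), reduction to a primitive character via induction and Uchida--Van der Waall (the paper's Lemma \ref{RW3}), and then an appeal to the Blichfeldt--Ito--Huppert circle of results. So the skeleton is correct.

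The gap is in your final step, and it is not merely ``bookkeeping.'' The Murty--Raghuram inequality you invoke gives $|n(G,\chi)| \leq p_2 - 2$, a bound on the \emph{order} of each L-function at $s_0$. But Ito's theorem (Theorem \ref{Ito}) requires a bound on the \emph{degree} of some faithful irreducible character: one needs $\psi(1) \leq p_2 - 2$ for a faithful $\psi$ in order to conclude that the Sylow $q$-subgroups are abelian normal for every $q \geq p_2$, which is what feeds into Huppert's theorem (Theorem \ref{Hup}) and forces $G$ to be an $M$-group. Your $\chi_0$ is faithful and primitive, but you have no control on $\chi_0(1)$; applying Ito only for $q > \chi_0(1) + 1$ tells you nothing useful. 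The Artin--Takagi identity $\sum_{\chi \neq 1_G} n(G,\chi)\chi(1) \leq p_2 - 2$ by itself does not preclude a single large-degree $\chi_0$ with $n(G,\chi_0) = -1$ compensated by a large-degree $\chi_1$ with $n(G,\chi_1) = +1$.

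The paper closes this gap with a genuinely new ingredient: it decomposes the Heilbronn character as $\Theta_G = \Theta_1 - \Theta_2 + \Theta_3$ (where $-\Theta_2$ collects the negative terms and $\Theta_3$ the non-faithful ones), chooses a normal subgroup $N$ of prime index containing $Z(G)$, and proves via Clifford theory and the Blichfeldt constraint that $(\Theta_2|_N,\Theta_3|_N) = 0$ and that $\Theta_1|_N - \Theta_2|_N$ is a nonzero character of $N$ (Lemmas \ref{RW4}--\ref{RW6}). From $(\Theta_1 - \Theta_2)(1) \leq \Theta_G(1) - n(G,1_G) = \mbox{ord}_{s=s_0}(\zeta_K/\zeta_F) \leq p_2 - 2$ one then extracts a faithful irreducible constituent $\psi$ of $\Theta_1 - \Theta_2$ with $\psi(1) \leq p_2 - 2$ (Lemma \ref{RW5}). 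This degree bound is what makes Ito--Huppert fire for all primes $\geq p_2$ and delivers the contradiction. Your proposal does not contain this mechanism, and without it the argument does not close.
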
 
Before going into the proof of Theorem \ref{Won} in a very orthodox manner, let us first set our playing ground. 
\begin{definition} Let G be any finite group associated to a Galois extension of number field. We say G has the property P, whenever H is either a subgroup or a quotient group of G, then H also satisfies P. We also assume solvability is included in property P.
\end{definition} 
\noindent Let $S$ be the collection of isomorphic classes of finite groups with the property P, and let $\{r_G\}_{G\in S}$ be a set of integers such that if $H$ is either a subgroup or a quotient group of $G$, then $r_G \leq r_H$. Let $S'$ be the collection consisting of groups $G \in S$ such that all Artin L-functions $L(s, \chi, K/F)$ are holomorphic at $s = s_0$ if $\mbox{ord}_{s=s_0}\Big(\zeta_K(s)/\zeta_F(s)\Big) \leq r_G$. Pick a minimal element $G_{mc}$ from $S'$. (We are taking minimal element depending on its order) Note that choice of this $G_{mc}$ depends on the $\{r_G\}_{G \in S}$. Without loss of generality we may assume $G=G_{mc}$. Now we are all set to proceed to the proof of Theorem \ref{Won}. We shall first prove some lemmas which will be required for proof of the theorem.
\begin{lemma} \label{RW1}$\Theta_G$ is not a character of $G$, but $\Theta_G|_H$ is a character of any proper subgroup $H$ of $G$. \end{lemma}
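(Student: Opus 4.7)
The plan is to leverage two tools already in place: the Heilbronn-Stark restriction formula $\Theta_G|_H = \Theta_H$ (Theorem \ref{HSL}) and the minimality of $G_{mc}$ as a counterexample. The former reduces each subgroup claim to showing non-negativity of the coefficients of $\Theta_H$; the latter converts that into a routine check that the hypothesis of the theorem is inherited by $H$.

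For the first assertion, I would argue by contradiction. If $\Theta_G = \sum_{\chi \in \mbox{Irr}(G)} n(G,\chi)\chi$ were a character, then every coefficient $n(G,\chi) = \mbox{ord}_{s=s_0} L(s,\chi,K/F)$ would be non-negative, so every Artin $L$-function $L(s,\chi,K/F)$ would be holomorphic at $s_0$. This is exactly the conclusion of the theorem, contradicting the fact that $G = G_{mc}$ is a counterexample. Hence $\Theta_G$ cannot be a character.

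For the second assertion, Theorem \ref{HSL} gives $\Theta_G|_H = \Theta_H = \sum_{\psi \in \mbox{Irr}(H)} n(H,\psi)\psi$, so it suffices to show $n(H,\psi) \geq 0$ for every irreducible $\psi$ of $H$, i.e., that every $L(s,\psi,K/K^H)$ is holomorphic at $s_0$. Since $H$ inherits property $P$ and $|H|<|G|$, the minimality of $G_{mc}$ will deliver this once I verify the hypothesis of the theorem for the Galois extension $K/K^H$, namely
\[
\mbox{ord}_{s=s_0}\Big(\frac{\zeta_K(s)}{\zeta_{K^H}(s)}\Big) \leq r_H.
\]

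I expect this last inequality to be the main obstacle, because $K^H/F$ need not itself be Galois. I would insert $K^H$ into the tower $F \subseteq K^H \subseteq K$ and split the order:
\[
\mbox{ord}_{s=s_0}\Big(\frac{\zeta_K(s)}{\zeta_F(s)}\Big) = \mbox{ord}_{s=s_0}\Big(\frac{\zeta_K(s)}{\zeta_{K^H}(s)}\Big) + \mbox{ord}_{s=s_0}\Big(\frac{\zeta_{K^H}(s)}{\zeta_F(s)}\Big).
\]
The normal closure of $K^H$ over $F$ has Galois group a quotient of $G$, hence solvable, so Uchida-van der Waall (Theorem \ref{UVdW}) ensures $\zeta_{K^H}/\zeta_F$ is entire, contributing a non-negative order at $s_0$. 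Combined with the hypothesis on $G$ and the monotonicity $r_G \leq r_H$ assumed on the family $\{r_G\}$, this yields
\[
\mbox{ord}_{s=s_0}\Big(\frac{\zeta_K(s)}{\zeta_{K^H}(s)}\Big) \leq \mbox{ord}_{s=s_0}\Big(\frac{\zeta_K(s)}{\zeta_F(s)}\Big) \leq r_G \leq r_H,
\]
and minimality of $G_{mc}$ then forces every $n(H,\psi) \geq 0$, completing the proof.
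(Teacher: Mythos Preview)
Your proof is correct and follows essentially the same route as the paper's: both parts hinge on $G=G_{mc}$ being a minimal counterexample, and for the restriction claim you use Uchida--van der Waall on $\zeta_{K^H}/\zeta_F$ to push the bound $r_G\le r_H$ down to $\mbox{ord}_{s=s_0}(\zeta_K/\zeta_{K^H})$, exactly as the paper does. Your phrasing that the normal closure of $K^H$ over $F$ has Galois group a \emph{quotient} of $G$ is in fact more accurate than the paper's claim that $K$ itself is that normal closure, though the conclusion (solvability) is the same either way.
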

\begin{proof} We assumed $G=G_{mc}$, so from definition it is clear that there is an irreducible character $\chi$ of $G$ such that $n_\chi$ is negative. But $n_\chi$ is the coefficient of $\chi$ of Heilbronn character $\Theta_G$, showing first part of the lemma.\\ 
\newline
\noindent If $H$ is a subgroup of $G$, we have
\[\mbox{ord}_{s=s_0}\Big(\zeta_K(s)/\zeta_F(s)\Big) \leq r_G \leq r_H.\]
As P contains the property of solvability, $G$ itself is a solvable group. So by Uchida-van der Waall theorem (Theorem \ref{UVdW}), we see $\zeta_M(s)/\zeta_F(s)$ is entire, where $M$ is the fixed field of $H$. As, $K$ is normal closure of $M$ over $F$ and $\mbox{Gal}(K/F)$ is solvable. So,  
\[\mbox{ord}_{s=s_0}\Big(\zeta_K(s)/\zeta_M(s)\Big) \leq \mbox{ord}_{s=s_0}\Big(\zeta_K(s)/\zeta_F(s)\Big) \leq r_H,\]
and later inequality suggests $H$ is a potential member of $S'$. But as $H$ is a proper subgroup of $G$, by minimality of $G=G_{mc}$ we see $n_\phi$ is non-negative for any irreducible character $\phi$ of $H$, proving the lemma. 
\end{proof}
\begin{lemma} \label{RW2} If $\chi$ is an irreducible character of $G$ such that $n_{\chi} < 0$, then $\chi$ is faithful. In other words $\emph{Ker} (\chi)$ is trivial.
\end{lemma}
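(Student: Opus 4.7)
The plan is to argue by contradiction: assume that $\chi$ is an irreducible character of $G$ with $n_{\chi}<0$ but whose kernel $N:=\mbox{Ker}(\chi)$ is non-trivial. Then $\chi$ is the inflation of an irreducible character $\widetilde{\chi}$ of the quotient $G/N$. Writing $M=K^{N}$ for the fixed field of $N$, the third identity of Lemma \ref{first} gives
\[L(s,\chi,K/F) = L(s,\widetilde{\chi},M/F).\]
The goal is to show, using the minimality of $G=G_{mc}$, that the right-hand side is holomorphic at $s=s_0$, which forces $n_\chi\geq 0$ and produces the desired contradiction.

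To apply minimality, I need to verify that $G/N$ is a smaller group in $S$ that still satisfies the order hypothesis attached to the $r$-values. First, property $P$ is stable under quotients and contains solvability, so $G/N\in S$, and by the monotonicity hypothesis on $\{r_G\}_{G\in S}$ we have $r_G\leq r_{G/N}$. Next, because $N\unlhd G$, the extension $M/F$ is Galois with $\mbox{Gal}(M/F)=G/N$, while $K/M$ is Galois with group $N$. Aramata--Brauer (Theorem \ref{AB1}) applied to $K/M$ shows that $\zeta_K(s)/\zeta_M(s)$ is entire, in particular has non-negative order at $s_0$. Factoring
\[\frac{\zeta_K(s)}{\zeta_F(s)} = \frac{\zeta_K(s)}{\zeta_M(s)}\cdot \frac{\zeta_M(s)}{\zeta_F(s)}\]
and comparing orders at $s_0$ yields
\[\mbox{ord}_{s=s_0}\Big(\zeta_M(s)/\zeta_F(s)\Big) \leq \mbox{ord}_{s=s_0}\Big(\zeta_K(s)/\zeta_F(s)\Big) \leq r_G \leq r_{G/N}.\]
Since $|G/N|<|G|$ and $G_{mc}$ was chosen minimal with the failure property, the group $G/N$ cannot itself be a counterexample. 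Consequently, every Artin L-function $L(s,\psi,M/F)$ with $\psi\in\mbox{Irr}(G/N)$ is holomorphic at $s_0$; in particular $L(s,\widetilde{\chi},M/F)$ is, so $L(s,\chi,K/F)$ is holomorphic at $s_0$ and $n_\chi\geq 0$, contradicting the assumption.

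The proof is essentially formal once the reduction to the quotient $G/N$ is set up: inflation and the third part of Lemma \ref{first} let us transfer the question to a smaller Galois group, property $P$ guarantees we remain in $S$, and the monotonicity of $r_G$ plus Aramata--Brauer guarantee that the order condition is inherited. The only genuinely delicate point is the order-inheritance step, which relies critically on the fact that the denominator of the crucial quotient of Dedekind zeta functions is enlarged from $\zeta_F$ to $\zeta_M$, something that is only permissible because $\zeta_K(s)/\zeta_M(s)$ is known to be entire by Aramata--Brauer. Everything else is routine once the correct interpretation of $S'$ as the class of counterexamples is kept in mind.
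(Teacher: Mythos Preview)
Your proof is correct and follows essentially the same approach as the paper: both argue by contradiction, pass to the quotient $G/\mbox{Ker}(\chi)$ via inflation and Lemma~\ref{first}, use Aramata--Brauer on $K/M$ to inherit the bound $\mbox{ord}_{s=s_0}(\zeta_M(s)/\zeta_F(s))\leq r_G\leq r_{G/N}$, and then invoke the minimality of $G_{mc}$. Your version is slightly more explicit about the factorization step and about why $G/N\in S$, and you correctly note that $S'$ must be read as the class of counterexamples for the minimality argument to make sense.
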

\begin{proof} Suppose $\text{Ker} (\chi)$  is non-trivial and let $M$ denote its fixed field. Since $\text{Ker} (\chi)$ is a normal subgroup of G, we can consider $\chi$ to be a character $\chi_0$ of $\mbox{Gal}(M/F)$, whose inflation to $\mbox{Gal}(K/F)$ is same as $\chi$. By Lemma \ref{first} $L(s, \chi, K/F)$ can be viewed as an Artin L-function for $M/F$ attached to the character $\chi_0$. We denote $N=\mbox{Gal}(M/F)$. By Aramata-Brauer theorem, $\Big(\zeta_K(s)/\zeta_F(s)\Big)$ and $\Big(\zeta_K(s)/\zeta_M(s)\Big)$ are entire, so
\[\mbox{ord}_{s=s_0}\Big(\zeta_M(s)/\zeta_F(s)\Big) \leq r_G \leq r_N.\]
Again as in the previous proof, above expression suggests that $N$ is potential member of $S'$, but minimality of $G=G_{mc}$ forces $n_{\chi_0} \geq 0$. And by inflation-invariant property (Lemma \ref{first}) of Artin L-functions, $n_{\chi_0}=n_{\chi} \geq 0$, a contradiction.
\end{proof}
\begin{lemma} \label{RW3} If $\chi$ is an irreducible character of $G$ such that $n_{\chi} < 0$, then $\chi$ is not induced from any proper subgroup of $G.$
\end{lemma}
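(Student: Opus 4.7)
The plan is to argue by contradiction. Suppose $\chi = \mbox{Ind}_H^G \psi$ for some character $\psi$ of a proper subgroup $H \subsetneq G$. The goal is to show that this forces $n_\chi \geq 0$, contradicting the standing assumption $n_\chi < 0$.

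First I would apply the induction-invariance of Artin L-functions (Lemma \ref{first}) to rewrite
$$L(s, \chi, K/F) = L(s, \mbox{Ind}_H^G \psi, K/F) = L(s, \psi, K/K^H),$$
so that, passing to orders at $s = s_0$, one obtains $n_\chi = \mbox{ord}_{s=s_0} L(s, \psi, K/K^H)$. Next, since $\psi$ is an actual (non-virtual) character of $H$, expand
$$\psi = \sum_{\phi \in \mbox{Irr}(H)} a_\phi \, \phi, \qquad a_\phi = (\psi, \phi) \in \mathbb{Z}_{\geq 0}.$$
The additivity of L-functions in their character argument (Lemma \ref{first} again) then yields
$$n_\chi = \sum_{\phi \in \mbox{Irr}(H)} a_\phi \, n(H, \phi).$$

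The decisive input is Lemma \ref{RW1}: the minimality of $G = G_{mc}$ guarantees that $\Theta_G|_H = \Theta_H$ is a genuine character of the proper subgroup $H$, whence $n(H, \phi) \geq 0$ for every $\phi \in \mbox{Irr}(H)$. Combining with $a_\phi \geq 0$, we conclude $n_\chi \geq 0$, contradicting $n_\chi < 0$.

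There is no real obstacle here, because the whole minimality setup introduced at the start of the proof of Theorem \ref{Won} was designed precisely to reduce such statements to positivity on proper subgroups via Heilbronn--Stark (Theorem \ref{HSL}). The only careful point is the distinction between \emph{character} and \emph{virtual character}: the non-negativity of the multiplicities $a_\phi = (\psi, \phi)$ depends on $\psi$ being a bona fide character, which is why the lemma speaks of $\chi$ being induced from a character rather than merely a rational combination as in (weak) Brauer induction.
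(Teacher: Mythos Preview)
Your proof is correct and follows essentially the same route as the paper: assume $\chi = \mbox{Ind}_H^G \psi$ for a proper subgroup $H$, invoke induction-invariance of Artin L-functions to identify $n_\chi$ with $\mbox{ord}_{s=s_0} L(s,\psi,K/K^H)$, and then use Lemma~\ref{RW1} to conclude non-negativity. The only difference is cosmetic: the paper asserts the last step in one line, whereas you spell out the decomposition of $\psi$ into irreducibles of $H$ and the additivity of the order, which is a welcome clarification.
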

\begin{proof} For sake of contradiction, suppose there exist a character $\psi$ for a proper subgroup $H$ of $G$ such that $\chi = \mbox{Ind}^G_H \psi$, then we have,
\[n_{\chi} = \mbox{ord}_{s=s_0} L(s, \psi, K/K^H),\]
but the latter one is non-negative by Lemma \ref{RW1}, a contradiction. 
\end{proof}
\noindent Now let us decompose $\Theta_G$ into three parts. Denote $\Theta_3$ to be the sum of all terms $n_{\chi}\chi$ so that $\chi$ is not faithful, and let $-\Theta_2$ be the sum of all terms $n_{\chi}\chi$ for which $n_{\chi}$ is negative. Lemma \ref{RW2} implies that $(\Theta_2,\Theta_3)=0$. We further define $\Theta_1=\Theta_{G}+\Theta_2-\Theta_3$
\begin{claim} Since $G$ is solvable and non-abelian, it admits a normal subgroup $N$ of prime index $p$ (say) that contains the centre $Z(G)$ of $G$.
\end{claim}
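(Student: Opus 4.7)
The plan is to exhibit such an $N$ by passing to the quotient $\overline{G} = G/Z(G)$, producing the required prime-index subgroup there, and pulling back.

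First, I would observe that since $G$ is non-abelian we have $Z(G) \neq G$, so $\overline{G}$ is a non-trivial group, and since $G$ is solvable so is $\overline{G}$. The next step is to show that $\overline{G}$ admits a non-trivial abelian quotient. For this, consider the abelianization $\overline{G}/[\overline{G}, \overline{G}]$. If this were trivial, then $\overline{G}$ would coincide with its own commutator subgroup, i.e. $\overline{G}$ would be perfect; but a non-trivial finite solvable group cannot be perfect (otherwise its derived series would never reach the identity), contradiction. Hence $A := \overline{G}/[\overline{G}, \overline{G}]$ is a non-trivial finite abelian group.

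Now pick any prime $p$ dividing $|A|$. By the structure theorem for finite abelian groups (or simply by Cauchy's theorem applied to $A$), $A$ has a quotient of order $p$; equivalently $A$ has a subgroup of index $p$. Pulling this subgroup back through the chain of canonical projections
\[
G \;\twoheadrightarrow\; G/Z(G) \;=\; \overline{G} \;\twoheadrightarrow\; A,
\]
we obtain a subgroup $N \subseteq G$ with $Z(G) \subseteq N$ and $[G:N] = p$. Since $N$ is the preimage of a subgroup under a surjective homomorphism whose target is abelian (well, the preimage of a subgroup of an abelian group is automatically normal in $G$, because the kernel of the composite $G \to A$ is a normal subgroup of $G$ and the preimage sits inside the normalizer of itself), $N$ is normal in $G$. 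This yields the desired normal subgroup.

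I do not expect any serious obstacle here; the one point that requires a moment's care is the non-triviality of the abelianization of $\overline{G}$, which is where solvability (as opposed to mere non-triviality) enters decisively. The rest is a standard pull-back argument using the correspondence theorem for normal subgroups.
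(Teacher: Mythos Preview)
Your proof is correct and, in fact, cleaner than the paper's. Both arguments reduce to finding a normal subgroup of prime index in the non-trivial solvable quotient $G/Z(G)$ and pulling back. The difference lies in how this prime-index subgroup is produced. You go straight to the abelianization: a non-trivial solvable group cannot be perfect, so $\overline{G}/[\overline{G},\overline{G}]$ is a non-trivial finite abelian group, which visibly has a quotient of prime order. The paper instead argues by induction on $|G|$: it takes a minimal normal subgroup $A$ (which is an elementary abelian $p$-group), passes to $G/A$, and handles the base case of $p$-groups separately. Your route is shorter and avoids both the induction and the case split; the paper's route has the minor advantage of rehearsing the structure of minimal normal subgroups in solvable groups, but this is not needed elsewhere in the argument. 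One small cosmetic point: your parenthetical justification of normality of $N$ is a bit tangled; it is enough to say that every subgroup of the abelian group $A$ is normal, and preimages of normal subgroups under surjective homomorphisms are normal.
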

\begin{proof} As $G$ is non-abelian, $G/Z(G)$ is non trivial whose center is trivial. So it is enough to show existence of a normal subgroup $N$ having prime index $p$, for some prime $p$. Let $A$ be a (non trivial) minimal normal subgroup of $G$. From \ref{yea} we know $A$ is abelian. Let $p\mid |A|$, then $A$ has a $p$-Sylow subgroup $S$. Since $A$ is normal in $G$ and $S$ is characteristic in $A$ (because it is the unique Sylow subgroup since $A$ is abelian) we must have that $S$ is normal in $G$, by assumption this implies that $A=S$ so that $|A|=p^n$ for some $n$. If $G$ is not a $p$-group, $G/A$ is non trivial having order less than $|G|$. Then by induction hypothesis, we get a required normal subgroup in $G/A$, and so in $G$. So the obstacle only comes when $G=A$, i.e. $G$ is a $p$-group. But in that case it is very well known that, for any divisor $d$ of $|G|$, there exist a subgroup of index $d$. We are finally done taking $d=p$ and using the fact that any subgroup of index $p$ is normal, (because $p$ is the smallest prime divisor of $|G|$). 
\end{proof} 
\begin{lemma} \label{RW4} Let $N$ be a normal subgroup of $G$ of prime index $p$ containing the centre $Z(G)$ of $G$. Then we have,
\[(\Theta_2|_N , \Theta_3|_N ) = 0.\]
Further, $\Theta_1|_N-\Theta_2|_N$ is either zero or is equal to a character $\phi$ of $N.$
\end{lemma}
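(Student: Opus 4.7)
My plan is to establish the orthogonality $(\Theta_2|_N,\Theta_3|_N)=0$ first, since the second assertion follows from it rather cleanly. Once the orthogonality is known, recall from Lemma \ref{RW1} that $\Theta_G|_N=\Theta_1|_N-\Theta_2|_N+\Theta_3|_N$ is an honest character of $N$. The three summands are themselves genuine characters of $N$ (each $\Theta_i$ is a non-negative integer combination of irreducibles of $G$, and restriction preserves this). For any $\phi\in\mbox{Irr}(N)$ with $(\Theta_2|_N,\phi)>0$ the orthogonality forces $(\Theta_3|_N,\phi)=0$, so $(\Theta_1|_N-\Theta_2|_N,\phi)=(\Theta_G|_N,\phi)\geq 0$; for $\phi$ with $(\Theta_2|_N,\phi)=0$ we trivially have $(\Theta_1|_N-\Theta_2|_N,\phi)=(\Theta_1|_N,\phi)\geq 0$. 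Thus $\Theta_1|_N-\Theta_2|_N$ has non-negative integer multiplicities on $\mbox{Irr}(N)$, so it is a character of $N$ or zero.

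For the orthogonality I would show that every irreducible constituent of $\Theta_2|_N$ is faithful on $N$, while every irreducible constituent of $\Theta_3|_N$ is non-faithful. The first half is straightforward: any $\chi$ appearing in $\Theta_2$ has $n_\chi<0$, hence is faithful by Lemma \ref{RW2} and is not induced from any proper subgroup by Lemma \ref{RW3}. Since $[G:N]=p$ is prime, Clifford's theorem (Theorem \ref{Cli}) forces $\chi|_N$ to be either irreducible or induced from an irreducible of $N$; the latter is ruled out, so $\chi|_N$ is irreducible. Its kernel equals $\ker(\chi)\cap N=\{e\}$, so $\chi|_N$ is a faithful irreducible of $N$.

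The main obstacle is showing that every irreducible constituent of $\Theta_3|_N$ is non-faithful; this is where the hypothesis $Z(G)\subseteq N$ must get used. Let $\psi\in\mbox{Irr}(G)$ appear in $\Theta_3$, so $\ker\psi\neq\{e\}$, and suppose for contradiction that some irreducible constituent $\phi$ of $\psi|_N$ is faithful on $N$. By Clifford, $\psi|_N$ is either $\phi$ alone or $\sum_{i=1}^{p}\phi_i$ with $\phi_i$ mutually $G$-conjugate to $\phi$; in either case every constituent is faithful, so $\ker\psi\cap N=\bigcap_i\ker\phi_i=\{e\}$. If $\ker\psi\subseteq N$ this forces $\ker\psi=\{e\}$, contradicting non-faithfulness. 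Otherwise $(\ker\psi)\cdot N=G$ by primality of $p$, and then $|\ker\psi|=p$. Since $\ker\psi$ and $N$ are both normal in $G$ with trivial intersection, the commutator $[x,n]=xnx^{-1}n^{-1}$ lies in $\ker\psi\cap N=\{e\}$ for every $x\in\ker\psi$, $n\in N$; hence $\ker\psi$ centralises $N$, and combined with $G=(\ker\psi)N$ this puts $\ker\psi$ inside $Z(G)$. The hypothesis $Z(G)\subseteq N$ now gives $\ker\psi\subseteq N$, contradicting $\ker\psi\cap N=\{e\}$ since $\ker\psi$ is non-trivial. This completes the proof.
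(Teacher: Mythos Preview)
Your proof is correct and your argument for the second assertion mirrors the paper's. The real difference lies in how you obtain the orthogonality $(\Theta_2|_N,\Theta_3|_N)=0$.

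The paper proceeds by first invoking Blichfeldt's theorem (Theorem \ref{Bli}): since $G$ possesses a faithful irreducible character that is not induced from any proper subgroup (namely any constituent of $\Theta_2$, by Lemmas \ref{RW2} and \ref{RW3}), Blichfeldt forces every abelian normal subgroup of $G$ to lie inside $Z(G)$. Solvability then guarantees that each non-faithful $\psi\in\mbox{Irr}(G)$ contains a nontrivial abelian normal subgroup of $G$ in its kernel, hence $\ker\psi\cap Z(G)\neq\{e\}$. The constituents of $\Theta_2|_N$ and $\Theta_3|_N$ are then separated by their central character on $Z(G)\subseteq N$: the former are faithful on $Z(G)$, the latter are not.

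Your route bypasses Blichfeldt entirely. You distinguish the constituents by faithfulness on $N$ itself rather than on $Z(G)$, and you handle the non-faithful $\psi$ by the direct observation that if $\ker\psi\cap N=\{e\}$ then $\ker\psi$ is a normal complement to $N$ of order $p$, which must centralise $N$ and hence land in $Z(G)\subseteq N$, a contradiction. This is more elementary and self-contained for the lemma at hand. The trade-off is that the paper's approach establishes the structural fact ``every abelian normal subgroup of $G$ lies in $Z(G)$'', and this fact (specifically, that $\ker\psi\cap Z(G)$ is nontrivial for each constituent $\psi$ of $\Theta_3$) is reused verbatim in the proof of Lemma \ref{4.26}, where one works with the abelian subgroup $H=\langle Z(G),x\rangle$ rather than $N$. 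Your argument, being tailored to the normal subgroup $N$ of prime index, does not immediately transfer to that setting.
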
 
\begin{proof} As $[G : N]$ is prime, by Theorem \ref{Cli} for every $\chi \in \mbox{Irr}(G)$, either $\chi|_N$ is irreducible or $\chi$ is induced from $N$. So if $\chi$ is contained in $\Theta_2$, then $\chi|_{N}$ is of course irreducible (from Lemma \ref{RW3}). The theorem by Blichfeldt (Theorem \ref{Bli}) says, if $G$ has a abelian normal subgroup not contained in center of $G$, then every faithful irreducible character of $G$ would be induced from a proper subgroup, contradiction to Lemma \ref{RW3}. So all abelian normal subgroups of $G$ must be contained in the centre $Z(G)$ of $G$. For every non-faithful $\chi \in \mbox{Irr}(G)$, we know $\mbox{Ker}(\chi)$ is a non trivial normal subgroup of $G$. By solvability of $G$ we can get a (non trivial) abelian normal subgroup $H$ of $G$ contained in $\ker(\chi)$. But we just observed earlier that $H$ is contained in $Z(G)$. So, we can say \[\ker(\chi) \cap Z(G)\] 
is non empty. So none of the irreducible components of $\Theta_3$ is faithful on $Z(G)$. But as per assumption $Z(G) \subseteq N$, so set of all irreducible components of $\Theta_2|_N$ and $\Theta_3|_N$ are pairwise disjoint, and so first part of the lemma is established.\\ 
\newline
For second part, from Lemma \ref{RW1} we know $\Theta_N=\Theta_1|_N-\Theta_2|_N+\Theta_3|_N$ is a character of $N$. If $\Theta_1|_N-\Theta_2|_N$ is identically zero then we are done. Otherwise, suppose it is not a character, so there exist an irreducible character $\psi$ of $N$, which comes with negative coefficient in $\Theta_1|_N-\Theta_2|_N$. But since $\Theta_N$ is a character, $\psi$ must come with positive coefficient in $\Theta_3|_{N}$. From definition of $\Theta_1$, if $\psi$ comes as a component in $\Theta_1|_N$, (here by component we mean, $\psi$ comes as a term, when we write $\Theta_1|_N$ as sum of irreducible characters of $N$) it would come with a positive coefficient, which means it comes with a (strictly) negative coefficient in $\Theta_2|_{N}$. Contradiction as  $(\Theta_2|_{N},\Theta_3|_{N})=0$. So, $\Theta_1|_N-\Theta_2|_N$ is a character of $N$.
\end{proof} 
\begin{lemma} \label{RW5} Suppose that $\Theta_1|_N-\Theta_2|_N$ is a character $\phi$ of $N$. Then there is an irreducible
character $\psi$ appearing in $\Theta_1 - \Theta_2$ such that $\psi(1) \leq r_G$.
\end{lemma}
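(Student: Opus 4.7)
The plan is to show that $\phi(1) \leq r_G$; once this is established, since $\phi$ is a character of $N$, it has some irreducible constituent of dimension at most $\phi(1) \leq r_G$, and such a constituent lifts to a faithful irreducible character of $G$ appearing in $\Theta_1 - \Theta_2$ via Clifford's theorem.

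The first step is to rewrite $\Theta_1 - \Theta_2$ explicitly. Unwinding $\Theta_1 = \Theta_G + \Theta_2 - \Theta_3$ gives
\[
\Theta_1 - \Theta_2 = \Theta_G - \Theta_3 = \sum_{\chi \text{ faithful}} n_\chi \chi,
\]
so in particular $\phi(1) = \mbox{ord}_{s=s_0}\zeta_K(s) - \Theta_3(1)$. The crucial arithmetic point is then the bound $\Theta_3(1) \geq n_{1_G} = \mbox{ord}_{s=s_0}\zeta_F(s)$. To prove this, I would show that $n_\chi \geq 0$ for every non-faithful irreducible character $\chi$ of $G$. Setting $H := \ker(\chi) \neq \{e\}$, the character $\chi$ is the inflation of an irreducible character $\chi_0$ of the proper quotient $G/H$, and $n_\chi = n_{\chi_0}$ by the inflation invariance recorded in Lemma \ref{first}. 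Letting $M = K^H$, the Uchida--van der Waall theorem (Theorem \ref{UVdW}) and the Aramata--Brauer theorem (Theorem \ref{AB1}) together yield
\[
0 \leq \mbox{ord}_{s=s_0}\Big(\zeta_M(s)/\zeta_F(s)\Big) \leq \mbox{ord}_{s=s_0}\Big(\zeta_K(s)/\zeta_F(s)\Big) \leq r_G \leq r_{G/H}.
\]
Since $G/H$ has smaller order than $G_{mc}$, the minimality assumption forces $L(s, \chi_0, M/F)$ to be holomorphic at $s = s_0$, so $n_{\chi_0} \geq 0$ and hence $n_\chi \geq 0$. This establishes $\Theta_3(1) \geq n_{1_G}$, whence $\phi(1) \leq \mbox{ord}_{s=s_0}(\zeta_K/\zeta_F) \leq r_G$.

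To conclude, pick any irreducible constituent $\psi'$ of the character $\phi$ of $N$; then $\psi'(1) \leq \phi(1) \leq r_G$. By Lemma \ref{RW3}, every faithful $\chi$ appearing in $\Theta_1 - \Theta_2$ is not induced from a proper subgroup, so because $[G:N] = p$ is prime, Clifford's theorem (Theorem \ref{Cli}) forces $\chi|_N$ to be irreducible. Hence $\psi' = \chi|_N$ for some faithful irreducible $\chi$ of $G$ with $\chi(1) = \psi'(1) \leq r_G$. If $n_\chi \neq 0$ we take $\psi = \chi$; otherwise, because two faithful irreducibles of $G$ have equal restriction to $N$ iff they differ by a one-dimensional character of $G/N$, the non-zero coefficient of $\psi'$ in $\phi$ forces some twist $\chi \otimes \lambda$ to have non-zero $n$-coefficient, and $\psi := \chi \otimes \lambda$ has $\psi(1) = \chi(1) \leq r_G$.

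The main obstacle is verifying that every non-faithful irreducible character of $G$ has non-negative multiplicity in $\Theta_G$; this is where the minimality of $G_{mc}$, inflation invariance, and the monotonicity $r_{G/H} \geq r_G$ all intervene. The subsequent dimension bound and the lift from $N$ back to $G$ via Clifford are then routine.
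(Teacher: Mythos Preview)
Your bound $\phi(1)\le r_G$ and the idea of picking an irreducible constituent $\psi'$ of $\phi$ are fine and match the paper. The gap is in your lifting step. Lemma~\ref{RW3} only says that an irreducible $\chi$ with $n_\chi<0$ is not induced; it says nothing about faithful irreducibles with $n_\chi>0$ sitting in $\Theta_1$. So your blanket claim ``every faithful $\chi$ appearing in $\Theta_1-\Theta_2$ is not induced from a proper subgroup'' is unjustified, and hence so is the conclusion that $\chi|_N$ is always irreducible.

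This matters precisely when $\psi'$ does \emph{not} extend to $G$: then the unique irreducible of $G$ lying over $\psi'$ is $\chi=\mbox{Ind}_N^G\psi'$, which has $\chi(1)=p\,\psi'(1)$, and your inequality $\psi'(1)\le r_G$ does not give $\chi(1)\le r_G$. The paper closes this case differently: since $(\Theta_1-\Theta_2)|_N$ is the restriction of a class function on $G$, it is $G$-stable, so all $p$ Clifford conjugates $\phi_1,\dots,\phi_p$ of $\psi'$ occur in $\phi$ with the same (positive) multiplicity; hence $\chi(1)=\sum_i\phi_i(1)\le\phi(1)\le r_G$. With that one extra observation your argument goes through; without it, the induced case is not covered.
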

\begin{proof} Let $\phi_1$ be an irreducible component of $\Theta_1|_N-\Theta_2|_N$. Pick an irreducible component $\phi$ of $\Theta_1-\Theta_2$ such that $\phi|_N$ contains $\phi_1$. If they are equal then, taking $\psi=\phi$ we have, 
\[\psi(1)=\phi_1(1) \leq (\Theta_1|_N-\Theta_2|_N)(1).\]
If not, then as $[G:N]=p$, by Theorem \ref{Cli} we can write 
\[\psi|_{N} = \sum_{i=1}^{p} \phi_i,\]
such that $\psi=\mbox{Ind}_{N}^{G} \phi_i$. But on the other hand $\psi$ being an irreducible component of $\Theta_1-\Theta_2$, we can say all of $\phi_i$'s are contained in $\Theta_1|_{N}-\Theta_2|_{N}$. And hence,
\[\psi(1)=\sum_{i=1}^{p}\phi_i(1) \leq (\Theta_1|_{N}-\Theta_2|_{N})(1).\]
So it is enough to show that, $(\Theta_1|_{N}-\Theta_2|_{N})(1) \leq r_G$. Together with our assumption and the fact that the trivial character is not faithful, we can write
\[r_{G} \geq  \mbox{ord}_{s=s_0}\Big(\zeta_K(s)/\zeta_F(s)\Big)=\Theta_{G}(1)-n_{1_G}=\Theta_{N}(1)-n_{1_G}1_{N}(1).\]
Later one is nothing but
\[\Theta_1|_{N}(1)-\Theta_2|_{N}(1)+\Theta_3|_{N}(1)-n_{1_G}1_N(1),\]
and to conclude the proof, we need to just ensure that $\Theta_3|_{N}(1)-n_{1_G}1_N(1)$ is non negative. Which of course is, because $1_N$ comes in $\Theta_3|_N$ with coefficient at least $n_{1_G}$, (because $1_G$ is a non faithful character), and coefficient of any $\psi \in \mbox{Irr}(N)$ coming in $\Theta_3|_N$ has non negative coefficient.
 \end{proof}
\begin{lemma} \label{4.26} Let $H$ be the subgroup generated by $Z(G)$ and an element $x \in G \setminus N.$ Then $(\Theta_2|_H, \Theta_3|_H ) =0$ and $\Theta_1|_H-\Theta_2|_H$ is either $0$ or a character of $H$. 
\end{lemma}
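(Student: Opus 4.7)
The plan is to mirror the proof of Lemma \ref{RW4}, with the crucial modification that $H$ need not be normal in $G$, so Clifford's theorem is unavailable. Observe first that $H = \langle Z(G), x \rangle$ is abelian (its two generating sets commute) and that $H$ is a \emph{proper} subgroup of $G$: if $H=G$ then $G/Z(G)$ would be cyclic, forcing $G$ to be abelian, contrary to $G = G_{mc}$ being non-abelian (if $G$ were abelian, Theorem \ref{ODT} combined with Lemma \ref{RW2} would force $\Theta_G$ itself to be a character, contradicting Lemma \ref{RW1}). Hence by Lemma \ref{RW1}, $\Theta_H = \Theta_G|_H$ is a character of $H$.

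For the orthogonality $(\Theta_2|_H, \Theta_3|_H) = 0$, I will show that every irreducible constituent of $\Theta_2|_H$ is faithful on $Z(G)$, while every constituent of $\Theta_3|_H$ is non-faithful on $Z(G)$. Since $H$ is abelian, one can write $\chi|_H = \sum_i m_i \mu_i$ with $\mu_i$ linear characters of $H$. For $\chi \in \Theta_3$: as in the proof of Lemma \ref{RW4}, Blichfeldt's theorem together with Lemma \ref{RW3} force $\ker(\chi) \cap Z(G)$ to be non-trivial; any $z \ne e$ in this intersection lies in $\ker(\chi) \cap H$ and hence acts as the identity on every isotypic component of $\chi|_H$, giving $\mu_i(z) = 1$ and thus non-faithfulness on $Z(G)$. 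For $\chi \in \Theta_2$ (faithful on $G$): the central character $\omega_\chi$ on $Z(G)$ is a faithful linear character, and comparing $\chi|_{Z(G)} = \chi(1)\omega_\chi$ with $\chi|_{Z(G)} = \sum_i m_i \mu_i|_{Z(G)}$ via linear independence of the characters of $Z(G)$ forces $\mu_i|_{Z(G)} = \omega_\chi$ for every $i$ with $m_i > 0$, so each such $\mu_i$ is faithful on $Z(G)$.

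For the second assertion, write $\Theta_H = (\Theta_1|_H - \Theta_2|_H) + \Theta_3|_H$, and recall $\Theta_H$ is a character of $H$. If $\Theta_1|_H - \Theta_2|_H$ were neither zero nor a character, some $\psi \in \mbox{Irr}(H)$ would have strictly negative multiplicity there; since $\Theta_1$ is itself a character of $G$, $\psi$ would then have to occur with positive multiplicity in $\Theta_2|_H$, and for $\Theta_H$ to remain a character, $\psi$ must also appear with positive multiplicity in $\Theta_3|_H$, contradicting the orthogonality just established.

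The main obstacle lies in the non-normality of $H$, which removes Clifford's theorem from the toolkit that powered Lemma \ref{RW4}, where the fact that $\chi|_N$ remains irreducible (and hence automatically faithful when $\chi$ is) handed the orthogonality to us almost for free. The workaround is the observation $Z(G) \subseteq Z(H)$: central characters on $Z(G)$ allow faithfulness to be tracked constituent-by-constituent of $\chi|_H$, after which linear independence of characters of the abelian group $Z(G)$ finishes the argument.
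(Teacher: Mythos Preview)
Your proof is correct and rests on the same core idea as the paper's: constituents of $\Theta_2|_H$ and $\Theta_3|_H$ are separated by their behaviour on $Z(G)$, the former being faithful there and the latter not. The implementations differ in a minor but genuine way. You work \emph{downstairs} in $H$, using the central character $\omega_\lambda$ and linear independence of characters of $Z(G)$ to show that every linear constituent of $\lambda|_H$ (for $\lambda$ in $\Theta_2$) restricts to the faithful $\omega_\lambda$ on $Z(G)$, while every constituent of $\chi|_H$ (for $\chi$ in $\Theta_3$) kills some nontrivial $z \in Z(G)$. The paper instead goes \emph{upstairs} via Frobenius reciprocity: it observes that any nontrivial $g \in \ker(\chi)\cap Z(G)$ lies in $\ker\bigl(\mbox{Ind}^G_H(\chi|_H)\bigr)$ (since $g$ is central, $a^{-1}ga = g \in H$ for all $a \in G$), so no faithful $\lambda$ can occur as a constituent of this induced character, and then $(\lambda|_H,\chi|_H)=0$ drops out. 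Both routes are short; yours is arguably the more transparent, while the paper's avoids analysing the $\Theta_2$-side constituents explicitly. Your worry that non-normality of $H$ blocks Clifford's theorem is well-placed, but note that the paper does not use Clifford here either --- its Frobenius-reciprocity trick is simply an alternative workaround to your central-character one. The second assertion is handled identically in both proofs.
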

\begin{proof} Proof is just analogous to the arguments given in Lemma \ref{RW4}. By construction, $H$ is a normal abelian subgroup of $G$. We may assume $H$ to be a proper subgroup of $G$, otherwise $G$ would be abelian, and we know Artin's conjecture holds true for abelian extensions (because all irreducible characters of abelian groups are one-dimensional). From proof of Lemma \ref{RW4} we know for every irreducible component $\chi$ of $\Theta_3, \mbox{Ker} (\chi) \cap Z(G)$ is non-trivial. Take some element $g \in \mbox{Ker} (\chi) \cap Z(G)$. As $H \subseteq Z(G)$, from definition one can easily show $g$ is in kernal of $\mbox{Ind}^G_H(\chi|_H)$. As any irreducible component $\lambda$ of $\Theta_2$ is faithful, one has 
\[(\lambda, \mbox{Ind}^G_H(\chi|_H))=0,\]
and by Frobenius reciprocity $(\lambda|_{H}, \chi|_H)=0$. So first part of the lemma is proved. Again by same arguments as in the proof of Lemma \ref{RW4}, second part is also established.
\end{proof} 
\begin{lemma} \label{RW6} If the minimal counterexample $G=G_{mc}$ exists, then $\Theta_1|_N \neq \Theta_2|_N$.
\end{lemma}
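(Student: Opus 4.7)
The plan is to argue by contradiction: suppose $\Theta_1|_N = \Theta_2|_N$; I will derive $\Theta_1 = \Theta_2 = 0$, contradicting the fact, recorded in Lemma~\ref{RW1}, that since $G = G_{mc}$ is a counterexample at least one irreducible character of $G$ has strictly negative Heilbronn coefficient, i.e.\ $\Theta_2 \ne 0$. The first observation is that evaluating the supposed equality at the identity gives $\Theta_1(1) = \Theta_2(1)$.

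The key step is to propagate the equality off of $N$ using Lemma~\ref{4.26}. For each $x \in G \setminus N$, put $H_x = \langle Z(G), x\rangle$. By that lemma, $\Theta_1|_{H_x} - \Theta_2|_{H_x}$ is either identically zero or an honest character of $H_x$. But its value at the identity is $\Theta_1(1) - \Theta_2(1) = 0$, and a non-zero character of any finite group is strictly positive at the identity, so this virtual character must in fact be zero. Consequently $\Theta_1(x) = \Theta_2(x)$ for every $x \in G \setminus N$, and combined with the standing hypothesis we conclude $\Theta_1 = \Theta_2$ as class functions on all of $G$.

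It remains to extract the contradiction. By the construction of the decomposition $\Theta_G = \Theta_1 - \Theta_2 + \Theta_3$, the character $\Theta_1$ is a non-negative integer combination of those faithful irreducible characters $\chi$ with $n_\chi > 0$, while $\Theta_2$ is a non-negative integer combination of those faithful irreducible characters $\chi$ with $n_\chi < 0$; these index sets are disjoint, so the equality $\Theta_1 = \Theta_2$ as characters forces both sides to be the zero character. This contradicts $\Theta_2 \ne 0$ noted above, and the proof is complete.

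The main content of the argument is thus packaged inside Lemma~\ref{4.26}, which supplies the non-negativity needed to turn the dimension identity $\Theta_1(1) = \Theta_2(1)$ into pointwise equality on the $H_x$; the only point requiring care is to make sure one invokes Lemma~\ref{4.26} rather than Lemma~\ref{RW4}, since the propagation of equality from $N$ to $G\setminus N$ requires subgroups meeting the complement $G\setminus N$ nontrivially, which $N$ itself obviously cannot provide.
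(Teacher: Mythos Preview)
Your proof is correct and follows essentially the same route as the paper: assume $\Theta_1|_N=\Theta_2|_N$, use Lemma~\ref{4.26} together with the vanishing of $(\Theta_1-\Theta_2)(1)$ to force $\Theta_1=\Theta_2$ on all of $G$, and then derive a contradiction with Lemma~\ref{RW1}. The only cosmetic difference is in the last step: the paper observes that $\Theta_1=\Theta_2$ gives $\Theta_G=\Theta_3$, which is a genuine character (by Lemma~\ref{RW2}) and hence contradicts Lemma~\ref{RW1} directly, whereas you invoke the disjointness of the irreducible constituents of $\Theta_1$ and $\Theta_2$ to conclude $\Theta_2=0$; both endings are immediate and equivalent.
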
 
\begin{proof} Suppose that $\Theta_1|_N = \Theta_2|_N$. But of course $\Theta_1(1)=\Theta_2(1)$, and so from Lemma \ref{4.26} we have, $\Theta_1|_H = \Theta_2|_H$ for any subgroup $H$ generated by $Z(G)$ and $x\in G-N$. So, $\Theta_1(x)=\Theta_2(x)$ for any $x \in G$, as they agree on $N$. So $\Theta_G=\Theta_3$, but this a contradiction to Lemma \ref{RW1}, as $\Theta_3$ is a character of $G$, by Lemma \ref{RW2}.
\end{proof} 
\noindent \textit{Proof of \mbox{Theorem \ref{Won}}}. First of all, we let P to be the property of solvablity. For each $G$ satisfying the property $P$, we set $r_G$ to be equal to $p_2-2$, where $p_2$ denotes the second smallest prime divisor of $|G|$. For a sake of contradiction, we take $G$ as a minimal counterexample. Following the notation of Lemma \ref{RW4}, we may assume that $\Theta_1|_N-\Theta_2|_N$ is a character of N. Then Lemma \ref{RW5} asserts that there is a faithful irreducible character $\psi$ contained in $\Theta_1-\Theta_2$ such that $\psi(1)\leq p_2-2$. But $G$ is solvable, so the theorem by Ito \ref{Ito} tells us any Sylow $q-$subgroup $G$ is abelian and normal, for all $q>p_1$. So $G$ admits an abelian normal subgroup $A$ such that $G/A$ is of order $p_1^{n_1}$. However Huppert's result \ref{Hup} forces $G$ to be an $M-$group, (as any $p$-group is supersolvable) a contradiction, since Artin's conjecture \ref{ACc} is of course true for an $M-$ group. Therefore, $\Theta_1|_N=\Theta_2|_N$. Finally Lemma \ref{RW6} leads to a contradiction. \\
\newline 
\noindent We have one immediate and interesting corollary to Theorem \ref{Won}.
\begin{corollary} Assume that $K/F$ is Galois and of odd degree. If 
\[\emph{ord}_{s=s_0} \Big(\zeta_K(s)/\zeta_F(s)\Big) \leq 3,\] 
then every Artin L-function $L(s, \chi, K/F)$ is holomorphic at $s = s_0$.
\end{corollary}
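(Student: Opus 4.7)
The plan is to deduce this directly from Theorem \ref{Won} using the Feit–Thompson odd order theorem, with one small case to handle separately. Since $[K:F]$ is odd, the group $G=\mathrm{Gal}(K/F)$ has odd order, and by Feit–Thompson it is solvable. Thus $G$ satisfies the blanket solvability hypothesis of Theorem \ref{Won}, and we may write $|G|=p_1^{n_1}\cdots p_k^{n_k}$ with $p_1<p_2<\cdots<p_k$ odd primes.

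The generic case is $k\geq 2$. Here $p_1\geq 3$, and since $p_2$ is a strictly larger odd prime we have $p_2\geq 5$, hence $p_2-2\geq 3$. The hypothesis $\mathrm{ord}_{s=s_0}\bigl(\zeta_K(s)/\zeta_F(s)\bigr)\leq 3$ therefore implies $\mathrm{ord}_{s=s_0}\bigl(\zeta_K(s)/\zeta_F(s)\bigr)\leq p_2-2$, and Theorem \ref{Won} applies verbatim to conclude that every $L(s,\chi,K/F)$ is holomorphic at $s=s_0$.

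The degenerate case is $k=1$, in which $p_2$ does not exist so Theorem \ref{Won} cannot be quoted directly. But in that case $|G|$ is a power of an odd prime $p$, so $G$ is nilpotent, hence supersolvable, and therefore an $M$-group. By Artin's original result (discussed in the introduction to the present note), Artin's conjecture holds for $M$-groups, so for every non-trivial irreducible character $\chi$ the function $L(s,\chi,K/F)$ is entire, in particular holomorphic at $s=s_0$.

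I do not expect a real obstacle: the content lies entirely in recognising that odd order both forces solvability (Feit–Thompson) and pushes the second prime $p_2$ up to at least $5$, so that the threshold $p_2-2$ in Theorem \ref{Won} reaches the value $3$ asserted in the corollary. The only care needed is to cover the prime-power case, where Theorem \ref{Won}'s hypothesis is vacuous and one appeals instead to the monomial-group case of Artin's conjecture.
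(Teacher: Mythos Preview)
Your argument is correct and follows the same route as the paper: invoke Feit--Thompson to get solvability, then apply Theorem~\ref{Won} with $p_2\geq 5$ (since $p_1\geq 3$). The paper's proof is a one-liner that glosses over the $k=1$ prime-power case you singled out; your handling of it via the $M$-group case of Artin's conjecture is the natural completion, and is implicit in the proof of Theorem~\ref{Won} itself (where the contradiction is reached precisely by showing $G$ is an $M$-group).
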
  
\begin{proof} This is immediate by a celebrated theorem of Feit and Thompson \cite{FT}, which says that all groups of odd order are solvable.
\end{proof} 
\noindent As a sequel one may of course ask what if $|\mbox{Gal}(K/F)|$ is even. Yes, we then have the following result.
\begin{theorem}[Wong, \cite{PJW}] \label{Won2} Let $K/F$ be a Galois extension of number fields with Galois group $G$. If $4$ does not divide $|G|$ and $\emph{ord}_{s=s_0}\Big(\zeta_{K}(s)/\zeta_k(s)\Big) \leq 3,$ then all Artin L-functions $L(s, \chi, K/F)$ are holomorphic at $s = s_0$.
\end{theorem}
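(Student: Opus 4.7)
The plan is to rerun the minimal-counterexample machinery used in the proof of Theorem \ref{Won} with the property $P$ now taken to be ``$4 \nmid |G|$'' and with the uniform choice $r_G = 3$. Every group with $4 \nmid |G|$ is automatically solvable: for odd order this is Feit--Thompson, while for $|G|=2m$ with $m$ odd a classical regular-representation/parity argument produces a normal subgroup of index $2$ of odd order. The property $P$ and the constant $r_G$ are both closed under taking subgroups and quotients, so one may fix a minimal counterexample $G = G_{mc}$ to which Lemmas \ref{RW1}--\ref{RW5}, \ref{4.26}, and \ref{RW6} apply without change.

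Let $N \trianglelefteq G$ be the normal subgroup of prime index containing $Z(G)$ produced as in the proof of Theorem \ref{Won}. If $\Theta_1|_N = \Theta_2|_N$, Lemma \ref{RW6} already furnishes a contradiction, so assume $\Theta_1|_N - \Theta_2|_N$ is a genuine character of $N$. Lemma \ref{RW5} then provides a faithful irreducible character $\psi$ of $G$ with $\psi(1) \leq r_G = 3$. For every prime $q \geq 5$ dividing $|G|$ one has $\psi(1) \leq 3 < q-1$, so Ito's Theorem \ref{Ito} supplies an abelian normal Sylow $q$-subgroup. Writing $A$ for the product of these Sylow subgroups as $q$ ranges over primes at least $5$ dividing $|G|$, one obtains an abelian normal Hall subgroup $A \trianglelefteq G$ with $|G/A| = 2^a 3^b$ and $a \leq 1$.

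The crux is to show $G/A$ is supersolvable; once this is in hand Huppert's Theorem \ref{Hup} makes $G$ an $M$-group, and then Artin's conjecture for $M$-groups (Conjecture \ref{ACc} combined with Lemma \ref{first}) forces every $n_\chi \geq 0$, contradicting the choice of counterexample. If $a = 0$, $G/A$ is a $3$-group, hence supersolvable. If $a = 1$, write $G/A = Q \rtimes C_2$ where $Q$ is the normal Sylow $3$-subgroup. The elementary-abelian $3$-socle $Z(Q)[3]$ is $C_2$-invariant; since $\gcd(2, 3) = 1$, Maschke's theorem splits it as an $\mathbb{F}_3[C_2]$-module into one-dimensional summands, each of which is a $(G/A)$-normal subgroup of order $3$. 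Quotienting by one such summand and inducting on $b$ produces a chief series of $G/A$ with cyclic factors.

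The novelty over Theorem \ref{Won} lies entirely in this supersolvability argument: there $G/A$ was a $p_1$-group and supersolvability came for free, whereas here both $2$ and $3$ can divide $|G/A|$ and supersolvability must be manufactured from the coprime-order action of $C_2$ on a $3$-group via Maschke plus induction. The hypothesis $4 \nmid |G|$ is exactly what keeps $a \leq 1$ and hence keeps the $C_2$-action inside Maschke's reach; without it the Sylow $2$-subgroup of $G/A$ could be an arbitrary non-abelian $2$-group, and this step would collapse. Once the supersolvability of $G/A$ is established, the remainder is identical to the endgame of the proof of Theorem \ref{Won}.
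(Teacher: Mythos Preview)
Your proof is correct and follows essentially the same route as the paper. The paper packages the argument as a specialization of a slightly more general lemma (for primes $q<p<p_1<\cdots$ with $q\mid p-1$ and $l\le 1$, bound $p_1-2$) and then sets $q=2$, $p=3$; you instead work directly with the property ``$4\nmid |G|$'' and $r_G=3$. Your supersolvability step for $G/A$ of order $2\cdot 3^b$ via Maschke on the $C_2$-action on $\Omega_1(Z(Q))$ is a more explicit rendering of what the paper asserts tersely (``Sylow $3$-subgroup normal, $p$-groups supersolvable, hence $G/A$ supersolvable''); the underlying content---that the condition $2\mid 3-1$ forces all $3$-chief factors to be one-dimensional $\mathbb{F}_3[C_2]$-modules---is the same.
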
 
\begin{proof} First we need to prove the following lemma.
\begin{lemma} Let $K/F$ be a solvable Galois extension of number fields with Galois group $G$. If
\[|G|=q^lp^n\prod_{i=1}^{k} p_i^{n_i}\]
with $q<p<p_1<p_2<\cdots<p_k, q|p-1$ and $l\leq 1$. If $\emph{ord}_{s=s_0}\Big(\zeta_K(s)/\zeta_F(s)\Big) \leq  p_1-2$, then all Artin L-functions $L(s, \chi, K/F)$'s are holomorphic at $s=s_0$.
\end{lemma} 
\begin{proof}
First we fix the primes $q,p,p_1,p_2,\cdots,p_k$ in our hand, and let P to be the property that $G$ is a solvable group of order $q^lp^n\prod_{i=1}^{k} p_i^{n_i}$ with $q<p<p_1<p_2<\cdots<p_k, q|p-1, l\leq 1$ and $l,n,n_i$'s are all non negative integers. One can easily check our "declared" property P is indeed a property on $G$ by definition. Consider the set of such groups having property P, and set $r_G=p_1-2$ for any such groups. We further take $G=G_{mc}$ to be the minimal counterexample. Following the same proof and same notation as we did in for Theorem \ref{Won}, we may assume that $\Theta_1|_N-\Theta_2|_N$ is a character of $N$. From Lemma \ref{RW5} we can say there exist an irreducible and faithful character $\psi$ appearing in $\Theta_1-\Theta_2$ such that $\psi(1)\leq r_G=p_1-2$. Again by Ito's theorem \ref{Ito} we can say Sylow $p_j$-subgroups of $G$ are abelian and normal. So $G$ admits an abelian normal subgroup $A$ of index $qp^n$. But since $q<p$, from Sylow's theorem we can say $p$-Sylow subgroup of $G/A$ is normal. But a $p$-group is always supersolvable, so $G/A$ is supersolvable. So by Huppert's theorem \ref{Hup} once again, $G$ is a $M$-group, a contradiction.
\end{proof} 
\noindent Now as for the proof of Theorem \ref{Won2}, we just take $q=2$. Then we get $3 \leq p_1-2$, and the proof follows immediately. 
\end{proof} 
\noindent Theorem \ref{foo} was further generalized by Foote \cite{33} in 1990. Define $M_G$ to be the minimum, taken over all subgroups $H$ of $G$, of the degrees of all nonmonomial irreducible characters of $H$. Then the following result holds
\begin{theorem}[Foote, \cite{33}] \label{tt} Let $E/F$ be a Galois extension of number fields with solvable Galois group $G$, and let $s_0 \in \mathbb{C}-\{1\}$. If $\emph{ord}_{s=s_0} \zeta_K(s) < M_G$, then all Artin L-functions $L(s, \chi, K/F)$ are analytic at $s=s_0$ for every irreducible character $\chi$ of $G$.
\end{theorem}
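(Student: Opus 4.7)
The plan is to mirror the minimal-counterexample strategy of Theorem \ref{Won}, replacing the explicit prime bound $p_2 - 2$ by the intrinsic invariant $M_G$. Assume the theorem fails, and choose a counterexample $(K/F, s_0)$ with $|G| = |\mbox{Gal}(K/F)|$ as small as possible; then there is an irreducible character $\chi$ of $G$ with $n_\chi := n(G,\chi) < 0$. First I would verify that the hypothesis is hereditary: for every proper subgroup $H$ of $G$, $M_H \geq M_G$ (the minimum defining $M_H$ runs over a subcollection of that defining $M_G$), and Aramata-Brauer (Theorem \ref{AB1}) gives $\mbox{ord}_{s=s_0}\zeta_{K^H}(s) \leq \mbox{ord}_{s=s_0}\zeta_K(s) < M_G \leq M_H$; a parallel argument using inflation shows $M_{G/N} \geq M_G$ for every normal subgroup $N$ of $G$. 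Minimality of $|G|$ then forces $\Theta_H$ to be a character for every proper $H$, and since $\Theta_H(1) = \mbox{ord}_{s=s_0}\zeta_K(s) < M_H$, every irreducible constituent of $\Theta_H$ has degree below $M_H$ and is therefore \emph{monomial}.

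Next I would pin down the structure of the bad character $\chi$, arguing as in Lemmas \ref{RW2} and \ref{RW3}. If $\chi$ were non-faithful, it would factor through $G/\ker\chi$, and minimality applied to the Galois extension $K^{\ker\chi}/F$ would give $n_\chi \geq 0$; contradiction. If $\chi$ were imprimitive, say $\chi = \mbox{Ind}_H^G\psi$ with $H$ proper, then Frobenius reciprocity together with Heilbronn-Stark (Theorem \ref{HSL}) gives $n_\chi = (\Theta_G, \mbox{Ind}_H^G\psi) = (\Theta_H, \psi) \geq 0$; again a contradiction. Hence $\chi$ is faithful and primitive. A one-dimensional character always has $n_\chi \geq 0$ by Theorem \ref{ODT}, so $\chi(1) \geq 2$; being primitive of degree at least two, $\chi$ is nonmonomial, and by the definition of $M_G$ applied to the subgroup $H = G$ one gets the crucial lower bound $\chi(1) \geq M_G$.

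The remainder of the argument derives a contradictory upper bound on $\chi(1)$. Because $\chi$ is faithful and primitive, Blichfeldt's Theorem \ref{Bli} implies every abelian normal subgroup of $G$ sits inside $Z(G)$; in particular $G$ admits a normal subgroup $N$ of prime index containing $Z(G)$. Decomposing $\Theta_G = \Theta_1 - \Theta_2 + \Theta_3$ as in the proof of Theorem \ref{Won} --- with $\Theta_2$ collecting the faithful constituents of negative multiplicity and $\Theta_3$ the non-faithful ones --- and invoking Clifford's Theorem \ref{Cli} across $N$, one extracts (analogously to Lemma \ref{RW5}) a faithful irreducible character $\psi$ appearing in $\Theta_1 - \Theta_2$ of degree $\psi(1) \leq (\Theta_1|_N - \Theta_2|_N)(1) \leq \mbox{ord}_{s=s_0}\zeta_K(s) < M_G$. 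By the definition of $M_G$ this $\psi$ must be monomial. Ito's Theorem \ref{Ito} applied to $\psi$ forces Sylow $q$-subgroups of $G$ to be abelian and normal for primes $q$ exceeding the relevant threshold, after which Huppert's Theorem \ref{Hup} yields that $G$ itself is an $M$-group; but Artin's conjecture is known for $M$-groups, so $n_\chi \geq 0$ for every irreducible $\chi$, contradicting our choice of $\chi$.

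The hard part is the Clifford-theoretic extraction of the small-degree faithful $\psi$. In the Wong/Foote-Murty template the explicit prime bound $p_2 - 2$ fed directly into Ito's theorem; here the abstract bound $M_G$ forces one to apply Ito's theorem to the extracted $\psi$ itself and then verify the structural hypothesis of Huppert's theorem. The inductive passage from \emph{every small-degree irreducible is monomial} to \emph{$G$ is an $M$-group} requires the full force of the minimality of $|G|$, Blichfeldt's constraint on abelian normal subgroups, and a careful treatment of the truncations $\Theta_1, \Theta_2, \Theta_3$ restricted to $N$ in the spirit of Lemmas \ref{RW4} and \ref{RW6}. This structural balancing act is the main novelty of Foote's argument.
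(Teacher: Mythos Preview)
Your minimal-counterexample setup, the faithfulness/primitivity reduction, and the decomposition $\Theta_G = \Theta_1 - \Theta_2 + \Theta_3$ are all in line with Foote's argument and with the paper's sketch. The genuine gap is in your endgame. Having extracted a faithful irreducible $\psi$ with $\psi(1) < M_G$, you correctly observe that $\psi$ must be monomial; but you then try to feed this into Ito's Theorem~\ref{Ito} and Huppert's Theorem~\ref{Hup} to force $G$ to be an $M$-group. That worked in Theorem~\ref{Won} only because the bound there was the explicit prime quantity $p_2 - 2$, which is exactly what Ito's hypothesis $\psi(1) < q - 1$ needs. The invariant $M_G$ carries no such prime information, so the inequality $\psi(1) < M_G$ does not let you invoke Ito, and there is no route from ``one small-degree monomial character exists'' to ``$G$ is an $M$-group''.

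Foote's actual argument replaces this step by a finer structural input. Since every abelian normal subgroup of $G$ lies in $Z(G)$ (Blichfeldt), the theory of primitive solvable linear groups (Theorem~2.2 of \cite{33}) produces a normal \emph{extraspecial} $p$-subgroup $E$ of $G$, together with an upper bound on $M_G$ in terms of the structure of $E$. One then runs the $\Theta_1,\Theta_2,\Theta_3$ analysis not on the subgroup generated by $Z(G)$ and $x$ as in Lemma~\ref{4.26}, but on the subgroup generated by $Z(E)$ and an $x$ with $\Theta_1(x)\neq\Theta_2(x)$; the orthogonality $(\Theta_2|_H,\Theta_3|_H)=0$ is recovered using that bound on $M_G$, and the contradiction follows as after Lemma~\ref{4.26}. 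So the ``main novelty'' you flag is not a sharpened Ito--Huppert step but the passage from $Z(G)$ to $Z(E)$ via the extraspecial normal subgroup, and you would need to import that structure theorem to close the argument.
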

\noindent Corollary 4 of \cite{33} shows $M_G \geq p_2-1$, so the result above indeed generalizes Theorem \ref{foo}. The proof of Theorem \ref{tt} follows the arguments as given in proof of Theorem \ref{Won}. The essential difference is, there we looked at the decomposition of $\Theta_G$ on the center of $Z(G)$, this argument considers the normal extraspecial $p$-subgroup $E$ mentioned in part (3) of Theorem 2.2 at \cite{33}., and works on the subgroup $H$ generated by $Z(E)$ and $x$, where $\Theta_1(x) \neq \Theta_2(x)$. Using the upper bound on $M_G$ provided at part (4) of Theorem 2.2, the arguments at page 270 shows $(\Theta_2|_{H},\Theta_3|_{H})=0$ and proceeds as we did after Lemma \ref{4.26}.



\section{Variant of Heilbronn characters and applications to L-functions} 
\noindent Our aim of this chapter is to introduce arithmetic Heilbronn characters and their properties analogous to results we established for Heilbronn characters. We will see this formalism has several arithmetic applications on various $L$-functions. 
\subsection {Weak arithmetic Heilbronn characters}
First we will define weak arithmetic Heilbronn characters that generalize the classical Heilbronn characters, and we will discuss several properties.
\begin{definition} \label{WAC} Suppose that there is a set of integers $\{n(H, \phi)\}_{(H,\phi)\in I(G)},$ where $I(G)$ is set of all pairs $(H, \phi)$ such that either $H$ is a cyclic subgroup of $G$ or $H = G$, and $\phi$ is a character of $H$, satisfying the following three properties: 
\begin{itemize} 
\item $n(H, \phi_1 + \phi_2) = n(H, \phi_1) + n(H, \phi_2)$ for any characters $\phi_1$ and $\phi_2$ of $H$,
where $H$ is a cyclic subgroup or an improper subgroup of $G$.
\item $n(G,\emph{Ind}^G_H \phi) = n(H, \phi)$ for every cyclic subgroup $H$ and every character $\phi$ of $H$.
\item $n(H, \phi) \geq 0$ for all cyclic subgroups $H$ of $G$ and all characters $\phi$ of $H$.
\end{itemize} 
\noindent Then we define weak arithmetic Heilbronn character of a subgroup $H$ of $G$, which is either cyclic or $G$ itself, by
\[\Theta_H = \sum_{\phi \in \mbox{Irr}(H)} n(H, \phi)\phi.\]
\end{definition} 
\noindent Now we will see how they are matched up with Heilbronn characters in terms of Arithmetic properties. The first one of them is the following 
\begin{theorem}[Weak arithmetic Artin-Takagi decomposition, \cite{PJwon}] \label{AATD}
\[n(G, \emph{Reg}_G) = \sum_{\chi \in \emph{Irr}(G)} \chi(1)n(G, \chi).\]
\end{theorem}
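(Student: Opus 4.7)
The plan is to derive this identity directly from Property 1 (additivity) in the definition of weak arithmetic Heilbronn characters, combined with the classical decomposition of the regular character of a finite group.

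First, I would recall the standard representation-theoretic fact that
\[
\mathrm{Reg}_G \;=\; \sum_{\chi \in \mathrm{Irr}(G)} \chi(1)\,\chi,
\]
which is already stated in the preliminaries section of the paper when the regular character is introduced. Note that each multiplicity $\chi(1)$ is a positive integer, so the right-hand side is a sum of characters (with repetition), not merely a virtual character.

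Next, I would apply Property 1 of Definition 5.1 (\ref{WAC}), which asserts that $n(G, \phi_1 + \phi_2) = n(G, \phi_1) + n(G, \phi_2)$ for any two characters $\phi_1,\phi_2$ of $G$. A straightforward induction on the number of summands extends this additivity to any finite sum of characters of $G$. Applying it to the decomposition above yields
\[
n(G, \mathrm{Reg}_G) \;=\; n\!\left(G,\ \sum_{\chi \in \mathrm{Irr}(G)} \chi(1)\,\chi\right) \;=\; \sum_{\chi \in \mathrm{Irr}(G)} \chi(1)\, n(G,\chi),
\]
which is the required identity.

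There is essentially no obstacle here: the claim is an immediate formal consequence of the additivity axiom once one writes the regular character as the appropriate nonnegative integer combination of irreducible characters. (Property 2 and Property 3 are not needed for this statement; they become relevant only once one wants further arithmetic consequences, such as the analogue of the Heilbronn–Stark lemma or the Foote–Murty inequality, which use induction from cyclic subgroups and nonnegativity.) In the classical setting this recovers the usual Artin–Takagi identity $\mathrm{ord}_{s=s_0}\zeta_K(s) = \sum_\chi \chi(1)\,\mathrm{ord}_{s=s_0} L(s,\chi,K/F)$, which is exactly Theorem \ref{bound} in the paper, so the present statement is the appropriate abstraction in the weak arithmetic framework.
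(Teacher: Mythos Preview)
Your proof is correct and follows essentially the same approach as the paper: both invoke the decomposition $\mathrm{Reg}_G = \sum_{\chi \in \mathrm{Irr}(G)} \chi(1)\chi$ and the additivity axiom from Definition~\ref{WAC}. The paper states this in a single line, and your version simply spells out the details.
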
 
\begin{proof} Just follows from definition \ref{WAC} and the fact that $\mbox{Reg}_G = \sum_{\chi \in \mbox{Irr}(G)} \chi(1)\chi$
\end{proof}
\noindent Like always, we should again have compatibility between $\Theta_{G}$ and $\Theta_{H}$'s. 
\begin{theorem} [Weak arithmetic Heilbronn-Stark Lemma, \cite{PJwon}] \label{WAHSL}
For every cyclic subgroup $H$ of $G$, we have
\[\Theta_G|_{H} = \Theta|_{H}.\] 
\end{theorem}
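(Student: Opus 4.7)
The plan is to compute $\Theta_G|_H$ by expanding it in the orthonormal basis $\mbox{Irr}(H)$ and matching coefficients with those of $\Theta_H$. Concretely, I would write
\[
\Theta_G|_H = \sum_{\phi \in \mbox{Irr}(H)} (\Theta_G|_H, \phi)\,\phi,
\]
so it suffices to show $(\Theta_G|_H, \phi) = n(H,\phi)$ for every $\phi \in \mbox{Irr}(H)$. By Frobenius reciprocity this inner product equals $(\Theta_G, \mbox{Ind}_H^G \phi)$, and substituting the definition $\Theta_G = \sum_{\chi \in \mbox{Irr}(G)} n(G,\chi)\chi$ together with bilinearity of the inner product yields
\[
(\Theta_G|_H, \phi) \;=\; \sum_{\chi \in \mbox{Irr}(G)} n(G,\chi)\,(\chi, \mbox{Ind}_H^G \phi).
\]

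The key move is then to recognize the right-hand side as $n(G, \mbox{Ind}_H^G \phi)$. Since $\phi$ is a character of $H$, the induction $\mbox{Ind}_H^G \phi$ is a genuine character of $G$, hence decomposes as a non-negative integer combination
\[
\mbox{Ind}_H^G \phi \;=\; \sum_{\chi \in \mbox{Irr}(G)} (\chi, \mbox{Ind}_H^G \phi)\,\chi.
\]
Iterating the additivity axiom of Definition 5.1.1, which is explicitly available at the level of $G$, gives
\[
n(G, \mbox{Ind}_H^G \phi) \;=\; \sum_{\chi \in \mbox{Irr}(G)} (\chi, \mbox{Ind}_H^G \phi)\, n(G,\chi).
\]
Combining this identity with the induction-invariance axiom $n(G, \mbox{Ind}_H^G \phi) = n(H,\phi)$, I conclude $(\Theta_G|_H, \phi) = n(H,\phi)$ for every $\phi \in \mbox{Irr}(H)$, which is precisely the coefficient of $\phi$ in $\Theta_H$. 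Summing over $\phi$ produces the required equality $\Theta_G|_H = \Theta_H$.

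The only real subtlety, and the step I would be most careful about, is making sure the additivity axiom iterates correctly: it is stated for binary sums of characters, and one must verify that successive application extends it to the finite decomposition of $\mbox{Ind}_H^G \phi$ into irreducibles (noting that the coefficients there are non-negative integers, so we really are adding characters, not virtual characters). This is a purely formal verification, not a substantive obstacle. Conceptually, the argument is the abstract shadow of the proof of the classical Heilbronn-Stark lemma (Theorem 4.1.2): additivity of orders of vanishing of Artin $L$-functions corresponds to axiom (1), and the inductive property from Lemma 2.0.1 corresponds to axiom (2). The non-negativity axiom (3) plays no role here; it is reserved for the analogue of the Foote--Murty inequality in the next result.
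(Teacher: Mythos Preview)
Your proof is correct and follows exactly the approach the paper indicates: the paper's proof is a one-liner stating that the argument is analogous to the classical Heilbronn--Stark lemma and uses only Frobenius reciprocity together with the first two axioms of Definition~\ref{WAC}, which is precisely what you carry out in detail. Your observation that axiom (3) is not needed here matches the paper's remark as well.
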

\begin{proof} Proof is just analogous to the arguments we gave for classical Heilbronn case at Theorem \ref{HSL}. We only need to use Frobenius reciprocity and first two conditions of Definition \ref{WAC}.
\end{proof}
\noindent Like the classical Heilbronn-Stark lemma (Theorem \ref{HSL}), above theorem enables us to bound $n(G, \chi)'s$ by the following theorem. This inequality is variant of Foote-Murty inequality \ref{MRI}. 
\begin{theorem} [Wong, \cite{PJwon}] \label{AFMI}
\[\sum_{\chi \in \emph{Irr}(G)} n(G, \chi)^2 \leq n(G, \emph{Reg}_G)^2.\]
\end{theorem}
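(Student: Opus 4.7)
The plan is to mimic the classical Foote–Murty argument (Theorem 4.1.3) almost verbatim, replacing the inputs from Artin $L$-function theory (non-negativity of orders at $s=s_0$ for one-dimensional characters, Heilbronn–Stark, Artin–Takagi) with their abstract counterparts that have already been established in this subsection, namely the Weak Arithmetic Heilbronn–Stark Lemma (Theorem 5.1.2) and the Weak Arithmetic Artin–Takagi Decomposition (Theorem 5.1.1), together with the positivity axiom in Definition 5.1.1.

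First, I would expand the inner product $(\Theta_G, \Theta_G)$ in two different ways. On one hand, by orthonormality of $\mathrm{Irr}(G)$,
\[
(\Theta_G, \Theta_G) = \sum_{\chi \in \mathrm{Irr}(G)} n(G,\chi)^2,
\]
which is precisely the left-hand side we want to bound. On the other hand, by the definition of the inner product,
\[
(\Theta_G, \Theta_G) = \frac{1}{|G|} \sum_{g \in G} |\Theta_G(g)|^2.
\]
So the task reduces to showing that $|\Theta_G(g)| \leq n(G, \mathrm{Reg}_G)$ for every $g \in G$.

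Fix $g \in G$ and set $H = \langle g \rangle$. By the Weak Arithmetic Heilbronn–Stark Lemma, $\Theta_G(g) = \Theta_H(g)$. Since $H$ is cyclic (hence abelian), every irreducible character $\phi$ of $H$ is one-dimensional, so $|\phi(g)| = 1$ and $\phi(1) = 1$. Combined with the non-negativity axiom $n(H,\phi) \geq 0$ for cyclic $H$, we obtain
\[
|\Theta_H(g)| \leq \sum_{\phi \in \mathrm{Irr}(H)} n(H,\phi) |\phi(g)| = \sum_{\phi \in \mathrm{Irr}(H)} n(H,\phi) = \Theta_H(1).
\]
Evaluating the Weak Heilbronn–Stark identity at the identity gives $\Theta_H(1) = \Theta_G(1)$, and the Weak Arithmetic Artin–Takagi Decomposition identifies $\Theta_G(1) = \sum_{\chi} \chi(1) n(G,\chi) = n(G, \mathrm{Reg}_G)$.

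Putting the pieces together yields $|\Theta_G(g)|^2 \leq n(G, \mathrm{Reg}_G)^2$ for every $g \in G$, so averaging over $G$ gives $(\Theta_G, \Theta_G) \leq n(G, \mathrm{Reg}_G)^2$, which is the desired inequality. I do not foresee any substantial obstacle: the hardest conceptual point is merely checking that the three ingredients used in the classical proof (positivity on cyclic subgroups, the Heilbronn–Stark compatibility, and the Artin–Takagi identity) have all been faithfully axiomatized in the definition of a weak arithmetic Heilbronn character, and indeed the three bulleted conditions of Definition 5.1.1 together with Theorems 5.1.1 and 5.1.2 supply exactly what is needed.
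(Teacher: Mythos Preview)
Your proof is correct and follows essentially the same route as the paper: expand $(\Theta_G,\Theta_G)$ two ways, restrict to cyclic subgroups via the weak Heilbronn--Stark lemma, use the positivity axiom to bound $|\Theta_{\langle g\rangle}(g)|$ by $\Theta_{\langle g\rangle}(1)$, and identify this with $n(G,\mathrm{Reg}_G)$. The only cosmetic difference is that the paper obtains the last identification directly from axiom~2 and the equality $\mathrm{Ind}_{\langle g\rangle}^G \mathrm{Reg}_{\langle g\rangle}=\mathrm{Reg}_G$, whereas you route through $\Theta_H(1)=\Theta_G(1)$ and the Artin--Takagi identity; these are equivalent.
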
 
\begin{proof} By the orthogonality property of characters and the definition of $\Theta_G$ one can say,
\[\sum_{\chi \in \mbox{Irr}(G)}n(G, \chi)^2= (\Theta_G, \Theta_G) = \frac{1}{|G|}\sum_{g\in G}|\Theta_G(g)|^2\]
Arithmetic version of Heilbronn-Stark lemma (Theorem \ref{WAHSL}) gives
\[\Theta_G(g)=\Theta_{\langle g \rangle}(g) = \sum_{\phi \in \mbox{Irr}(g)} n(\langle g \rangle, \phi)\phi(g).\]
As $\langle g \rangle$ is cyclic, all of $n(\langle g \rangle,\phi)$s are non negative, and so
\[|\Theta_G(g)| \leq n(\langle g \rangle, \phi) =n\Big(\langle g \rangle, \sum_{\phi \in \mbox{Irr}(\langle g \rangle)} \phi\Big).\]
But later one is equals to 
\[n(\langle g \rangle, \mbox{Reg}_{\langle g \rangle}) = n(G, \mbox{Reg}_G),\] 
because $\mbox{Ind}_{\langle g \rangle}^{G} \mbox{Reg}_{\langle g \rangle}=\mbox{Reg}_G$.
\end{proof} 
\noindent As an immediate corollary one can easily obtain the following analogues version of theorems discussed in the previous chapter. 
\begin{corollary}[Weak arithmetic Aramata-Brauer Theorem, \cite{PJwon}] \label{BAT}
\[|n(G, \emph{Reg}_G)| \geq  |n(G, 1_G)|,\]
where $1_G$ denotes the trivial character of $G$.
\end{corollary}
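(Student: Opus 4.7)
The plan is extremely direct: this is essentially a one-line corollary of the weak arithmetic Foote--Murty inequality (Theorem \ref{AFMI}). I would simply observe that the trivial character $1_G$ is an element of $\text{Irr}(G)$, so the nonnegative quantity $n(G,1_G)^2$ occurs as a single summand in the sum $\sum_{\chi \in \text{Irr}(G)} n(G,\chi)^2$. Dropping all other (nonnegative) terms therefore gives
\[
n(G,1_G)^2 \;\leq\; \sum_{\chi \in \text{Irr}(G)} n(G,\chi)^2.
\]

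Next I would invoke Theorem \ref{AFMI} to bound the right-hand side by $n(G,\text{Reg}_G)^2$, obtaining
\[
n(G,1_G)^2 \;\leq\; n(G,\text{Reg}_G)^2.
\]
Taking square roots (both sides being nonnegative) then yields $|n(G,1_G)| \leq |n(G,\text{Reg}_G)|$, which is exactly the claim.

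There is no real obstacle to overcome here, since all the work has already been done in Theorem \ref{AFMI}; the only thing worth noting is the interpretation. In the classical Heilbronn setting, $n(G,\text{Reg}_G) = \text{ord}_{s=s_0}\zeta_K(s)$ and $n(G,1_G) = \text{ord}_{s=s_0}\zeta_F(s)$ (via Lemma \ref{first}, since $\text{Reg}_G = \text{Ind}_{\{e\}}^G 1_{\{e\}}$ and $L(s,1_G,K/F)=\zeta_F(s)$), so the corollary records the weak arithmetic analogue of the absolute-value comparison that follows from Aramata--Brauer, namely $|\text{ord}_{s=s_0}\zeta_K(s)| \geq |\text{ord}_{s=s_0}\zeta_F(s)|$. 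Note that, unlike the classical Aramata--Brauer theorem (Theorem \ref{AB1}), this weak version does not assert that $n(G,\text{Reg}_G) - n(G,1_G) \geq 0$ itself, only the inequality of absolute values; a stronger nonnegativity statement will be the object of the subsequent (arithmetic, as opposed to weak arithmetic) Heilbronn framework.
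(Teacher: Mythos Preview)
Your proposal is correct and matches the paper's own proof exactly: the paper simply says ``Just take $\chi=1_G$ in Theorem \ref{AFMI},'' which is precisely your argument of isolating the $\chi=1_G$ summand in the Foote--Murty inequality and taking square roots. Your additional interpretive remarks about the classical setting are accurate and helpful context, though not part of the paper's terse proof.
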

\begin{proof} Just take $\chi=1_G$ in Theorem \ref{AFMI}.
\end{proof}
\begin{corollary}[Weak arithmetic Stark Lemma, \cite{PJwon}] If $n(G, \emph{Reg}_G) \leq 1$, then $n(G, \chi) \geq 0$ for all irreducible characters $\chi$ of $G$.
\end{corollary}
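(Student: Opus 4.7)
My plan is to combine the weak arithmetic Foote--Murty inequality (Theorem \ref{AFMI}) with the weak arithmetic Artin--Takagi decomposition (Theorem \ref{AATD}), after first upgrading the hypothesis via the observation that $n(G,\mbox{Reg}_G)$ is automatically non-negative.

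The key preliminary step is to note that $\mbox{Reg}_G=\mbox{Ind}_{\{e\}}^{G}(1_{\{e\}})$. Since the trivial subgroup is (trivially) cyclic, the induction-invariance axiom of Definition \ref{WAC} yields $n(G,\mbox{Reg}_G)=n(\{e\},1_{\{e\}})$, while the non-negativity axiom of the same definition forces $n(\{e\},1_{\{e\}})\geq 0$. Combined with the hypothesis $n(G,\mbox{Reg}_G)\leq 1$, this confines $n(G,\mbox{Reg}_G)$ to $\{0,1\}$; in particular $n(G,\mbox{Reg}_G)^2\leq 1$.

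Applying Theorem \ref{AFMI}, I then obtain
\[\sum_{\chi\in\mbox{Irr}(G)}n(G,\chi)^2 \;\leq\; n(G,\mbox{Reg}_G)^2 \;\leq\; 1.\]
Since each $n(G,\chi)$ is an integer, at most one irreducible character $\chi_0$ contributes a non-zero term, and for it $n(G,\chi_0)\in\{-1,+1\}$; if no such $\chi_0$ exists, the conclusion is immediate. To rule out $n(G,\chi_0)=-1$, I invoke the weak arithmetic Artin--Takagi decomposition (Theorem \ref{AATD}):
\[n(G,\mbox{Reg}_G)\;=\;\sum_{\chi\in\mbox{Irr}(G)}\chi(1)\,n(G,\chi)\;=\;\chi_0(1)\,n(G,\chi_0).\]
Since $n(G,\mbox{Reg}_G)\geq 0$ and $\chi_0(1)\geq 1$, this forces $n(G,\chi_0)=+1$, completing the argument.

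The proof is short, and there is no real obstacle in the technical sense. The only mildly subtle point --- and the one on which the whole argument hinges --- is recognizing at the very start that non-negativity of $n(G,\mbox{Reg}_G)$ is already built into Definition \ref{WAC} via the trivial cyclic subgroup, which upgrades the one-sided bound $n(G,\mbox{Reg}_G)\leq 1$ into the two-sided bound $n(G,\mbox{Reg}_G)^2\leq 1$ needed to apply the Foote--Murty inequality. Without this observation the hypothesis alone does not control the sum of squares, and the chain of implications breaks down.
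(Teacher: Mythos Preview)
Your proof is correct and follows essentially the same route as the paper: apply the weak arithmetic Foote--Murty inequality to bound $\sum n(G,\chi)^2$, then use the Artin--Takagi decomposition to pin down the sign of the (at most one) surviving term. If anything, your argument is cleaner than the paper's on one point: the paper handles the case $n(G,\mbox{Reg}_G)\leq 0$ by asserting that Theorem \ref{AFMI} ``directly'' forces all $n(G,\chi)=0$, which is only literally true once one knows $n(G,\mbox{Reg}_G)\geq 0$ (so that $\leq 0$ collapses to $=0$); you make this non-negativity explicit via the trivial cyclic subgroup, which is the right thing to do.
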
 
\begin{proof} If $n(G, \mbox{Reg}_G) \leq 0$, then Theorem \ref{AFMI} directly says that  $n(G,\chi)=0$ for any $\chi \in \mbox{Irr}(G)$. But if $n(G, \mbox{Reg}_G) = 1$, from above theorem we can say $|n(G,\chi)|=1$ for at most one $\chi$ and all others are zero. Take a $\chi_0 \in \mbox{Irr}(G)$ such that $|n(G,\chi_0)|=1$. By weak arithmetic Artin-Takagi decomposition \ref{AATD} one can write,
\[\sum n(G,\chi)\chi(1)=n(G,\mbox{Reg}_G)=1,\]
and so we must have $\chi_0(1)=1$ as well as $n(G,\chi_0)=1$, as desired.
\end{proof} 
\noindent After this our goal is to further generalize this characters and use it to study several $L$-functions. But before that we would like to introduce the following formalism.\\ 
\newline
Let $\rho$ be some character of a subgroups $H$ of $G$. We call $\rho \otimes \phi$ to be twist of $\rho$ by $\phi$, where $\phi$ is some character of $H$ and $(\rho \otimes \phi)(h)=\rho(h)\phi(h)$ for all $h\in H$. We will explain importance of twists later. In this context we first have the following result. 
\begin{theorem} [Wong, \cite{PJwon}] \label{TWIST} Let $\rho$ be an arbitrary character of $G$. Suppose that for every cyclic subgroup $H$ of $G$ and irreducible character $\phi$ of $H$, we have $n(H, \rho|_{H} \otimes \phi) \geq 0$, then 
\[ \sum_{\chi \in \emph{Irr}(G)} n(G, \rho \otimes \chi)^2 \leq n(G, \rho \otimes \emph{Reg}_G)^2.\]
\end{theorem}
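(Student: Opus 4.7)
The plan is to reduce Theorem \ref{TWIST} to Theorem \ref{AFMI} by constructing a new weak arithmetic Heilbronn character from the given one via twisting. Specifically, for each pair $(H,\phi) \in I(G)$, I would set
\[
m(H, \phi) := n(H, \rho|_{H} \otimes \phi).
\]
The strategy is then to verify that the family $\{m(H,\phi)\}_{(H,\phi)\in I(G)}$ satisfies the three axioms of a weak arithmetic Heilbronn character in the sense of Definition \ref{WAC}, and to apply Theorem \ref{AFMI} directly to it.

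The additivity axiom for $m$ follows immediately from the bilinearity of the tensor product and the additivity axiom for $n$. The non-negativity axiom on cyclic subgroups is exactly the hypothesis of the theorem. The key step is the induction compatibility: I need to show $m(G, \operatorname{Ind}^{G}_{H}\phi) = m(H, \phi)$ for every cyclic $H$ and every character $\phi$ of $H$. This reduces, via the second axiom for $n$, to the projection formula
\[
\rho \otimes \operatorname{Ind}^{G}_{H}(\phi) \;=\; \operatorname{Ind}^{G}_{H}\bigl(\rho|_{H}\otimes \phi\bigr),
\]
which I would establish at the level of characters using the explicit induced-character formula from Definition \ref{Inf} together with the fact that $\rho$, being a character of $G$, is a class function on $G$ (so $\rho(x^{-1}gx) = \rho(g)$ for every $x\in G$, allowing one to pull $\rho(g)$ out of the sum defining $\operatorname{Ind}^{G}_{H}$). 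Once verified, this gives $m(G, \operatorname{Ind}^{G}_{H}\phi) = n(G, \operatorname{Ind}^{G}_{H}(\rho|_{H}\otimes\phi)) = n(H, \rho|_{H}\otimes\phi) = m(H,\phi)$.

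Having checked that $\{m(H,\phi)\}$ is a genuine weak arithmetic Heilbronn character, Theorem \ref{AFMI} applied to $m$ yields
\[
\sum_{\chi \in \operatorname{Irr}(G)} m(G,\chi)^{2} \;\leq\; m(G, \operatorname{Reg}_G)^{2},
\]
and unwinding the definition of $m$ delivers precisely the claimed bound. The only real obstacle is the verification of the projection formula in the character formalism used by the paper; everything else is bookkeeping and an appeal to Theorem \ref{AFMI}. Note that this mirrors the philosophy of Theorem \ref{AFMI}'s own proof, where one passes from $\Theta_G$ to its restrictions on cyclic subgroups via the Heilbronn--Stark compatibility; here we are instead performing a global twist that preserves all three defining axioms.
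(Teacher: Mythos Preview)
Your proposal is correct and is essentially identical to the paper's own proof: the paper defines $n'(H,\phi)=n(H,\rho|_{H}\otimes\phi)$, checks additivity and the non-negativity hypothesis, invokes the projection formula $\rho\otimes\mathrm{Ind}^G_H\phi=\mathrm{Ind}^G_H(\rho|_H\otimes\phi)$ to verify induction compatibility, and then applies Theorem~\ref{AFMI}. The only cosmetic difference is that the paper cites ``tensoring commutes with induction'' as a known fact rather than sketching its proof via the induced-character formula.
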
  
\begin{proof} Proof is very predictable from nature of the statement. Indeed, for every cyclic subgroup $H$ of $G$ (or $H= G$) and every character $\phi$ of $H$, we define $n'(H,\phi)$ to be $n(H,\rho|_{H}\otimes \phi)$. Note that tensor product respects linearity, and by the hypothesis made in the statement of theorem we can see $n'(H,\phi)$'s satisfy first and last conditions of Definition \ref{WAHSL}. So to prove they are actually weak arithmetic Heilbronn characters, we only need to justify second condition. Note that
\[n'(H, \phi) = n(H, \rho|_{H} \otimes \phi) = n(G,\mbox{Ind}^G_H(\rho|_{H} \otimes \phi)),\]
and from the fact that tensoring commutes with induction, one can see the later one is equals to 
\[n(G, \rho \otimes \mbox{Ind}_{H}^{G} \phi)=n'(G, \mbox{Ind}_{H}^{G} \phi),\]
as desired. The proof now follows from Theorem \ref{AFMI}.
\end{proof} 
\subsection{Introduction to Arithmetic Heilbronn characters and variant of Uchida-Van der Waal} \label{ACH}
\noindent In this section we will put more conditions on $n(H, \phi)$'s, which will enable us study several other L-functions, comparing with the results from previous chapter.
\begin{definition} \label{AHC} Suppose that there is a set of integers $\{n(H, \phi)\}_{(H,\phi)\in I(G)}$, where \[I(G)=\{(H, \phi) \mid H \text{ is a subgroup of } G, \text{ and } \phi \text{ is a character of $H$}\}\]
satisfying the following three properties:
\begin{itemize}
\item \textbf{ACH1:} $n(H, \phi_1 + \phi_2) = n(H, \phi_1) + n(H, \phi_2)$ for any characters $\phi_1$ and $\phi_2$ of $H$,
where $H$ is a subgroup or an improper subgroup of $G$.
\item \textbf{ACH2:} $n(G,\emph{Ind}^G_H \phi) = n(H, \phi)$ for every subgroup $H$ and every character $\phi$ of $H$. 
\item \textbf{ACH3:} $n(H, \phi) \geq 0$ for all subgroups $H$ of $G$ and all one dimensional characters $\phi$ of $H$.
\end{itemize} 
\noindent Then arithmetic Heilbronn character of a subgroup $H$ of $G$, is defined by
\[\Theta_H = \sum_{\phi \in \mbox{Irr}(H)} n(H, \phi)\phi.\]
\end{definition}
 \begin{rem} Note that arithmetic Heilbronn characters are weak arithmetic Heilbronn characters as well. So any result holding with weak arithmetic Heilbronn characters will hold with arithmetic Heilbronn characters too. 
\end{rem}
\noindent Similarly as before, we have the following result,
\begin{theorem}[Arithmetic Heilbronn-Stark Lemma, \cite{PJwon}] \label{AHSL} For every subgroup $H$ of G, one has
\[\Theta_G|_{H} = \Theta|_{H}.\]
\end{theorem}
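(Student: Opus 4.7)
The plan is to mimic the proof of the weak arithmetic Heilbronn--Stark Lemma (Theorem \ref{WAHSL}), but to take advantage of the fact that the induction-invariance axiom \textbf{ACH2} now holds for every subgroup $H$ of $G$, not just cyclic ones. Since both $\Theta_G|_H$ and $\Theta_H$ are virtual characters of $H$, it suffices to check that they have the same Fourier coefficients with respect to the orthonormal basis $\mbox{Irr}(H)$, i.e., that $(\Theta_G|_H,\phi)_H = (\Theta_H,\phi)_H$ for every $\phi \in \mbox{Irr}(H)$. By definition the right side is just $n(H,\phi)$, so the content lies in computing the left side.

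First I would fix an arbitrary irreducible character $\phi$ of $H$ and expand
\[
(\Theta_G|_H,\phi)_H \;=\; \sum_{\chi \in \mbox{Irr}(G)} n(G,\chi)\,(\chi|_H,\phi)_H \;=\; \sum_{\chi \in \mbox{Irr}(G)} n(G,\chi)\,(\chi,\mbox{Ind}_H^G\phi)_G,
\]
where the second equality is Frobenius reciprocity (Theorem \ref{FR}). Next I would decompose $\mbox{Ind}_H^G\phi$ into its irreducible constituents, writing $\mbox{Ind}_H^G\phi = \sum_{\chi \in \mbox{Irr}(G)} a_\chi\,\chi$ with $a_\chi = (\mbox{Ind}_H^G\phi,\chi)_G$, so the previous sum becomes $\sum_\chi a_\chi\, n(G,\chi)$. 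At this point \textbf{ACH1} (additivity of $n(G,\cdot)$) collapses the sum to $n\!\bigl(G,\sum_\chi a_\chi \chi\bigr) = n(G,\mbox{Ind}_H^G\phi)$, and \textbf{ACH2} (induction invariance, now valid for the arbitrary subgroup $H$) rewrites this as $n(H,\phi)$. Putting the chain together gives $(\Theta_G|_H,\phi)_H = n(H,\phi) = (\Theta_H,\phi)_H$, and since $\phi$ was arbitrary this proves $\Theta_G|_H = \Theta_H$.

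I do not expect a genuine obstacle here: the argument is a line-by-line transcription of the proof of Theorem \ref{WAHSL}, with the only new input being that \textbf{ACH2} no longer requires $H$ to be cyclic. The reason the weak version was restricted to cyclic $H$ is precisely that its induction axiom was only assumed for cyclic subgroups; once that restriction is lifted in Definition \ref{AHC}, the identical Frobenius-reciprocity computation goes through verbatim for every $H \leq G$. So the role of this proof is essentially to record that strengthening \textbf{ACH2} to all subgroups automatically strengthens the compatibility $\Theta_G|_H = \Theta_H$ to all subgroups as well.
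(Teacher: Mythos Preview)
Your proposal is correct and follows exactly the approach the paper indicates: the paper does not spell out a proof here but simply says ``similarly as before,'' pointing back to the weak arithmetic Heilbronn--Stark Lemma (Theorem~\ref{WAHSL}), whose proof in turn is described as using Frobenius reciprocity together with the first two axioms of the definition. Your computation is precisely that argument, with the only change being that \textbf{ACH2} is now available for all subgroups rather than just cyclic ones.
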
  
\noindent From now on, we will always denote $\Theta_{H}$ as the arithmetic Heilbronn character associated to $H$, unless otherwise specified. Suppose $G$ is solvable and $H$ be its subgroup. Then we can expect some variant of the result by Uchida and van der Waal \ref{UVdW}. Indeed we can get a analogous result, which we will establish shortly. But before that let us establish one useful result. 
\begin{proposition} \label{acimp} Let $G$ be a solvable group, and $H$ be a subgroup of $G$. Let $\chi$ and $\phi$ be $1$-dimensional characters 
of $G$ and $H$, respectively. Then 
\[n(G,\emph{Ind}^G_H \phi) \geq (\chi|_{H}, \phi)n(G, \chi).\]  \end{proposition}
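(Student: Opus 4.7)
The plan is to mimic the proof of Murty--Raghuram's Theorem 3.2.1 and split into two cases according to whether $(\chi|_H,\phi)$ vanishes or not, translating the L-function arguments there into statements about $n(G,\cdot)$ via axioms ACH1--ACH3.

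First, if $(\chi|_H,\phi)=0$, the right-hand side is zero, and since $\phi$ is a one-dimensional character of the subgroup $H$, ACH3 gives $n(H,\phi)\ge 0$, while ACH2 gives $n(G,\mbox{Ind}_H^G\phi)=n(H,\phi)$. So the inequality is immediate in this case.

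The interesting case is $(\chi|_H,\phi)\neq 0$. Since both $\chi|_H$ and $\phi$ are one-dimensional, this forces $\chi|_H=\phi$ and $(\chi|_H,\phi)=1$. Here I would invoke Corollary \ref{imp3} applied to the solvable group $G$ and its subgroup $H$, which yields a decomposition
\[
\mbox{Ind}_H^G(1_H) \;=\; 1_G \;+\; \sum_i \mbox{Ind}_{H_i}^G \chi_i,
\]
where each $\chi_i$ is a one-dimensional character of a subgroup $H_i\le G$. Twisting both sides by $\chi$ and using the fact (noted at the beginning of Section 3.2) that tensoring commutes with induction gives
\[
\mbox{Ind}_H^G(\chi|_H) \;=\; \chi \;+\; \sum_i \mbox{Ind}_{H_i}^G\bigl(\chi|_{H_i}\otimes \chi_i\bigr).
\]

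Now I would apply $n(G,\cdot)$ to this identity. By ACH1 (additivity) and ACH2 (invariance under induction),
\[
n(G,\mbox{Ind}_H^G\phi) \;=\; n(G,\chi) \;+\; \sum_i n\bigl(H_i,\,\chi|_{H_i}\otimes \chi_i\bigr).
\]
Each character $\chi|_{H_i}\otimes\chi_i$ is one-dimensional (being a product of two one-dimensional characters), so ACH3 guarantees that every term in the sum is nonnegative. This yields $n(G,\mbox{Ind}_H^G\phi)\ge n(G,\chi)=(\chi|_H,\phi)\,n(G,\chi)$, completing the argument. There is no real obstacle here once the decomposition of Corollary \ref{imp3} is in hand; the only point that needs care is verifying that the tensored characters remain one-dimensional so that ACH3 can be applied, but this is automatic.
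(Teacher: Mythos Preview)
Your proof is correct and follows essentially the same approach as the paper: split into the two cases $(\chi|_H,\phi)=0$ and $(\chi|_H,\phi)=1$, in the latter case invoke Corollary~\ref{imp3}, twist by $\chi$, and apply ACH1--ACH3. Your write-up is in fact slightly more explicit than the paper's, spelling out the use of ACH2 in the trivial case and the application of $n(G,\cdot)$ termwise.
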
 
\begin{proof} First of all if $(\chi|_H, \phi) = 0$, then we are done by $\textbf{ACH3}$. So we may take $(\chi|_H, \phi) > 0$. As both of $\chi$ and $\phi$ are one 1-dimensional, it is evident that $\chi|_H = \phi$ and $(\chi|_H, \phi) = 1$. From Corollary \ref{imp3}, we can write 
\[\mbox{Ind}^G_H(1|_{H}) = 1_G + \sum_{i=1}^{n} \mbox{Ind}^G_{H_i} \phi_i.\]
Twisting both sides with $\chi$ one obtains
\[\mbox{Ind}^G_H \phi=\mbox{Ind}^G_H (\chi|_H) = \chi + \sum_{i} \mbox{Ind}^G_{H_i} (\chi|_{H_i} \otimes \phi_i),\] and the proof follows trivially from $\textbf{ACH3}$, as $n(H_i, \chi|_{H_i} \otimes \phi_i) \geq 0$, for any $1 \leq i\leq n$.
\end{proof}
\noindent As a corollary we can easily deduce the following result, just taking $\chi$ and $\phi$ to be trivial characters.
\begin{corollary}[Arithmetic Uchida-Van der Waall, \cite{PJwon}] \label{AUVdW} Let $G$ be a solvable group,
and $H$ be a subgroup. Then
\[n(G,\emph{Ind}^G_H 1_H)-n(G, 1_G) \geq 0.\]
\end{corollary}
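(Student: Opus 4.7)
The plan is to obtain this as an immediate specialization of Proposition \ref{acimp} applied to the trivial characters on $G$ and $H$. Since both $1_G$ and $1_H$ are one-dimensional, the proposition's hypotheses are satisfied with $\chi = 1_G$ and $\phi = 1_H$.

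First I would compute the inner product appearing on the right-hand side of Proposition \ref{acimp} under this choice. The restriction $1_G|_H$ equals $1_H$, so $(1_G|_H, 1_H) = (1_H, 1_H) = 1$ by orthonormality of irreducible characters. Substituting into the inequality of Proposition \ref{acimp} directly gives
\[
n(G, \mbox{Ind}_H^G 1_H) \;\geq\; (1_G|_H,\, 1_H)\, n(G, 1_G) \;=\; n(G, 1_G),
\]
which is exactly the desired conclusion after rearrangement.

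There is essentially no obstacle here: the content lies in Proposition \ref{acimp}, whose proof in turn relies on the solvable-case decomposition
\[
\mbox{Ind}_H^G(1_H) = 1_G + \sum_i \mbox{Ind}_{H_i}^G \phi_i
\]
from Corollary \ref{imp3} together with axiom \textbf{ACH3} applied to the one-dimensional characters $\phi_i$. So once Proposition \ref{acimp} is in hand, the corollary is a one-line substitution and no further calculation is required.
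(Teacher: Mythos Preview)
Your proof is correct and follows exactly the paper's approach: the paper also deduces the corollary by taking $\chi$ and $\phi$ to be the trivial characters in Proposition \ref{acimp}. Your computation of $(1_G|_H,1_H)=1$ makes explicit the one step the paper leaves implicit.
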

\begin{rem} One can see this is indeed a variant of Uchida-Van der Waall's theorem taking $G=\mbox{Gal}(\overline{K}/F), H=\mbox{Gal}(\overline{K}/K)$ and $n(H',\phi)=\mbox{ord}_{s=s_0}L\Big(s,\phi,\overline{K}/K^{H'}\Big)$, for any subgroup $H'$ of $G$.
\end{rem} 
\subsection{Variants of Murty-Raghuram's results}
From now on we will assume $G$ is solvable and $H$ be its subgroup. In this new settings we have the following analogous result to Murty-Raghuram's inequality for Heilbronn characters.
\begin{theorem} [Wong, \cite{PJwon}] \label{M-R I2} Let $\chi_0$ be a $1$-dimensional character of $G$. Then
\[ \sum_{\chi \in \emph{Irr}(G)-\{\chi_0\}} n(G, \chi)^2 \leq \Big(n(G, \emph{Reg}_G)-n(G, \chi_0)\Big)^2.\]
\end{theorem}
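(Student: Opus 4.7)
My plan is to mimic the proof of Lemma \ref{MRI 2} (the Murty--Raghuram inequality for classical Heilbronn characters) in the arithmetic setting, using the axioms \textbf{ACH1}--\textbf{ACH3} together with Proposition \ref{acimp} in place of the original Theorem \ref{RR2}. The key idea is to truncate $\Theta_G$ by removing the $\chi_0$-component and estimate the truncated character pointwise on cyclic subgroups via the arithmetic Heilbronn--Stark lemma.

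Concretely, I would define the truncated arithmetic Heilbronn character
\[\Theta_G^{\chi_0} = \Theta_G - n(G,\chi_0)\chi_0 = \sum_{\chi \neq \chi_0} n(G,\chi)\chi,\]
so that orthogonality gives $\sum_{\chi \neq \chi_0} n(G,\chi)^2 = (\Theta_G^{\chi_0},\Theta_G^{\chi_0}) = \frac{1}{|G|}\sum_{g \in G} |\Theta_G^{\chi_0}(g)|^2$. The job is then reduced to bounding $|\Theta_G^{\chi_0}(g)|$ uniformly by $n(G,\mbox{Reg}_G) - n(G,\chi_0)$.

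For a fixed $g \in G$, I would apply the arithmetic Heilbronn--Stark lemma (Theorem \ref{AHSL}) to write $\Theta_G(g) = \Theta_{\langle g\rangle}(g) = \sum_{\phi \in \mbox{Irr}(\langle g\rangle)} n(\langle g\rangle,\phi)\phi(g)$, and expand $n(G,\chi_0)\chi_0(g)$ via Frobenius reciprocity as $n(G,\chi_0)\sum_{\phi}(\chi_0|_{\langle g\rangle},\phi)\phi(g)$. Combining,
\[\Theta_G^{\chi_0}(g) = \sum_{\phi \in \mbox{Irr}(\langle g\rangle)} \Big(n(\langle g\rangle,\phi) - (\chi_0|_{\langle g\rangle},\phi)\, n(G,\chi_0)\Big)\phi(g).\]
Since $\langle g\rangle$ is cyclic, every irreducible $\phi$ is one-dimensional, and Proposition \ref{acimp} (applied with $H = \langle g\rangle$, $\chi = \chi_0$, using \textbf{ACH2} to identify $n(\langle g\rangle,\phi) = n(G,\mbox{Ind}_{\langle g\rangle}^G \phi)$) guarantees that each bracketed coefficient is non-negative. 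Using $|\phi(g)| = 1$, this is where I collapse the sum by the triangle inequality:
\[|\Theta_G^{\chi_0}(g)| \leq \sum_{\phi} \Big(n(\langle g\rangle,\phi) - (\chi_0|_{\langle g\rangle},\phi)\, n(G,\chi_0)\Big) = \Theta_{\langle g\rangle}(1) - n(G,\chi_0),\]
because $\chi_0|_{\langle g\rangle}$ is itself one-dimensional so $\sum_\phi (\chi_0|_{\langle g\rangle},\phi) = 1$.

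Finally, I would identify $\Theta_{\langle g\rangle}(1) = n(\langle g\rangle, \mbox{Reg}_{\langle g\rangle}) = n(G, \mbox{Ind}_{\langle g\rangle}^G \mbox{Reg}_{\langle g\rangle}) = n(G,\mbox{Reg}_G)$ via \textbf{ACH2} and the fact that the induction of the regular character is the regular character. Substituting this uniform pointwise bound into the averaged sum yields $\sum_{\chi \neq \chi_0} n(G,\chi)^2 \leq (n(G,\mbox{Reg}_G) - n(G,\chi_0))^2$. The only delicate point — the ``main obstacle'' — is verifying the non-negativity of the coefficients in the truncated expansion, which is precisely what Proposition \ref{acimp} supplies and why it was proved just before; everything else is linearity, triangle inequality, and the arithmetic Heilbronn--Stark compatibility.
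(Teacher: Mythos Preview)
Your proof is correct and follows essentially the same approach as the paper: define the truncated character $\Theta_G^{\chi_0}$, restrict to cyclic subgroups via the arithmetic Heilbronn--Stark lemma, invoke Proposition \ref{acimp} to get non-negativity of the coefficients, and collapse the sum using $(\chi_0|_{\langle g\rangle},\mbox{Reg}_{\langle g\rangle})=1$ together with \textbf{ACH2}. The paper's argument is identical in structure and detail.
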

\begin{proof} We define a truncated arithmetic Heilbronn character with respect to $\chi_0$ as follows,
\[\Theta^{\chi_0}_{G}=\sum_{\chi \in  \mbox{Irr}(G)-\{\chi_0\}}n(G, \chi)\chi,\]
\noindent which leads us to conclude 
\[\sum|\Theta_G^{\chi_0}|^2 = \frac{1}{|G|} \sum_{g\in G}|\Theta^{\chi_0}_G (g)|^2 = \sum_{\chi \in \mbox{Irr}(G)-\{\chi_0\}} n(G, \chi)^2.\]
From arithmetic Heilbronn–Stark lemma \ref{AHSL} we can write 
\[\Theta^{\chi_0}_G (g)=\Theta_G(g)-n(G, \chi_0)\chi_0(g)=\Theta_{\langle g \rangle}(g)-n(G, \chi_0)\chi_0(g),\]
and later one is equals to
\[\sum_{\phi \in \mbox{Irr}(\langle g \rangle)}\Big(n(\langle g \rangle, \phi)-n(G, \chi_0)(\chi_0|_{\langle g \rangle}, \phi)\Big)\phi(g).\]
\noindent In Proposition \ref{acimp} taking $H=\langle g \rangle$ and $\phi \in \mbox{Irr}(\langle g \rangle)$ we can simply write 
\begin{align}
|\Theta^{\chi_0}_G (g)| & \leq \sum_{\phi \in \mbox{Irr}(\langle g \rangle)}\Big(n(\langle g \rangle, \phi)-n(G, \chi_0)(\chi_0|_{\langle g \rangle}, \phi)\Big) \\
& = n(\langle g \rangle, \mbox{Reg}_{\langle g \rangle})-n\big(G,(\chi_0|_{\langle g \rangle}, \mbox{Reg}_{\langle g \rangle})\chi_0\big), 
\end{align}
but since $\chi_0$ was taken to be one dimensional, $\chi_0|_{\langle g \rangle}=1$, and so $\chi_0|_{\langle g \rangle} \in \mbox{Irr}(\langle g \rangle)$. Thus, $(\chi_0|_{\langle g \rangle}, \mbox{Reg}_{\langle g \rangle})=1$. Also we know $n(\langle g \rangle, \mbox{Reg}_{\langle g \rangle})=n(G, \mbox{Reg}_{G})$, from $\textbf{ACH2}$. The proof is therefore completed.
\end{proof} 
We are not proving next two theorems. They are exactly analogous to Theorem \ref{RR2} and Lemma \ref{MRI 2}. For a more detailed proof, see Theorem 3.8 and 3.9 in \cite {PJwon}.
\begin{theorem} Let $H$ be a subgroup of $G$, and let $\phi$ be any 1-dimensional character of $H$. Let $S^{\phi}$ denote the set of all $1$-dimensional characters of $G$ whose restrictions on $H$ are $\phi$. Then
\[n(G,\emph{Ind}^G_H \phi)-\sum_{\chi \in S_{\phi}} n(G, \chi) \geq 0.\]
\end{theorem}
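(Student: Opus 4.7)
The plan is to adapt the argument of Murty--Raghuram's Theorem \ref{RR2} to the arithmetic Heilbronn setting, replacing the role played there by Theorem \ref{ODT} (entireness of one-dimensional Artin L-functions) with the structural axioms \textbf{ACH1}--\textbf{ACH3}. I would split the proof into two cases according to whether $S_\phi$ is empty. Suppose first that $S_\phi=\emptyset$; then the sum on the right-hand side is empty, and the claim reduces to $n(G,\mbox{Ind}_H^G\phi)\geq 0$. By \textbf{ACH2} one has $n(G,\mbox{Ind}_H^G\phi)=n(H,\phi)$, and since $\phi$ is one-dimensional, \textbf{ACH3} furnishes $n(H,\phi)\geq 0$, settling this case immediately.

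Now assume $S_\phi\neq\emptyset$ and pick $\chi_0\in S_\phi$, so $\chi_0$ is a one-dimensional character of $G$ with $\chi_0|_H=\phi$. Because $G$ is solvable, Corollary \ref{imp2} with $i=1$ gives
\[
\mbox{Ind}_H^G 1_H \;=\; \mbox{Ind}_{HG^1}^G 1_{HG^1} \;+\; \sum_j \mbox{Ind}_{H_j}^G \chi_j,
\]
where each $\chi_j$ is a one-dimensional character of a subgroup $H_j\leq G$. Twisting both sides by $\chi_0$ and using that induction commutes with tensoring by a character of the ambient group, I would obtain
\[
\mbox{Ind}_H^G \phi \;=\; \mbox{Ind}_{HG^1}^G(\chi_0|_{HG^1}) \;+\; \sum_j \mbox{Ind}_{H_j}^G\bigl(\chi_j\otimes \chi_0|_{H_j}\bigr).
\]
The dimension-count already carried out in the proof of Theorem \ref{RR2}---matching $\dim \mbox{Ind}_{HG^1}^G(\chi_0|_{HG^1})=[G:HG^1]$ against the cardinality $|S_\phi|=[G:HG^1]$ of one-dimensional extensions of $\phi$---then forces
\[
\mbox{Ind}_{HG^1}^G(\chi_0|_{HG^1}) \;=\; \sum_{\chi\in S_\phi}\chi.
\]

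Applying $n(G,\cdot)$ to the resulting identity and invoking the additivity \textbf{ACH1} together with the induction-invariance \textbf{ACH2} produces
\[
n(G,\mbox{Ind}_H^G\phi) \;=\; \sum_{\chi\in S_\phi} n(G,\chi) \;+\; \sum_j n\bigl(H_j,\,\chi_j\otimes \chi_0|_{H_j}\bigr).
\]
Each tensor $\chi_j\otimes \chi_0|_{H_j}$ is a product of two one-dimensional characters and is therefore again one-dimensional, so \textbf{ACH3} makes every summand in the last sum non-negative; rearranging gives the asserted inequality. The only real subtlety is that the twisting step requires $\chi_0$ to exist, which is precisely why the vacuous case $S_\phi=\emptyset$ has to be peeled off first. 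Once it is, Corollary \ref{imp2} together with the identity $|S_\phi|=[G:HG^1]$ handles all the group-theoretic content, and \textbf{ACH1}--\textbf{ACH3} convert it directly into the desired positivity statement.
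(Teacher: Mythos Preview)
Your proof is correct and follows precisely the route the paper indicates: it says this theorem is ``exactly analogous to Theorem \ref{RR2}'' and refers to Wong \cite{PJwon} for details, and your adaptation of that argument---using Corollary \ref{imp2} with $i=1$, twisting by a chosen $\chi_0\in S_\phi$, invoking the dimension count $|S_\phi|=[G:HG^1]$, and then replacing the appeal to Theorem \ref{ODT} by \textbf{ACH1}--\textbf{ACH3}---is exactly what is intended. Your handling of the $S_\phi=\emptyset$ case via \textbf{ACH2} and \textbf{ACH3} is in fact slightly cleaner than the literal translation of Theorem \ref{RR2}, since in the arithmetic setting there is no pole at $s=1$ to worry about.
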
 
\begin{theorem}Let $S$ be the set of all $1$-dimensional characters of $G$. Then
\[\sum_{\chi \in \emph{Irr}(G)-S}n(G, \chi)^2 \leq \Big(n(G, \emph{Reg}_G)-n\big(G,\emph{Ind}^G_{G^1} 1_{G^1}\big)\Big)^2.\]
\end{theorem}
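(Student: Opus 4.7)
The plan is to imitate the proof of Lemma 4.1.4 step by step, replacing each use of the classical Heilbronn formalism by its arithmetic analogue established in Section 5.1--5.3. The inequality should drop out by writing a truncated arithmetic Heilbronn character, bounding it pointwise on every cyclic subgroup, and then applying Plancherel.

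First, define the truncated character $\Theta_G^S = \sum_{\chi \in \mathrm{Irr}(G)-S} n(G,\chi)\chi$. By orthogonality of irreducible characters,
\[
\sum_{\chi \in \mathrm{Irr}(G)-S} n(G,\chi)^2 \;=\; (\Theta_G^S,\Theta_G^S) \;=\; \frac{1}{|G|}\sum_{g\in G} |\Theta_G^S(g)|^2,
\]
so it suffices to obtain the pointwise bound $|\Theta_G^S(g)| \le n(G,\mathrm{Reg}_G) - n(G,\mathrm{Ind}_{G^1}^{G} 1_{G^1})$ for every $g \in G$. Fix $g$ and use the arithmetic Heilbronn--Stark lemma (Theorem 5.1.2) to write $\Theta_G(g) = \Theta_{\langle g\rangle}(g)$, so that
\[
\Theta_G^S(g) \;=\; \Theta_{\langle g\rangle}(g) - \sum_{\chi \in S} n(G,\chi)\chi(g) \;=\; \sum_{\phi \in \mathrm{Irr}(\langle g\rangle)} \Big( n(\langle g\rangle,\phi) - \sum_{\chi \in S} n(G,\chi)(\chi|_{\langle g\rangle},\phi) \Big)\phi(g),
\]
where I expanded $\chi(g)$ as $\chi|_{\langle g\rangle}$ and decomposed it into irreducible (one-dimensional) characters of $\langle g\rangle$. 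Using the correspondence $S^\phi = \{\chi \in S : \chi|_{\langle g\rangle} = \phi\}$, the inner sum collapses to $\sum_{\chi \in S^\phi} n(G,\chi)$ because $\chi$ is one-dimensional, so $(\chi|_{\langle g\rangle},\phi) \in \{0,1\}$.

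Now the key input is the preceding theorem of the section: for every one-dimensional character $\phi$ of the cyclic subgroup $\langle g\rangle$,
\[
n(\langle g\rangle,\phi) - \sum_{\chi \in S^\phi} n(G,\chi) \;\ge\; 0.
\]
Thus every coefficient in the expansion of $\Theta_G^S(g)$ is non-negative. Since each $\phi$ is one-dimensional and $g$ is a group element, $|\phi(g)| = 1$, so the triangle inequality gives
\[
|\Theta_G^S(g)| \;\le\; \sum_{\phi \in \mathrm{Irr}(\langle g\rangle)} n(\langle g\rangle,\phi) - \sum_{\phi \in \mathrm{Irr}(\langle g\rangle)} \sum_{\chi \in S^\phi} n(G,\chi).
\]
By \textbf{ACH1} the first term is $n(\langle g\rangle,\mathrm{Reg}_{\langle g\rangle})$, and by \textbf{ACH2} together with the identity $\mathrm{Ind}_{\langle g\rangle}^{G}\mathrm{Reg}_{\langle g\rangle} = \mathrm{Reg}_G$ this equals $n(G,\mathrm{Reg}_G)$. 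The second term, since $\{S^\phi\}_{\phi}$ partitions $S$, equals $\sum_{\chi \in S} n(G,\chi) = n\bigl(G, \sum_{\chi \in S}\chi\bigr) = n(G,\mathrm{Ind}_{G^1}^{G} 1_{G^1})$, using the character-theoretic identity $\sum_{\chi \in S}\chi = \mathrm{Ind}_{G^1}^{G} 1_{G^1}$ (established in the proof of Theorem 3.2.2 via Frobenius reciprocity).

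Combining, $|\Theta_G^S(g)| \le n(G,\mathrm{Reg}_G) - n(G,\mathrm{Ind}_{G^1}^{G} 1_{G^1})$ uniformly in $g$. Squaring and averaging over $G$ yields the stated inequality. The main subtlety, rather than any obstacle, is the bookkeeping in the partition argument $\sum_{\phi}\sum_{\chi \in S^\phi} = \sum_{\chi \in S}$: this requires $\chi|_{\langle g\rangle}$ to be a single element of $\mathrm{Irr}(\langle g\rangle)$, which is precisely where one-dimensionality of $\chi \in S$ is used. No new inputs beyond \textbf{ACH1}, \textbf{ACH2}, the arithmetic Heilbronn--Stark lemma, and the previous theorem are needed.
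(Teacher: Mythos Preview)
Your proof is correct and follows exactly the approach the paper indicates: the paper does not write out a proof for this theorem but states it is ``exactly analogous to Lemma \ref{MRI 2}'' (referring the reader to Wong's paper), and your argument is precisely that analogue, with the classical Heilbronn--Stark lemma and Theorem \ref{RR2} replaced by their arithmetic counterparts (Theorem \ref{WAHSL} and the preceding theorem of Section 5.3, respectively), and the order-of-vanishing identities replaced by \textbf{ACH1}--\textbf{ACH2}.
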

\subsection{Variants of Lansky-Wilson's work}
\noindent In this section we adapt the method developed by Lansky-Wilson, and show how to generalize of Murty and Raghuram's work in the setting of arithmetic Heilbronn characters. Our first result is,
\begin{theorem}[Wong, \cite{PJwon}] Let $d$ be the greatest common divisor of the degrees of the characters in $\emph{Irr}(G)-S^i$, where $S^i$ denotes the set of irreducible characters of $G$ of level less than or equal to $i$. Then 
\[n(G, \emph{Reg}_G)-n(G,\emph{Ind}_{G_i}^{G}1_{G^i})=kd,\] for some non-negative integer $k$.
\end{theorem}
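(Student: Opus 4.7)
The plan is to combine the decomposition of $\mbox{Ind}_{G^i}^G 1_{G^i}$ given in Lemma \ref{imp for i} with a divisibility observation (which delivers the fact that $d$ divides the difference) and the arithmetic Uchida--Van der Waall result (which delivers non-negativity of $k$).

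First, specializing Lemma \ref{imp for i} to $H = \{e\}$ and $\psi = 1_{\{e\}}$ (so that $HG^i = G^i$, $\psi' = 1_{G^i}$, and $\mbox{Ind}_H^G \psi = \mbox{Reg}_G$) yields $(\chi, \mbox{Ind}_{G^i}^G 1_{G^i}) = \chi(1)$ for $l(\chi) \leq i$ and $0$ otherwise. Hence
\[\mbox{Reg}_G - \mbox{Ind}_{G^i}^G 1_{G^i} = \sum_{l(\chi) > i} \chi(1)\, \chi\]
as an identity of honest characters of $G$. Applying $n(G, \cdot)$ and using \textbf{ACH1},
\[n(G, \mbox{Reg}_G) - n(G, \mbox{Ind}_{G^i}^G 1_{G^i}) = \sum_{l(\chi) > i} \chi(1)\, n(G, \chi).\]
By the definition of $d$, each term on the right is an integer multiple of $d$, so the difference equals $kd$ for some $k \in \mathbb{Z}$.

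To show $k \geq 0$, I would rewrite the same virtual character via transitivity of induction, namely $\mbox{Reg}_G = \mbox{Ind}_{G^i}^G \mbox{Reg}_{G^i}$:
\[\mbox{Reg}_G - \mbox{Ind}_{G^i}^G 1_{G^i} = \mbox{Ind}_{G^i}^G (\mbox{Reg}_{G^i} - 1_{G^i}).\]
Applying $n(G, \cdot)$ and using \textbf{ACH1} together with \textbf{ACH2} to move the induction inward produces
\[n(G, \mbox{Reg}_G) - n(G, \mbox{Ind}_{G^i}^G 1_{G^i}) = n(G^i, \mbox{Reg}_{G^i}) - n(G^i, 1_{G^i}).\]

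Finally, I would observe that the restriction of $\{n(H, \phi)\}$ to subgroups of $G^i$ gives an arithmetic Heilbronn character on $G^i$: axioms \textbf{ACH1} and \textbf{ACH3} are inherited immediately, while \textbf{ACH2} for $G^i$ follows from \textbf{ACH2} for $G$ combined with transitivity of induction ($\mbox{Ind}_K^G \phi = \mbox{Ind}_{G^i}^G \mbox{Ind}_K^{G^i} \phi$ for $K \leq G^i$). Since $G^i$ is solvable, Corollary \ref{AUVdW} applied to the group $G^i$ and the subgroup $\{e\}$ gives $n(G^i, \mbox{Reg}_{G^i}) - n(G^i, 1_{G^i}) \geq 0$, which proves $k \geq 0$. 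The only minor obstacle is this sanity-check on the restriction of the arithmetic Heilbronn character structure to $G^i$; otherwise, the argument is a direct translation of the classical proof of Corollary \ref{imp for i_2} into the arithmetic Heilbronn setting.
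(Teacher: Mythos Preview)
Your proof is correct and follows essentially the same route as the paper: express the difference via Lemma~\ref{imp for i} as $\sum_{l(\chi)>i}\chi(1)\,n(G,\chi)$ to get divisibility by $d$, then rewrite it as $n(G^i,\mbox{Reg}_{G^i})-n(G^i,1_{G^i})$ using $\mbox{Ind}_{G^i}^G\mbox{Reg}_{G^i}=\mbox{Reg}_G$ and \textbf{ACH2}, and conclude non-negativity. The only cosmetic difference is that the paper cites the weak arithmetic Aramata--Brauer theorem (Corollary~\ref{BAT}) for the last step rather than Corollary~\ref{AUVdW}; your choice is in fact slightly cleaner since it yields the inequality directly without a separate sign check.
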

\begin{proof} From conditions  $\textbf{AHC1},\textbf{AHC2}$ and Lemma \ref{imp for i}, we get
\[n(G, \mbox{Reg}_G)-n(G,\mbox{Ind}^G_{G_i}1_{G_i}) = n(G, \mbox{Reg}_G)-\sum_{\chi \in S^i} \chi(1)n(G, \chi),\]
but the later one is $\sum_{\chi \in \mbox{Irr}(G)-S^i}\chi(1)n(G, \chi)$, which is clearly a multiple of the greatest common divisor of the degrees of the characters $\chi$ of G with $l(\chi) > d$. Using the fact that $\mbox{Ind}_{G^i}^{G}(\mbox{Reg}_{G^i})=\mbox{Reg}_G$, we have
\[n(G, \mbox{Reg}_G)-n(G,\mbox{Ind}^G_{G_i}1_{G_i})= n(G^i,\mbox{Reg}_{G^i})-n(G^i, 1_{G^i}),\]
and from weak arithmetic Aramata-Brauer theorem \ref{BAT}, it  follows that 
\[n(G^i,\mbox{Reg}_{G^i})-n(G^i, 1_{G^i}) \geq 0.\]
So the proof is complete. 
\end{proof}
The next result generalizes Lansky-Wilson's main work in \cite{LW}.
\begin{theorem}[Wong, \cite{PJwon}] \label{LW5} Let $\psi$ be a 1-dimensional character of a subgroup $H$ of G. Then
\[n(G,\emph{Ind}^G_H \psi)-\sum_{\chi \in S^i}(\chi,\emph{Ind}^G_H\psi)n(G, \chi) \geq 0.\]
\end{theorem}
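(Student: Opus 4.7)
The plan is to mimic, in the arithmetic setting, the proof of Theorem \ref{LWT1} by Lansky--Wilson, splitting into two cases according to whether $\psi$ is trivial on $H\cap G^{i}$ or not, and replacing the appeals to Theorem \ref{ODT} (holomorphy of $1$-dimensional Artin $L$-functions) by appeals to axioms \textbf{ACH1--ACH3}, and replacing equalities/divisibilities of Artin $L$-functions by the additivity in the first slot.

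First I would handle the main case where $\psi|_{H\cap G^{i}}$ is trivial. Then $\psi$ extends uniquely to a $1$-dimensional character $\psi'$ of $HG^{i}$ that is trivial on $G^{i}$. Applying Corollary \ref{imp3} to the solvable group $HG^{i}$ and its subgroup $H$ gives
\[
\mbox{Ind}_{H}^{HG^{i}}(1_{H}) \;=\; 1_{HG^{i}} + \sum_{j}\mbox{Ind}_{H_{j}}^{HG^{i}}(\chi_{j}),
\]
with $H_{j}\subseteq HG^{i}$ and $\chi_{j}$ one-dimensional. Tensoring both sides by $\psi'$ and using that tensoring commutes with induction (so $\mbox{Ind}_{H}^{HG^{i}}(\psi'|_{H})=\mbox{Ind}_{H}^{HG^{i}}(\psi)$), then inducing from $HG^{i}$ to $G$, I obtain the decomposition
\[
\mbox{Ind}_{H}^{G}(\psi) \;=\; \mbox{Ind}_{HG^{i}}^{G}(\psi') + \sum_{j}\mbox{Ind}_{H_{j}}^{G}\!\bigl(\chi_{j}\otimes\psi'|_{H_{j}}\bigr).
\]
Now I would apply $n(G,\cdot)$: axiom \textbf{ACH1} distributes over the finite sum, and each summand $n\bigl(G,\mbox{Ind}_{H_{j}}^{G}(\chi_{j}\otimes\psi'|_{H_{j}})\bigr)$ equals $n\bigl(H_{j},\chi_{j}\otimes\psi'|_{H_{j}}\bigr)$ by \textbf{ACH2} and is nonnegative by \textbf{ACH3} (the twist of two $1$-dimensional characters remains $1$-dimensional). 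Finally, by Lemma \ref{imp for i},
\[
\mbox{Ind}_{HG^{i}}^{G}(\psi') \;=\; \sum_{\chi\in S^{i}}\bigl(\chi,\mbox{Ind}_{H}^{G}\psi\bigr)\,\chi,
\]
so again by \textbf{ACH1} (multiplicities being nonnegative integers) we get the identity $n(G,\mbox{Ind}_{HG^{i}}^{G}\psi')=\sum_{\chi\in S^{i}}(\chi,\mbox{Ind}_{H}^{G}\psi)\,n(G,\chi)$, and the desired inequality follows.

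For the remaining case, when $\psi|_{H\cap G^{i}}$ is nontrivial, I would argue exactly as in the proof of Theorem \ref{LWT1}: a Mackey plus Frobenius computation gives $\bigl(\mbox{Ind}_{H}^{G}(\psi)|_{G^{i}},1_{G^{i}}\bigr)=0$, so $1_{G^{i}}$ does not occur in $\mbox{Ind}_{H}^{G}(\psi)|_{G^{i}}$; since any $\chi\in S^{i}$ satisfies $\chi|_{G^{i}}=\chi(1)\,1_{G^{i}}$ by Definition \ref{Len}, no such $\chi$ can be a constituent of $\mbox{Ind}_{H}^{G}\psi$. Hence the entire sum $\sum_{\chi\in S^{i}}(\chi,\mbox{Ind}_{H}^{G}\psi)\,n(G,\chi)$ vanishes, and what remains is to check $n(G,\mbox{Ind}_{H}^{G}\psi)\geq 0$, which is immediate from \textbf{ACH2} followed by \textbf{ACH3} since $\psi$ is one-dimensional.

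I expect no serious obstacle: the whole proof is a bookkeeping exercise, transporting the Lansky--Wilson argument across the dictionary ``$L$-function $\leftrightarrow$ $n(G,\cdot)$, holomorphy $\leftrightarrow$ \textbf{ACH3}, Artin formalism $\leftrightarrow$ \textbf{ACH1} + \textbf{ACH2}''. The only point requiring some care is the derivation of the twisted identity for $\mbox{Ind}_{H}^{G}\psi$, where one must apply Corollary \ref{imp3} to the \emph{intermediate} group $HG^{i}$ (not to $G$ directly) so that $\psi'$ is available to twist with; this is exactly the manoeuvre used in the proof of Theorem \ref{LWT1}, and it carries over here without modification.
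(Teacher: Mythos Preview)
Your proposal is correct and follows essentially the same approach as the paper, which simply states that the proof is exactly the same as the proof of Theorem~\ref{LWT1}. You have carried out precisely that translation, replacing the holomorphy input (Theorem~\ref{ODT}) by the axioms \textbf{ACH1}--\textbf{ACH3}, and your handling of both cases---the decomposition via Corollary~\ref{imp3} applied to $HG^{i}$ in the first case, and the Mackey/Frobenius vanishing argument in the second---matches the Lansky--Wilson argument faithfully.
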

\begin{proof} Proof is exactly same as proof of Theorem \ref{LWT1}. 
\end{proof}
\begin{corollary} Let $\phi_0$ be a $1$-dimensional character of a subgroup $H$ of $G$, and $S^i_{\phi_0}$ be the set of irreducible characters of level $i$ occurring in $\emph{Ind}^G_H \phi_0$. Then
\[\sum_{\chi \in S^i_{\phi_0}}(\chi,\emph{Ind}^G_H \phi_0)n(G, \chi) \geq 0.\]
\end{corollary}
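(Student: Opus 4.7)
The plan is to mirror the proof of the classical Lansky--Wilson corollary at the end of Section 3.3, but with Theorem \ref{LW5} playing the role of Theorem \ref{LWT1} and with Lemma \ref{imp for i} invoked at two consecutive levels. Accordingly, I would split into two cases based on the restriction of $\phi_0$ to $H \cap G^i$.

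In the first case, suppose $\phi_0$ is nontrivial on $H \cap G^i$. Then the Mackey plus Frobenius computation already performed in the proof of Theorem \ref{LWT1} shows that $(\mbox{Ind}^G_H \phi_0|_{G^i}, 1_{G^i}) = 0$. Since any irreducible $\chi$ of level at most $i$ satisfies $\chi|_{G^i} = \chi(1)\,1_{G^i}$, the vanishing above forces $(\chi, \mbox{Ind}^G_H \phi_0) = 0$ for every such $\chi$. In particular $S^i_{\phi_0}$ is empty and the sum in question is vacuously zero.

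In the second case, $\phi_0$ is trivial on $H \cap G^i$, and hence extends uniquely to a $1$-dimensional character $\phi$ of $HG^i$ that is trivial on $G^i$. I would apply Theorem \ref{LW5} to the pair $(HG^i, \phi)$ with the parameter set to $i-1$, obtaining
\[
n(G, \mbox{Ind}^G_{HG^i}\phi) - \sum_{\chi \in S^{i-1}}(\chi, \mbox{Ind}^G_{HG^i}\phi)\,n(G, \chi) \;\geq\; 0.
\]
Lemma \ref{imp for i}, applied to $\phi_0$ with parameter $i$, gives
\[
\mbox{Ind}^G_{HG^i}\phi \;=\; \sum_{l(\chi)\leq i}(\chi, \mbox{Ind}^G_H \phi_0)\,\chi,
\]
so $\textbf{ACH1}$ yields $n(G, \mbox{Ind}^G_{HG^i}\phi) = \sum_{l(\chi)\leq i}(\chi, \mbox{Ind}^G_H \phi_0)\,n(G, \chi)$. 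The same lemma also gives $(\chi, \mbox{Ind}^G_{HG^i}\phi) = (\chi, \mbox{Ind}^G_H \phi_0)$ whenever $l(\chi) \leq i-1$. Substituting these identities into the displayed inequality and subtracting, the levels up to $i-1$ cancel and what remains is exactly
\[
\sum_{l(\chi)=i}(\chi, \mbox{Ind}^G_H \phi_0)\,n(G,\chi) \;=\; \sum_{\chi \in S^i_{\phi_0}}(\chi, \mbox{Ind}^G_H \phi_0)\,n(G,\chi) \;\geq\; 0,
\]
since irreducibles of level $i$ outside $S^i_{\phi_0}$ contribute zero by definition.

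The main obstacle, as far as I can see, is only bookkeeping: one must keep track of which subgroup and which level each invocation of Lemma \ref{imp for i} is taking place in, and verify that Theorem \ref{LW5} is being applied to the correct $1$-dimensional character on the correct subgroup. Once the two applications of Lemma \ref{imp for i} are aligned, the nonnegativity drops out from Theorem \ref{LW5} essentially immediately, just as the entireness in the classical corollary came out of Theorem \ref{LWT1}.
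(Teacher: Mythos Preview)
Your proposal is correct and follows essentially the same approach as the paper: split on whether $\phi_0$ is trivial on $H\cap G^i$, handle the nontrivial case via the Mackey/Frobenius computation from Theorem~\ref{LWT1}, and in the trivial case apply Theorem~\ref{LW5} to $(HG^i,\phi)$ at level $i-1$ and use Lemma~\ref{imp for i} to identify the resulting expression with the desired sum. Your bookkeeping of the two invocations of Lemma~\ref{imp for i} is in fact more explicit than the paper's.
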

\begin{proof} If $\phi_0$ is non-trivial on $H \cap G^i$, then by similar argument as second part of proof of Theorem \ref{LWT1} gives $(\chi,\mbox{Ind}^G_H \phi_0)=0$ for all $\chi \in S^i_{\phi_0}$, and so the result follows immediately.
Otherwise, $\phi_0$ extends uniquely to a character $\phi$ of $HG^i$. Then from Theorem \ref{LW5} we see that
\[n(G,\mbox{Ind}^G_{HG^i} \phi)-\sum_{\chi \in S^{i-1}}(\chi,\mbox{Ind}_{HG^i}^{G}\phi)n(G,\chi)\geq 0,\]
and just by Lemma \ref{imp for i}, right hand side is always non negative due to \textbf{ACH3} and left hand side above is equals to
\[\sum_{\chi \in S^i_{\phi_0}}(\chi,\mbox{Ind}^G_H \phi_0)n(G, \chi).\] 
Therefore the result follows.
\end{proof}
\begin{theorem}[Wong, \cite{PJwon}] Let $K/F$ be a solvable extension of number fields, and let $G=\emph{Gal}(K/F)$, then 
\[\sum_{\chi \in \emph{Irr}(G),l(\chi)=i} n(G,\chi)^2 \leq \Big(n(G,\emph{Ind}^G_{G^i} 1_{G^i})-n(G,\emph{Ind}^G_{G^{i-1}}1_{G^{i-1}})\Big)^2.\]
\end{theorem}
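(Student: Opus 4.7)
The plan is to adapt the Lansky--Wilson proof of Theorem \ref{LWi} to the arithmetic Heilbronn character setting, with Proposition \ref{acimp} playing the role of positivity.

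First I would rewrite the right-hand side. Applying Lemma \ref{imp for i} with the trivial subgroup and trivial character gives $\mbox{Ind}^G_{G^j}1_{G^j}=\sum_{l(\chi)\leq j}\chi(1)\chi$, so \textbf{ACH1} identifies
\[
n(G,\mbox{Ind}^G_{G^i}1_{G^i})-n(G,\mbox{Ind}^G_{G^{i-1}}1_{G^{i-1}})=\sum_{l(\chi)=i}\chi(1)\,n(G,\chi).
\]
Next, introduce the level-$i$ truncation $\Theta^i_G:=\sum_{l(\chi)=i}n(G,\chi)\chi$. By orthogonality,
\[
\sum_{l(\chi)=i}n(G,\chi)^2=\frac{1}{|G|}\sum_{g\in G}|\Theta^i_G(g)|^2,
\]
so it suffices to bound $|\Theta^i_G(g)|\leq n(G,\mbox{Ind}^G_{G^i}1_{G^i})-n(G,\mbox{Ind}^G_{G^{i-1}}1_{G^{i-1}})$ pointwise in $g$.

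Fix $g\in G$ and expand $\chi|_{\langle g\rangle}$ in $\mbox{Irr}(\langle g\rangle)$ via Frobenius reciprocity. Setting
\[
T_j(\phi):=\sum_{l(\chi)\leq j}n(G,\chi)\,(\chi,\mbox{Ind}^G_{\langle g\rangle}\phi),\qquad \phi\in\mbox{Irr}(\langle g\rangle),
\]
one obtains $\Theta^i_G(g)=\sum_\phi\phi(g)\bigl(T_i(\phi)-T_{i-1}(\phi)\bigr)$. Lemma \ref{imp for i} together with \textbf{ACH1} and \textbf{ACH2} evaluate $T_j(\phi)=n(\langle g\rangle G^j,\phi^{(j)})$ whenever $\phi|_{\langle g\rangle\cap G^j}=1$, where $\phi^{(j)}$ is the unique one-dimensional extension of $\phi$ to $\langle g\rangle G^j$ trivial on $G^j$, and $T_j(\phi)=0$ otherwise. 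In either case $T_j(\phi)\geq 0$ by \textbf{ACH3}.

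The decisive step is to show $T_i(\phi)\geq T_{i-1}(\phi)$ for every $\phi$. Since $\langle g\rangle\cap G^i\subseteq\langle g\rangle\cap G^{i-1}$, the only non-trivial case occurs when $\phi|_{\langle g\rangle\cap G^{i-1}}=1$, so both extensions exist and one checks directly $\phi^{(i-1)}|_{\langle g\rangle G^i}=\phi^{(i)}$. Applying Proposition \ref{acimp} to the solvable group $\widehat G:=\langle g\rangle G^{i-1}$, equipped with the AHC structure inherited from $G$ (inheritance of \textbf{ACH2} by iteration, while \textbf{ACH1} and \textbf{ACH3} are immediate), yields
\[
n(\langle g\rangle G^i,\phi^{(i)})=n(\widehat G,\mbox{Ind}^{\widehat G}_{\langle g\rangle G^i}\phi^{(i)})\geq (\phi^{(i-1)}|_{\langle g\rangle G^i},\phi^{(i)})\,n(\widehat G,\phi^{(i-1)})=n(\langle g\rangle G^{i-1},\phi^{(i-1)}),
\]
that is $T_i(\phi)\geq T_{i-1}(\phi)$. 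Since $|\phi(g)|=1$ for every $\phi\in\mbox{Irr}(\langle g\rangle)$, the triangle inequality combined with the identity $\sum_\phi T_j(\phi)=\sum_{l(\chi)\leq j}\chi(1)\,n(G,\chi)=n(G,\mbox{Ind}^G_{G^j}1_{G^j})$ (which follows from $\sum_\phi\mbox{Ind}^G_{\langle g\rangle}\phi=\mbox{Reg}_G$) closes the argument. The principal obstacle is precisely this positivity step $T_i\geq T_{i-1}$; recognizing that Proposition \ref{acimp} applies verbatim to the sub-AHC structure on $\langle g\rangle G^{i-1}$ is the key observation that drives the arithmetic analogue through.
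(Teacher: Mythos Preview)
Your argument is correct and follows the same route the paper sketches: truncate to $\Theta^i_G$, bound it pointwise, and identify the right-hand side via Lemma \ref{imp for i}. The paper merely asserts the pointwise bound $|\Theta^i_G(g)|\le n\bigl(G,\sum_{l(\chi)=i}\chi(1)\chi\bigr)$ by analogy with Theorem \ref{LWi}, whereas your $T_j(\phi)$-decomposition together with Proposition \ref{acimp} applied to the inherited AHC structure on $\langle g\rangle G^{i-1}$ supplies the positivity $T_i(\phi)\ge T_{i-1}(\phi)$ that this bound actually requires.
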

\begin{proof} Again we follow proof of Theorem \ref{LWi}. Based on the notations in the proof, we can write
\[|\Theta^i_{G}(g)|\leq n\Big(G,\sum_{l(\chi)=i} \chi(1)\chi\Big).\]
Now, 
\[n\Big(G,\sum_{\chi \not\in S^i}\chi(1)\chi\Big)=n(G,\mbox{Reg}_G)-n\Big(G,\mbox{Reg}_{G_i}^{G}1_{G^i}\Big),\]
also,
\[n\Big(G,\sum_{l(\chi)=i} \chi(1)\chi\Big)=n\Big(G,\sum_{\chi \not\in S^i}\chi(1)\chi\Big)-n\Big(G,\sum_{\chi \not\in S^{i-1}}\chi(1)\chi\Big).\]
and so does the proof follows evidently. 
\end{proof}
The next result is variant to Corollary \ref{crr}. 
\begin{theorem}[Wong, \cite{PJwon}] \label{nice}Let $G$ be a solvable group. Then $n(G, \emph{Reg}_G)-n(G,\emph{Ind}^G_{G^i} 1_{G^i})$ 
cannot be $1$.
\end{theorem}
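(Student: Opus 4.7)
The plan is to proceed in parallel with the Murty--Raghuram argument that $\zeta_K/\zeta_{K^1}$ admits no simple zero (Theorem \ref{MR AB}), with the derived--length index $1$ replaced by an arbitrary $i\ge 1$ and the Foote--Murty bound replaced by its arithmetic Heilbronn analog. First I would rewrite the target quantity in terms of the individual $n(G,\chi)$: applying Lemma \ref{imp for i} with $H=\{e_G\}$ and $\psi=1$ gives $\mbox{Ind}^G_{G^i}\,1_{G^i}=\sum_{l(\chi)\le i}\chi(1)\chi$, and since $\mbox{Reg}_G=\sum_{\chi\in\mbox{Irr}(G)}\chi(1)\chi$, axiom \textbf{AHC1} yields
\[
n(G,\mbox{Reg}_G)-n(G,\mbox{Ind}^G_{G^i}\,1_{G^i})\;=\;\sum_{\chi\notin S^i}\chi(1)\,n(G,\chi). \qquad(\ast)
\]

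The core step is the truncated $L^2$--bound
\[
\sum_{\chi\notin S^i}n(G,\chi)^2\;\le\;\Big(n(G,\mbox{Reg}_G)-n(G,\mbox{Ind}^G_{G^i}\,1_{G^i})\Big)^2,
\]
proved in the spirit of Lemma \ref{MRI 2}. Form the truncated arithmetic Heilbronn character $\Theta_G^{S^i}:=\sum_{\chi\notin S^i}n(G,\chi)\chi$ and use orthogonality to write $\sum_{\chi\notin S^i}n(G,\chi)^2=\frac{1}{|G|}\sum_{g\in G}|\Theta_G^{S^i}(g)|^2$. The arithmetic Heilbronn--Stark lemma (Theorem \ref{AHSL}) then reduces $\Theta_G^{S^i}(g)$ to a sum over $\phi\in\mbox{Irr}(\langle g\rangle)$ with coefficients $n(\langle g\rangle,\phi)-\sum_{\chi\in S^i}n(G,\chi)(\chi,\mbox{Ind}^G_{\langle g\rangle}\phi)$. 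Since $\langle g\rangle$ is cyclic and $\phi$ is automatically one--dimensional, Theorem \ref{LW5} applied to $H=\langle g\rangle$ and $\psi=\phi$ guarantees that every such coefficient is non--negative. Taking absolute values and using $\mbox{Ind}^G_{\langle g\rangle}\mbox{Reg}_{\langle g\rangle}=\mbox{Reg}_G$ together with \textbf{AHC2} and Lemma \ref{imp for i}, the two internal sums collapse to $n(G,\mbox{Reg}_G)$ and $n(G,\mbox{Ind}^G_{G^i}\,1_{G^i})$ respectively, yielding the uniform bound $|\Theta_G^{S^i}(g)|\le n(G,\mbox{Reg}_G)-n(G,\mbox{Ind}^G_{G^i}\,1_{G^i})$ independent of $g$. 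Squaring and averaging gives the claimed inequality.

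Assume for contradiction that $n(G,\mbox{Reg}_G)-n(G,\mbox{Ind}^G_{G^i}\,1_{G^i})=1$. Then $\sum_{\chi\notin S^i}n(G,\chi)^2\le 1$, so there is at most one $\chi_0\notin S^i$ with $|n(G,\chi_0)|=1$ and every other $n(G,\chi)$ with $\chi\notin S^i$ vanishes. Substituting into $(\ast)$ forces $\chi_0(1)n(G,\chi_0)=1$, which in the positive integers forces $\chi_0(1)=1$ and $n(G,\chi_0)=1$. But any one--dimensional character is trivial on $G^1\supseteq G^i$ for $i\ge 1$, so $\chi_0\in S^i$, contradicting $\chi_0\notin S^i$.

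The main obstacle is the truncated $L^2$--inequality in the second paragraph. Although the shape of the argument mirrors Lemma \ref{MRI 2} and the proof of Theorem \ref{M-R I2}, one must carefully verify that Theorem \ref{LW5} can be applied uniformly at every cyclic subgroup $\langle g\rangle$ with each linear character $\phi$ and that the bookkeeping identities --- $\mbox{Ind}^G_{\langle g\rangle}\mbox{Reg}_{\langle g\rangle}=\mbox{Reg}_G$, \textbf{AHC2}, and Lemma \ref{imp for i} --- collapse the two sums on the right to exactly $n(G,\mbox{Reg}_G)$ and $n(G,\mbox{Ind}^G_{G^i}\,1_{G^i})$. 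Once this inequality is in place, the counting contradiction in the final paragraph is automatic.
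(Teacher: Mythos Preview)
Your argument is correct and is essentially the approach that the paper defers to \cite{PJwon}: the truncated $L^2$--bound $\sum_{\chi\notin S^i}n(G,\chi)^2\le\big(n(G,\mbox{Reg}_G)-n(G,\mbox{Ind}^G_{G^i}1_{G^i})\big)^2$ is precisely Wong's Theorem 3.15 alluded to in the remark following Theorem \ref{nice}, and the dimension contradiction you give is then the content of his Corollary 3.16. The bookkeeping you flag as the main obstacle does collapse as claimed, since $\sum_{\phi\in\mbox{Irr}(\langle g\rangle)}(\chi|_{\langle g\rangle},\phi)=\chi(1)$ and $\sum_{\chi\in S^i}\chi(1)\chi=\mbox{Ind}^G_{G^i}1_{G^i}$ by Lemma \ref{imp for i}; note also that the result is understood for $i\ge 1$, which you use implicitly in the last step.
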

\begin{proof} See \cite{PJwon} pp. 1559, Corollary 3.16.
\end{proof}
\begin{rem}
Let us get back to Remark \ref{rem5}, we again remark that the possible improvement we hoped for has been recently shown by Wong in \cite{PJwon}, even in this new set up. See Theorem 3.15 in \cite{PJwon} for a proof.
\end{rem}
\noindent Finally time has come to use our arithmetic Heilbronn characters to study Artin L-functions, as we aimed for at the beginning. Which
we are going to do from next section onward. 
\subsection{Application on L-functions}
\noindent Before going further let us recall what exactly we just did before. We started with work of Foote-Murty, then we discussed work of Murty-Raghuram. We saw Lansky-Wilson generalized their results in 2005. Recently Wong has generalized many of their results in a different settings. Wong did it to apply similar kind of results on several other L-functions, and not just Artin L-functions. Key idea was of course, notion of artihmetic Heilbronn characters. We will discuss how to use this for other L-functions. To enter the world of L-functions, the first result that we are going to prove is the following. 
\begin{theorem}[Wong, \cite{PJwon}] \label{AHC to L} Let $K/F$ be a solvable Galois extension of number fields and $G=\emph{Gal}(K/F)$, and let $\rho$ be a $2$-dimensional representation of $G$. Then for any subgroup $H$ of $G$, the quotient 
\[\frac{L(s,\emph{Ind}^G_H \rho|_H, K/F)}
{L(s, \rho, K/F)}\] 
is holomorphic on $\mathbb{C}-\{1\}$. Moreover, for every $1-$dimensional character $\chi_0$ of $G$, one has
\[\sum_{\chi \in \emph{Irr}(G)-\{\chi_0\}}(\emph{ord}_{s=s_0} L(s, \rho \otimes \chi))^2 \leq \Big(\emph{ord}_{s=s_0} \Big(\frac{\zeta^2_K(s)}{L(s,\rho \otimes \chi_0,K/F)}\Big)\Big)^2.\]
\end{theorem}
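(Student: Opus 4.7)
The plan is to lift the arithmetic Heilbronn character machinery to this twisted setting and then apply Corollary \ref{AUVdW} and Theorem \ref{M-R I2} essentially verbatim. For every subgroup $H$ of $G$ and every character $\phi$ of $H$, I would set
\[n(H, \phi) := \mbox{ord}_{s=s_0} L(s, \rho|_H \otimes \phi, K/K^H),\]
and check that $\{n(H,\phi)\}$ satisfies the three axioms of Definition \ref{AHC}. Axiom \textbf{ACH1} (additivity) is immediate from the multiplicative factorization in Lemma \ref{first}. Axiom \textbf{ACH2} (induction invariance) follows from the projection formula $\mbox{Ind}_H^G(\rho|_H \otimes \phi) = \rho \otimes \mbox{Ind}_H^G(\phi)$ combined with the induction identity of Lemma \ref{first}. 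For axiom \textbf{ACH3}, given a one-dimensional $\phi$ of $H$, the representation $\rho|_H \otimes \phi$ is two-dimensional on the solvable subgroup $H$; decomposing into irreducibles, each one-dimensional constituent has holomorphic L-function away from $s=1$ by Theorem \ref{ODT}, while every irreducible two-dimensional constituent has projective image that is cyclic, dihedral, tetrahedral, or octahedral (icosahedral is ruled out by solvability of $H$), so its Artin L-function is entire by Hecke's classical dihedral result together with the Langlands--Tunnell theorem. Thus $n(H, \phi) \ge 0$ for every $s_0 \in \mathbb{C} \setminus \{1\}$.

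The first conclusion then follows at once from the arithmetic Uchida--van der Waall inequality (Corollary \ref{AUVdW}), which applied to this twisted AHC says $n(G, \mbox{Ind}_H^G 1_H) \ge n(G, 1_G)$; translating via \textbf{ACH2} and Lemma \ref{first},
\[\mbox{ord}_{s=s_0} L(s, \mbox{Ind}_H^G(\rho|_H), K/F) \ge \mbox{ord}_{s=s_0} L(s, \rho, K/F)\]
for every $s_0 \ne 1$, which is precisely holomorphy of the quotient on $\mathbb{C} \setminus \{1\}$. For the second conclusion, I would plug the twisted AHC into Theorem \ref{M-R I2} with the given one-dimensional $\chi_0$, obtaining $\sum_{\chi \ne \chi_0} n(G,\chi)^2 \le (n(G, \mbox{Reg}_G) - n(G, \chi_0))^2$. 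Since $\mbox{Reg}_G = \mbox{Ind}_{\{e\}}^G 1_{\{e\}}$, $\rho|_{\{e\}} = 2 \cdot 1_{\{e\}}$, and $L(s, 1_{\{e\}}, K/K) = \zeta_K(s)$, axiom \textbf{ACH2} gives $n(G, \mbox{Reg}_G) = 2\,\mbox{ord}_{s=s_0}\zeta_K(s) = \mbox{ord}_{s=s_0}\zeta_K^2(s)$, and substituting yields the stated inequality.

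The principal obstacle is the verification of \textbf{ACH3} on non-cyclic subgroups. The weak arithmetic Heilbronn framework of Definition \ref{WAC} would be inadequate here, because the proof of Theorem \ref{M-R I2} depends on Proposition \ref{acimp}, which in turn invokes Corollary \ref{imp3} and thereby involves arbitrary subgroups $H_i \le G$, on each of which one must control the L-function of a two-dimensional twist $\rho|_{H_i} \otimes (\chi|_{H_i} \otimes \phi_i)$. The essential external input is therefore the Langlands--Tunnell theorem supplying Artin's conjecture for two-dimensional irreducible representations of solvable groups; granted this automorphic input, the rest of the argument is a purely formal application of the arithmetic Heilbronn character formalism developed in Sections 5.1--5.4.
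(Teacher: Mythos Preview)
Your proposal is correct and follows essentially the same route as the paper: define the twisted integers $n(H,\phi)=\mbox{ord}_{s=s_0}L(s,\rho|_H\otimes\phi,K/K^H)$, verify the arithmetic Heilbronn axioms using Langlands--Tunnell for \textbf{ACH3}, and then read off the two conclusions from Corollary \ref{AUVdW} (equivalently Proposition \ref{acimp} with $\chi=\phi=1$) and Theorem \ref{M-R I2}, together with the identity $\rho\otimes\mbox{Reg}_G=2\,\mbox{Reg}_G$. Your added remark that the weak framework of Definition \ref{WAC} would not suffice (because Proposition \ref{acimp} feeds non-cyclic $H_i$'s into \textbf{ACH3}) is accurate and a nice clarification of why the full arithmetic axioms are needed here.
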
  
\noindent Before going into proof let us look at statement of the theorem very carefully, $\rho$ is assumed to be $2$-dimensional. Why ? well, the deep result by Langlands and Tunnell \cite{LT} says, for all 2-dimensional irreducible representations $\rho$ of $G, L(s,\rho,K/F)$ is entire. Yes, this is the main reason behind the statement. As a consequence of  Langlands-Tunnell theorem, we can say for any  $2$-dimensional representation $\rho$ of $G$ and any one dimensional character $\phi$ of a subgroup $H$ of G, the Artin L-function $L(s, \rho|_H \otimes \phi, K/K^H)$ is entire, because $\rho|_H \otimes \phi$ is also 2-dimensional. Which says $n'(H,\phi)$'s are all non negative, where 
\[n'(H,\phi)=\mbox{ord}_{s=s_0} L(s, \rho|_H \otimes \phi, K/K^H).\]
From proof of Theorem \ref{TWIST} we see they are arithmetic Heilbronn characters, and this is what we need to prove Theorem \ref{AHC to L}. Here is the proof,
\begin{proof} Our proof goes by light of the formalism established in Theorem \ref{TWIST}. First we note that
\[\rho \otimes \mbox{Reg}_G=\rho \otimes \mbox{Ind}^{G}_{\langle e_G \rangle} 1_G = \mbox{Ind}^{G}_{\langle e_G \rangle} \big(\rho \otimes 1_G\big),\]
as tensoring commutes with induction. But since $\rho$ is 2-dimensional, later one is $2\mbox{Ind}^{G}_{\langle e_G \rangle} 1_G=2\mbox{Reg}_G$. Now just use Proposition \ref{acimp} and Theorem \ref{M-R I2}.
\end{proof}
\noindent We will now see application of this tool to study other L-functions, and weapon will always be to choose suitable integers to construct arithmetic Heilbronn characters. We start with introducing the concept of Artin-Hecke L-functions developed by Weil in 1979. First recall what Hecke characters are, we defined \textit{classical} Hecke characters in Remark \ref{Hecke}. In iedelic settings it is defined to be a continous map from ring of ideles $\mathbb{A}_F^{*}$ to $\mathbb{C}$, whose kernal contains $F^{*}$. See Chapter IV of \cite{NK2} for a more discussion on ideles. It can be shown from topology on $\mathbb{A}_F^{*}$ and continuity of Hecke characters that, for any Hecke character $\psi$, and for all but finitely many prime ideal $v$ of $O_F$, $\psi(O^{*}_v)=1$. This allows us to define a homomorphism on $I^{\mathfrak{m}}_F$ (for some modulus $\mathfrak{m})$) by $\psi(v)=\psi|_{v}(\pi_v)$, where $\psi|_{v}$ is the restriction of $\psi$ to $F^{*}_v$ and $\pi_v$ is a uniformizer of $F_v$. From now on, by Hecke character we will always mean that it is defined in idelic settings.
\begin{definition} \label{AHLF}Let $K/F$ be a Galois extension of number fields with Galois group
$G$. Let $\psi$ be a Hecke character of $F$ and $\rho$ be a complex representation of $G$ with underlying vector space $V$. The Artin–Hecke L-function attached to $\psi$ and $\rho$ is defined by
\[L(s, \psi \otimes \rho, K/F) = \prod_{\mathfrak{p} \subseteq O_F} \det\Big(1-\psi(\mathfrak{p})\rho(\sigma_{\mathfrak{P}})N(\mathfrak{p})^{-s}|_{{V^{I_{\mathfrak{P}}}}}\Big)^{-1},\]
\noindent where the product runs over prime ideals in $\mathfrak{p}$ in $O_F$, and $\mathfrak{P}$ denotes a prime ideal in $O_K$ lying above $\mathfrak{p}, I_{\mathfrak{P}}$ is the inertia subgroup at $\mathfrak{P}$. And $V^{I_{\mathfrak{P}}}$ is set of vectors of $V$ fixed by $I_{\mathfrak{P}}$. For our convenience we we denote $L(s, \psi \otimes \chi, K/F)$ as $L(s, \psi \otimes \rho, K/F)$ where $\chi$ the character for $\rho$. 
\end{definition}
\noindent We recall if $\chi$ is one dimensional then $L(s,\chi,K/F)$ can be associated to a Hecke L-function, and correspondence is given by $H(\chi)(\mathfrak{p})=\chi(\sigma_{\mathfrak{p}})$. So, for any Hecke character $\psi, L(s,\psi \otimes \chi, K/F)$ can be identified with the Hecke L-function $L(s,\psi \otimes H(\chi),K/F)$, and the later one is known to have analytic continuation except possibly a pole at $s=1$. Now one may naturally ask the following question. 
\begin{question} For any irreducible character $\chi$ of $\mbox{Gal}(K/F)$ and Hecke character $\psi$ of $F$, does the twisted L-function $L(s,\psi \otimes \chi, K/F)$ has analytic continuation to $\mathbb{C}-\{1\}$, with a possible pole at $s=1$ ?
\end{question}
\noindent Of course we do not know yet, because this is stronger version of Artin's holomorphy conjecture at \ref{ACc}. But we can indeed get their meromorphic continuation, from the following lemma and Brauer's induction theorem or Theorem \ref{mero}.
\begin{lemma} \label{Wei} Let $H$ be a subgroup of $G$. Then for any characters $\chi_1, \chi_2$ of $G$ and $\phi$ of $H$ and Hecke character $\psi$ of $G$, we have 
\begin{itemize}
\item $L(s, \psi \otimes (\chi_1 + \chi_2), K/F) = L(s, \psi \otimes \chi_1, K/F)L(s, \psi \otimes \chi_2, K/F)$
\item $L(s, \psi \otimes \emph{Ind}^G_H \phi, K/F) = L(s, (\psi \circ N_{K^H/F}) \otimes \phi, K/K^H)$
\end{itemize}
\noindent where $K^H $is a subfield of $K$ fixed by $H$ and $N_{K^H/F}$ is the usual norm of $K^H/F$.\\
\end{lemma}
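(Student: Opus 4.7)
The plan is to prove both parts by working prime-by-prime with the Euler product, reducing to the analogous statements for ordinary Artin $L$-functions recorded in Lemma \ref{first}. The key observation is that the Hecke-character twist only alters the variable appearing in each local factor, not the structural identities coming from representation theory.

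For the first statement, I would fix a prime $\mathfrak{p}$ of $F$ and a prime $\mathfrak{P}$ of $K$ above it, and observe that if $\rho_1$ and $\rho_2$ afford $\chi_1$ and $\chi_2$ on vector spaces $V_1$ and $V_2$, then $\chi_1+\chi_2$ is afforded by $\rho_1\oplus\rho_2$ on $V_1\oplus V_2$. The inertia subgroup $I_{\mathfrak{P}}$ acts block-diagonally, so $(V_1\oplus V_2)^{I_{\mathfrak{P}}}=V_1^{I_{\mathfrak{P}}}\oplus V_2^{I_{\mathfrak{P}}}$, and the endomorphism $\psi(\mathfrak{p})(\rho_1\oplus\rho_2)(\sigma_{\mathfrak{P}})N(\mathfrak{p})^{-s}$ restricted to this subspace is also block-diagonal. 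Hence its characteristic polynomial factors as the product of the two block characteristic polynomials, which gives the claimed factorization of local Euler factors and therefore of the whole $L$-function.

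For the second statement, I would again argue locally. Fix a prime $\mathfrak{p}$ of $F$, and let $\mathfrak{q}_1,\dots,\mathfrak{q}_r$ denote the primes of $K^H$ above $\mathfrak{p}$, with residue degrees $f_i=f(\mathfrak{q}_i|\mathfrak{p})$. Setting $T=\psi(\mathfrak{p})N(\mathfrak{p})^{-s}$, we have
\[
\bigl(\psi\circ N_{K^H/F}\bigr)(\mathfrak{q}_i)\,N(\mathfrak{q}_i)^{-s}=\psi(\mathfrak{p})^{f_i}N(\mathfrak{p})^{-sf_i}=T^{f_i},
\]
so the right-hand local factor at $\mathfrak{p}$ is $\prod_{i}\det\bigl(1-T^{f_i}\phi(\sigma_{\mathfrak{P}_i})\bigr|_{W^{I_{\mathfrak{P}_i/\mathfrak{q}_i}}}\bigr)^{-1}$ (for primes $\mathfrak{P}_i$ of $K$ above $\mathfrak{q}_i$, with $W$ the space of $\phi$). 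On the other hand, for the untwisted Artin case the known identity (the inductive property underlying Lemma \ref{first}, cf.\ \cite{thedude} pp.\ 222) gives
\[
\det\!\bigl(1-T\cdot \mathrm{Ind}_H^G(\phi)(\sigma_{\mathfrak{P}})\bigr|_{V^{I_{\mathfrak{P}}}}\bigr)^{-1}=\prod_{i}\det\!\bigl(1-T^{f_i}\phi(\sigma_{\mathfrak{P}_i})\bigr|_{W^{I_{\mathfrak{P}_i/\mathfrak{q}_i}}}\bigr)^{-1},
\]
and this identity is purely an identity of characteristic polynomials of operators, independent of how we assigned the value $T$. Substituting $T=\psi(\mathfrak{p})N(\mathfrak{p})^{-s}$ recovers precisely the Hecke-twisted version.

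The main technical obstacle is the careful bookkeeping at ramified primes. The decomposition of $V^{I_{\mathfrak{P}}}$ for $V=\mathrm{Ind}_H^G W$ into pieces indexed by the primes $\mathfrak{q}_i$ of $K^H$ above $\mathfrak{p}$ must be justified via Mackey's formula applied to the double cosets $I_{\mathfrak{P}}\backslash G/H$, and one must check that these double cosets correspond bijectively to the primes $\mathfrak{q}_i$ and that the inertia subgroups $I_{\mathfrak{P}_i/\mathfrak{q}_i}$ match the stabilizers appearing in the Mackey decomposition. Once this matching is verified, the determinant identity reduces to a straightforward computation on each summand, and both the twist by $\psi(\mathfrak{p})$ on the left and by $\psi\circ N_{K^H/F}(\mathfrak{q}_i)$ on the right are seen to propagate compatibly through the decomposition.
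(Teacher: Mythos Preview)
Your proposal is correct and follows exactly the approach the paper indicates: the paper's own proof consists only of the sentence ``This is not difficult to prove after knowing Lemma \ref{first}. For more details see \cite{Weil},'' and your argument is precisely the natural unpacking of that hint---treat the local Euler factor identity underlying Lemma \ref{first} as a polynomial identity in the formal variable $T$, then specialize $T=\psi(\mathfrak{p})N(\mathfrak{p})^{-s}$ and use the norm relation $(\psi\circ N_{K^H/F})(\mathfrak{q}_i)N(\mathfrak{q}_i)^{-s}=T^{f_i}$ to match the two sides. You have in fact supplied considerably more detail than the paper does.
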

\begin{proof} This is not difficult to prove after knowing Lemma \ref{first}. For more details see \cite{Weil}.
\end{proof} 
\noindent We consider a non trivial Hecke character of infinite order $\psi$, associated to $F$. Fix a point $s_0 \in \mathbb{C}$ and set
\[n^{\psi}(H,\phi)=\mbox{ord}_{s=s_0} L(s, (\psi \circ N_{K^H/F}) \otimes \phi, K/K^H).\]
\noindent One can see these $n^{\psi}(H, \phi)$'s are non negative when $H$ is a subgroup of $G$ and $\phi$ is a 1-dimensional character on $H$, at $s=s_0\neq 1$. This is because $(\psi \circ N_{K^{H}/F}) \otimes \phi$ is a Hecke character of $K^H$. At $s_0=1$, L-function associated to a trivial Hecke L-function has a simple pole, so $n(H,\phi)$ is negative if and only if $(\psi \circ N_{K^H/F}) \otimes \phi$ is trivial. Which means $\psi$ is of finite order. So for any $s=s_0$ these set of integers $n^{\psi}(H, \phi)$ defined above gives arithmetic Heilbronn characters, a special thanks to Weil's Lemma \ref{Wei}. Doing similar things as before we can deduce analogous results to all the theorems we discussed in section \ref{ACH}.\\
\newline 
\noindent One may think what is the reason behind twisting characters with Hecke characters. Astonishingly it is used to study $L$-functions for elliptic curves. And this idea was due to M. Murty and K. Murty \cite{MME}, based on Deuring's work \cite{Deu}. We first introduce to L-functions for elliptic curves.
\begin{definition} Let $K$ be a number field and $E$ be an elliptic curve over $K$. We define the $L$-series attached to $E$ by
\[L_{K}(s)=\prod L_{v}(s),\]
where $v$ runs through all finite places of $K$ coprime to conductor (see \cite{Sil} pp. 450 for definition) of $E/K$. Local factors $L_v(s)$'s are defined by
\[L_v(s)=\big(1-a_v(N(v))^{-s}+(N(v))^{1-2s}\big)^{-1},\]
where $a_v$'s are given by $N(v)+1-|E \mod v|$, and $N(v)$ is being index of $v$ in $O_K$. Here the product is over finite primes $v$ of $K$.
\end{definition}
We consider the endomorphism ring $\mbox{End}_{\overline{K}}(E_{/K})$. There are always two possibilities, either this is $\mathbb{Z}$, in which case $E$ is said be without complex multiplication (CM) over $K$. Or, the endomorphism ring an order in an imaginary quadratic field, say $F$. In which case $E$ is said to have complex multiplication, or CM in short, over $K$. For more details about CM elliptic curves and definition of order, see pp. 100-102 in \cite{Sil}.  
\begin{conjecture}[Birch and Swinnerton-Dyer, \cite{Sil}]\label{BSD} $L_K(s)$ extends to en entire function, satisfying a suitable functional equation, and at $s=1$ has a zero of order equals to rank of $E(K)$. Where $E(K)$ is group of $K$-rational points on $E$. 
\end{conjecture}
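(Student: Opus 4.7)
The statement bundles three claims: analytic continuation of $L_K(s)$ to an entire function on $\mathbb{C}$, a suitable functional equation, and the equality $\mbox{ord}_{s=1}L_K(s)=\mbox{Rank}(E(K))$. Taken in full generality the third claim is the Birch--Swinnerton-Dyer conjecture and is well out of reach; I will describe what I believe is realistically attainable in the CM setting that the paper has been preparing for, and where the genuine obstructions lie.

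For analytic continuation and the functional equation in the CM case, the plan is to invoke Deuring's theorem. Assume first that the CM field $F$ is contained in $K$. One attaches to $E/K$ a Hecke character $\psi_E$ of $K$ such that
\[L_K(s,E)=L(s,\psi_E)\,L(s,\overline{\psi_E}).\]
Each factor is a Hecke L-function of infinite order, hence entire, and satisfies a standard functional equation relating $s$ to $2-s$ by the work of Hecke and Tate; multiplying these gives the functional equation for $L_K(s,E)$. When $F \not\subseteq K$, I would first base-change to $KF$, apply the decomposition above, and then descend using the factorisation $L_{KF}(s,E)=L_K(s,E)\,L_K(s,E^{\sigma})$ for $\sigma$ the nontrivial element of $\mbox{Gal}(KF/K)$. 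This is exactly the elliptic analogue of the reduction carried out for Artin--Hecke L-functions in Definition~\ref{AHLF}, and it plugs directly into the arithmetic Heilbronn character formalism of Section~5, where one sets $n(H,\phi):=\mbox{ord}_{s=s_0}L(s,(\psi_E\circ N_{K^H/F})\otimes \phi,K/K^H)$ and applies the results of Sections~5.1--5.4.

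For the rank identity in the CM case, my plan splits into two inequalities. For $\mbox{ord}_{s=1}L_K(s)\le \mbox{Rank}(E(K))$, I would combine Coates--Wiles (analytic rank $0$) with Gross--Zagier and Kolyvagin's Euler system (analytic rank $1$), backed by Rubin's main conjecture over imaginary quadratic fields to control the $p$-part of $\Sha(E/K)$. This settles only analytic ranks $0$ and $1$. The reverse inequality $\mbox{ord}_{s=1}L_K(s)\ge \mbox{Rank}(E(K))$ in any generality is wide open: one would need to manufacture zeros of $L_K(s,E)$ at $s=1$ from the existence of many independent $K$-rational points, and no general method is known.

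The main obstacle is therefore the analytic-rank $\ge 2$ regime, and the Heilbronn-character techniques developed in this paper do not attack it. What they \emph{can} contribute, and what I would actually try to write up cleanly, is the following weaker package: via the elliptic Aramata--Brauer and Uchida--van der Waall theorems recorded as Theorem 5.5.2, together with the arithmetic Heilbronn inequalities in Section~5.4 applied to the twist characters $n(H,\phi)$ above, one obtains the entirety of $L_K(s,E)/L_F(s,E)$ for solvable $\mbox{Gal}(K/F)$ when $E$ has CM, and hence the first two assertions of the conjecture across a wide class of extensions. The rank equality remains, in this plan, conditional on BSD itself.
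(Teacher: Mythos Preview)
The paper does not prove this statement: it is recorded as Conjecture~\ref{BSD} and is immediately followed by the sentence ``Here we will mainly discuss about quotient of such $L$-functions,'' after which the paper moves on to Theorem~\ref{MME1}. There is thus no proof in the paper to compare your proposal against. Your write-up is a reasonable summary of what is known in the CM case (Deuring's decomposition gives analytic continuation and the functional equation; Coates--Wiles, Gross--Zagier/Kolyvagin, and Rubin handle analytic ranks $0$ and $1$), and you correctly identify that the rank equality in general is open. But none of this constitutes a proof of the conjecture, and the paper makes no claim otherwise.

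One small inaccuracy: your descent step for $F\not\subseteq K$ via ``$L_{KF}(s,E)=L_K(s,E)\,L_K(s,E^{\sigma})$'' does not match the paper's formulation. The paper, following Deuring, writes $L_K(s)=L(s,\psi_{FK})$ directly as a single Hecke $L$-function over $FK$ when $F\not\subseteq K$, rather than factoring over $K$. This does not affect your conclusion about entirety, but if you intend to align with the paper's later use in the proof of Theorem~\ref{MME1}, you should phrase the reduction that way.
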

\noindent Here we will mainly discuss about quotient of such $L$-functions. In this context, M. Murty and K. Murty proved the following in \cite{MME}.  
\begin{theorem} \label{MME1} Let $E$ be an elliptic curve defined over $K$. Suppose that $E$ has \emph{CM} by an order in an imaginary quadratic field $F$. Let $M$ be a finite Galois extension of $K$, then $\frac{L_M(s)}{L_K(s)}$ is entire. Moreover if $M$ is contained in a solvable extension of $K$, then $\frac{L_M(s)}{L_K(s)}$ is entire. 
\end{theorem}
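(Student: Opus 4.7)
The plan is to invoke the Deuring correspondence to translate the elliptic $L$-functions into Hecke $L$-functions over the compositum $L = KF$, and then apply the arithmetic Heilbronn character machinery of Section 5 to the Hecke character $\psi$ attached to $E/K$ by CM theory.

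First, I would appeal to Deuring's theorem: there exists a Hecke character $\psi$ of $L = KF$ of infinite order such that $L_K(s) = L(s,\psi)$ when $F \not\subset K$, and $L_K(s) = L(s,\psi)L(s,\bar\psi)$ when $F \subset K$. For any finite extension $M/K$ the base-change $\psi \circ N_{ML/L}$ is a Hecke character of $ML$ that gives an analogous factorization of $L_M(s)$. It suffices to treat the single-factor case; the product case is handled in parallel by running the argument once for $\psi$ and once for $\bar\psi$.

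Next, let $\widetilde{M}$ denote the Galois closure of $M$ over $K$ (so $\widetilde{M} = M$ in the Galois case, and $\widetilde{M}/K$ is solvable Galois in the solvable case). Set $G = \mbox{Gal}(\widetilde{M}L/L)$; this is Galois since $L/K$ is Galois of degree at most two, and is a subgroup of $\mbox{Gal}(\widetilde{M}/K)$, hence solvable whenever the latter is. For each subgroup $H$ of $G$ and each character $\phi$ of $H$, define
$$n^\psi(H,\phi) = \mbox{ord}_{s=s_0} L\bigl(s,\, (\psi \circ N_{(\widetilde{M}L)^H/L}) \otimes \phi,\, \widetilde{M}L/(\widetilde{M}L)^H\bigr),$$
using the Artin--Hecke $L$-function of Definition \ref{AHLF}. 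Axioms \textbf{ACH1} and \textbf{ACH2} of Definition \ref{AHC} are immediate from Weil's Lemma \ref{Wei}. The crucial \textbf{ACH3} is where CM enters: for one-dimensional $\phi$, the twist $(\psi \circ N) \otimes \phi$ is itself a Hecke character of $(\widetilde{M}L)^H$, and since $\psi$ has infinite order while $\phi$ has finite order, this twist is non-trivial. Its Hecke $L$-function is therefore entire by class field theory, so $n^\psi(H,\phi) \geq 0$ for every $s_0 \in \mathbb{C}$, including $s_0 = 1$.

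With the arithmetic Heilbronn character system $\{n^\psi(H,\phi)\}$ in place, the endgame splits by hypothesis. When $M/K$ is Galois, I apply the weak arithmetic Aramata--Brauer theorem (Corollary \ref{BAT}) to $G = \mbox{Gal}(ML/L)$: since $n^\psi(G, 1_G) = \mbox{ord}_{s=s_0} L_K(s)$ and $n^\psi(G, \mbox{Reg}_G) = n^\psi(\{e\}, 1) = \mbox{ord}_{s=s_0} L_M(s)$ are both non-negative (the underlying Hecke $L$-functions being entire), the inequality $|n^\psi(G, \mbox{Reg}_G)| \geq |n^\psi(G, 1_G)|$ forces $\mbox{ord}_{s=s_0} L_M(s) \geq \mbox{ord}_{s=s_0} L_K(s) \geq 0$, so $L_M(s)/L_K(s)$ has no poles. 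In the solvable case, $G$ is solvable and applying the arithmetic Uchida--Van der Waall theorem (Corollary \ref{AUVdW}) with $H = \mbox{Gal}(\widetilde{M}L/ML)$ yields $n^\psi(G, \mbox{Ind}_H^G 1_H) \geq n^\psi(G, 1_G)$, which again translates to $\mbox{ord}_{s=s_0} L_M(s) \geq \mbox{ord}_{s=s_0} L_K(s)$ and hence entirety of the quotient. The hard part will be the bookkeeping from Deuring, namely keeping track of whether $F \subset K$, whether $F \subset M$, and how $\widetilde{M}$ sits with respect to $L$, and verifying that in every subcase the quotient $L_M(s)/L_K(s)$ truly matches a quotient of the form read off from the $n^\psi$'s. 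Once this case analysis is correct, the arithmetic Heilbronn character theorems of Section 5 do the rest essentially mechanically.
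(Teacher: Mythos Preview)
Your proposal is correct and follows essentially the same route as the paper: Deuring's theorem converts the elliptic $L$-functions to Artin--Hecke $L$-functions for an infinite-order Hecke character, the axioms \textbf{ACH1}--\textbf{ACH3} are verified via Lemma~\ref{Wei} and the infinite order of $\psi$, and then Corollary~\ref{BAT} (Galois case) and Corollary~\ref{AUVdW} (solvable case) finish the argument. The only substantive point the paper makes explicit that you take for granted is the identification $n^\psi(G,\mbox{Reg}_G) = \mbox{ord}_{s=s_0} L_M(s)$: rather than citing base-change compatibility of the Deuring character, the paper invokes Serre's description of $L_M(s)$ via the $\ell$-adic representation $\rho_\ell$ on $T_\ell(E)$ and the fact that $\rho_\ell$ for $E/M$ is the restriction of $\rho_\ell$ for $E/K$, which gives $L_M(s) = L(s,\psi_\ell \otimes \mbox{Reg}_G)$ directly; your ``hard part'' bookkeeping (especially the subcase $F\not\subset K$, $F\subset M$, where $n^\psi(G,\mbox{Reg}_G)$ recovers only one of the two Hecke factors of $L_M(s)$) is handled in the paper by the parallel chain $L_{MF}/L_{KF}$, $L_{KF}/L_K$ rather than by a single uniform computation over $KF$.
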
 
\noindent Game of twisting a character by a Hecke character, comes to prove above result. We will give a short sketch of proof to understand why the concept of twisting has been crucial. From Deuring's result \cite{Deu} below,
\begin{theorem}[Deuring, \cite{Deu}]Let $E$ be an elliptic curve defined over $K$. Suppose that $E$ has \emph{CM} by an order in an imaginary quadratic field $F$. If $F \subseteq K$, then the $L$-function $L_K(s)$ of $E$ can be written as $L(s,\psi_{K})L(s,\overline{\psi_{K}})$, where $\psi_K$ is certain Hecke character of $K$. And if $F\not\subseteq K$, then $L_K(s)$ is equal to $L(s,\psi_{FK})$, where $\psi_{FK}$ is certain Hecke character of $FK$.
\end{theorem}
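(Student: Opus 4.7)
The plan is to invoke the standard CM machinery on the $\ell$-adic Tate module, and then reduce the second case to the first via induction of representations. I would first treat the case $F\subseteq K$. Fix a prime $\ell$ and consider the Tate module $V_\ell(E)=T_\ell(E)\otimes_{\mathbb Z_\ell}\mathbb Q_\ell$, a two dimensional $\mathbb Q_\ell$-vector space carrying a continuous representation of $G_K=\mathrm{Gal}(\overline K/K)$. Because every endomorphism in the order $\mathcal O\subset F$ is defined over $K$, the action of $G_K$ commutes with the $\mathcal O$-action, so $V_\ell(E)$ is a module over $F\otimes_{\mathbb Q}\mathbb Q_\ell \cong \prod_{\lambda\mid\ell} F_\lambda$. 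A standard rank count (comparing $\mathbb Q_\ell$-dimensions) shows it is free of rank one over this ring, and projecting onto each factor decomposes the Galois representation into characters $\chi_\lambda: G_K\to F_\lambda^\times$, with $\chi_{\overline\lambda}$ being the complex conjugate of $\chi_\lambda$ (using the Weil pairing, which identifies $\det V_\ell(E)$ with the cyclotomic character).

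Next I would promote this family of $\ell$-adic characters to a single algebraic Hecke character $\psi_K$ of $K$. This is the main theorem of complex multiplication: for each prime $\mathfrak p$ of $K$ of good reduction and coprime to $\ell$, the Frobenius $\sigma_{\mathfrak p}$ acts on $V_\ell(E)$ with characteristic polynomial $T^2-a_{\mathfrak p}T+N\mathfrak p$, and the eigenvalues lie in $F$; the map $\mathfrak p\mapsto$ (the eigenvalue obtained from $\chi_\lambda$) patches, as $\mathfrak p$ varies, into an algebraic Grossencharacter $\psi_K$ of $K$ whose $\ell$-adic realisations are precisely the $\chi_\lambda$. Granting this, the identity $a_{\mathfrak p}=\psi_K(\mathfrak p)+\overline{\psi_K(\mathfrak p)}$ and $N\mathfrak p=\psi_K(\mathfrak p)\overline{\psi_K(\mathfrak p)}$ yields the factorisation of Euler factors
\begin{equation*}
1-a_{\mathfrak p}N\mathfrak p^{-s}+N\mathfrak p^{1-2s}=\bigl(1-\psi_K(\mathfrak p)N\mathfrak p^{-s}\bigr)\bigl(1-\overline{\psi_K}(\mathfrak p)N\mathfrak p^{-s}\bigr),
\end{equation*}
giving $L_K(s)=L(s,\psi_K)L(s,\overline{\psi_K})$ after taking the product over good $\mathfrak p$ (bad Euler factors require a separate, routine check using the inertia invariants).

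For the case $F\not\subseteq K$, I would base change to the compositum $FK$, which is a quadratic extension of $K$ and does contain $F$. Over $FK$ the previous step applies and produces a Hecke character $\psi_{FK}$ with $L_{FK}(s)=L(s,\psi_{FK})L(s,\overline{\psi_{FK}})$. The nontrivial element $\sigma$ of $\mathrm{Gal}(FK/K)$ conjugates the endomorphism ring (it swaps $F$ with its conjugate in $\mathrm{End}(E)\otimes\mathbb Q$), and hence swaps $\chi_\lambda$ with $\chi_{\overline\lambda}$; therefore $\psi_{FK}^{\sigma}=\overline{\psi_{FK}}$. On the other hand, the $\ell$-adic representation of $G_K$ on $V_\ell(E)$ is irreducible and restricts on $G_{FK}$ to $\chi_\lambda\oplus\chi_{\overline\lambda}$, so it must be $\mathrm{Ind}_{G_{FK}}^{G_K}\chi_\lambda$. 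Applying Lemma~\ref{Wei} (the induction-invariance of Artin–Hecke L-functions) then gives $L_K(s)=L(s,\mathrm{Ind}_{G_{FK}}^{G_K}\psi_{FK})=L(s,\psi_{FK})$ as desired.

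The main obstacle is clearly the middle step: showing that the compatible system $\{\chi_\lambda\}$ is attached to an honest algebraic Hecke character. This is not a formal manipulation but the heart of the theory of complex multiplication (essentially Shimura–Taniyama), requiring the good reduction theory of CM abelian varieties, the action of the Artin map on torsion, and the comparison of Frobenius with the multiplication map by $\psi_K(\mathfrak p)\in\mathcal O$. In this expository note I would be forced to cite it as a black box; the $\ell$-adic decomposition and the induction argument are then essentially formal.
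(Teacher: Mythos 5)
The paper does not actually prove this statement: it is quoted as Deuring's theorem, with the characters $\psi_K$ and $\psi_{FK}$ taken as constructed in Silverman's \emph{Advanced Topics} (p.~168) and the result cited to \cite{Deu}, so there is no internal argument to measure yours against. Your sketch is the standard modern route --- $V_\ell(E)$ free of rank one over $F\otimes_{\mathbb Q}\mathbb Q_\ell$, the resulting compatible system of characters $\chi_\lambda$ of $G_K$, the main theorem of complex multiplication promoting this system to an algebraic Hecke character, and induction from $G_{FK}$ to $G_K$ when $F\not\subseteq K$ --- and it is correct in outline. But note that the Shimura--Taniyama step you black-box is not a technical lemma inside Deuring's theorem; it essentially \emph{is} Deuring's theorem. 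So what you have written is a clean reduction of the statement to that citation, which is in effect the same thing the paper does, just with the formal bookkeeping (Euler-factor factorisation, the induction step) made explicit.

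Two concrete caveats on the formal part. First, Lemma~\ref{Wei} is not the right reference for your last equality: it concerns Artin--Hecke L-functions $L(s,\psi\otimes\rho,K/F)$ in which $\psi$ is a Hecke character of the \emph{base} field and $\rho$ a character of the finite Galois group, whereas $\psi_{FK}$ is a Hecke character of the quadratic extension $FK$ which is not of the form $(\text{character of }K)\circ N_{FK/K}$ twisted by a finite-order character (it cannot be, precisely because $F\not\subseteq K$). The identity you need is the classical invariance of Hecke (Weil-group) L-functions under induction through $FK/K$, which is standard but must be invoked separately or checked on Euler factors --- where it also accounts for $a_{\mathfrak p}=0$ at primes of $K$ inert in $FK$. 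Second, to conclude $V_\ell(E)\cong\mathrm{Ind}_{G_{FK}}^{G_K}\chi_\lambda$ you genuinely need the irreducibility you assert in passing: having restriction $\chi_\lambda\oplus\chi_{\overline\lambda}$ to an index-two subgroup only pins the representation down up to a quadratic twist. Irreducibility does follow, since the nontrivial element of $\mathrm{Gal}(FK/K)$ swaps the two $G_{FK}$-eigenlines (as $\chi_\lambda\neq\chi_{\overline\lambda}$, their product being the cyclotomic character while neither has finite order), so no line is $G_K$-stable; it would be better to say this than to leave irreducibility as an unsupported claim.
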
 
\begin{rem} Here $\psi_M$'s are certain Hecke characters of number field $M$'s, defined at page 168 of \cite{Sil2}. Deuring defined $\psi_K$ in such a way, so that local factors of $L_K(s)$ and of $L(s,\psi_{K})L(s,\overline{\psi_{K}})$ matches. Which means $\psi_K(v)\overline{\psi_K(v)}=N(v)$, for all but finitely many prime ideals $v \subseteq O_K$. Which means, $\psi_K$'s can not have finite order. We will use this fact lot of times throughout rest of this section. 
\end{rem}
\noindent M. Murty and K. Murty derived the following result.
\begin{theorem}[Murty-Murty, \cite{MME}] Every L-function of a \emph{CM}-elliptic curve can be written in terms of Hecke L-functions. 
\end{theorem}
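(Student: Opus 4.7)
The statement is essentially a formal consequence of Deuring's theorem quoted just above it, so the plan is to split into the two cases that Deuring's theorem dictates and verify that in each case the right-hand side is manifestly a product of Hecke $L$-functions.

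First I would set up the notation: let $E/K$ be a CM elliptic curve with CM by an order in an imaginary quadratic field $F$. There are two mutually exclusive cases, depending on whether $F\subseteq K$ or $F\not\subseteq K$, and Deuring's theorem treats each. In the case $F\subseteq K$, Deuring gives
\[
L_K(s) \;=\; L(s,\psi_K)\,L(s,\overline{\psi_K}),
\]
which is literally a product of two Hecke $L$-functions attached to the Hecke characters $\psi_K$ and $\overline{\psi_K}$ of $K$. In the case $F\not\subseteq K$, Deuring gives
\[
L_K(s) \;=\; L(s,\psi_{FK}),
\]
which is the Hecke $L$-function of the Hecke character $\psi_{FK}$ of the compositum $FK$. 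In either case the right-hand side is, by construction, a finite product of Hecke $L$-functions, which is what the statement asserts.

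If one wishes to phrase the second case as an Artin–Hecke $L$-function over the base field $K$ itself (in the sense of Definition~\ref{AHLF}), the plan is to take an auxiliary Galois extension $M/K$ containing $FK$ and put $G=\mathrm{Gal}(M/K)$, $H=\mathrm{Gal}(M/FK)$. Then Lemma~\ref{Wei}(ii) applied with $\phi=1_H$ rewrites
\[
L(s,\psi\otimes \mathrm{Ind}_{H}^{G} 1_H,\,M/K) \;=\; L\!\left(s,(\psi\circ N_{FK/K})\otimes 1_H,\,M/FK\right),
\]
so choosing $\psi$ so that $\psi\circ N_{FK/K}=\psi_{FK}$ realises $L_K(s)=L(s,\psi_{FK})$ as an Artin–Hecke $L$-function over $K$, still manifestly in Hecke form.

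The only real difficulty lies entirely in Deuring's theorem itself, which is quoted from \cite{Deu} and \cite{Sil2} and not reproved here; once it is granted, the present statement is a one-line case distinction together with the formal identities of Lemma~\ref{Wei}. In particular, no analytic input beyond Hecke's theory of $L$-functions of Grössencharaktere is needed, and the local factor comparison $\psi_K(v)\overline{\psi_K(v)}=N(v)$ recorded in the remark preceding the theorem is exactly what guarantees that the Euler factors on the two sides of Deuring's identity agree prime by prime.
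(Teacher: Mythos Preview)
The paper does not actually give a proof of this statement; it is simply recorded as something Murty and Murty ``derived'' from Deuring's theorem and then the text moves directly to the proof of Theorem~\ref{MME1}. Your case distinction via Deuring's theorem is exactly the intended derivation, and your first two displayed identities already establish the statement in full: in each case $L_K(s)$ is visibly a product of Hecke $L$-functions (over $K$ or over $FK$).

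Your optional third paragraph, however, contains a genuine gap. You assert that one may ``choose $\psi$ so that $\psi\circ N_{FK/K}=\psi_{FK}$'', but there is no reason such a Hecke character $\psi$ of $K$ should exist: a Hecke character of $FK$ need not arise as the base-change of a Hecke character of $K$ via the norm. Indeed, when $F\not\subseteq K$ the character $\psi_{FK}$ is typically \emph{not} invariant under $\mathrm{Gal}(FK/K)$ (complex conjugation swaps $\psi_{FK}$ and $\overline{\psi_{FK}}$), which is an obstruction to such descent. The correct way to rewrite $L(s,\psi_{FK})$ over $K$ is via automorphic induction: it equals $L(s,\rho,FK/K)$ for a two-dimensional representation $\rho=\mathrm{Ind}_{FK}^{K}\psi_{FK}$, not a twist by a one-dimensional $\psi$ of $K$. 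Since this paragraph is unnecessary for the statement as phrased---$L(s,\psi_{FK})$ is already a Hecke $L$-function---you should simply delete it.
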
 
Now let us sketch proof of Theorem \ref{MME1} in details. \\
\newline  
\textit{Proof of Theorem \ref{MME1}.} First we consider the case, $F \subseteq K$, where $F$ is the imaginary quadratic extension of $\mathbb{Q}$ mentioned as mentioned earlier. In this case we define 
\[L_K(s,\phi)=L\Big(s,(\psi_{K}\circ N_{M^{H}/K})\otimes \phi, M/M^{H}\Big)L\Big(s,(\overline{\psi_{K}}\circ N_{M^{H}/K})\otimes \phi, M/M^{H}\Big),\]
for any subgroup $H$ of $\mbox{Gal}(M/K)$ and character $\phi$ on $H$. We also define $n(H,\phi)$ to be the order of zero of above, at $s=s_0$. From Lemma \ref{Wei} and the fact that $\psi_K$ does not have finite order, it follows that $n(H,\phi)'s$ satisfy all the required conditions of Definition \ref{AHC}. Now if $M$ is any arbitrary extension of $K$, we can look $E$ over $M$. When we do so, we will denote this as $E/_{M}$. First part of the theorem follows from Corollary \ref{BAT}, because $n(H,\phi)$'s are always non negative, for any character $\phi$ on $H$, and $L_K(s,\mbox{Reg}_G)$ corresponds to $L_M(s)$. Former one is true because L-functions of CM elliptic are entire, as they can be written as product of one or two Hecke L-functions of infinite order. Later one is true because, by work of Serre \cite{Serrel} the $L$-function of $E$ over $M$ is given by the family of $\ell$-adic representations 
\[\rho_{\ell}:\mbox{Gal}(\overline{K}/M) \longrightarrow  \mbox{Aut}\big(T_{\ell}(E/_{M})\big),\] 
where $\overline{K}$ is algebraic closure of $K$ and $T_{\ell}(E/_{M})$ is denoted by inverse limit of $E/_{M}[\ell^n]$'s, where $E/_{M}[\ell^n]$ is set of $\ell^n$ torsion points of $E$ lying in $M$. Since $T_{\ell}(E/_{M}) = T_{\ell}(E/_{K})=\mbox{GL}_2(\mathbb{Z}_{\ell})$ as $\mbox{Gal}(\overline{K}/M)$ modules, it follows that $\rho_{\ell}$ is the restriction of the representation
\[\psi_{\ell} : \mbox{Gal}(\overline{K}/K) \longrightarrow \mbox{Aut}\big(T_{\ell}(E/_{K})\big).\]
So
\[L(s,\rho_{\ell})=L\Big(s,\psi_{\ell}|_{\mbox{Gal}(\overline{K}/M)}\Big),\]  
but 
\[\mbox{Ind}_{H'}^{G'}(\psi_{\ell}|_{H'})=\psi_{\ell} \otimes \mbox{Ind}_{H'}^{G'} 1_{H'}=\psi_{\ell} \otimes \mbox{Reg}_{G'/H'},\]
where $G'=\mbox{Gal}(\overline{K}/K)$ and $H'=\mbox{Gal}(\overline{K}/M)$. And so $\mbox{Reg}_{G'/H'}=\mbox{Reg}_{G}$, as $G'/H'=G$. And this Completes first part of the proof using arithmetic Brauer-Aramata Theorem (Theorem \ref{BAT}), for the case $F\subseteq K$.\\ 
\newline 
For the other case, following similar procedure we can show $\frac{L_{MF}(s)}{L_{KF}(s)}$ is entire. Now, 
\[\frac{L_{KF}(s)}{L_K(s)}=\frac{L(s,\psi_{FK})L(s,\overline{\psi_{FK}})}{L(s,\psi_{FK})}.\]
But the later one is entire, because $\psi_{FK}$ has infinite order, so $\frac{L_{MF}(s)}{L_{K}(s)}$ is entire. If $F\subseteq M$, then $MF=M$, and we are done. Now if $F\not\subseteq M$, then $F\cap M=\mathbb{Q}$. In this case, $\mbox{Gal}(M/K)$ is isomorphic to $\mbox{Gal}(MF/KF)$, we denote this isomorphism to be $f$. Now for any subgroup $H$ of $\mbox{Gal}(M/K)$ and one dimensional character $\phi$ on $H$, define  \[n(H,\phi)=\mbox{ord}_{s=s_0}L\Big(s, (\psi_{FK} \circ N_{(MK)^{f(H)}/(FK)})\otimes \chi \circ f^{-1}, MK/(MK)^{f(H)}\Big),\]
and follow the same argument as we provided for the case $F \subseteq K$.\\
\newline
For second part of the theorem, let $\overline{M}$ be Galois closure of $M$ over $K$, and $G$ be the Galois group. In this we only need to realize that $L_F\Big(s,\mbox{Ind}_{H}^{G}(1|_H)\Big)=L_{\overline{M}^{H}}(s)$, for any subgroup $H$ of $G$. Which can be done by the similar argument, as we gave to show $L_F(s,\mbox{Reg}_G)=L_M(s)$ in the previous part. After this the proof is immediate using Corollary \ref{AUVdW}.\\
\newline 
Here we use the power of arithmetic Heilbronn characters to get elliptic analogue of the Uchida–van der Waall theorem as follows, 
\begin{theorem}[Wong, \cite{PJwon}] \label{5.27}Suppose $K/F$ is a solvable Galois extension with Galois group $G$, and let $H$ be a subgroup of $G$. Let $\chi$ and $\phi$ be $1$-dimensional characters of $G$ and $H$, respectively. Then
\[\frac{L_F(s,\emph{Ind}^G_H \phi)}{L_F(s, \chi)^{(\chi|_H,\phi)}}\] 
is entire. 
\end{theorem}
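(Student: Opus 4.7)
The plan is to realize this as a direct consequence of Proposition \ref{acimp} once we package the Artin L-functions $L_F(s,\cdot)$ as the output of a family of arithmetic Heilbronn characters. I would fix an arbitrary point $s_0\in\mathbb{C}$ and, for every pair $(H',\phi')$ consisting of a subgroup $H'\leq G$ and a character $\phi'$ of $H'$, set
\[n(H',\phi')\,:=\,\mbox{ord}_{s=s_0}\,L_F(s,\mbox{Ind}^G_{H'}\phi').\]
The first task is to verify that the family $\{n(H',\phi')\}$ satisfies the three axioms \textbf{ACH1}--\textbf{ACH3} of Definition \ref{AHC}. Additivity \textbf{ACH1} comes from the multiplicativity of Artin L-functions under direct sums (first bullet of Lemma \ref{first}); the induction compatibility \textbf{ACH2} is built into the definition via $L(s,\mbox{Ind}^G_{H'}\phi',K/F)=L(s,\phi',K/K^{H'})$ (second bullet of Lemma \ref{first}); and the non-negativity \textbf{ACH3} for one-dimensional $\phi'$ follows from Theorem \ref{ODT}, which identifies such an L-function with a non-trivial Hecke L-function and in particular gives an entire function with non-negative orders of vanishing everywhere.

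With the arithmetic Heilbronn structure in hand, the theorem reduces to a direct invocation of Proposition \ref{acimp} applied to the given one-dimensional characters $\chi$ of $G$ and $\phi$ of $H$. That proposition yields
\[n(G,\mbox{Ind}^G_H \phi)\,\geq\,(\chi|_H,\phi)\cdot n(G,\chi),\]
which after unwinding the definition of $n$ at $s_0$ reads
\[\mbox{ord}_{s=s_0}\,L_F(s,\mbox{Ind}^G_H\phi)\,\geq\,(\chi|_H,\phi)\cdot\mbox{ord}_{s=s_0}\,L_F(s,\chi).\]
Hence the order of the displayed quotient at $s_0$ is non-negative, and since $s_0$ was arbitrary this forces the quotient to have no poles anywhere on $\mathbb{C}$, i.e.\ to be entire. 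Recall also that Proposition \ref{acimp} itself is proved by twisting the decomposition $\mbox{Ind}^G_H(1_H)=1_G+\sum_i\mbox{Ind}^G_{H_i}\phi_i$ (Corollary \ref{imp3}) by $\chi$ to obtain $\mbox{Ind}^G_H\phi=\chi+\sum_i\mbox{Ind}^G_{H_i}(\chi|_{H_i}\otimes\phi_i)$ when $(\chi|_H,\phi)=1$ (the case $(\chi|_H,\phi)=0$ being trivial since then $\chi|_H\neq\phi$), and applying $n(G,\cdot)$ using \textbf{ACH3} on the residual induced one-dimensional pieces.

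The main obstacle is bookkeeping at the potential pole $s_0=1$ of the trivial-character factor: when $\chi=1_G$ and $\phi=1_H$, both numerator and denominator acquire a simple pole from $\zeta_F(s)$, but with matching multiplicities dictated by $(\chi|_H,\phi)=1$, so the quotient stays regular; in every other case the trivial character does not contribute to $\mbox{Ind}^G_H\phi$ with positive mass that is unmatched on the denominator side, and the Hecke-L-function identification of Theorem \ref{ODT} supplies the needed non-negativity. Once this mild bookkeeping is checked, the proof is a clean packaging of the arithmetic version of the Uchida--van der Waall argument already established as Corollary \ref{AUVdW}, now twisted by the one-dimensional character $\chi$.
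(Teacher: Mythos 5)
Your overall skeleton (package suitable integers $n(H,\phi)$ as arithmetic Heilbronn characters, then quote Proposition \ref{acimp}) is exactly the paper's route, but you have fed the wrong L-functions into it. In this theorem $L_F(s,\cdot)$ is \emph{not} the Artin L-function $L(s,\cdot,K/F)$: it is the twisted L-function of the CM elliptic curve, i.e.\ the product of Artin--Hecke L-functions $L\big(s,(\psi\circ N_{K^H/F})\otimes\phi,K/K^H\big)\,L\big(s,(\overline{\psi}\circ N_{K^H/F})\otimes\phi,K/K^H\big)$ (or the single analogous factor when the CM field is not contained in the base field) furnished by Deuring's theorem, as set up in the proof of Theorem \ref{MME1} -- that is why the result is called the \emph{elliptic} analogue of Uchida--van der Waall. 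For those objects the axioms \textbf{ACH1}--\textbf{ACH3} of Definition \ref{AHC} hold at \emph{every} $s_0$, by Weil's Lemma \ref{Wei} together with the fact that the CM Hecke character has infinite order, so its twist by any one-dimensional Artin character is a non-trivial Hecke character whose L-function is entire, with no exceptional point at $s=1$. That input is the entire content behind the unqualified word ``entire'' in the statement, and it is absent from your argument.

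With your Artin-L-function reading the proof genuinely breaks at $s_0=1$, and the asserted conclusion is false there: take $\phi=1_H$ and $\chi\neq 1_G$ with $\chi|_H=1_H$, so $(\chi|_H,\phi)=1$; the numerator is $\zeta_{K^H}(s)$, which has a simple pole at $s=1$, while $L(s,\chi,K/F)$ is holomorphic and non-vanishing at $s=1$ (Theorem \ref{ODT}), so the quotient has a pole. Correspondingly your verification of \textbf{ACH3} fails at $s_0=1$, since $n(H',1_{H'})=\mathrm{ord}_{s=1}\,\zeta_{K^{H'}}(s)=-1<0$, and your closing ``bookkeeping'' paragraph only treats the case $\chi=1_G$, $\phi=1_H$. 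This is precisely why the purely Artin-theoretic statement in the paper, Theorem \ref{RR1} of Murty--Raghuram, carries the caveat ``holomorphic except at $s=1$.'' So the key proposition you cite is the right one, but the missing idea is the Deuring/infinite-order Hecke character construction of the integers $n(H,\phi)$, which is what removes the point $s=1$ from consideration and makes the quotient entire on all of $\mathbb{C}$.
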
 
Moreover the elliptic analogue of Murty–Raghuram's inequality \ref{M-R I}, is as follows. 
\begin{theorem}[Wong, \cite{PJwon}] \label{5.28} For every $1$-dimensional character $\chi_0$ of G, 
\[\sum_{\chi \in \emph{Irr}(G)-\{\chi_0\}}\Big(\emph{ord}_{s=s_0} L_F(s, \chi)\Big)^2 \leq \Big(\emph{ord}_{s=s_0} \frac{L_K(s)}{L_F(s, \chi_0)}\Big)^2.\]
\end{theorem}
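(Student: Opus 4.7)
The plan is to package the order-of-vanishing information of the twisted elliptic L-functions into a family of arithmetic Heilbronn characters, and then invoke Theorem 5.4.1 (the arithmetic variant of Murty--Raghuram's inequality). Let $F_0$ denote the imaginary quadratic field such that $E$ has CM by an order in $F_0$, and let $\psi$ be the corresponding Deuring Hecke character (of $F$ if $F_0\subseteq F$, of $F_0F$ otherwise). For every subgroup $H$ of $G=\mbox{Gal}(K/F)$ and every character $\phi$ of $H$, I would set
\[
n(H,\phi) \;:=\; \mbox{ord}_{s=s_0}\, L_{K^H}(s,\phi),
\]
where $L_{K^H}(s,\phi)$ is the twisted Artin-Hecke L-function assembled exactly as in the proof of Theorem 5.5.2 (a product of one or two Artin-Hecke factors of the shape $L(s,(\psi\circ N_{K^H/F})\otimes\phi, K/K^H)$, according to whether $F_0$ lies in the relevant base field).

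The next step is to verify that $\{n(H,\phi)\}$ forms a family of arithmetic Heilbronn characters in the sense of Definition 5.2.1. Additivity \textbf{ACH1} is immediate from the multiplicativity of Artin-Hecke L-functions (first bullet of Lemma 5.4.2). Induction-compatibility \textbf{ACH2} follows from the second bullet of Lemma 5.4.2, since Weil's identity gives $L_F(s,\mbox{Ind}_H^G\phi)=L_{K^H}(s,\phi)$. Non-negativity \textbf{ACH3} for $1$-dimensional $\phi$ is the delicate point: because $\psi$ is a Hecke character of infinite order, the twisted characters $(\psi\circ N_{K^H/F})\otimes\phi$ (and its conjugate) are again Hecke characters of infinite order for any one-dimensional Galois character $\phi$, so the corresponding Hecke L-functions are entire and $n(H,\phi)\ge 0$.

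I would then identify the two values needed. By construction $n(G,\chi)=\mbox{ord}_{s=s_0}L_F(s,\chi)$ for every $\chi\in\mbox{Irr}(G)$; and applying \textbf{ACH2} to $\mbox{Reg}_G=\mbox{Ind}_{\langle e\rangle}^G 1$ together with the Serre $\ell$-adic compatibility used in the proof of Theorem 5.5.2 (which identifies $L_{K^{\langle e\rangle}}(s,1)$ with $L_K(s)$) gives $n(G,\mbox{Reg}_G)=\mbox{ord}_{s=s_0}L_K(s)$. Now Theorem 5.4.1 applied to the $1$-dimensional character $\chi_0$ of $G$ yields
\[
\sum_{\chi\in\mbox{Irr}(G)\setminus\{\chi_0\}} n(G,\chi)^2 \;\le\; \bigl(n(G,\mbox{Reg}_G)-n(G,\chi_0)\bigr)^{2},
\]
and substituting the identifications (using \textbf{ACH1} to rewrite the right-hand side as an order of the quotient) gives exactly the claimed inequality.

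The main obstacle is verifying \textbf{ACH3} uniformly across the whole subgroup lattice of $G$ and across the two CM-position cases ($F_0\subseteq F$ or not). In each case this reduces to the fact that Deuring's $\psi$ already has infinite order, that composition with a norm map preserves infinite order, and that twisting by a finite-order Galois character $\phi$ cannot produce a trivial Hecke character — so the relevant Hecke L-functions remain entire at $s_0$. The case $F_0\not\subseteq F$ additionally needs the isomorphism $\mbox{Gal}(K/F)\cong\mbox{Gal}(F_0K/F_0F)$ to transport everything to the CM-rich side, precisely as was handled in the proof of Theorem 5.5.2.
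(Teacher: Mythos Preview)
Your proposal is correct and follows essentially the same route as the paper: the paper's proof simply says the result is immediate from the arithmetic Murty--Raghuram inequality (Theorem \ref{M-R I2}) once one uses the integers $n(H,\phi)$ already set up via Deuring's Hecke character in the proof of Theorem \ref{MME1}. Your write-up is more explicit in verifying \textbf{ACH1}--\textbf{ACH3} and in handling the two CM-position cases, but the underlying argument is the same.
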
 
\begin{proof} Proof of Theorem \ref{5.27} is immediate from Proposition \ref{acimp}, and Theorem \ref{5.28} is immediate from Theorem \ref{acimp}, where $n(H,\phi)$'s are defined before. 
\end{proof} 
We have below an interesting result for L-functions of CM-elliptic curves by just applying Theorem \ref{nice} and Theorem \ref{MME1}.
\begin{theorem}[Wong, \cite{PJwon}] Suppose $K/F$ is a solvable Galois extension with Galois group $G$. Then for all $i \geq 1$,
\[\frac{L_K(s)}{L_{K^{G^i}}(s)}\]
is entire, and cannot have any simple zero. 
\end{theorem}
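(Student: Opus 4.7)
The plan is to combine Theorem~\ref{MME1} (the CM-elliptic analogue of Aramata--Brauer) with Theorem~\ref{nice} (which forbids the value $1$ for an arithmetic Heilbronn character difference on a solvable group). Write $G=\mbox{Gal}(K/F)$. Since $G^i$ is a characteristic, hence normal, subgroup of $G$, the intermediate field $K^{G^i}$ is Galois over $F$, and $K/K^{G^i}$ is itself Galois with Galois group $G^i$; being a subgroup of the solvable group $G$, $G^i$ is solvable.

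For the holomorphy, I would simply apply Theorem~\ref{MME1} to the Galois extension $K/K^{G^i}$: the CM elliptic curve retains its complex multiplication by the same order after base change from $F$ to $K^{G^i}$, so the quotient $L_K(s)/L_{K^{G^i}}(s)$ is entire.

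For the no-simple-zero assertion, I would fix an arbitrary $s_0\in\mathbb{C}$ and install on $G$ the arithmetic Heilbronn characters constructed in the proof of Theorem~\ref{MME1}: for every subgroup $H\le G$ and every character $\phi$ of $H$, set $n(H,\phi)$ to be the order at $s=s_0$ of the appropriate Artin--Hecke $L$-function (a single such $L$-function, or the product of a complex-conjugate pair, according to whether the CM field is contained in $K^H$) attached to $\phi$ twisted by the Hecke character $\psi_{K^H}\circ N$ produced by Deuring's theorem. The proof of Theorem~\ref{MME1} already verifies axioms \textbf{ACH1}--\textbf{ACH3} for these integers, the decisive input being that the underlying Hecke characters have infinite order, so every Hecke $L$-factor involved is entire. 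By the Deuring-type identifications used there together with property \textbf{ACH2}, one checks
\[
n(G,\mbox{Reg}_G) = \mbox{ord}_{s=s_0}L_K(s), \qquad n\bigl(G,\mbox{Ind}^G_{G^i}1_{G^i}\bigr) = \mbox{ord}_{s=s_0}L_{K^{G^i}}(s),
\]
so that $\mbox{ord}_{s=s_0}\bigl(L_K(s)/L_{K^{G^i}}(s)\bigr) = n(G,\mbox{Reg}_G)-n\bigl(G,\mbox{Ind}^G_{G^i}1_{G^i}\bigr)$. Since $G$ is solvable, Theorem~\ref{nice} forbids this difference from being $1$ at any $s_0$, so the quotient cannot vanish to order exactly one.

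The only place where anything beyond direct citation is required is the bookkeeping step: verifying that the construction of the $n(H,\phi)$'s from the proof of Theorem~\ref{MME1}, set up there with a fixed ambient extension in mind, transfers cleanly to $G=\mbox{Gal}(K/F)$ here, and that the two identifications above hold uniformly in both the ``CM field contained in $K$'' and ``CM field not contained in $K$'' regimes. This is the main, and essentially the only, point of care; once it is in place, the theorem drops out as a direct consequence of Theorems~\ref{MME1} and~\ref{nice}.
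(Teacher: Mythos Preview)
Your proposal is correct and follows exactly the route the paper indicates: the paper's entire proof is the single sentence ``by just applying Theorem~\ref{nice} and Theorem~\ref{MME1},'' and you have supplied precisely the bookkeeping (the identifications $n(G,\mathrm{Reg}_G)=\mathrm{ord}_{s=s_0}L_K(s)$ and $n(G,\mathrm{Ind}^G_{G^i}1_{G^i})=\mathrm{ord}_{s=s_0}L_{K^{G^i}}(s)$, and the caveat about the two CM-containment regimes) that this citation leaves implicit. There is nothing to add.
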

\begin{rem} If we assume Birch and Swinnerton-Dyer conjecture \ref{BSD}, then the result above says $\mbox{Rank}(E/_{K})-\mbox{Rank}(E/_{K^{G^i}})$ can not be one. Which is perhaps difficult to prove unconditionally, i.e, without assuming Birch and Swinnerton-Dyer conjecture. So there seems to have some scope of further research here.
\end{rem}
\section{Supercharacter theory on Artin L-functions}
Recently Diaconis and Isaacs \cite{DI} introduced the theory of supercharacters which generalizes the classical character theory in a natural way as follows.
\begin{definition} Let $G$ be a finite group, let $K$ be a partition of $G$, and let $X$ be a partition of $\emph{Irr}(G)$. The ordered pair $(X,K)$ is a super character theory if  $\{1\} \in K,|X|=|K|$ and for each $x \in X$, the character $\sigma_x =\sum_{\sigma \in x} \sigma(1)\sigma$ is constant on each $k \in K$. The characters $\sigma_x$ are called \textit{supercharacters}, and the elements $k$ in $K$ are called \textit{superclasses}. In addition, if $f : G \to C$ is constant on each superclass in $G$, then we say $f$ is a superclass function on $G$. 
\end{definition}
\noindent It is clear that the irreducible characters and conjugacy classes of $G$ give a supercharacter theory of $G$, which will be referred to as the \textit{classical theory} of $G$. Throughout this section, we will often equip groups with different kind of supercharacter theories to understand more about Artin L-functions. Diaconis and Isaacs \cite{DI} showed that every superclass is a union of conjugacy classes in $G$. Given a supercharacter theory on $G$, we denote the set of all supercharacters as $\mbox{Sup}(G)$. They also showed $\mbox{Sup}(G)$ forms an orthogonal basis for the inner product space of all superclass functions on $G$ with respect to the usual inner product on $\mbox{Irr}(G)$, 
\begin{definition} Let $G$ be a finite group and $H$ is a subgroup of $G$. We define $\mbox{SCl}_H(h)$ and $\mbox{SCl}_G(h)$ to be superclasses containing $h$ in $H$ and $G$, respectively. $G$ and $H$ are said to be compatible if for any $h \in H, \mbox{SCl}_H(h)$ is contained in $\mbox{SCl}_G(h)$
\end{definition}
\begin{definition}[Superinduction] Suppose $G$ and $H$ are compatible. Let $\phi$ be a supercalss functions on $H$. Then define superinduction of $\phi$ to $G$ by
\[S \emph{Ind}_H^G \phi(g)= \frac{|G|}{|H|}\frac{1}{|\mbox{SCl}_G(g)|}\sum_{i=1}^{m(g)}|\mbox{SCl}_{H}(x_{i,g})|\phi(x_{i,g}),\]
where $\mbox{SCl}_G(g)$ is the superclass in $G$ containing $g$ and $\{x_{i,g}\}$ is a set of superclass representatives in $H$ belonging to $\mbox{SCl}_G(g)$.
\end{definition}
One can check that $S\mbox{Ind}_H^G \phi(g)$ forms is a superclass function of $G$. 
\noindent Similarly analogous to Frobenius reciprocity, we have the following theorem in the case of our supercharacter theory. 
\begin{theorem}[Super Frobenius Reciprocity, \cite{Won}] Suppose that $G$ and $H$ are compatible. For all superclass functions $\phi$ on $H$ and all superclass functions $\psi$ of G, 
\[(S\emph{Ind}^G_H \phi, \psi) = (\phi, \psi|_H).\]
\end{theorem}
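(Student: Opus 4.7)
The plan is to mimic the classical Frobenius reciprocity proof, but carefully tracking the combinatorics of superclasses and using compatibility at the key step. First I would unwind the definition of the inner product:
\[
(S\text{Ind}_H^G \phi, \psi) = \frac{1}{|G|} \sum_{g \in G} S\text{Ind}_H^G \phi(g)\,\overline{\psi(g)}.
\]
Since both factors are constant on superclasses of $G$, I would convert this sum into a sum over superclass representatives $g_K$ of $G$, weighted by $|K|$:
\[
(S\text{Ind}_H^G \phi, \psi) = \frac{1}{|G|}\sum_{K} |K|\,S\text{Ind}_H^G\phi(g_K)\,\overline{\psi(g_K)}.
\]

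Next I would substitute the definition of superinduction. The factor $\tfrac{|G|}{|H|}\tfrac{1}{|K|}$ coming from $S\text{Ind}_H^G\phi(g_K)$ combines with $\tfrac{|K|}{|G|}$ to give $\tfrac{1}{|H|}$, leaving
\[
(S\text{Ind}_H^G \phi, \psi) = \frac{1}{|H|}\sum_{K}\sum_{i=1}^{m(g_K)} |\text{SCl}_H(x_{i,K})|\,\phi(x_{i,K})\,\overline{\psi(g_K)},
\]
where each $x_{i,K}$ is a superclass representative in $H$ lying inside the superclass $K$ of $G$.

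Here is where compatibility enters, and this is the only real conceptual step. By hypothesis, for every $h \in H$ the superclass $\text{SCl}_H(h)$ is contained in $\text{SCl}_G(h)$, so the superclasses of $H$ refine (the restriction to $H$ of) the superclasses of $G$. Therefore as $K$ ranges over superclasses of $G$ and $i$ ranges over $1,\dots,m(g_K)$, the elements $x_{i,K}$ run through exactly one representative from each superclass of $H$. Moreover, since $x_{i,K} \in K$ and $\psi$ is constant on $K$, we may replace $\overline{\psi(g_K)}$ by $\overline{\psi(x_{i,K})} = \overline{\psi|_H(x_{i,K})}$.

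After this replacement the double sum collapses to a single sum over superclasses $L$ of $H$ with representative $y_L$:
\[
(S\text{Ind}_H^G \phi, \psi) = \frac{1}{|H|}\sum_{L} |L|\,\phi(y_L)\,\overline{\psi|_H(y_L)} = \frac{1}{|H|}\sum_{h \in H}\phi(h)\,\overline{\psi|_H(h)} = (\phi,\psi|_H),
\]
which is the claim. The main obstacle, as already noted, is verifying that compatibility really does ensure that the $x_{i,K}$ constitute a complete and irredundant system of superclass representatives for $H$; but this is exactly the content of the containment $\text{SCl}_H(h) \subseteq \text{SCl}_G(h)$, so it is built into the hypothesis.
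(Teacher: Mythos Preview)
Your proof is correct and is precisely the argument the paper has in mind: the paper's own proof simply says ``Proof is no different from the usual proof of Frobenius reciprocity'' and points to \cite{Won}, and what you have written is exactly that classical computation adapted to superclasses, with compatibility used in the one place it is needed to ensure the $x_{i,K}$ enumerate the $H$-superclasses without repetition.
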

\begin{proof} Proof is no different from the usual proof of Frobenius reciprocity. See \cite{Won} pp. 21 for a complete proof. 
\end{proof} 
\begin{rem} Our defined superinduction is unique, (provided the superinduction has to satisfy super Frobenius reciprocity). Why ? Suppose that there is another map $\phi \mapsto \phi^{G}$, sending a superclass function on $H$ to a superclass function on $G$. Then, for any superclass function $\phi$ on $H$ and any superclass function $\Theta$ of $G$ we have,
\[(\phi^{G}, \Theta) = (\phi, \Theta|_{H}).\]
But right hand side above is same as $(S\mbox{Ind}^G_H \phi,\Theta)$. We know $\mbox{Sup}(G)$ forms an orthogonal basis for the inner product space of all supeclass functions on $G$, so varying $\Theta$ over $\mbox{Sup}(G)$ we get
$\phi^{G}=S\mbox{Ind}^{G}_H\phi$, as desired. 
\end{rem}  
\noindent Interesting part is via the theory of supercharacters and superinduction discussed above, one can generalize the classical Heilbronn character in the following way.
\begin{definition}[Super Heilbronn Characters] \label{Sind} Let $K/F$ be a Galois extension of number fields with Galois group $G$. Let $H$ be a subgroup of $G$. Let $G$ and $H$ be
compatible, $\mbox{Sup}(G)$ be the set of all supercharacters of $G$, and $\mbox{Sup}(H)$ be the set of all supercharacters of $H$. Assume that the restriction $\sigma|_{H}$ of any supercharacter $\sigma$ of $G$ to $H$ is an integral combination of supercharacters of $H$. Then the super Heilbronn
character $\Theta_H$ (with respect to $s = s_0$) is defined by
\[\Theta_{H} =\sum_{\tau \in \emph{Sup}(H)} n(H,\tau) \frac{\tau}{\tau(1)},\]
where $n(H, \tau ) = \frac{1}{m} \emph{ord}_{s=s_0} L(s,mS\mbox{Ind}^G_H\tau,K/K^{H})$, and $m = \emph{lcm}\Big(\sigma(1), \sigma \in \emph{Sup}(G)\Big).$
\end{definition} 
One might naturally ask why there are extra $m$ and $\frac{1}{m}$ for each $n(H, \tau )$. This is because, it is always natural to consider Artin L-functions attached to characters, and we can write
\[\mbox{ord}_{s=s_0} L(s, mS\mbox{Ind}^G_H \tau, K/F)=\mbox{ord}_{s=s_0}L\Big(s,\sum_{\sigma \in \mbox{Sup}(G)}\frac{m(S\mbox{Ind}^G_H \tau, \sigma)\sigma}{(\sigma,\sigma)},K/F\Big).\]
From the given condition, $(S\mbox{Ind}^G_H \tau, \sigma)$ is an integer. So to put the arguments in perspective, we declare $(\sigma,\sigma)$($=\sigma(1)$) to divide $m$ for all supercharacter $\sigma$ of $G$. So now we can consider Artin L-functions attached to supercharacters and study them. Note that, if one considers the improper subgroup $H$ of $G$, i.e., $H = G$, equipped with the same supercharacter theory, then the superinduction from $H$ to $G$ is the identity map, i.e, for any supercharacter $\sigma$ of $H = G,S\mbox{Ind}^G_H \sigma = \sigma$. So, 
\[n(G, \sigma) =\frac{1}{m} \mbox{ord}_{s=s_0} L(s,m\sigma,K/F) = \mbox{ord}_{s=s_0} L(s,\sigma,K/F),\]
which coincides with the classical definition. Point of doing all these is to understand how Artin L-functions attached to supercharacters satisfy similar properties with Artin L-functions attached to irreducible characters. Our main theme of this section is to understand Heilbronn characters in various settings. We already defined what super Heilbronn characters are. The definition at \ref{Sind} of course should coincide with original one when we we consider classical theory. Yes indeed it does, which only relies on the fact that any supercharacter in this case looks like $\chi(1)\chi$ such that $\chi \in \mbox{Irr}(G)$.
\begin{align}
\Theta_{G} & =\sum_{\sigma \in \mbox{Sup}(G)}n(G,\sigma)\frac{\sigma}{\sigma(1)}\\
& =  \sum_{\chi \in \mbox{Irr}(G)} n(G,\chi(1)\chi)\frac{\chi}{\chi(1)}\\
& = \sum_{\chi \in \mbox{Irr}(G)} n(G,\chi)\chi,
\end{align}
as required.\\ 
\newline
Next we present the following result that generalizes the previous works of Foote and Murty, in the context of supercharacters.
\begin{theorem}[Wong, \cite{Won}] \label{lo} Let $K/F$ be a Galois extension of number fields with Galois group $G$. One has
\[\sum_{\sigma \in G} \frac{n(G,\sigma)^2}{\sigma(1)} \leq (\emph{ord}_{s=s_0} \zeta_K(s))^2.\] 
\end{theorem}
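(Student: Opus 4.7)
The strategy mirrors the classical Foote--Murty argument (Theorem \ref{MRI}) adapted to the supercharacter framework. The plan is to realize the left-hand side as the squared $L^2$-norm of the super Heilbronn character $\Theta_G$ and then bound $\Theta_G$ pointwise by restricting to cyclic subgroups equipped with their classical supercharacter theory.

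First I would compute $(\Theta_G, \Theta_G)$ using orthogonality of supercharacters. A direct calculation from $\sigma_x = \sum_{\chi \in x} \chi(1)\chi$ shows that $(\sigma, \sigma) = \sigma(1)$ for every $\sigma \in \mbox{Sup}(G)$, while distinct supercharacters are orthogonal since they are supported on disjoint sets of irreducible characters. Hence
\[
(\Theta_G, \Theta_G) = \sum_{\sigma \in \mbox{Sup}(G)} \frac{n(G, \sigma)^2}{\sigma(1)^2} (\sigma, \sigma) = \sum_{\sigma \in \mbox{Sup}(G)} \frac{n(G, \sigma)^2}{\sigma(1)},
\]
which is precisely the left-hand side of the desired inequality.

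Next, writing $(\Theta_G, \Theta_G) = \frac{1}{|G|} \sum_{g \in G} |\Theta_G(g)|^2$, I would bound each summand individually. For fixed $g \in G$, equip $\langle g \rangle$ with its classical supercharacter theory. Singleton superclasses are automatically contained in any superclass of $G$, giving compatibility, and restrictions of supercharacters of $G$ to $\langle g \rangle$ are integer combinations of irreducible characters (hence of supercharacters of $\langle g \rangle$ in the classical theory), so $\Theta_{\langle g \rangle}$ is well-defined. The analogue of the Heilbronn--Stark lemma (Theorem \ref{HSL}) in this setting gives $\Theta_G|_{\langle g \rangle} = \Theta_{\langle g \rangle}$; this equality is to be obtained by expanding each $\sigma|_{\langle g \rangle}$ in the basis of supercharacters of $\langle g \rangle$ and invoking super Frobenius reciprocity together with the induction-invariance of Artin $L$-functions (Lemma \ref{first}) to match coefficients. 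Since $\langle g \rangle$ is cyclic and its classical supercharacters are one-dimensional, Theorem \ref{ODT} yields $n(\langle g \rangle, \chi) \geq 0$ for every $\chi$ when $s_0 \neq 1$, whence
\[
|\Theta_G(g)| = |\Theta_{\langle g \rangle}(g)| \leq \sum_{\chi} n(\langle g \rangle, \chi)\chi(1) = \Theta_{\langle g \rangle}(1) = \mbox{ord}_{s=s_0}\zeta_K(s),
\]
where the last equality uses the Artin--Takagi decomposition of $\zeta_K(s)$ relative to $K/K^{\langle g \rangle}$. The degenerate case $s_0 = 1$ is handled exactly as in the proof of Theorem \ref{MRI}: only the trivial character contributes and with coefficient $-1$, so $|\Theta_G(g)| = 1$, matching the order of the simple pole of $\zeta_K(s)$ at $s = 1$. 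Averaging these pointwise bounds over $g$ then yields the claimed inequality.

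The main technical obstacle is establishing the super Heilbronn--Stark identity $\Theta_G|_{\langle g \rangle} = \Theta_{\langle g \rangle}$ cleanly. The difficulty lies in the normalization: the coefficient $n(H, \tau)$ is defined with a compensating factor $\frac{1}{m}$ (with a corresponding $m$ inside the $L$-function argument) while $\Theta_H$ itself carries a $\frac{1}{\tau(1)}$ weight. One must verify that super Frobenius reciprocity together with Artin's induction-invariance for $L$-functions makes these normalizations collapse so that the coefficient of each irreducible character $\chi$ of $\langle g \rangle$ on the two sides of the equality genuinely agree; modulo this bookkeeping, the proof is a direct transcription of the classical Foote--Murty estimate into the supercharacter-theoretic language.
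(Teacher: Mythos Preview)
Your approach has a genuine gap at the step where you claim $n(\langle g\rangle,\chi)\geq 0$ from Theorem~\ref{ODT}. In the super setting, $n(H,\tau)$ is \emph{not} $\mbox{ord}_{s=s_0}L(s,\tau,K/K^{H})$; by Definition~\ref{Sind} it is $\frac{1}{m}\,\mbox{ord}_{s=s_0}L(s,\,mS\mbox{Ind}^{G}_{H}\tau,\,K/F)$. Even when $H=\langle g\rangle$ carries the classical theory, the superinduction $S\mbox{Ind}^{G}_{\langle g\rangle}\chi$ is computed with respect to the supercharacter structure on $G$ and is in general only the projection of $\mbox{Ind}^{G}_{\langle g\rangle}\chi$ onto the space of superclass functions of $G$, not $\mbox{Ind}^{G}_{\langle g\rangle}\chi$ itself. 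Thus $mS\mbox{Ind}^{G}_{\langle g\rangle}\chi$ is a (non-negative integral) combination of supercharacters of $G$---hence a genuine character of $G$---but its Artin $L$-function is a product of $L(s,\psi,K/F)$'s for higher-dimensional irreducibles $\psi$, and nothing prevents poles there short of Artin's conjecture. So Theorem~\ref{ODT} is simply not applicable, and the pointwise bound $|\Theta_{\langle g\rangle}(g)|\leq \Theta_{\langle g\rangle}(1)$ is unjustified.

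The paper sidesteps exactly this difficulty. Instead of restricting $\Theta_G$ to cyclic subgroups, it expands each supercharacter as $\sigma=\sum_{\chi\in\mbox{Irr}(G,\sigma)}\chi(1)\chi$, writes $n(G,\sigma)=\sum_{\chi\in\mbox{Irr}(G,\sigma)}\chi(1)\,\mbox{ord}_{s=s_0}L(s,\chi,K/F)$, and applies Cauchy--Schwarz:
\[
n(G,\sigma)^2 \;\leq\; \Big(\sum_{\chi\in\mbox{Irr}(G,\sigma)}\chi(1)^2\Big)\Big(\sum_{\chi\in\mbox{Irr}(G,\sigma)}\big(\mbox{ord}_{s=s_0}L(s,\chi,K/F)\big)^2\Big).
\]
Since $\sigma(1)=\sum_{\chi\in\mbox{Irr}(G,\sigma)}\chi(1)^2$ and the sets $\mbox{Irr}(G,\sigma)$ partition $\mbox{Irr}(G)$, summing over $\sigma$ and invoking the already-established classical Foote--Murty inequality (Theorem~\ref{MRI}) finishes the proof. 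This route never needs non-negativity of any super $n(H,\tau)$; it reduces everything to the classical irreducible-character level where the Hecke theory (Theorem~\ref{ODT}) is already baked into Theorem~\ref{MRI}.
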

\begin{proof} For every $\sigma \in \mbox{Sup}(G)$, one can write $\sigma$ as \[\sigma=\sum_{\chi \in \mbox{Irr}(G,\sigma)} \chi(1) \chi,\]
where $\mbox{Irr}(G, \sigma)$ is the set of irreducible characters of $G$ appearing in $\sigma$. Then,
\[\mbox{ord}_{s=s_0} L(s, \sigma,K/F) =\sum_{\chi \in \mbox{Irr}(G,\sigma)} \chi(1)\mbox{ord}_{s=s_0} L(s, \chi,K/F).\]
By Cauchy-Schwarz inequality we have,
\[n(G,\sigma)^2 \leq \Big(\sum_{\chi \in \mbox{Irr}(G,\sigma)} \chi^2(1)\Big) \Big(\sum_{\chi \in \mbox{Irr}(G,\sigma)} \big(\mbox{ord}_{s=s_0} L(s,\chi,K/F)\big)^2\Big).\]
Also by orthogonality property we have,
\[\sigma(1)=\sum_{\chi \in \mbox{Irr}(G,\sigma)} \chi^2(1).\]
Now the proof immediately follows from the fact that $\mbox{Irr}(G) = \bigsqcup_{\sigma \in \mbox{Sup}(G)}\mbox{Irr}(G, \sigma)$ and Foote-Murty's inequality \ref{MRI}.
\end{proof}
\noindent As was in the case for Heilbronn characters, we should also expect to have compatibility for super Heilbronn characters. The following lemma indeed fulfill our expectation.
\begin{lemma}[Super Heilbronn-Stark lemma, \cite{Won}] We have \[\Theta_G|_H=\Theta_H,\]
for any subgroup $H$ of $G.$
\end{lemma}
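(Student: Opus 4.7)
The plan is to adapt the proof of the classical Heilbronn--Stark lemma (Theorem \ref{HSL}) essentially word for word, replacing irreducible characters by supercharacters and the orthonormal normalization by the fact that $(\sigma,\sigma) = \sigma(1)$. The three ingredients are: this orthogonality relation, which is immediate from $\sigma = \sum_{\chi \in \mbox{Irr}(G,\sigma)} \chi(1)\chi$ and Schur orthonormality; the compatibility hypothesis, which forces $\sigma|_H$ to be an integer combination of supercharacters of $H$; and super Frobenius reciprocity.

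First, I would expand
$$\Theta_G|_H = \sum_{\sigma \in \mbox{Sup}(G)} n(G,\sigma)\, \frac{\sigma|_H}{\sigma(1)}$$
and decompose $\sigma|_H = \sum_{\tau \in \mbox{Sup}(H)} \frac{(\sigma|_H,\tau)}{\tau(1)}\,\tau$, whose coefficients $(\sigma|_H,\tau)/\tau(1)$ are integers by the compatibility hypothesis. Interchanging summations rewrites this as
$$\Theta_G|_H = \sum_{\tau \in \mbox{Sup}(H)} \Bigl(\,\sum_{\sigma \in \mbox{Sup}(G)} n(G,\sigma)\,\frac{(\sigma|_H,\tau)}{\sigma(1)}\Bigr)\,\frac{\tau}{\tau(1)},$$
so it suffices to identify the bracketed quantity with $n(H,\tau)$. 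Applying super Frobenius reciprocity turns $(\sigma|_H,\tau)$ into $(\sigma, S\mbox{Ind}_H^G \tau)$, and expanding $mS\mbox{Ind}_H^G \tau = \sum_\sigma \frac{m(S\mbox{Ind}_H^G \tau, \sigma)}{\sigma(1)}\,\sigma$ produces a virtual character of $G$ with integer coefficients, since $\sigma(1) \mid m$ by construction. The first bullet of Lemma \ref{first}, applied to this integer-coefficient combination, then gives
$$\frac{1}{m}\,\mbox{ord}_{s=s_0} L(s, mS\mbox{Ind}_H^G \tau, K/F) = \sum_{\sigma \in \mbox{Sup}(G)} n(G,\sigma)\,\frac{(S\mbox{Ind}_H^G \tau, \sigma)}{\sigma(1)},$$
which is exactly the bracketed quantity.

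The main obstacle is a choice-of-field bookkeeping step: the definition in the excerpt writes the L-function as $L(s, mS\mbox{Ind}_H^G \tau, K/K^H)$ rather than $L(s, mS\mbox{Ind}_H^G \tau, K/F)$. To close the identification I would invoke the supercharacter analogue of the induction-invariance identity (second bullet of Lemma \ref{first}), by first writing $mS\mbox{Ind}_H^G \tau$ as an integer combination of ordinary induced characters $\mbox{Ind}_{H_i}^G \phi_i$ and then transporting the classical identity $L(s, \mbox{Ind}_{H_i}^G \phi_i, K/F) = L(s, \phi_i, K/K^{H_i})$ through that combination to relate the L-function over $K/F$ to the one written over $K/K^H$. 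Once this bridge is in place the chain above closes and $\Theta_G|_H = \Theta_H$ follows.
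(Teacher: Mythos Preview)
Your core argument is correct and matches the paper's approach: the paper's own proof simply says the lemma follows from super Frobenius reciprocity, exactly as you do by expanding $\Theta_G|_H$, decomposing each $\sigma|_H$ in the orthogonal basis $\{\tau\}$ with $(\tau,\tau)=\tau(1)$, and then identifying the coefficient of $\tau/\tau(1)$ with $n(H,\tau)$ via $(\sigma|_H,\tau)=(S\mbox{Ind}_H^G\tau,\sigma)$.

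The one place you go astray is the ``bridge'' at the end. The expression $L(s,\,mS\mbox{Ind}_H^G\tau,\,K/K^H)$ in Definition~\ref{Sind} is simply a typo: $mS\mbox{Ind}_H^G\tau$ is a class function on $G=\mbox{Gal}(K/F)$, not on $H=\mbox{Gal}(K/K^H)$, so the Artin $L$-function attached to it only makes sense over $K/F$. Indeed, two paragraphs after the definition the paper itself writes $\mbox{ord}_{s=s_0}L(s,\,mS\mbox{Ind}_H^G\tau,\,K/F)$ when unpacking the construction. So no induction-invariance bridge is needed or even available here: superinduction is \emph{not} ordinary induction, and there is no identity relating $L(s,S\mbox{Ind}_H^G\tau,K/F)$ to an $L$-function over $K/K^H$. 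Your proposed rewriting of $mS\mbox{Ind}_H^G\tau$ as a sum of ordinary $\mbox{Ind}_{H_i}^G\phi_i$ would land you in $L$-functions over various $K/K^{H_i}$, not over $K/K^H$, so that route does not close the loop. Once you read $K/F$ in place of $K/K^H$, your displayed identity
\[
\frac{1}{m}\,\mbox{ord}_{s=s_0}L(s,\,mS\mbox{Ind}_H^G\tau,\,K/F)=\sum_{\sigma\in\mbox{Sup}(G)}n(G,\sigma)\,\frac{(S\mbox{Ind}_H^G\tau,\sigma)}{\sigma(1)}
\]
is exactly $n(H,\tau)$ and the proof is complete.
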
 
\begin{proof} This comes from a straightforward application of super Frobenius reciprocity, as was the case for Heilbronn Stark lemma \ref{Stark} before. See page 58 of \cite{Won} for more detailed proof. 
\end{proof}
\noindent Let us now prove the theorem below which is an analogous of Stark's result \ref{Stark}.
\begin{theorem}[Weak super-Stark, \cite{Won}] 
If $\emph{ord}_{s=s_0} \zeta_K(s) = 0$, then Artin L-functions 
\[L(s,mS\emph{Ind}^G_H \chi,K/F)\] 
attached to supercharacters $\chi$ of $H$ are holomorphic and non-vanishing at $s = s_0$.
\end{theorem}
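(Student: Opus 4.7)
The plan is to chain together the super analogue of the Foote--Murty inequality (Theorem \ref{lo}) with the super Heilbronn--Stark lemma, and then read the conclusion directly off the definition of $n(H,\tau)$.

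First I would apply Theorem \ref{lo} to the hypothesis $\mathrm{ord}_{s=s_0}\zeta_K(s)=0$: the inequality
\[\sum_{\sigma\in\mathrm{Sup}(G)}\frac{n(G,\sigma)^2}{\sigma(1)}\leq \big(\mathrm{ord}_{s=s_0}\zeta_K(s)\big)^2=0\]
together with $\sigma(1)>0$ forces $n(G,\sigma)=0$ for every supercharacter $\sigma$ of $G$. Consequently the super Heilbronn character of $G$ is identically zero, $\Theta_G=0$.

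Next I would invoke the super Heilbronn--Stark lemma (applied to the pair $G$ and $H$, which are compatible by hypothesis) to deduce $\Theta_H=\Theta_G|_H=0$. Since the supercharacters $\{\tau\}_{\tau\in\mathrm{Sup}(H)}$ form an orthogonal (hence linearly independent) family of superclass functions on $H$, the expansion
\[0\;=\;\Theta_H\;=\;\sum_{\tau\in\mathrm{Sup}(H)} n(H,\tau)\,\frac{\tau}{\tau(1)}\]
forces $n(H,\tau)=0$ for every supercharacter $\tau$ of $H$. Unpacking the definition of $n(H,\tau)$ from Definition \ref{Sind}, this says
\[\frac{1}{m}\,\mathrm{ord}_{s=s_0} L\big(s, mS\mathrm{Ind}^G_H\tau, K/F\big)=0,\]
so the $L$-function in question has order zero at $s_0$, i.e.\ it has neither a pole nor a zero there. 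This is exactly the desired statement of holomorphy and non-vanishing at $s=s_0$.

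I do not expect any genuine obstacle: the argument is a three-step reduction built entirely on machinery already in place — the super Foote--Murty inequality collapses $\Theta_G$ to zero, the super Heilbronn--Stark lemma transports this vanishing to $\Theta_H$, and linear independence of supercharacters converts the vanishing of $\Theta_H$ into vanishing of each $n(H,\tau)$. The only point requiring minor care is the passage from $\Theta_H=0$ to the vanishing of each coefficient, which is justified by the orthogonality of $\mathrm{Sup}(H)$ with respect to the inner product inherited from $\mathrm{Irr}(H)$, a fact recorded right after the definition of supercharacter theory.
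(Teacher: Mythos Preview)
Your argument is correct and uses the same two ingredients as the paper: the super Foote--Murty inequality (Theorem \ref{lo}) and the super Heilbronn--Stark lemma. The only difference is organizational: the paper first proves the quantitative lemma
\[\frac{|H|}{|G|}\sum_{\tau\in\mathrm{Sup}(H)}\frac{n(H,\tau)^2}{\tau(1)^2}\leq \big(\mathrm{ord}_{s=s_0}\zeta_K(s)\big)^2\]
by bounding $(\Theta_G,\Theta_G)$ from below via the sum over $H$ and from above via Theorem \ref{lo}, and then specializes to the zero case, whereas you go directly from $\Theta_G=0$ to $\Theta_H=0$ and read off the coefficients. The paper's route yields a reusable inequality valid for any $s_0$; yours is a shade more direct for the statement at hand.
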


\noindent \textit{Proof.} The proof clearly follows from the following lemma, 
\begin{lemma} 
\[ \frac{|H|}{|G|} \sum_{\tau \in \emph{Sup}(H)} \frac{n(H,\tau)^2}{\tau(1)^2} \leq (\emph{ord}_{s=s_0} \zeta_K(s))^2.\]
\end{lemma}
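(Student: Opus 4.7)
The plan is to imitate the proof of Theorem 6.0.4 (the supercharacter analogue of Foote--Murty) but applied at the level of the subgroup $H$, and then pull the bound back to $G$ via the Super Heilbronn--Stark lemma.

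First I would expand $(\Theta_H, \Theta_H)$ directly from the definition. Since the supercharacters of $H$ form an orthogonal basis satisfying $(\tau,\tau)=\tau(1)$, expanding
\[\Theta_H = \sum_{\tau \in \mbox{Sup}(H)} n(H,\tau)\,\frac{\tau}{\tau(1)}\]
yields
\[(\Theta_H, \Theta_H) = \sum_{\tau \in \mbox{Sup}(H)} \frac{n(H,\tau)^2}{\tau(1)^2}\,(\tau,\tau) = \sum_{\tau \in \mbox{Sup}(H)} \frac{n(H,\tau)^2}{\tau(1)},\]
which dominates the sum $\sum n(H,\tau)^2/\tau(1)^2$ appearing in the target inequality (since $\tau(1) \geq 1$), so a bound on this quantity suffices a fortiori.

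Next I would invoke Super Heilbronn--Stark, $\Theta_G|_H = \Theta_H$, to rewrite
\[(\Theta_H, \Theta_H) = \frac{1}{|H|}\sum_{h \in H}|\Theta_G(h)|^2.\]
Each summand being nonnegative, extending the average from $H$ to $G$ can only increase it, giving
\[(\Theta_H, \Theta_H) \;\leq\; \frac{1}{|H|}\sum_{g\in G}|\Theta_G(g)|^2 \;=\; \frac{|G|}{|H|}\,(\Theta_G, \Theta_G).\]

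Finally, Theorem 6.0.4 furnishes $(\Theta_G, \Theta_G) \leq (\mbox{ord}_{s=s_0}\zeta_K(s))^2$, and multiplying through by $|H|/|G|$ produces the desired estimate. Since all three ingredients---orthogonality of supercharacters, Super Heilbronn--Stark, and the supercharacter Foote--Murty bound---are already in hand, no real obstacle is anticipated; the only care required is to ensure that the compatibility hypothesis between $H$ and $G$ from the definition of super Heilbronn characters remains in force, so that $\Theta_H$ is defined and the restriction identity $\Theta_G|_H = \Theta_H$ genuinely applies.
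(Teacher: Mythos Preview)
Your approach is essentially the same as the paper's: both arguments combine the orthogonality relation $(\tau,\tau)=\tau(1)$, the Super Heilbronn--Stark identity $\Theta_G|_H=\Theta_H$, the trivial inequality $\sum_{h\in H}|\Theta_G(h)|^2\le\sum_{g\in G}|\Theta_G(g)|^2$, and Theorem~6.0.4 to bound $(\Theta_G,\Theta_G)$. You are in fact slightly more careful than the paper, which writes $(\Theta_H,\Theta_H)=\sum n(H,\tau)^2/\tau(1)^2$ without comment, whereas you correctly compute $(\Theta_H,\Theta_H)=\sum n(H,\tau)^2/\tau(1)$ and observe that this dominates the stated sum since $\tau(1)\ge 1$.
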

\begin{proof} Note that
\begin{align}
|\Theta_{G}|^2 & = \sum \frac{n(G,\sigma)^2}{\sigma(1)}\\
& \leq (\mbox{ord}_{s=s_0} \zeta_K(s))^2,
\end{align}
where (5.7) comes from (5.6), by Theorem \ref{lo}. But on the other hand, 
\begin{align}
(\Theta_G,\Theta_G) & = \frac{1}{|G|}\sum_{g \in G} \Theta(g)\overline{\Theta(g)}\\
& \geq \frac{1}{|G|}\sum_{g \in H} \Theta(g)\overline{\Theta(g)}\\
& = \frac{|H|}{|G|} (\Theta_H,\Theta_H)\\
& =\frac{|H|}{|G|} \sum_{\tau \in \mbox{Sup}(H)} \frac{n(H,\tau)^2}{\tau(1)^2},
\end{align}
as desired. 
\end{proof}
\noindent Our purpose of this section is to study Artin L-functions attached to supercharacters. More specifically, we intend to investigate under which supercharacter theories on $G$, Artin's conjecture is true for all Artin $L$-functions associated to supercharacters. This is perhaps a very difficult topic, because we are still struggling with the classical case (when supercharacter theory is given by conjugacy classes and usual characters). So the first question one may ask, 
\begin{question} 
Does there exist any supercharacter theory for which Artin's conjecture is true ?  
\end{question} 
Fortunately we are lucky here by virtue of the Aramata-Brauer theorem \ref{AB1}, which says that, for any Galois extension $K/F$ with Galois group $G$, the quotient $\zeta_K(s)/\zeta_F(s)$ is entire. But $\zeta_K(s)/\zeta_F(s)$ is nothing but Artin L-function attached to the super character $\text{reg}_G-1_G$. So, Artin's conjecture holds true if we invoke the theory given by $\{\text{reg}_G-1_G,1_G\}$. We refer this as \textit{max theory}\label{MT}. In the light of this observation, we consider the following conjecture, which may be seen as a supercharacter-theoretic variant of Artin's conjecture.
\begin{conjecture} Let $K/F$ be a Galois extension of number fields with Galois group $G$. Let $(X,K)$ be a supercharacter theory of $G$ and $\mbox{Sup}(G)$ denote the set of supercharacters with respect to $(X,K)$. Then for every non-trivial $\sigma \in \mbox{Sup}(G)$, the
Artin L-function $L(s, \sigma, K/F)$ attached to $\sigma$ extends to an entire function. For such an instance, we shall say that Artin's conjecture holds for $G$ with respect to $(X,K)$.
\end{conjecture}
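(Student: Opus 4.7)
The plan is to first observe that the conjecture cannot be established in full generality without resolving the classical Artin conjecture: taking $(X,K)$ to be the classical supercharacter theory, the supercharacters are precisely the $\chi(1)\chi$ for $\chi\in\mbox{Irr}(G)$, so the statement for this theory reduces to the assertion that every $L(s,\chi,K/F)$ is entire, i.e.\ Artin's conjecture itself. Thus the genuinely open content of the conjecture lies in identifying strictly coarser supercharacter theories for which an unconditional proof is accessible, and my proposal accordingly splits into a direct verification for distinguished theories and a local/conditional result for arbitrary ones.

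For the direct part I would start from the observation that the max theory $\{1_G,\mbox{Reg}_G-1_G\}$ is immediate from Aramata--Brauer (Theorem \ref{AB1}), since the unique non-trivial supercharacter has L-function $\zeta_K(s)/\zeta_F(s)$. More generally, one may apply the same reasoning to supercharacter theories built from a normal series in $G$, or from tensor products of theories on the quotients of such a series: the supercharacters that arise take the shape $\mbox{Reg}_N-1_N$ inflated through successive quotients, and for $G$ solvable the Uchida--Van der Waall theorem (Theorem \ref{UVdW}) together with the Aramata--Brauer theorem handles such families without further input.

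The main technical step for arbitrary theories would be a \emph{super-Brauer induction theorem}: for any supercharacter $\sigma$ of $G$, try to write
\[\sigma \;=\; \sum_i m_i\, S\mbox{Ind}_{H_i}^{G}\tau_i,\]
with $m_i\in\mathbb{Q}$, each pair $(H_i,\tau_i)$ compatible with the ambient theory on $G$, and each $\tau_i$ an object whose L-function is already known to be entire (ideally a one-dimensional character of $H_i$, to which Theorem \ref{ODT} applies). Combined with the super-induction identity $L(s,S\mbox{Ind}_H^G\tau,K/F)=L(s,\tau,K/K^H)$ expected to hold under compatibility, such a decomposition would reduce the global problem to controlling the signs of the $m_i$. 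Independently, and perhaps more realistically in the short term, the super-Heilbronn--Stark lemma combined with the inequality of Theorem \ref{lo} should already secure \emph{local} entirety at every $s_0$ for which $\mbox{ord}_{s=s_0}\zeta_K(s)$ is small, in exactly the spirit of Stark's theorem (Theorem \ref{Stark}) and of Wong (Theorem \ref{Won}).

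The hard part will be the super-Brauer step itself. Because supercharacters are aggregated along a partition of $\mbox{Irr}(G)$, there is no \emph{a priori} reason that $\mbox{Ind}_H^G\psi$ of a linear character $\psi$ of a subgroup $H$ should lie in the supercharacter lattice of $G$; the compatibility of subgroup theories required by super-induction is a strong constraint and typically forces $H$ to respect the given partition of $G$ in a rather rigid way. I therefore expect a clean unconditional conclusion only for structured families of theories -- those arising from normal series, or from products of theories on direct factors -- and outside these families the conjecture is likely to be strictly equivalent to Artin's conjecture restricted to the irreducible constituents appearing in each $\sigma$, at which point no genuine simplification has taken place and only the local conclusions via super-Heilbronn characters remain available.
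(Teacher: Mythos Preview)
The statement you are addressing is a \emph{conjecture}, not a theorem: the paper does not claim or provide a proof. You correctly identify this at the outset, noting that for the classical supercharacter theory the conjecture specialises to Artin's conjecture itself, so no unconditional proof in full generality is possible with current technology.

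Where the paper does proceed, its direction overlaps with your proposal but differs in one key mechanism. Like you, the paper records the max theory $\{1_G,\mbox{Reg}_G-1_G\}$ as an immediate consequence of Aramata--Brauer, and then turns to building larger families of theories for which the conjecture holds. The paper's device for this is Hendrickson's \emph{product} of supercharacter theories: given a normal subgroup $N\unlhd G$ with a $G$-invariant theory on $N$ and any theory on $G/N$, the product theory on $G$ has supercharacters of the form $\mbox{Ind}_N^G\sigma_X$ (ordinary induction, not superinduction) and $\mbox{Inf}_{G/N}^G\sigma_Y$. Because ordinary induction and inflation leave Artin L-functions invariant (Lemma~\ref{first}), the conjecture for the product follows from the conjecture for the two factors; taking $N=[G,G]$ with the max theory and $G/N$ with the classical theory gives a nontrivial unconditional family. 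Your normal-series suggestion is morally the same idea, but the paper's construction makes the reduction precise and explains why ordinary induction is what matters.

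This exposes the one genuine gap in your programme. Your proposed super-Brauer step relies on an identity $L(s,S\mbox{Ind}_H^G\tau,K/F)=L(s,\tau,K/K^H)$, but there is no reason to expect this: superinduction is \emph{not} ordinary induction, and the Artin formalism of Lemma~\ref{first} applies only to the latter. In general $S\mbox{Ind}_H^G\tau$ need not even be a character of $G$, only a rational combination of supercharacters, so its L-function need not factor through $K/K^H$ at all. The paper sidesteps this entirely by restricting to product theories whose supercharacters are, by construction, ordinary inductions or inflations; any viable extension to further cases will likely have to proceed the same way rather than through super-induction.
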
 
Hendricson \cite{HEN} introduced the notion of product of supercharacter theories. We will describe below this notion and see how it helps to understand more of this vast topic.
\begin{definition} Let $G$ be a finite group and $N$ be a normal subgroup of $G$. We equip $N$ and $G/N$ with supercharacter theories $(C_1,P_1)$ and $(C_2,P_2)$ respectively. Further we assume $(C_1,P_1)$ is $G$-invariant in the sense, for each $g \in G$ and $n \in N$, both $n$ and $g^{-1}ng$ belong to the same superclass. Define their product $(C_1,P_1) \otimes (C_2,P_2)$ as the pair $(C,P)$ where  
\[C=\big\{\mbox{Ind}^G_N(\sigma_X) \mid X \in C_1-\{1_N\} \big\} \cup \big\{\mbox{Inf}_{G/N}^{G} \sigma_Y \mid Y \in C_2 \big\},\]
and  
\[P=P_1 \cup \{Np \mid p \in P_2-\{e_{G/N}\}\},\]
where $\mbox{Inf}$ is meant to be inflation of supercharacters from $G/N$ to $G$.
\end{definition} 
\noindent Hendrickson \cite{HEN} proved that the pair $(C,P)$ indeed gives a supercharacter theory of $G$. The main point behind introducing such product is to observe the following: If (super character theoretic) Artin's conjecture for both of the theories are true, then it is true for their product as well. This is because Artin L-functions are invariant under induction and inflation property (Lemma \ref{first}). At a first glance it may not sound very useful, but imagine if we know Artin's conjecture is true for classical theory on $G/N$, then it is of course true for any theory on $G/N$ (because any super character is positive integral combination of irreducible characters) and if we equip $N$ with max theory \ref{MT}, then for a whole lot of supercharacter theory on $G$, Artin's conjecture is true. For instance if $N=G^1=[G,G]$ then $G/N$ is abelian, and so Artin's conjecture is true for classical theory on $G/N$. \\

\section{Further Remarks}



In 2008, F. Nicolae \cite{FNIC} studied the Artin L-functions, which are holomorphic at a particular point $s=s_0$. We call this set to be $\mbox{Hol}(s_0)$. Nicolae showed that $\mbox{Hol}(s_0)$ is a finitely generated semigroup, using a result about positive polyhedral cones in the theory of convexity and optimization. Moreover Nicolae has shown that Artin's conjecture is true at $s=s_0$ if and only if, minimal number of generators of seimigroup is $|\mbox{Irr}(G)|$ and any $|\mbox{Irr}(G)|-1$ fold product of irreducible Artin L-functions (L-functions attached to irreducible characters) is holomorphic at $s_0$. This observation led him to prove the conjecture for icosahedral extension over $\mathbb{Q}$, using a result by Artin \cite{Artin} pp. 108. It is interesting to note that the idea of working with cones was earlier considered by S. Rhoades \cite{Rhoades} in $1993$. We give a brief sketch of Rhoades's work. 
\begin{definition} A complex valued function $\chi$ on $G$ is said to be a virtual character if $\chi=\sum_{\phi_i \in \emph{\emph{Irr}}(G)} r_i \phi_i$, where $r_i$'s all are in $\mathbb{R}$. Let $F$ be a subset of all characters of $G$. A \textit{virtual character} $\chi$ of $G$ is said to be a Heilbronn character with respect to $F$ if $(\chi,\psi) \geq 0$ for all $\psi \in F$. \end{definition} 
\begin{theorem}\label{RD} Let $F$ be a nonempty set of characters of the finite group $G$, and let $\psi$ be a nonzero virtual character of $G$. Then $\psi$ can be written as a positive rational linear combination of characters from $F$ if and only if $\psi$ is a Heilbronn character with respect to $F$.
\end{theorem}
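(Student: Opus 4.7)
The plan is to split the biconditional. The forward direction is immediate: if $\psi = \sum_i c_i \chi_i$ with $c_i \in \mathbb{Q}_{\geq 0}$ and $\chi_i \in F$, then for every $\phi \in F$ one has
\[
(\psi,\phi) \;=\; \sum_i c_i (\chi_i,\phi) \;\geq\; 0,
\]
since the inner product of two honest characters equals the multiplicity of common irreducible constituents and is therefore a non-negative integer. So the content of the theorem lies entirely in the reverse implication.

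To prove that implication, I would pass to the finite-dimensional real inner product space $V$ of virtual characters, in which $\mathrm{Irr}(G)$ is an orthonormal basis, and set
\[
C \;:=\; \Bigl\{\, \textstyle\sum_i c_i \chi_i \,:\, c_i \in \mathbb{R}_{\geq 0},\ \chi_i \in F \,\Bigr\}.
\]
Because $F$ consists of characters, whose coordinates in the $\mathrm{Irr}(G)$ basis are non-negative integers, $C$ is a rational polyhedral cone, hence closed; this will automatically let us upgrade real non-negative combinations to rational ones at the end. The strategy is a contrapositive separation argument: assuming $\psi \notin C$, the Hahn--Banach/Farkas separation theorem for closed convex cones in finite dimensions produces a vector $\phi \in V$ with $(\phi,\chi) \geq 0$ for every $\chi \in F$ and $(\phi,\psi) < 0$.

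The crux, and the main obstacle, is to convert this abstract separator $\phi$ into a non-negative combination of elements of $F$, so that the Heilbronn hypothesis $(\psi,\chi) \geq 0$ for all $\chi \in F$ forces the required contradiction $(\phi,\psi) \geq 0$. Equivalently, one must establish that the polar cone
\[
C^{\ast} \;=\; \{\phi \in V : (\phi,\chi) \geq 0 \text{ for all } \chi \in F\}
\]
coincides with $C$, i.e., that $C$ is self-polar. This self-duality is the real content of the theorem and is where the integrality and positivity structure of characters must enter: the extreme rays of the polyhedral cone $C^{\ast}$ must be shown to lie in $C$, plausibly via the extremal description of rational polyhedral cones together with Artin's induction theorem (every virtual character is a $\mathbb{Q}$-linear combination of characters induced from cyclic subgroups, whose inner products with characters of $F$ are already constrained by positivity). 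Once $C = C^{\ast}$ is established, the Heilbronn hypothesis places $\psi$ in $C^{\ast} = C$, and the rationality of the coefficients is then free from $C$ being rational polyhedral — any point of a rational polyhedral cone admits a representation as a non-negative rational combination of its generators.
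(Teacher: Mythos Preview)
Your reduction is exactly right: after the separation step you obtain $\phi\in C^{\ast}$ with $(\phi,\psi)<0$, and to contradict the Heilbronn hypothesis $(\psi,\chi)\ge 0$ for all $\chi\in F$ you would need $\phi\in C$, i.e.\ $C^{\ast}\subseteq C$. Since $F$ consists of honest characters one always has $C\subseteq C^{\ast}$, so you are asking for $C=C^{\ast}$. Unfortunately this self-duality is \emph{false} in general, and no amount of Artin induction will rescue it. Take $G=\mathbb{Z}/2\mathbb{Z}$ with irreducible characters $1_G$ and $\epsilon$, let $F=\{1_G\}$ and $\psi=\epsilon$. Then $(\psi,1_G)=0\ge 0$, so $\psi$ is a Heilbronn character with respect to $F$, yet $\psi$ is certainly not a non-negative rational multiple of $1_G$. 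Thus the biconditional, read literally, is false, and the gap you identified cannot be closed.

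What the paper's sketch actually proves is the \emph{bipolar} statement. In the paper's notation $S(F)$ is the dual cone and $T(F)$ the cone generated by $F$; Step~1 is precisely the bipolar theorem $S(S(F))=T(F)$, and the ``Final Step'' places $\psi$ in $S(S(F))$, not merely in $S(F)$. In other words, the hypothesis being used is $(\psi,\theta)\ge 0$ for every $\theta\in S(F)$, that is, for every Heilbronn character $\theta$ with respect to $F$ --- not the weaker condition that $\psi$ itself be such a Heilbronn character. With that corrected hypothesis the argument is just the bipolar theorem plus a passage from real to rational coefficients (the paper's Step~2), and your polyhedral-cone setup handles both cleanly. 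So your difficulty is not a flaw in your strategy but a symptom of the statement being mis-recorded: read ``$\psi$ is a Heilbronn character with respect to $F$'' as ``$(\psi,\theta)\ge 0$ for every Heilbronn character $\theta$ with respect to $F$'', and your separation argument goes through without needing any self-duality.
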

\begin{proof} Let $k=\mid \mbox{Irr}(G)\mid$ and $\langle,\rangle$ denote the usual inner product on $\mathbb{R}^k$. Let $F$ be any nonempty subset of $\mathbb{R}^k$. Define 
\[S(F)=\Big\{x\in \mathbb{R}^k \mid \langle f,x\rangle \geq 0, \forall f \in F\Big\},\]
and 
\[T(F)=\Big\{\sum w_if_i \mid f_i \in F, w_i \in \mathbb{R}^{+}\Big\}.\]
\newline
\noindent\textbf{Step 1:} If $F$ is non empty not containing $0$, then $S(S(F))=T(F)$.\\
\newline
\noindent\textbf{Step 2:} Let $v_1, \cdots, v_s$ and $V$ be vectors in $\mathbb{R}^k$ with nonnegative rational coordinates. If $V$ can be written as a nonnegative real linear combination of $v_1,\cdots,v_s$, then $V$ can also be written as a nonnegative rational linear combination of $v_1, \cdots, v_s$.\\
\newline
\noindent\textbf{Final Step:} Identifying $\psi$ with $\mathbb{R}^k$ we have $\psi \in S(S(F))$.
\end{proof} 
As a byproduct, Rhoades obtained a different proof of Brauer-Aramata theorem (Theorem \ref{BAT}). In fact by Rhode's result \ref{RD}, we can say that $\chi$ is an irreducible character of $G$ such that $(\chi, \psi)\geq 0$ for all monomial characters $\psi$ of G, then $\chi$ is a positive rational linear combination of monomial characters. And this allows us to write $k\chi$ as positive integral combination of monomial characters, for some natural number $k$. And this in turn says that $L(s,\chi,K/F)^k$ is entire. This indeed makes $L(s,\chi,K/F)$ to be entire, because it is already known to meromorphic due to Theorem \ref{mero}. So could this be one approach towards making in roads to Artin's conjecture ?\\
\newline
Note that for the case of solvable groups, by the result of Ferguson and Isaacs \cite{FI} we can conclude that, if $G$ is solvable and any positive integer multiple of an irreducible character $\chi$ is monomial, then $\chi$ itself is monomial. However for monomial characters we know Artin's holomorphy conjecture to hold, from the day Artin L-functions were discovered in $1923$. The main problem is to see if this approach can shed some light for non-solvable groups.\\ 
\newline
It is interesting to note that recently Konig \cite{Kon} showed that, if for every $\chi \in \mbox{Irr}(G)$ there exist a natural number $k_{\chi}$ such that $k_{\chi}\chi$ is monomial, then $G$ is solvable ! To show solvability Konig embedded $G$ inside $\mbox{Aut}(S^m)$, where $S^m$ is a non-abelian simple group. In $S^m$ with $S$ nonabelian, the minimal normal subgroups are precisely the $m$ factors, which are thus permuted by the automorphism group. This gives the description of the automorphism group. To be precise $\mbox{Aut}(S^m)= \mbox{Aut}(s) \rtimes_{\rho} \mbox{Sym}(m)$, where $\rho: \mbox{Sym}(m) \longrightarrow \mbox{Aut}(\mbox{Aut}(S))$ is given by $\rho(\sigma)(f)(g)=f(g^{\sigma})$. After this the general problem was reduced to the problem of investigation of finite simple groups. After this Konig attacked the problem case by case relying on classification of finite simple groups. For a detailed arguments, see \cite{Kon}. This story tells us that we must look into other approaches for making any meaningful contribution further in this area of research. We hope to pursue these problems in near future.

\section{Acknowledgement}
I express my sincere gratitude to Professor K. Srinivas for his help in the making of note. His careful reading of earlier versions of this draft, suggestions to improve the presentation at various places is a matter of great education for me. I am especially grateful to Professor Anne-Marie Aubert for carefully reading the draft, suggesting changes which improved the content and presentation of this note. In all my needs, I always found her as a kind and helpful person. I am indebted to her for all the care and help that I received for the past one year. I also would like to thank CMI and IMSc for providing wonderful working environment. Last but not the least, I acknowledge the support and affection of my parents, cousins and Sulakhana Chowdhury who always stood by me in every choice of my life.

\end{document}